\documentclass[a4paper,reqno,10pt]{amsart}

\usepackage[
  colorlinks=true,
  linkcolor=blue,
  citecolor=blue,
  urlcolor=blue
]{hyperref}
\usepackage{a4wide}
\usepackage{amssymb,amstext,amsmath,amscd,amsthm,amsfonts}
\usepackage{mathrsfs}
\usepackage{enumitem}
\setlist[enumerate,1]{label={\upshape(\arabic*)}}
\setlist[enumerate,2]{label={\upshape(\alph*)}}
\usepackage{tikz}
\usepackage{tikz-cd}
\usetikzlibrary{arrows,matrix,positioning,calc}
\usepackage{array}
\usepackage{booktabs}
\newcolumntype{C}{>{$}c<{$}}

\newtheorem{theorem}{Theorem}[section]
\newtheorem{theoremi}{Theorem}

\newtheorem{corollary}[theorem]{Corollary}
\newtheorem{lemma}[theorem]{Lemma}
\newtheorem{proposition}[theorem]{Proposition}
\newtheorem{conjecture}[theorem]{Conjecture}

\theoremstyle{definition}
\newtheorem{definition}[theorem]{Definition}
\newtheorem{construction}[theorem]{Construction}
\newtheorem{remark}[theorem]{Remark}
\newtheorem{example}[theorem]{Example}
\newtheorem*{conv}{Conventions and notation}
\newtheorem*{org}{Organization}
\newtheorem*{formalization}{Formalization and verification}
\newtheorem*{acknowledgments}{Acknowledgments}
\numberwithin{equation}{section}

\newcommand{\kk}{k}
\newcommand{\NN}{\mathbb{N}}
\newcommand{\ZZ}{\mathbb{Z}}
\newcommand{\DDual}{\mathbb{D}}
\renewcommand{\AA}{\mathcal{A}}
\newcommand{\CC}{\mathcal{C}}
\newcommand{\DD}{\mathcal{D}}
\newcommand{\LL}{\mathcal{L}}
\newcommand{\PP}{\mathcal{P}}
\newcommand{\QQ}{\mathcal{Q}}
\newcommand{\UU}{\mathcal{U}}
\newcommand{\VV}{\mathcal{V}}

\newcommand{\XX}{\mathcal{X}}
\newcommand{\quot}{\mathrm{quot}}
\newcommand{\sub}{\mathrm{sub}}
\newcommand{\LQ}{\mathcal{L}_{\quot}}
\newcommand{\LS}{\mathcal{L}_{\sub}}
\newcommand{\RQ}{R_{\quot}}
\newcommand{\RS}{R_{\sub}}
\newcommand{\Hom}{\operatorname{Hom}\nolimits}
\newcommand{\End}{\operatorname{End}\nolimits}

\newcommand{\rad}{\operatorname{rad}\nolimits}
\newcommand{\soc}{\operatorname{soc}\nolimits}
\renewcommand{\top}{\operatorname{top}\nolimits}
\newcommand{\ann}{\operatorname{ann}\nolimits}
\newcommand{\tr}{\operatorname{tr}\nolimits}
\newcommand{\ex}{\operatorname{ex}\nolimits}
\newcommand{\Image}{\operatorname{Im}\nolimits}
\newcommand{\op}{\mathrm{op}}

\DeclareMathOperator{\moduleCategory}{\mathsf{mod}}
\renewcommand{\mod}{\moduleCategory}
\DeclareMathOperator{\ind}{\mathsf{ind}}
\DeclareMathOperator{\proj}{\mathsf{proj}}
\DeclareMathOperator{\add}{\mathsf{add}}
\DeclareMathOperator{\Gen}{\mathsf{Gen}}
\DeclareMathOperator{\Cogen}{\mathsf{Cogen}}
\DeclareMathOperator{\qcl}{\mathsf{qcl}}
\DeclareMathOperator{\scl}{\mathsf{scl}}
\allowdisplaybreaks

\title[An equidistribution conjecture for closed subcategories]
{An equidistribution conjecture for quotient-closed\\
and submodule-closed subcategories}
\author[H.\ Enomoto]{Haruhisa Enomoto}
\address{Parakeet Inc., Japan}
\email{haruhisa.enomoto.math@gmail.com}
\subjclass[2020]{16G10, 16G20, 05A15}
\keywords{quotient-closed subcategory, submodule-closed subcategory,
convex geometry, Auslander--Reiten quiver, Bruhat order}
\date{\today}

\begin{document}

\begin{abstract}
We study subcategories of the module category of a finite-dimensional algebra
that are closed under quotients or submodules. We propose the
quotient--submodule equidistribution conjecture: over a representation-finite
algebra, the number of quotient-closed subcategories of size $i$ is equal to
that of submodule-closed subcategories of size $i$ for every $i$, where size
is the number of indecomposable modules in the subcategory. We prove the
following cases of the conjecture: (1) the five smallest and five largest
values of $i$, hence all algebras with at most nine indecomposable modules;
(2) Nakayama algebras; (3) algebras with radical square zero; and
(4) representation-directed algebras. In the last case, we classify
quotient-closed subcategories by a lower Bruhat interval in a Coxeter group
constructed from the Auslander--Reiten quiver, extending the classification
of Oppermann--Reiten--Thomas for Dynkin quivers. We also prove that quotient
closure defines a finitary convex geometry and classify the functorially
finite quotient-closed subcategories by Gen-minimal modules.
\end{abstract}

\maketitle
\begingroup
\footnotesize
\setcounter{tocdepth}{2}
\tableofcontents
\endgroup

\section{Introduction}\label{sec:introduction}

Torsion classes and torsion-free classes are basic objects in the
representation theory of finite-dimensional algebras. Taking
\(\Hom\)-orthogonals gives a bijection between them
\cite[Section~VI.1]{AssemSimsonSkowronski}. Dropping extension closure from
their definitions leads to quotient-closed and submodule-closed
subcategories, for which orthogonality no longer gives a bijection. This
raises two questions: how the two families are related numerically, and what
structure each family carries. We propose a conjectural answer to the first
question, prove it in several cases, and develop a structure theory for the
second.

Let \(\kk\) be a field and \(A\) a finite-dimensional \(\kk\)-algebra.
Choose a set \(\ind A\) of representatives of the indecomposable modules,
and for a subcategory \(\CC\) let \(\ind\CC\) be the subset of \(\ind A\)
consisting of the indecomposable modules in \(\CC\).

\begin{definition}\label{def:quot-sub-counting-data}
An additive subcategory of \(\mod A\) is \emph{quotient-closed} if it is
closed under factor modules, and it is \emph{submodule-closed} if it is
closed under submodules.  Quotient-closed subcategories are also called
\emph{pretorsion classes} \cite{CFY}. We write \(\LQ(A)\) and \(\LS(A)\) for
the lattices of quotient-closed and submodule-closed subcategories of
\(\mod A\), respectively, both ordered by inclusion.

The \emph{size} of a subcategory \(\CC\) is \(|\ind\CC|\).  For every
integer \(i\geq0\), write \(\LQ^i(A)\) and \(\LS^i(A)\) for the
quotient-closed and submodule-closed subcategories of size \(i\),
respectively.  If \(A\) is representation-finite, define the \emph{size generating polynomials} by
\[
 \RQ(A;q):=\sum_{i=0}^{|\ind A|}|\LQ^i(A)|q^i,\qquad
 \RS(A;q):=\sum_{i=0}^{|\ind A|}|\LS^i(A)|q^i.
\]
\end{definition}

Motivated by extensive computer experiments, we propose the following
conjecture.

\begin{conjecture}[Quotient--Submodule Equidistribution Conjecture]
\label{conj:equidistribution}
Let \(A\) be a representation-finite finite-dimensional algebra over a
field.  Then
\[
 \RQ(A;q)=\RS(A;q).
\]
Equivalently, \(|\LQ^i(A)|=|\LS^i(A)|\) for every \(0\leq i\leq |\ind A|\).
\end{conjecture}

If \(A^{\op}\) is Morita equivalent to \(A\), duality gives a
size-preserving lattice isomorphism between \(\LQ(A)\) and \(\LS(A)\), so
the conjecture holds. However, such a lattice isomorphism need not exist in
general; see Example~\ref{ex:inward-a3}.

The conjecture holds for path algebras of Dynkin quivers over an
algebraically closed field. To see this, let \(Q\) be a Dynkin quiver and
let \(W_Q\) be its Weyl group. Proposition~\ref{prop:ort-dynkin}, due to
Oppermann--Reiten--Thomas, gives a bijection between \(W_Q\) and the
quotient-closed subcategories of \(\mod\kk Q\), under which \(w\)
corresponds to a subcategory omitting exactly \(\ell(w)\) indecomposable
modules. Applying Proposition~\ref{prop:ort-dynkin} to \(Q\) and its
opposite, together with vector-space duality, proves this assertion
(Theorem~\ref{thm:dynkin}).

\begin{remark}\label{rem:computer-verification}
Over \(\kk=\mathbb{F}_2\), computer calculations using QPA verify the
conjecture for
all cluster-tilted algebras of Dynkin type with at most seven simple
modules, all Brauer tree algebras with at most five edges and
exceptional multiplicity at most three, and all representation-finite
gentle algebras with at most four simple modules \cite{QPA}.
The largest verified algebra is the gentle algebra \(A=\kk Q/I\), where
\[
\begin{tikzcd}[column sep=large]
1
& 2 \arrow[l, "\varepsilon"']
    \arrow[r, shift left=.6ex, "\zeta"]
& 3 \arrow[l, shift left=.6ex, "\beta"]
    \arrow[r, shift left=.6ex, "\alpha"]
& 4 \arrow[l, shift left=.6ex, "\gamma"]
    \arrow[loop right, "\delta"]
\end{tikzcd}
\qquad
I=(\alpha\gamma,\ \gamma\alpha,\ \beta\zeta,\ \zeta\beta,\ \delta^2).
\]
It has \(140\) indecomposable modules and \(6{,}395{,}330{,}502\) quotient-closed subcategories, with \(A\not\cong A^{\op}\).  The
\(141\) coefficients of \(\RQ(A;q)\) and \(\RS(A;q)\) agree.
Each polynomial has largest coefficient \(175{,}352{,}963\) at \(q^{64}\).
\end{remark}

The following is the main result of this paper.

\begin{theoremi}[{= Theorems~\ref{thm:size-four-intro},
\ref{thm:nakayama}, \ref{thm:radical-square-zero}, and
\ref{thm:directed-main}(3)}]\label{thm:main}
Let \(A\) be a representation-finite finite-dimensional algebra over an
algebraically closed field.  The quotient--submodule equidistribution
conjecture holds in each of the following cases.
\begin{enumerate}
\item \(A\) has at most nine indecomposable modules.
\item \(A\) is a Nakayama algebra.
\item The radical of \(A\) has square zero.
\item \(A\) is representation-directed.
\end{enumerate}
\end{theoremi}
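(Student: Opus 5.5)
The theorem is a compilation of four separate results, so I would prove each case on its own. A common tool for all four is a concrete description of quotient-closed subcategories. A set \(S\subseteq\ind A\) is quotient-closed exactly when, for every \(M\in S\), every indecomposable summand of every factor module of \(M\) lies in \(S\). Thus \(\LQ(A)\) is the lattice of closed sets of the closure operator \(\qcl\), and dually \(\LS(A)\) is the lattice of closed sets of \(\scl\). For \(\mod A^{\op}\), vector-space duality \(\DDual\) interchanges the two operators, so \(\RS(A;q)=\RQ(A^{\op};q)\). Each case therefore reduces to showing that the size distribution of \(\qcl\)-closed sets is the same for \(A\) and \(A^{\op}\).

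For case (1), I would count closed sets of size \(i\) and of co-size \(j=|\ind A|-i\) for small \(i\) and \(j\). Small closed sets are built from modules with few quotients. A closed set of size \(1\) is a simple module, and a closed set of size \(2\) is either two simples or a uniserial module of length two together with its top. In general, for \(i\le 4\) the count depends only on the number of simples and on uniserial or "almost uniserial" data of small length: tops, socles, and the arrows of the Ext-quiver. I would check that each such invariant is left-right symmetric; for instance \(\dim\mathrm{Ext}^1(S,T)\) for \(A\) equals \(\dim\mathrm{Ext}^1(DT,DS)\) for \(A^{\op}\).

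For large sizes I would use complements. Removing \(j\le4\) indecomposables leaves a closed set exactly when no removed module is a quotient of a kept one. The removed set must then be closed under "quotient-predecessors". This produces small order-ideal counts in the preorder of the Auslander--Reiten quiver given by epimorphisms, and here projective-injective symmetry and AR-theory should give matching counts. These ten cases (five smallest, five largest) cover every size once \(|\ind A|\le9\). This is the main obstacle: the case analysis at size four and five is delicate, and I would first try to make it uniform through an inclusion--exclusion formula expressed in terms of the Gabriel quiver with relations.

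For case (2), indecomposables of a Nakayama algebra are uniserial, \(M=P_x/\rad^\ell P_x\), and their quotients are the \(P_x/\rad^{m}P_x\) with \(m\le\ell\). A quotient-closed set is therefore determined by a function assigning to each vertex \(x\) the maximal length \(\ell_x\) of a module with top \(x\) that it contains, with \(0\le\ell_x\le c_x\), and closure imposes no extra constraint. Hence \(\RQ=\prod_x(1+q+\dots+q^{c_x})\). Dually, \(\RS=\prod_x(1+\dots+q^{d_x})\), where \(d_x\) is the length of the injective module with socle \(x\). The Kupisch series data \((c_x)\) and \((d_x)\) agree as multisets, because both count the indecomposables by length through the bijection \(\ind A\leftrightarrow\{(\text{top},\text{length})\}\) and \(\leftrightarrow\{(\text{socle},\text{length})\}\); more precisely, the number of modules of length at least \(m\) is the same in both descriptions.

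For case (3), radical square zero, I would use the separated quiver and stable equivalence with a hereditary algebra. I would then count closed sets via the bipartite structure and show that the count is invariant under reversing all arrows.

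For case (4), I would use the quoted Bruhat-interval classification: \(\LQ(A)\) corresponds to a lower Bruhat interval \([e,w_A]\) with size \(|\ind A|-\ell\), extending Proposition~\ref{prop:ort-dynkin}. Applying this to \(A^{\op}\) gives the reverse word. The claim then follows from the facts that \(w\mapsto w^{-1}\) is a length-preserving bijection \([e,w]\to[e,w^{-1}]\), and that the element attached to \(A^{\op}\) is \(w_A^{-1}\).
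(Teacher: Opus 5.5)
Your reduction $\RS(A;q)=\RQ(A^{\op};q)$ and your case~(2) agree with the paper. So does the split of case~(1) into the five smallest and five largest sizes. The other cases have genuine gaps, starting with your basic criterion.

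\textbf{The closure test.} Quotient closure must be tested on factor modules of finite \emph{direct sums} of members, not of single members. In Example~\ref{ex:inward-a3}, $M_{123}$ is a quotient of $M_{12}\oplus M_{23}$ but of neither summand. So $\{S_1,S_3,M_{12},M_{23}\}$ passes your test, and its complement $\{S_2,M_{123}\}$ passes your cosize test ``no removed module is a quotient of a kept one''. Yet this set is not quotient-closed; the correct test is Lemma~\ref{lem:indecomposable-quotient-test}. In case~(2) your test happens to be valid, but only because an indecomposable quotient of a sum of uniserials is a quotient of one summand. That Nakayama-lemma argument is needed for your ``closure imposes no extra constraint''.

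\textbf{Case~(1) has no argument.} The claim that the counts in sizes at most four depend only on left--right symmetric data is the whole content. The paper obtains it from several ingredients:
\begin{itemize}
\item the annihilator stratification $|\LQ^i(A)|=\sum_{I}f^i_{\quot}(A/I)$;
\item the fact that every faithful closed set contains the minimal one $\mathcal K_*(B)$;
\item Ringel's bijection between torsionless $B$- and $B^{\op}$-modules, which gives $|\mathcal K_{\quot}(B)|=|\mathcal K_{\sub}(B)|$;
\item for size four, a classification of the algebras with $d(B)\leq3$.
\end{itemize}
For cosizes, no formula in symmetric Gabriel-quiver data can work. Example~\ref{ex:infinite-counterexample} shows the cosize-two counts already differ ($5\neq6$) for a representation-infinite algebra, so representation-finiteness must enter through Auslander--Reiten theory. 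The paper proves $\beta_{\quot}(A)=\beta_{\sub}(A)$ by a $\tau$-count of Auslander--Reiten arrows (Corollary~\ref{cor:cosize-two}). Cosizes three and four then require three further tools:
\begin{itemize}
\item the factor-ladder criterion from Iyama's $\tau$-categories (Theorem~\ref{thm:factor-ladder});
\item an inclusion--exclusion balance between projectively rooted and injectively corooted sets;
\item a computer-assisted classification of killed four-vertex configurations, balanced along $\sigma$-orbits of arrows.
\end{itemize}
Your ``order-ideal counts \dots\ should give matching counts'' supplies none of this.

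\textbf{Case~(3) restates the goal.} ``The count is invariant under reversing all arrows'' is exactly $\RQ(A;q)=\RQ(A^{\op};q)$. What is needed is first the comparison with $B=\kk Q^{\rm s}$. A set of nonsimple indecomposables is the nonsimple part of a quotient-closed subcategory over $A$ if and only if its image is over $B$. Simples outside the tops are free, which gives $\RQ(A;q)=\RQ(B;q)/(1+q)^r$. Then you need orientation-independence for the Dynkin quiver $Q^{\rm s}$, which is itself the Dynkin case of the conjecture. The paper gets it from the Oppermann--Reiten--Thomas bijection, under which $\RQ(\kk Q;q)=\sum_{w\in W_Q}q^{N-\ell(w)}$ depends only on the Weyl group (Theorem~\ref{thm:dynkin}). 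You could also take it from case~(4), since $\kk Q^{\rm s}$ is representation-directed, but you make no such connection.

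\textbf{Case~(4) assumes its main step.} Your last step is exactly the paper's proof of Theorem~\ref{thm:directed-main}(2),(3): apply the classification to $A^{\op}$, whose Auslander--Reiten word is $\underline{w}_A$ reversed and represents $w_A^{-1}$, then use that inversion preserves Bruhat order and length. But the classification you ``quote'' is Theorem~\ref{thm:directed-main}(1) of this paper, and it carries all the weight. Proving it means showing $\underline{w}_A$ is reduced, establishing the mixed-coordinate criterion for quotient closure, and identifying that criterion with Armstrong-sorted subwords, via $\tau$-categories and root calculations.
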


The first part of Theorem~\ref{thm:main} follows from a more uniform
result.

\begin{theoremi}[{= Lemma~\ref{lem:boundary-coefficients} and
Propositions~\ref{prop:bottom-two}, \ref{prop:bottom-three},
\ref{prop:bottom-four}, \ref{prop:top-four}}]\label{thm:size-four-intro}
Let \(A\) be a representation-finite finite-dimensional algebra over an
algebraically closed field, and put \(N=|\ind A|\).  Then
\[
 |\LQ^i(A)|=|\LS^i(A)|
 \quad\text{and}\quad
 |\LQ^{N-i}(A)|=|\LS^{N-i}(A)|
\]
for every \(0\leq i\leq\min\{4,N\}\).
\end{theoremi}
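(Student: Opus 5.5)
The plan is to reduce both sides to finite combinatorics of small sets of indecomposables, and then compare the counts by matching module-theoretic configurations that are self-dual under swapping ``quotient'' and ``submodule''. A quotient-closed subcategory \(\CC\) is determined by \(\ind\CC\). A subset \(\mathcal{S}\subseteq\ind A\) arises this way exactly when every indecomposable summand of every factor module of every \(M\in\mathcal{S}\) lies in \(\mathcal{S}\); the analogous statement holds for submodules. Throughout, \(|\LQ^0(A)|=|\LS^0(A)|=1\), and \(|\LQ^N(A)|=|\LS^N(A)|=1\), which is the content of Lemma~\ref{lem:boundary-coefficients}.

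\textbf{Bottom coefficients.} I would first classify the small members directly.
\begin{enumerate}
\item If \(M\in\mathcal{S}\), then \(\top M\subseteq\mathcal{S}\). In particular, a size-one quotient-closed set is a single simple module, so \(|\LQ^1|=|\LS^1|\) equals the number of simples.
\item For size two there are two possibilities. One is a pair of simples. The other is \(\{M,S\}\) with \(M\) non-simple, where every nonzero submodule \(X\) of \(M\) gives \(M/X\in\add S\). The latter forces \(M\) to be local with simple radical, i.e.\ uniserial of length two. Uniserial modules of length two are characterized dually by socles, so the same sets \(\{M,\top M\}\) occur in the submodule-closed case with \(\soc M\) in place of \(\top M\).
\item The count in each case is \(\binom{s}{2}\) plus the number of length-two uniserials, where \(s\) is the number of simples. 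Both sides give the same number.
\end{enumerate}
For sizes three and four (Propositions~\ref{prop:bottom-three}, \ref{prop:bottom-four}), the same argument shows that every module in \(\mathcal{S}\) has length at most four. The sets should then be sorted by the multiset of lengths and by the ``shape'' of the non-simple members: uniserial of length three, or local modules of length three with two-dimensional radical (over an algebraically closed field these are determined by \(\Ext^1\)-data and arrow multiplicities). The aim is to show each configuration count is a symmetric function of local quiver data, so that it is invariant under \(A\mapsto A^{\op}\) when the relevant local data are exchanged. A typical example of the needed matching: the number of local length-three modules with prescribed top and two-dimensional radical must be compared with the number of colocal ones with prescribed socle. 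When the two sides do not correspond module-by-module, I would count both as functions of the numbers of arrows and relations between the vertices involved.

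\textbf{Top coefficients.} By complementation, \(\LQ^{N-i}\) is in bijection with \(i\)-element subsets \(\XX\subseteq\ind A\) that are upward closed. Here upward closed means: if \(M\) is an indecomposable summand of a factor of some indecomposable \(E\), then \(M\in\XX\) implies \(E\in\XX\). So I would study the preorder ``\(M\) is a summand of a quotient of \(E\)'' near its maximal elements, and dually for submodules. A split epimorphism onto a projective shows that projective indecomposables are maximal for the quotient order. Dually, injectives are maximal for the submodule order. The key step is then to describe the top part of each order using AR theory. For a non-projective \(M\), the AR sequence \(0\to\tau M\to E\to M\to 0\) has a surjective middle term. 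I would argue that some indecomposable summand of \(E\), or a module built from it, maps onto a module having \(M\) as a summand, so that the maximal elements are exactly the projectives. Carried out in the representation-finite setting, this should express \(|\LQ^{N-1}|,\dots,|\LQ^{N-4}|\) in terms of the Hasse diagram among projectives and the indecomposables just below them (Proposition~\ref{prop:top-four}). After that, the task is to identify this diagram with the corresponding one for injectives, for instance via the Nakayama functor \(\nu\) and the AR translate.

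\textbf{Main obstacle.} I expect the main obstacle to be the top side for \(i=3,4\). There the relevant modules are not of bounded length, and the comparison between the ``quotient'' order near projectives and the ``submodule'' order near injectives is not given by any duality. I would first try to show that the first few layers of both orders are determined by \(\Hom\)-data between projectives (respectively injectives) and their radicals, and then match these via \(\nu\). If that fails, I would fall back on a finer case analysis of the local AR-quiver shape near the projective-injectives.
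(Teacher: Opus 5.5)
Your closure criterion is wrong, and this breaks both halves of the plan. A set $\mathcal S\subseteq\ind A$ is quotient-closed when it contains every indecomposable summand of every factor of a \emph{finite direct sum} of its members, i.e.\ $\ind\Gen(\add\mathcal S)=\mathcal S$. Factors of single members do not suffice.

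For $A=\kk(1\to2\leftarrow3)$ (Example~\ref{ex:inward-a3}), $M_{123}$ is a quotient of its projective cover $M_{12}\oplus M_{23}$ but of neither summand. So $\{M_{12},M_{23},S_1,S_3\}$ passes your test at size four without being quotient-closed. In your preorder the maximal elements are $S_2,M_{12},M_{23},M_{123}$, so your claim that they are exactly the projectives fails. Counting your up-sets gives $4$ and $6$ at cosizes one and two, instead of the correct $3$ and $\binom32+2=5$; note that $\ind A\setminus\{M_{123}\}$ is not quotient-closed because $M_{123}$ is not projective.

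This is structural, not cosmetic. Quotient-closed sets are closed under intersections but not under unions, so they are not the down-sets of any preorder, and no ``Hasse diagram near the projectives'' can encode the cosize counts. With $\Gen(\add\mathcal S)$ in place, your arguments for sizes and cosizes $0$ and $1$ and for size $2$ go through. For size $2$ you must also invoke representation-finiteness (Lemma~\ref{lem:no-multiple-gabriel-arrows}) to force $M$ to be uniserial: over the Kronecker algebra, the injective of dimension vector $(2,1)$ has length three but quotient closure of size two.

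Beyond that, the cases that carry the content of the theorem are only announced.
\begin{itemize}
\item Sizes $3$ and $4$. The bound ``length at most four'' does not follow from the size-two argument, and the Kronecker example shows it needs representation-finiteness in an essential way. Your list of shapes omits non-local members such as $M_{123}\in\{M_{123},S_1,S_3\}$. The matching you cite also fails already: over $\kk(1\to2\leftarrow3)$ there is one colocal length-three module with two-dimensional top and no local length-three module with two-dimensional radical. So equality can come only from cancellations across shapes, which you do not identify. The paper never counts shapes. It splits $\LL^j_*(A)$ by annihilators into faithful pieces over factor algebras (Lemma~\ref{lem:annihilator-strata}). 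Each piece contains the minimal faithful set $\mathcal K_*(B)$, and the two versions of this set have the same size $d(B)$ by Ringel's bijection (Proposition~\ref{prop:ringel-torsionless}). This leaves only factors with $d(B)\le3$, which are classified in Lemma~\ref{lem:small-faithful-core} and computed.
\item Cosize $2$. The Nakayama functor cannot supply the matching. Cosize two amounts to $\sum_P|\ind\rad P|=\sum_I|\ind(I/\soc I)|$. This is not a vertexwise identity under $\nu$: in the example, $P_1=M_{12}$ contributes $1$ while $I_1=S_1$ contributes $0$. It is also false for representation-infinite algebras (Example~\ref{ex:infinite-counterexample}). The paper proves it by a global arrow count using the mesh correspondence (Corollary~\ref{cor:cosize-two}).
\item Cosizes $3$ and $4$. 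These require three ingredients: the factor-ladder criterion in the $\tau$-category $(\mod A)/[\add K]$ (Theorem~\ref{thm:factor-ladder}); the inclusion--exclusion balance of projectively rooted and injectively corooted sets (Lemma~\ref{lem:rooted-balance}); and the classification and $\sigma$-orbit balance of killed four-vertex configurations (Lemma~\ref{lem:four-vertex-ladder}). Your proposal has no substitute for any of these.
\end{itemize}
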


If \(N\leq9\), every \(0\leq j\leq N\) satisfies \(j\leq4\) or
\(N-j\leq4\).  Thus Theorem~\ref{thm:size-four-intro} implies
Theorem~\ref{thm:main}(1).

For the classes in Theorem~\ref{thm:main}(2)--(4), we compute the
size generating polynomial in closed form.  For a Nakayama algebra, both \(\LQ(A)\)
and \(\LS(A)\) are products of chains whose multisets of lengths agree,
and the polynomial is a product of \(q\)-integers (Theorem~\ref{thm:nakayama}).  For an algebra with radical
square zero and \(r\) simple modules, the two polynomials \(\RQ(A;q)\) and
\(\RS(A;q)\) are both equal to the Poincar\'e polynomial of the Weyl group of
the separated quiver divided by \((1+q)^r\)
(Theorem~\ref{thm:radical-square-zero}).

For a representation-directed algebra, we classify quotient-closed
subcategories via a lower Bruhat interval of a Coxeter group, thereby showing
that the size generating polynomial is the Poincar\'e polynomial of the lower
Bruhat interval with degrees reversed.
First, let \(\Delta_A\) be the simple graph whose vertices are the
\(\tau\)-orbits in the Auslander--Reiten quiver of \(\mod A\). Two distinct
\(\tau\)-orbits are joined by an edge if an arrow in the
Auslander--Reiten quiver has one endpoint in each orbit.
Let \(W(\Delta_A)\) be its simply-laced Coxeter group.

Next, we construct the element that determines the Bruhat interval. List the
indecomposable modules as \(X_1,\ldots,X_N\) in a Hom-compatible order, so
that
\[
 i\ne j,\quad\Hom_A(X_i,X_j)\ne0\quad\Longrightarrow\quad i<j,
\]
and let \(i_d\) be the vertex corresponding to the \(\tau\)-orbit of
\(X_d\). Consider the element
\[
 w_A:=s_{i_1}\cdots s_{i_N}\in W(\Delta_A).
\]
We call this expression the \emph{Auslander--Reiten word} associated with the
chosen order. We prove that the expression is reduced; hence
\(\ell(w_A)=N\). We prove
\[
 \RQ(A;q)=\RS(A;q)=\sum_{u\leq w_A}q^{N-\ell(u)},
\]
where \(\leq\) is the Bruhat order (Theorem~\ref{thm:directed-main}(3)).
Unlike the Weyl groups arising from Dynkin quivers, \(W(\Delta_A)\) can be
infinite; nevertheless, the lower Bruhat interval \([1,w_A]\) is finite
(Example~\ref{ex:directed-affine-d4}).
Moreover, we explicitly classify quotient-closed subcategories of \(\mod A\)
as follows. For each \(u\leq w_A\), delete the indecomposable modules indexed
by the positions of the lexicographically first reduced subword of the
Auslander--Reiten word representing \(u\). This gives every quotient-closed
subcategory exactly once (Theorem~\ref{thm:directed-main}(1)).
This extends the classification of Oppermann--Reiten--Thomas for Dynkin
quivers to representation-directed algebras
(Remark~\ref{rem:directed-recovers-ort}).

We now turn to the structure theory of quotient-closed subcategories.  For \(B\subseteq\ind A\),
let \(\add B\) be the subcategory formed by direct summands of finite
direct sums of modules in \(B\), and let \(\Gen(\add B)\) be the
subcategory of factor modules of finite direct sums of modules in
\(B\).  Put
\[
 \qcl(B)=\ind\Gen(\add B),
\]
the set of indecomposable modules in the smallest quotient-closed
subcategory containing \(B\).  We prove that $\qcl$
satisfies the anti-exchange property: if \(\QQ=\qcl(\QQ)\) and
\(X,Y\in\ind A\setminus\QQ\) are nonisomorphic, then
\[
 X\in\qcl(\QQ\cup\{Y\})
 \quad\Longrightarrow\quad
 Y\notin\qcl(\QQ\cup\{X\})
\]
(Proposition~\ref{prop:anti-exchange}).  Consequently, \(\ind A\) with
\(\qcl\) is a finitary convex geometry.  Therefore,
\(\LQ(A)\) and \(\LS(A)\) are the lattices of closed
sets of two convex geometries on \(\ind A\)
(Corollary~\ref{cor:two-convex-geometries}). The convex-geometric
interpretation has several useful consequences. For example, when \(A\) is
representation-finite, both \(\LQ(A)\) and \(\LS(A)\) are graded by size
(Corollary~\ref{cor:finite-convex-consequences}).

The convex-geometric viewpoint also gives a short route to the classification
of functorially finite quotient-closed subcategories. A basic fact about
convex geometries is that every finitely generated closed set has a unique
minimal generating set (Proposition~\ref{prop:compact-extreme-bijection}).
For quotient closure, this set consists precisely of the indecomposable
splitting projectives in the subcategory
(Proposition~\ref{prop:as-minimal-cover}(3)): an indecomposable module
\(X\in\CC\) is \emph{splitting projective} if every surjection
\(C\twoheadrightarrow X\) with \(C\in\CC\) splits.

For a module \(M\), write \(\Gen M=\Gen(\add M)\). By the classical result
of Auslander--Smal{\o}, a quotient-closed subcategory \(\CC\) is
functorially finite exactly when \(\CC=\Gen M\) for some module \(M\)
(Proposition~\ref{prop:as-approximation}). The direct sum of the unique
minimal generators is therefore a basic \(\Gen\)-minimal module. Here a
basic module \(M\) is \emph{\(\Gen\)-minimal} if \(X\notin\Gen M'\) for every
decomposition \(M=X\oplus M'\) with \(X\) indecomposable
(Definition~\ref{def:gen-minimal}). We obtain the following classification.
\begin{theoremi}[{= Proposition~\ref{prop:compact-functorially-finite}(2)}]
\label{thm:gen-minimal-classification-intro}
Let \(A\) be a finite-dimensional algebra over a field.  The map
\[
 M\longmapsto\Gen M
\]
is a bijection from the isomorphism classes of basic \(\Gen\)-minimal
modules to the
functorially finite quotient-closed subcategories of \(\mod A\).  The
inverse map sends a subcategory \(\CC\) to the direct sum of the
indecomposable splitting projective modules in $\CC$.
\end{theoremi}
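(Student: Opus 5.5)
We will prove the statement directly from the definitions, using two earlier results. The first is Auslander--Smal{\o} (Proposition~\ref{prop:as-approximation}): a quotient-closed subcategory is functorially finite if and only if it has the form \(\Gen M\). The second is the fact (Proposition~\ref{prop:as-minimal-cover}(3)) that the indecomposable splitting projectives of a quotient-closed \(\CC\) form the unique minimal generating set. Since \(\ind A\) may be infinite, we will not rely on the general convex-geometry statement. Instead we argue directly. The one extra input is the local property of indecomposables: a surjection \(\bigoplus C_j\twoheadrightarrow X\) onto an indecomposable \(X\) that is the sum of non-split maps \(C_j\to X\) cannot be surjective. This holds because \(\End X\) is local, so each component map \(C_j\to X\) either lands in the radical of \(\Hom(-,X)\) or is a split epimorphism.

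\emph{Step 1: the map is well defined and surjective.} By Proposition~\ref{prop:as-approximation}, \(\Gen M\) is functorially finite, so the map lands in the right set. Conversely, let \(\CC\) be functorially finite and write \(\CC=\Gen M\) with \(M\) basic. Let \(P\) be the direct sum of the indecomposable splitting projectives in \(\CC\).

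\begin{itemize}
\item Each indecomposable splitting projective \(X\in\CC\) is a direct summand of \(M\). Indeed, take a surjection \(M^n\twoheadrightarrow X\). It splits, so \(X\) is a summand of \(M^n\), hence of \(M\) by Krull--Schmidt.
\item \(P\) is finite, so \(P\) is a genuine module and \(\Gen P\subseteq\CC\).
\item Each indecomposable summand \(Y\) of \(M\) that is not splitting projective lies in the quotient closure of the other summands of \(M\). To see this, choose a non-split surjection \(C\twoheadrightarrow Y\) with \(C\in\CC\). Precompose with a surjection \(M^m\twoheadrightarrow C\); the composite is still non-split. By the local property, the components of this composite \(M^m\to Y\) coming from copies of \(Y\) may be taken to be radical endomorphisms. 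Those radical components are absorbed by a Nakayama-type argument, leaving a surjection from \(M'^{m}\), where \(M=Y\oplus M'\).
\item Removing non-splitting-projective summands one at a time shows \(\Gen P=\Gen M=\CC\), because removing one summand does not change \(\Gen\). A removed summand stays non-splitting-projective in \(\CC\), since that property depends only on \(\CC\).
\end{itemize}

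\emph{Step 2: \(P\) is \(\Gen\)-minimal and the map is injective.} If a summand \(X\) of \(P\) lay in \(\Gen P'\) with \(P=X\oplus P'\), the corresponding surjection \(P'^n\twoheadrightarrow X\) would split. Then \(X\) would be a summand of \(P'^n\), which contradicts basicness of \(P\). Now let \(M\) be any basic \(\Gen\)-minimal module with \(\CC=\Gen M\). By Step 1, every splitting projective is a summand of \(M\). Conversely, suppose a summand \(Y\) of \(M\) is not splitting projective in \(\CC\). The argument of Step 1 puts \(Y\in\Gen M'\) with \(M=Y\oplus M'\), contradicting \(\Gen\)-minimality. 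Hence \(M\cong P\), which proves injectivity and identifies the inverse map.

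\emph{Main obstacle.} The delicate step is the absorption in Step 1. We must pass from a non-split surjection \(M^m\twoheadrightarrow Y\) to a surjection from \(M'^m\). Our plan is to write the map as \((f,g)\), where \(f\colon Y^a\to Y\) has all components in \(\rad\End Y\) and \(g\colon M'^m\to Y\). We then argue that \(\Image f\subseteq \operatorname{rad}\) of the image under the trace. A cleaner way is to iterate: substituting the surjection into itself expresses \(f\) through \(g\) plus products of radical maps. Because \(\rad\End Y\) is nilpotent (\(Y\) has finite length), after finitely many substitutions the \(Y\)-components vanish, and \(g\) alone becomes surjective from a sum of copies of \(M'\).
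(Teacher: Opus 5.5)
Your proof is correct, but it takes a different and more elementary route than the paper.

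\textbf{The paper's route.} The paper obtains the statement from its convex-geometric framework, in four steps.
\begin{enumerate}
\item A compact closed set of a finitary convex geometry has a unique minimal generating set, namely its set of extreme points (Proposition~\ref{prop:compact-extreme-bijection}). Its proof needs the anti-exchange property of Proposition~\ref{prop:anti-exchange}.
\item The compact members of \(\LQ(A)\) are exactly the subcategories \(\Gen M\), and hence exactly the functorially finite ones.
\item For a basic \(M\), \(\qcl\)-independence of its indecomposable summands is the same as \(\Gen\)-minimality.
\item Proposition~\ref{prop:split-projective-extreme}, via the absorption Lemma~\ref{lem:as-remove-target}, identifies extreme points with splitting projectives.
\end{enumerate}

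\textbf{Your route.} You bypass anti-exchange altogether. Splitting projectives are summands of every generator \(M\), because a surjection \(M^n\twoheadrightarrow X\) splits. A summand \(Y\) of \(M\) that is not splitting projective is redundant: a non-split surjection onto \(Y\) from an object of \(\CC=\Gen M\) pulls back to a non-split surjection from \(M^m\). Its \(Y\to Y\) components lie in \(\rad\End Y\), and nilpotency of \(\rad\End Y\) eliminates them.

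That last step is a correct proof of Lemma~\ref{lem:as-remove-target}. Strictly, the resulting surjection is assembled from the composites \(f_w\circ g\), not from \(g\) alone. The whole argument is essentially Auslander--Smal{\o}'s own proof of Proposition~\ref{prop:as-minimal-cover}. Since you cite that proposition anyway, it would shorten your proof considerably. Part~(3) gives surjectivity in one line. Part~(2) gives injectivity, because \(\ind M\) is a minimal cover whenever \(M\) is \(\Gen\)-minimal.

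\textbf{Comparison.} Your route is self-contained and shorter. The paper's route buys something else: it identifies functorially finite subcategories with compact elements and with finite \(\qcl\)-independent sets, as one instance of a general lattice-theoretic fact.

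\textbf{Two small corrections.}
\begin{enumerate}
\item The paper's convex geometries are finitary and allow \(\ind A\) to be infinite. So a possibly infinite \(\ind A\) is no reason to avoid them.
\item The ``local property'' as you state it in your opening paragraph is false. The projective cover of a nonprojective indecomposable module is a surjection whose only component is non-split. What you actually use, correctly, is a weaker fact: a map \(Y^a\oplus M'^m\to Y\) with an invertible component \(Y\to Y\) is a split epimorphism.
\end{enumerate}
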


\begin{figure}
\centering
\begin{tikzpicture}[
  >=stealth,
  roadmap/.style={draw, rounded corners=2pt, align=center,
    font=\scriptsize, inner sep=2pt, minimum height=6mm}
]
\node[roadmap, text width=16mm] (intro) at (0,0)
  {Introduction};
\node[roadmap, text width=22mm] (convex) at (2.65,0.75)
  {Section~\ref{sec:convex}\\Convex geometry};
\node[roadmap, text width=24mm] (categorical) at (2.65,-0.75)
  {Section~\ref{sec:categorical}\\Factor categories and AR criteria};
\node[roadmap, text width=26mm] (classes) at (5.6,0.75)
  {Sections~\ref{sec:conjecture}--\ref{sec:classes}\\Dynkin and class formulas};
\node[roadmap, text width=25mm] (directed) at (5.6,-0.75)
  {Section~\ref{sec:representation-directed}\\Bruhat\\classification};
\node[roadmap, text width=24mm] (coefficients) at (8.5,0.75)
  {Section~\ref{sec:coefficients}\\Small sizes and cosizes};
\node[roadmap, text width=26mm] (appa) at (8.5,-0.75)
  {Appendix~\ref{sec:directed-technical}\\\(\tau\)-category and\\Coxeter lemmas};
\node[roadmap, text width=23mm] (smallapps) at (11.25,0.75)
  {Appendices~\ref{app:small-core}--\ref{sec:four-ladders}\\Calculations};

\draw[->] (intro.north east) -- (convex.west);
\draw[->] (intro.south east) -- (categorical.west);
\draw[->] (convex) -- (classes);
\draw[->] (classes) -- (coefficients);
\draw[->] (categorical) -- (directed);
\draw[->] (directed) -- (appa);
\draw[->] (categorical.north east) -- (coefficients.south west);
\draw[->] (coefficients) -- (smallapps);
\end{tikzpicture}
\caption{Suggested reading paths.}
\end{figure}

\begin{org}
Section~\ref{sec:convex} develops the finitary convex geometry, applies
it to quotient closure, and proves Theorem~\ref{thm:gen-minimal-classification-intro}.
Section~\ref{sec:conjecture} establishes basic
properties of the conjecture, proves the Dynkin case, and counts the
quotient-closed subcategories omitting one or two indecomposable modules.
Section~\ref{sec:classes} computes the size generating polynomials for
Nakayama algebras and algebras with radical square zero, thereby proving the
equidistribution conjecture for both classes.
Section~\ref{sec:categorical} uses Iyama's theory of \(\tau\)-categories to
study the ideal quotient \((\mod A)/[\CC]\) and gives a criterion for a
subcategory to be quotient-closed using only the combinatorics of the
Auslander--Reiten quiver.
Section~\ref{sec:coefficients} proves the equality for \(i\leq4\) and
\(i\geq |\ind A|-4\); the supporting calculations are given in
Appendices~\ref{app:small-core} and~\ref{sec:four-ladders}.
Section~\ref{sec:representation-directed} classifies quotient-closed
subcategories for representation-directed algebras by lower Bruhat intervals,
thereby proving the conjecture for this class. Technical lemmas are proved in
Appendix~\ref{sec:directed-technical}.
\end{org}

\phantomsection\label{note:formalization-and-verification}
\begin{formalization}
A Lean~4 formalization of results in this paper and the verification scripts
for the computer-assisted arguments are available at
\url{https://github.com/haruhisa-enomoto/quotient-submodule-equidistribution}.
\end{formalization}

\begin{acknowledgments}
The author thanks Ryu Tomonaga for organizing the workshop ``Young Researchers
Seminar in Ring Theory and Representation Theory'' in March 2026 at Nagoya
University and for his lecture ``Advances in the Representation Theory of
Algebras,'' which strongly renewed the author's motivation to pursue
mathematics and led to the observation that quotient closure defines a convex
geometry.  The author also thanks OpenAI and Anthropic for developing the AI
models used in this project.  See
\hyperref[sec:human-ai-collaboration]{the final note on human--AI
collaboration} for an account of how the author and the AI systems collaborated
to produce this paper.
Finally, I thank Mikazuki Hanai, a
character in the Japanese visual novel
\href{https://www.entergram.co.jp/musicus/index.html}{\emph{MUSICUS!}}, and its writer,
Renya Setoguchi. Mikazuki confronted me with the existential question of why
human beings exist at all, compelling me to reflect upon it; she also led me
to consider what it is that I, and I alone, can do in this accursed world.
\end{acknowledgments}

\begin{conv}
Unless otherwise stated, \(\kk\) is an algebraically closed field and
\(A\) is a finite-dimensional \(\kk\)-algebra.  Modules are finitely
generated right modules.  For a positive integer \(n\), put
\([n]:=\{1,\ldots,n\}\).  Let \(\DDual=\Hom_\kk(-,\kk)\) denote the usual
duality between \(\mod A\) and \(\mod A^{\op}\).  All subcategories are
full, additive, and closed under isomorphisms and direct summands.  We
identify such a subcategory with the set of isomorphism classes of
indecomposable modules which it contains.  Paths are composed from left to
right: \(\alpha\beta\) means first \(\alpha\), then \(\beta\).  By the
Auslander--Reiten quiver we mean the usual translation quiver of \(\mod A\):
its vertices are the isomorphism classes of indecomposable modules, the
number of arrows \(X\to Y\) is the dimension of the space of irreducible
maps from \(X\) to \(Y\), and it is equipped with the Auslander--Reiten
translation \(\tau\).  We denote the Auslander--Reiten quiver by
\(\Gamma(\mod A)\).
\end{conv}

\section{Quotient closure as a finitary convex geometry}\label{sec:convex}

In this section we introduce finitary convex geometries, show that
quotient closure defines one on the set of indecomposable modules, and
interpret its extreme points as the splitting projectives of Auslander and
Smal{\o}. We then use this convex-geometric description to classify
functorially finite quotient-closed subcategories.

\subsection{Lattice structure and extreme points of finitary convex geometries}

\begin{definition}\label{def:finitary-convex-geometry}
A \emph{closure operator} on a set \(E\) is a map
\(c\colon2^E\to2^E\) such that
\[
 X\subseteq c(X),\qquad
 X\subseteq Y\Longrightarrow c(X)\subseteq c(Y),\qquad
 c(c(X))=c(X).
\]
A subset \(C\subseteq E\) is \emph{closed} if \(c(C)=C\).  The closure
operator is \emph{finitary} if
\[
 c(X)=\bigcup_{\substack{Y\subseteq X\\Y\text{ finite}}}c(Y)
\]
for every \(X\subseteq E\).  It satisfies \emph{anti-exchange} if, for every
closed set \(C\) and distinct \(x,y\notin C\),
\[
 x\in c(C\cup\{y\})\quad\Longrightarrow\quad
 y\notin c(C\cup\{x\}).
\]
A \emph{finitary convex geometry} is a set \(E\) equipped with a finitary
closure operator \(c\) such that \(c(\varnothing)=\varnothing\) and
anti-exchange holds.
\end{definition}

When \(E\) is finite, Definition~\ref{def:finitary-convex-geometry} is the
usual definition of a convex geometry \cite{EdelmanJamison}.

\begin{definition}\label{def:independent-extreme}
A subset \(B\subseteq E\) is \emph{\(c\)-independent} if
\[
 b\notin c(B\setminus\{b\})
 \qquad\text{for every }b\in B.
\]
For \(S\subseteq C\), we say that \(S\) \emph{generates} the closed set
\(C\) if \(c(S)=C\).
For a closed set \(C\), put
\[
 \ex(C)=\{\,x\in C\mid x\notin c(C\setminus\{x\})\,\},
\]
the set of \emph{extreme points} of \(C\).
\end{definition}

For a compact closed set, \(\ex(C)\) will turn out to be the unique minimal
generating set (Proposition~\ref{prop:compact-extreme-bijection}).

The closed sets of a finitary convex geometry form a lattice which is
algebraic and spatial; we recall these terms.

\begin{definition}\label{def:algebraic-spatial-lattice}
An element \(x\) of a complete lattice is \emph{compact} if
\[
 x\leq\bigvee_{\lambda\in\Lambda}x_\lambda
 \quad\Longrightarrow\quad
 x\leq\bigvee_{\lambda\in\Lambda'}x_\lambda
\]
for some finite \(\Lambda'\subseteq\Lambda\).  A complete lattice is
\emph{algebraic} if every element is a join of compact elements.  It is
\emph{spatial} if every element is a join of completely join-irreducible
elements, where \(j\) is \emph{completely join-irreducible} if
\[
 j=\bigvee_{\lambda\in\Lambda}x_\lambda
 \quad\Longrightarrow\quad
 j=x_\lambda\text{ for some }\lambda.
\]
\end{definition}

We first recall the lattice structure of a finitary convex geometry.

\begin{theorem}[{\cite[Section~2 and Corollary~3.2]{AdarichevaPouzet},
\cite[Lemma~5-3.1 and Corollaries~5-3.2 and~5-3.5]
{AdarichevaNationConvexGeometries}}]
\label{thm:infinite-convex-structure}
Let \((E,c)\) be a finitary convex geometry.
\begin{enumerate}
\item The closed subsets form a complete lattice, with
\[
 \bigwedge_\lambda C_\lambda=\bigcap_\lambda C_\lambda,
 \qquad
 \bigvee_\lambda C_\lambda=c\left(\bigcup_\lambda C_\lambda\right).
\]
This lattice is algebraic.  Its compact elements are precisely the sets
\(c(F)\) with \(F\subseteq E\) finite.
\item The lattice is spatial.  Its completely join-irreducible elements are
precisely the point closures
\[
 J_x=c(\{x\})\qquad(x\in E).
\]
The point closures are compact and pairwise distinct, and for each
\(x\) the set \(J_x\setminus\{x\}\) is the greatest proper closed subset
of \(J_x\).
\end{enumerate}
\end{theorem}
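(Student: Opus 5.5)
The plan is to argue directly from the three axioms (closure, finitary, anti-exchange) together with \(c(\varnothing)=\varnothing\), without relying on the cited references.

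\emph{Part (1).} Intersections of closed sets are closed. Indeed, if \(C=\bigcap_\lambda C_\lambda\), then monotonicity gives \(c(C)\subseteq c(C_\lambda)=C_\lambda\) for every \(\lambda\), so \(c(C)\subseteq C\). Hence the closed sets form a complete lattice with meet equal to intersection. The join is the smallest closed set containing \(\bigcup_\lambda C_\lambda\), which is \(c(\bigcup_\lambda C_\lambda)\).

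For compactness, let \(F\) be finite and suppose \(c(F)\subseteq c(\bigcup_\lambda C_\lambda)\). By finitarity, each \(x\in F\) lies in \(c(Y_x)\) for some finite \(Y_x\subseteq\bigcup_\lambda C_\lambda\). Only finitely many \(C_\lambda\) are needed to cover \(\bigcup_{x\in F}Y_x\), and their join contains \(c(F)\). So \(c(F)\) is compact.

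Conversely, every closed set \(C\) equals \(\bigcup_F c(F)\) over finite \(F\subseteq C\), again by finitarity. This exhibits \(C\) as a join of compact elements, so the lattice is algebraic. It also shows that a compact \(C\) lies below a finite subjoin \(c(F_1)\vee\cdots\vee c(F_n)=c(F_1\cup\cdots\cup F_n)\subseteq C\). Hence every compact \(C\) has the form \(c(F)\) with \(F\) finite.

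\emph{Part (2).} First I show that \(J_x\setminus\{x\}\) is closed; this is the key use of anti-exchange. Put \(D=c(J_x\setminus\{x\})\subseteq J_x\), and suppose \(x\in D\). By finitarity, \(x\in c(F)\) for some finite \(F\subseteq J_x\setminus\{x\}\). Choose such an \(F\) of minimal size. It is nonempty because \(c(\varnothing)=\varnothing\). Pick \(y\in F\), and let \(C=c(F\setminus\{y\})\).

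By minimality \(x\notin C\). Also \(y\notin C\), since otherwise \(c(F)=C\ni x\). Now \(x\in c(C\cup\{y\})=c(F)\). On the other hand, \(y\in J_x=c(\{x\})\subseteq c(C\cup\{x\})\). These two facts contradict anti-exchange, because \(x\ne y\) and both lie outside \(C\).

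Therefore \(x\notin D\), and since \(D\subseteq J_x\), we get \(D=J_x\setminus\{x\}\); that is, \(J_x\setminus\{x\}\) is closed. Any proper closed subset \(K\subsetneq J_x\) cannot contain \(x\), since otherwise \(K\supseteq c(\{x\})=J_x\). So \(K\subseteq J_x\setminus\{x\}\), and \(J_x\setminus\{x\}\) is the greatest proper closed subset of \(J_x\).

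\emph{Consequences for (2).} Complete join-irreducibility of \(J_x\) now follows. If \(J_x=\bigvee_\lambda X_\lambda\) with every \(X_\lambda\ne J_x\), then every \(X_\lambda\subseteq J_x\setminus\{x\}\). Their join is then also contained in \(J_x\setminus\{x\}\), a contradiction.

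The point closures are pairwise distinct. If \(J_x=J_y\) with \(x\ne y\), then \(y\in J_x\setminus\{x\}\), which is closed. This forces \(J_y\subseteq J_x\setminus\{x\}\), a contradiction. The point closures are compact by (1).

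Spatiality holds because every closed set \(C\) equals \(\bigvee_{x\in C}J_x\).

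Finally, let \(j\) be completely join-irreducible. Since \(j=\bigvee_{x\in j}J_x\), complete join-irreducibility gives \(j=J_x\) for some \(x\). (The empty set is excluded: it is the empty join, so it is not completely join-irreducible.)

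The main obstacle is showing that \(J_x\setminus\{x\}\) is closed. The minimal-finite-set argument above handles it.
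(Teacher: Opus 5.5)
Your proof is correct. It is more self-contained than the paper's, which proves only the easy parts directly and cites the literature for the substantive ones.
\begin{itemize}
\item Algebraicity and the description of the compact elements are taken from Adaricheva--Pouzet.
\item The closedness of \(J_x\setminus\{x\}\), the complete join-irreducibility of \(J_x\), and spatiality are taken from Adaricheva--Nation.
\end{itemize}
The paper argues the rest itself: compactness of \(J_x\) via part (1), identification of the completely join-irreducible elements via \(C=\bigvee_{x\in C}J_x\), and the greatest-proper-closed-subset statement. There your reasoning matches the paper's. The one small difference is pairwise distinctness: the paper applies anti-exchange directly to the closed set \(\varnothing\), whereas you derive it from the closedness of \(J_x\setminus\{x\}\).

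What you add is a derivation of the cited facts from the axioms. The finitarity argument gives compactness of \(c(F)\), algebraicity, and the characterization of compact elements. The minimal-finite-set argument shows \(x\notin c(J_x\setminus\{x\})\); this is the one place where finitarity and anti-exchange are combined, and \(c(\varnothing)=\varnothing\) is needed there only to make the minimal \(F\) nonempty. Complete join-irreducibility then follows at once. The paper's version is shorter; yours makes the theorem independent of the references and shows exactly which axiom each conclusion rests on.
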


\begin{proof}
(1)
The formulas for meets and joins follow from the definition of a closure
operator.  It is shown in \cite[Section~2]{AdarichevaPouzet} that this lattice
is algebraic and that its compact elements are the sets \(c(F)\) with \(F\)
finite.

(2)
It is shown in
\cite[Lemma~5-3.1 and Corollaries~5-3.2 and~5-3.5]{AdarichevaNationConvexGeometries}
that \(J_x\setminus\{x\}\) is closed,
that \(J_x\) is completely join-irreducible, and that the lattice is spatial.
Part~(1) makes every \(J_x\) compact, and anti-exchange with the closed set
\(\varnothing\) makes the point closures pairwise distinct.  Since every
closed set \(C\) is the join of the \(J_x\) with \(x\in C\), every completely
join-irreducible closed set is one of the \(J_x\).  Finally, any proper closed
subset of \(J_x\) omits \(x\), so it is contained in the closed set
\(J_x\setminus\{x\}\).
\end{proof}

The compact elements have the following intrinsic minimal generators.

\begin{proposition}\label{prop:compact-extreme-bijection}
Let \((E,c)\) be a finitary convex geometry.  The maps
\[
 \left\{\begin{array}{c}
 \text{finite \(c\)-independent}\\[-2pt]
 \text{subsets of \(E\)}
 \end{array}\right\}
 \ \underset{C\mapsto\ex(C)}{\overset{B\mapsto c(B)}{\rightleftarrows}}
 \left\{\begin{array}{c}
 \text{compact closed}\\[-2pt]
 \text{subsets of \(E\)}
 \end{array}\right\}
\]
are inverse bijections.
\end{proposition}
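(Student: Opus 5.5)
The plan is to prove three things: the two maps land in the claimed sets, \(\ex(c(B))=B\) for a finite independent \(B\), and \(c(\ex(C))=C\) for a compact closed \(C\). The central lemma is the following.

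\emph{Key lemma.} If \(C\) is closed and \(S\subseteq C\) satisfies \(c(S)=C\), then \(\ex(C)\subseteq S\).

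To prove it, take \(x\in\ex(C)\) with \(x\notin S\). Then \(S\subseteq C\setminus\{x\}\), so monotonicity gives \(C=c(S)\subseteq c(C\setminus\{x\})\). Hence \(x\in c(C\setminus\{x\})\), contradicting extremality. This step uses only the closure axioms.

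\textbf{Independent sets to compact sets.} Let \(B\) be finite and \(c\)-independent, and put \(C=c(B)\). By Theorem~\ref{thm:infinite-convex-structure}(1), \(C\) is compact. The key lemma gives \(\ex(C)\subseteq B\). For the reverse inclusion, take \(b\in B\) and suppose \(b\in c(C\setminus\{b\})\); we must reach a contradiction with independence. This is the main point, and it is where anti-exchange enters.

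Put \(D=c(B\setminus\{b\})\), which is closed and, by independence, does not contain \(b\). Suppose some \(y\in C\setminus\{b\}\) satisfies \(y\notin D\). Since \(y\in C=c(D\cup\{b\})\), anti-exchange applied to \(D\) and the distinct points \(y,b\notin D\) gives \(b\notin c(D\cup\{y\})\). Thus every \(y\in C\setminus\{b\}\) satisfies \(b\notin c(D\cup\{y\})\); for \(y\in D\) this is just \(b\notin D\).

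This handles one point at a time, but we need \(b\notin c(C\setminus\{b\})\) for the whole set. I would use finitarity. If \(b\in c(C\setminus\{b\})\), then \(b\in c(F)\) for some finite \(F\subseteq C\setminus\{b\}\). The plan is to enlarge \(D\) by the points of \(F\) one at a time, keeping \(b\) outside the closure.

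The obstacle is that after adjoining one point \(y\), the new closed set \(D'=c(D\cup\{y\})\) still satisfies \(D'\cup\{b\}\) generating \(C\). So the same anti-exchange argument applies with \(D'\) in place of \(D\). By induction on \(|F|\) we obtain \(b\notin c(D\cup F)\supseteq c(F)\), a contradiction.

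Hence \(b\in\ex(C)\), so \(\ex(c(B))=B\).

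\textbf{Compact sets to independent sets.} Let \(C\) be compact closed. By Theorem~\ref{thm:infinite-convex-structure}(1), \(C=c(F)\) for some finite \(F\). Shrink \(F\) to an inclusion-minimal finite generating set \(B\subseteq F\), which is possible since \(F\) is finite. Minimality means \(c(B\setminus\{b\})\neq C\) for each \(b\in B\). If \(b\in c(B\setminus\{b\})\), then \(c(B\setminus\{b\})\supseteq B\) would force \(c(B\setminus\{b\})=C\), contradicting minimality. Therefore \(B\) is independent.

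By the previous step, \(\ex(C)=\ex(c(B))=B\). So \(\ex(C)\) is finite and independent, and \(c(\ex(C))=C\).

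Together with \(\ex(c(B))=B\) from the first step, this shows the two maps are inverse bijections.
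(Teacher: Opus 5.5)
Your proposal is correct and is essentially the paper's argument: the same inclusion \(\ex(C)\subseteq S\) for every generating set \(S\), and the same finitary chain \(K_i=c(K_{i-1}\cup\{y_i\})\) starting from \(c(B\setminus\{b\})\), kept away from \(b\) by anti-exchange. The only difference is order: the paper runs the chain for a minimal generating set of a compact \(C\) and then deduces \(\ex(c(B))=B\), whereas you run it for an independent \(B\) and then deduce the compact case. The two are interchangeable, since finite minimal generating sets and finite independent sets coincide, as you yourself show.
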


\begin{proof}
Let \(C\) be compact and closed.  By
Theorem~\ref{thm:infinite-convex-structure}(1), there is a finite set
\(S\subseteq C\) with \(c(S)=C\); choose a subset \(B\subseteq S\) minimal
with this property.  Every generating set \(S'\) of \(C\) contains
\(\ex(C)\): for \(x\in\ex(C)\setminus S'\) we would get
\(C=c(S')\subseteq c(C\setminus\{x\})\), which does not contain \(x\).
In particular \(\ex(C)\subseteq B\).  Suppose that \(b\in B\setminus\ex(C)\).  Since \(c\)
is finitary, there are \(y_1,\ldots,y_m\in C\setminus\{b\}\) such that
\[
 b\in c(\{y_1,\ldots,y_m\}).
\]
Set \(K_0=c(B\setminus\{b\})\) and
\(K_i=c(K_{i-1}\cup\{y_i\})\).  Minimality of \(B\) gives
\(b\notin K_0\).  Each \(K_{i-1}\) is a subset of \(C\) containing
\(B\setminus\{b\}\), so
\[
 c(K_{i-1}\cup\{b\})=c(B)=C.
\]
Inductively, suppose that \(b\notin K_{i-1}\).  If \(y_i\notin K_{i-1}\),
then \(y_i\in C=c(K_{i-1}\cup\{b\})\), so anti-exchange gives
\(b\notin K_i\).  If \(y_i\in K_{i-1}\), then
\(K_i=K_{i-1}\), so again \(b\notin K_i\).  This contradicts
\(b\in c(\{y_1,\ldots,y_m\})\subseteq K_m\).  Consequently
\(B=\ex(C)\), and \(\ex(C)\) is the unique minimal generating set of \(C\).

A finite set \(B\) is \(c\)-independent precisely when it is a minimal
generating set of \(c(B)\).  Since \(c(B)\) is compact, the uniqueness of
its minimal generating set gives
\[
 \ex(c(B))=B.
\]
For every compact closed set \(C\), the set \(\ex(C)\) generates \(C\), so
\[
 c(\ex(C))=C.
\]
This proves the bijection.
\end{proof}

For later use, we record the finite deletion, grading, and generation
properties.

\begin{corollary}[{\cite[Theorem~2.1]{EdelmanJamison}}]
\label{cor:finite-abstract-convex}
Let \((E,c)\) be a finite convex geometry and let \(C\) be closed.
\begin{enumerate}
\item If \(C\neq\varnothing\), then \(\ex(C)\neq\varnothing\), and
\(C\setminus\{x\}\) is closed for every \(x\in\ex(C)\).
\item If \(C\subsetneq D\) is a covering relation between closed sets, then
\(D=C\cup\{x\}\) for a unique \(x\in E\).  Thus the lattice of closed sets
is graded by cardinality.
\item One has \(C=c(\ex(C))\).
\end{enumerate}
\end{corollary}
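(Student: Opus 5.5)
The plan is to derive all three parts from Theorem~\ref{thm:infinite-convex-structure} and Proposition~\ref{prop:compact-extreme-bijection}. When \(E\) is finite, every closure operator is finitary, and every closed set \(C=c(C)\) is \(c(F)\) for the finite set \(F=C\). By Theorem~\ref{thm:infinite-convex-structure}(1), every closed set is therefore compact. Proposition~\ref{prop:compact-extreme-bijection} then gives \(c(\ex(C))=C\) for every closed \(C\), which is part (3).

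For part (1), let \(C\neq\varnothing\) be closed. By (3) we have \(c(\ex(C))=C\neq\varnothing\), and since \(c(\varnothing)=\varnothing\), this forces \(\ex(C)\neq\varnothing\). Now fix \(x\in\ex(C)\) and write \(K=c(C\setminus\{x\})\). Then \(C\setminus\{x\}\subseteq K\subseteq C\), and \(x\notin K\) by the definition of an extreme point. Hence \(K=C\setminus\{x\}\), so \(C\setminus\{x\}\) is closed.

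For part (2), let \(C\subsetneq D\) be a covering relation between closed sets. I would argue by downward induction on \(D\), using (1) repeatedly. This amounts to showing that every proper closed subset of \(D\) lies in \(D\setminus\{x\}\) for some extreme point \(x\) of \(D\) with \(D\setminus\{x\}\) closed. The direct route is the following:
\begin{itemize}
\item Pick \(y\in D\setminus C\) and consider \(C'=c(C\cup\{y\})\). This is closed and satisfies \(C\subsetneq C'\subseteq D\), so the covering relation forces \(C'=D\).
\item Among all \(y\in D\setminus C\), choose one such that \(D\setminus\{y\}\supseteq C\) is closed. To find it, apply (1) to \(D\) relative to \(C\). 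Concretely, suppose \(z\in D\setminus C\) with \(z\neq y\). Then \(z\in c(C\cup\{y\})\), so anti-exchange with the closed set \(C\) gives \(y\notin c(C\cup\{z\})\). On the other hand, \(c(C\cup\{z\})\) is a closed set strictly between \(C\) and \(D\), so by the covering relation it equals \(D\), and thus contains \(y\). This is a contradiction.
\end{itemize}
Hence \(D\setminus C=\{y\}\) is a single point, so \(D=C\cup\{x\}\) with \(x=y\), and this \(x\) is clearly unique. It follows that maximal chains of closed sets increase by exactly one element at each step, so the rank function is the cardinality.

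The only genuinely nontrivial step is the anti-exchange argument in (2). Parts (1) and (3) reduce to the compactness observation together with Proposition~\ref{prop:compact-extreme-bijection}.
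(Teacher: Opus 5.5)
Your proof is correct. The paper gives no argument of its own for this statement: it cites Edelman--Jamison's Theorem~2.1 and places the result as a corollary right after Proposition~\ref{prop:compact-extreme-bijection}. Your write-up therefore supplies the derivation that this placement suggests.

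Parts (3) and (1) go exactly as you say. On a finite ground set every closed set \(C=c(C)\) is compact, so Proposition~\ref{prop:compact-extreme-bijection} gives \(c(\ex(C))=C\). The deletion claim then needs only the sandwich \(C\setminus\{x\}\subseteq c(C\setminus\{x\})\subseteq C\) together with \(x\notin c(C\setminus\{x\})\).

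For (2), your anti-exchange argument is the standard one and is self-contained. For any \(y\in D\setminus C\), the covering relation forces \(c(C\cup\{y\})=D\). A second point \(z\in D\setminus C\) would then satisfy both \(z\in c(C\cup\{y\})\) and \(y\in c(C\cup\{z\})\), which contradicts anti-exchange at the closed set \(C\). This uses neither finiteness nor parts (1) and (3), so it holds for covering pairs under any closure operator with anti-exchange.

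You should delete the framing around this argument. The ``downward induction on \(D\)'' is never used. The instruction to choose \(y\) with \(D\setminus\{y\}\) closed is asserted without justification at that point, and it is also never used. Your ``Concretely'' argument already works for an arbitrary \(y\in D\setminus C\).
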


Corollary~\ref{cor:finite-abstract-convex}(1) extends to finite intervals in
a finitary convex geometry, even when the ground set is infinite.

\begin{corollary}\label{cor:finite-interval-chain}
Let \((E,c)\) be a finitary convex geometry, and let \(C\subseteq D\) be
closed sets such that \(D\setminus C\) is finite.  There is an ordering
\(x_1,\ldots,x_n\) of \(D\setminus C\) such that
\[
 D_i=D\setminus\{x_1,\ldots,x_i\}
\]
is closed for every \(0\leq i\leq n\).  Equivalently, one can pass from
\(D\) to \(C\) by successively deleting extreme points.
\end{corollary}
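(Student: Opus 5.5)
The plan is to induct on \(n=|D\setminus C|\), reducing everything to the existence of one extreme point of \(D\) lying outside \(C\). If \(n=0\) there is nothing to prove. Otherwise it suffices to find some \(x\in D\setminus C\) such that \(D\setminus\{x\}\) is closed. Then \(C\subseteq D\setminus\{x\}\) is a pair of closed sets whose difference has size \(n-1\), and induction supplies the rest of the ordering. Closedness of \(D\setminus\{x\}\) is equivalent to \(x\notin c(D\setminus\{x\})\), since \(c(D\setminus\{x\})\) is a closed set containing \(D\setminus\{x\}\) and contained in \(D\). So the task is to show \(\ex(D)\cap(D\setminus C)\neq\varnothing\).

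To find such an \(x\), I would look at the closed sets \(K\) with \(C\subseteq K\subseteq D\) and \(K\neq D\), and pick one that is maximal. This is possible because \(D\setminus C\) is finite, so there are only finitely many intermediate sets. Now take any \(x\in D\setminus K\). Then \(c(K\cup\{x\})\) is closed, lies between \(K\) and \(D\), and strictly contains \(K\), so by maximality it equals \(D\).

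Next I would show \(D\setminus K=\{x\}\). Suppose \(y\in D\setminus K\) with \(y\neq x\). Applying the same maximality argument to \(y\) gives \(c(K\cup\{y\})=D\). Then \(x\in c(K\cup\{y\})\) and \(y\in c(K\cup\{x\})\), with \(K\) closed and \(x,y\notin K\). This contradicts anti-exchange, so \(D\setminus K=\{x\}\). Hence \(K=D\setminus\{x\}\) is closed with \(x\in D\setminus C\), which is what the induction needs.

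Iterating produces \(x_1,\ldots,x_n\) such that each \(D_i\) is closed. By the remark at the start, this is the same as saying that each \(x_{i+1}\) is an extreme point of \(D_i\).

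The only delicate point is the existence of a maximal proper intermediate closed set. This is exactly where finiteness of \(D\setminus C\) is used: the intermediate closed sets correspond to subsets of a finite set, so no appeal to Zorn's lemma or to finitarity of \(c\) is needed. Beyond that, the argument is just anti-exchange applied to the closed set \(K\), mirroring the proof of Corollary~\ref{cor:finite-abstract-convex}.
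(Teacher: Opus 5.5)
Your argument is correct, but it takes a different route from the paper's.

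\textbf{The paper's route.} The paper does not work inside \(E\) directly. It puts \(F=D\setminus C\) and defines the contracted operator \(\overline c(S)=c(C\cup S)\setminus C\) on subsets of \(F\). It checks that \((F,\overline c)\) is a finite convex geometry: the closure axioms and anti-exchange are inherited, and \(\overline c(\varnothing)=\varnothing\) because \(C\) is closed. It then invokes the Edelman--Jamison fact, Corollary~\ref{cor:finite-abstract-convex}(1), that a nonempty closed set of a finite convex geometry has an extreme point whose deletion keeps it closed. Applying this repeatedly, starting from \(F\), gives the chain.

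\textbf{Your route.} You prove the needed local fact by hand. A maximal proper closed set \(K\) with \(C\subseteq K\subsetneq D\) must equal \(D\setminus\{x\}\). Otherwise two distinct points of \(D\setminus K\) would each regenerate \(D\) over the closed set \(K\), violating anti-exchange. This is essentially the covering statement of Corollary~\ref{cor:finite-abstract-convex}(2), localized to the interval between \(C\) and \(D\).

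\textbf{What each buys.} Your proof is self-contained. It needs neither a verification that the contraction is a convex geometry nor a citation of the finite theory. It also shows directly that every coatom of the interval is a one-point deletion, so every maximal chain from \(D\) down to \(C\) removes one point at a time. The paper's route gives a reusable structural statement instead: the interval of closed sets between \(C\) and \(D\) is itself the lattice of closed sets of a finite convex geometry. Hence all the finite results of Corollary~\ref{cor:finite-abstract-convex} transfer to it at once. Both arguments use only the finiteness of \(D\setminus C\), not the finitarity of \(c\).
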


\begin{proof}
Put \(F=D\setminus C\).  For \(S\subseteq F\), define
\[
 \overline c(S)=c(C\cup S)\setminus C.
\]
Since \(D\) is closed, \(c(C\cup S)\subseteq D\), so \(\overline c\) maps
subsets of \(F\) to subsets of \(F\).  The axioms for a closure operator and
anti-exchange follow from those of \(c\), while
\(\overline c(\varnothing)=\varnothing\) because \(C\) is closed.  Thus
\((F,\overline c)\) is a finite convex geometry.  Starting with its closed
set \(F\), apply Corollary~\ref{cor:finite-abstract-convex}(1) repeatedly.
The resulting sequence of closed subsets of \(F\) corresponds, after
adjoining \(C\), to the required sequence of closed subsets of \(E\).
\end{proof}

\subsection{The convex geometry of quotient closure}

Let \(A\) be a finite-dimensional algebra over a field.

\begin{definition}\label{def:quotient-closure-operator}
For \(\XX\subseteq\ind A\), set
\[
 \qcl(\XX)=\ind\Gen(\add\XX).
\]
Here \(\Gen(\add\XX)\) consists of the factor modules of finite direct sums
of modules in \(\XX\).
\end{definition}

The direct sums are essential: an indecomposable can
be generated by several modules together without being generated by any one
of them.  Since every such factor module uses only finitely many modules
of \(\XX\), the map \(\qcl\) is a finitary closure operator on
\(\ind A\).  Under the identification of a subcategory with its set of
indecomposable modules, the closed sets of \(\qcl\) are precisely the
members of \(\LQ(A)\); moreover \(\qcl(\varnothing)=\varnothing\), since
the empty set generates only the zero module.

The following test reduces quotient closure to the omitted indecomposable
modules.

\begin{lemma}\label{lem:indecomposable-quotient-test}
Let \(K\subseteq\ind A\), and put \(D=\ind A\setminus K\).  For an
\(A\)-module \(X\), let \(\tr_{\add K}(X)\) be the trace of \(\add K\)
in \(X\).  The following are equivalent.
\begin{enumerate}
\item The subcategory \(\add K\) is quotient-closed.
\item One has \(X\notin\Gen(\add K)\) for every \(X\in D\).
\item One has \(X/\tr_{\add K}(X)\ne0\) for every \(X\in D\).
\end{enumerate}
\end{lemma}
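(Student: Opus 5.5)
We establish the chain of equivalences (1)\(\Leftrightarrow\)(2)\(\Leftrightarrow\)(3). The key observation is that \(\Gen(\add K)\) is always quotient-closed and contains \(\add K\). Hence \(\add K\) is quotient-closed exactly when \(\Gen(\add K)=\add K\), that is, when \(\qcl(K)=K\) in the language of Definition~\ref{def:quotient-closure-operator}.

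For (1)\(\Rightarrow\)(2), suppose \(\add K\) is quotient-closed and \(X\in D\). If \(X\) were a factor of a module in \(\add K\), then \(X\in\add K\). Since \(X\) is indecomposable, it would then be isomorphic to a member of \(K\). This contradicts \(X\in D=\ind A\setminus K\), where we use that \(\ind A\) is a set of representatives.

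For (2)\(\Rightarrow\)(1), let \(M\in\add K\) and let \(M\twoheadrightarrow Y\) be a surjection. Write \(Y=\bigoplus Y_j\) with each \(Y_j\) indecomposable, which is possible by Krull--Schmidt. Each \(Y_j\) is a factor of \(Y\) via the projection, so \(Y_j\in\Gen(\add K)\). By (2), no \(Y_j\) is isomorphic to a member of \(D\), so every \(Y_j\) lies in \(K\), and therefore \(Y\in\add K\). The main point in this step is that \(\Gen(\add K)\) is closed under direct summands. We get this directly by composing the surjection onto \(Y\) with the split projection onto \(Y_j\).

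For (2)\(\Leftrightarrow\)(3), we use the standard fact that \(X\in\Gen(\add K)\) if and only if \(\tr_{\add K}(X)=X\). Recall that the trace is the sum of the images of all morphisms from modules in \(\add K\) to \(X\). If \(f\colon M\twoheadrightarrow X\) with \(M\in\add K\), then \(X=\Image f\subseteq\tr_{\add K}(X)\). Conversely, suppose \(\tr_{\add K}(X)=X\). Since \(X\) is finite-dimensional, the trace is already the sum of the images of finitely many morphisms \(f_i\colon M_i\to X\) with \(M_i\in\add K\). The induced map \(\bigoplus M_i\to X\) is then surjective, and \(\bigoplus M_i\in\add K\). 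Thus for each \(X\in D\) we have \(X\notin\Gen(\add K)\) if and only if \(X/\tr_{\add K}(X)\neq0\).

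No step is a serious obstacle. The only points needing care are the finiteness reduction in the trace argument and the closure of \(\Gen(\add K)\) under summands, both of which are handled above.
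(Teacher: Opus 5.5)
Your proof is correct and follows essentially the same route as the paper. You reduce (1) to the equality \(\Gen(\add K)=\add K\), use projections onto indecomposable summands to show this fails only when some member of \(D\) lies in \(\Gen(\add K)\), and prove (2)\(\Leftrightarrow\)(3) by noting that the trace equals \(X\) exactly when finitely many maps from \(\add K\) jointly surject onto \(X\); you simply spell out the Krull--Schmidt and finite-dimensionality details that the paper leaves implicit.
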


\begin{proof}
(1) $\Leftrightarrow$ (2):
The subcategory \(\add K\) is quotient-closed if and only if
\(\Gen(\add K)=\add K\).  If a module in \(\Gen(\add K)\) has an
indecomposable direct summand \(X\), then the projection onto \(X\) shows
that \(X\in\Gen(\add K)\).  Hence this equality holds if and only if no
member of \(D\) belongs to \(\Gen(\add K)\).

(2) $\Leftrightarrow$ (3):
The trace of \(\add K\) in \(X\) equals \(X\) if and only if finitely many
maps from objects of \(\add K\) give a surjection onto \(X\).
\end{proof}

\begin{definition}[{\cite[Section~2]{AuslanderSmaloPreprojective}}]
\label{def:cover-splitting-projective}
A subset
\(B\subseteq\ind\CC\) is a \emph{cover} of an additive subcategory \(\CC\)
if every object of \(\CC\) is a factor module of an object of \(\add B\).
An indecomposable
\(X\in\CC\) is a \emph{splitting projective} if every surjection
\(C\twoheadrightarrow X\) with \(C\in\CC\) splits; write
\(\mathcal P_0(\CC)\) for the set of these modules.
\end{definition}

\begin{proposition}[{\cite[Proposition~2.1, Theorem~2.3, and
Corollary~2.4]{AuslanderSmaloPreprojective}}]
\label{prop:as-minimal-cover}
Let \(\CC\) be an additive subcategory.
\begin{enumerate}
\item Every cover of \(\CC\) contains \(\mathcal P_0(\CC)\).
\item A cover \(B\) of \(\CC\) is minimal if and only if
\(B=\mathcal P_0(\CC)\).
\item If \(\CC\) has a finite cover, then \(\mathcal P_0(\CC)\) is finite
and is the unique minimal cover of \(\CC\).
\end{enumerate}
\end{proposition}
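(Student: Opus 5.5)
The plan is to prove Proposition~\ref{prop:as-minimal-cover} directly from the definitions, working with finite direct sums and the Krull--Schmidt property of \(\mod A\). (One could instead feed the convex geometry of \(\qcl\) into Proposition~\ref{prop:compact-extreme-bijection}, but this does not apply directly: \(\CC\) is only assumed additive, not quotient-closed.) Part~(1) is handled first, and parts~(2) and~(3) then follow from it together with a removal argument.

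For (1), let \(B\) be a cover and let \(X\in\mathcal P_0(\CC)\). By the cover property there is a surjection \(C\twoheadrightarrow X\) with \(C\in\add B\subseteq\CC\). Since \(X\) is splitting projective, this surjection splits, so \(X\) is a direct summand of \(C\). The Krull--Schmidt theorem then shows that \(X\) is isomorphic to some member of \(B\), and hence \(X\in B\).

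For (2), one direction is immediate from (1): if \(\mathcal P_0(\CC)\) is a cover, it is contained in every cover and is therefore minimal. Conversely, suppose \(B\) is a minimal cover and \(b\in B\) is not splitting projective. Then there is a nonsplit surjection \(p\colon C\twoheadrightarrow b\) with \(C\in\CC\). Write \(C\) as a quotient of some \(C'\in\add B\) and compose to get a nonsplit surjection \(C'\to b\). The key step is to show that \(b\) is already a quotient of \(C''\in\add(B\setminus\{b\})\). Decompose \(C'=b^m\oplus C''\). Each component \(b\to b\) is a non-isomorphism, because otherwise the surjection would split; since \(\End(b)\) is local, these components lie in the radical. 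The image of \(b^m\) is therefore contained in \(\rad(\End b)\cdot b\), and the sum of the images of the \(C''\)-part together with this radical part is all of \(b\).

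I expect this to be the main obstacle: concluding that the \(C''\)-part alone surjects onto \(b\). The idea is a Nakayama-type iteration. Substitute the surjection back into itself: the radical part equals \(\sum \phi_i(b)\) with each \(\phi_i\) radical, and \(b=\operatorname{im}(C''\text{-part})+\sum\phi_i(b)\). Iterating \(n\) times pushes the radical contribution into \((\rad\End b)^n b\), which vanishes for large \(n\) because \(\End b\) is finite-dimensional, so its radical is nilpotent. Hence finitely many maps from \(C''\) surject onto \(b\). Every object that was a quotient of \(\add B\) is then a quotient of \(\add(B\setminus\{b\})\), so \(B\setminus\{b\}\) is a cover, contradicting minimality. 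Thus \(B\subseteq\mathcal P_0(\CC)\), and combined with (1) this gives \(B=\mathcal P_0(\CC)\).

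For (3), start from a finite cover \(B_0\). Apply the removal argument of (2) repeatedly, deleting a non-splitting-projective element at each step. Since \(B_0\) is finite, the process terminates, leaving a cover \(B\) consisting only of splitting projectives. By (1), \(B\supseteq\mathcal P_0(\CC)\), so \(B=\mathcal P_0(\CC)\). Consequently \(\mathcal P_0(\CC)\) is finite, is a cover, and is minimal; by (1) it is the unique minimal cover.
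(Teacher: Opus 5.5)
Your proof is correct and is essentially the Auslander--Smal{\o} argument that the paper cites without reproducing. Part (1) is Krull--Schmidt, and your iteration using nilpotence of \(\rad\End(b)\) proves exactly the elimination step that the paper quotes separately as Lemma~\ref{lem:as-remove-target}; parts (2) and (3) then follow as you describe. Avoiding the convex geometry is also the right choice, because the paper proves anti-exchange for \(\qcl\) (Proposition~\ref{prop:anti-exchange}) from this proposition, so that route would be circular.
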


We also use the following elimination of repeated copies of an indecomposable
target.

\begin{lemma}[{\cite[Proposition~2.2(a)]{AuslanderSmaloPreprojective}}]
\label{lem:as-remove-target}
Let \(X\) be indecomposable and let
\(M=X^n\oplus M'\), where \(X\) is not a direct summand of \(M'\).  If there
is a nonsplit surjection \(M\twoheadrightarrow X\), then there is a
surjection \((M')^m\twoheadrightarrow X\) for some \(m\geq1\).
\end{lemma}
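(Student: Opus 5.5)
The plan is to prove the statement directly, using only that \(\End_A(X)\) is a finite-dimensional local algebra. Write the given surjection as
\[
 f=(f_1,\ldots,f_n,g)\colon X^n\oplus M'\twoheadrightarrow X,
\]
with \(f_i\in\End_A(X)\) and \(g\in\Hom_A(M',X)\). Surjectivity of \(f\) means
\[
 X=\sum_{i=1}^n f_i(X)+g(M').
\]

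\emph{Step~1: no \(f_i\) is an isomorphism.} Suppose some \(f_i\) were an automorphism of \(X\). Let \(\iota_i\colon X\to X^n\oplus M'\) be the inclusion of the \(i\)-th copy. Then \(\iota_i f_i^{-1}\) is a section of \(f\), so \(f\) would split, contrary to hypothesis. Since \(X\) is indecomposable and finite-dimensional, \(\End_A(X)\) is local. Hence every \(f_i\) lies in \(J:=\rad\End_A(X)\), and \(J\) is nilpotent, say \(J^r=0\).

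\emph{Step~2: substitute the displayed equality into itself.} Replacing \(X\) on the right by \(\sum_j f_j(X)+g(M')\) gives
\[
 f_i(X)=\sum_j f_if_j(X)+f_ig(M').
\]
Iterating this \(r\) times yields
\[
 X=\sum_{|w|=r} f_w(X)+\sum_{|w|<r} f_w g(M').
\]
Here \(w\) runs over words \(w=(w_1,\ldots,w_\ell)\) in the letters \(1,\ldots,n\), we put \(f_w=f_{w_1}\cdots f_{w_\ell}\), and the empty word gives the identity. Each \(f_w\) with \(|w|=r\) lies in \(J^r=0\), so the first sum vanishes.

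\emph{Step~3: assemble the surjection.} Let \(m\) be the number of words of length \(<r\); note \(m\geq1\) because of the empty word. The map
\[
 (f_wg)_{|w|<r}\colon (M')^m\to X
\]
is then surjective, which proves the lemma.

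The only genuinely nontrivial point is Step~1 combined with nilpotency of the radical. The finite-dimensionality of \(\End_A(X)\) is exactly what makes the iteration in Step~2 terminate. The hypothesis that \(X\) is not a summand of \(M'\) is not needed for the argument.
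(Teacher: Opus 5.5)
Your proof is correct and complete. The paper gives no argument of its own for this lemma; it only cites \cite[Proposition~2.2(a)]{AuslanderSmaloPreprojective}. Your argument is a valid self-contained proof. Non-splitness forces every component \(f_i\colon X\to X\) into the nilpotent ideal \(J=\rad\End_A(X)\). Substituting \(X=\sum_j f_j(X)+g(M')\) into itself until \(J^r=0\) then eliminates the \(X^n\)-part and leaves \(X=\sum_{|w|<r}f_wg(M')\).

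You use only Fitting's lemma and the nilpotency of the radical of a finite-dimensional algebra. The argument therefore holds over an arbitrary field, which is the generality in which the paper applies the lemma (Proposition~\ref{prop:split-projective-extreme}).

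Your remark that the hypothesis ``\(X\) is not a summand of \(M'\)'' is superfluous is also right: if \(X\) were a summand of \(M'\), the conclusion would be immediate. The degenerate case \(M'=0\) causes no trouble either. Your identity then gives \(X=0\), which contradicts indecomposability, so that case cannot occur.
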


We apply Proposition~\ref{prop:as-minimal-cover} to quotient closure.

\begin{proposition}\label{prop:anti-exchange}
The closure operator \(\qcl\) satisfies anti-exchange.
\end{proposition}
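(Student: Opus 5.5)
Let \(\QQ=\qcl(\QQ)\) and let \(X,Y\in\ind A\setminus\QQ\) be nonisomorphic. Suppose, for a contradiction, that \(X\in\qcl(\QQ\cup\{Y\})\) and \(Y\in\qcl(\QQ\cup\{X\})\). Then the two closures coincide; call the common closed set \(\CC\). So \(\CC\) contains \(X\) and \(Y\), and \(\CC=\qcl(\QQ\cup\{X\})=\qcl(\QQ\cup\{Y\})\).

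The plan is to compute the splitting projectives of \(\CC\) in two ways and derive a contradiction. By the definition of \(\qcl\), the set \(\QQ\cup\{Y\}\) is a cover of the subcategory \(\add\CC\), and so is \(\QQ\cup\{X\}\). By Proposition~\ref{prop:as-minimal-cover}(1), every cover contains \(\mathcal P_0(\add\CC)\). Hence
\[
 \mathcal P_0(\add\CC)\subseteq(\QQ\cup\{X\})\cap(\QQ\cup\{Y\})=\QQ,
\]
using \(X\ne Y\). The key step is then to show that \(\mathcal P_0(\add\CC)\) must contain \(X\) or \(Y\); this contradicts \(X,Y\notin\QQ\).

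To see this, I will argue that \(X\) is a splitting projective in \(\add\CC\). Take a surjection \(C\twoheadrightarrow X\) with \(C\in\add\CC\), and suppose it does not split. Every indecomposable summand of \(C\) lies in \(\CC=\qcl(\QQ\cup\{Y\})\), so it is a factor module of an object of \(\add(\QQ\cup\{Y\})\). Composing, we get a surjection \(Q_0\oplus Y^n\twoheadrightarrow X\) with \(Q_0\in\add\QQ\); moreover, since \(X\notin\Gen(\add\QQ)\), we need \(n\geq1\). This alone is not yet a contradiction, so a different argument is needed.

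Instead I will work directly with the cyclic generation. Write \(X\) as a quotient of \(Q_1\oplus Y^n\) and \(Y\) as a quotient of \(Q_2\oplus X^m\), with \(Q_1,Q_2\in\add\QQ\). Substituting the second surjection into the first gives a surjection
\[
 f\colon Q_1\oplus Q_2^n\oplus X^{mn}\twoheadrightarrow X .
\]
If \(f\) were split, then some component \(X\to X\) of the composite would be an isomorphism. That component factors through \(Y^n\), which would make \(X\) a direct summand of \(Y^n\). By Krull--Schmidt this forces \(X\cong Y\), which is excluded. So \(f\) is nonsplit. Now apply Lemma~\ref{lem:as-remove-target} with \(M'=Q_1\oplus Q_2^n\); note that \(X\) is not a summand of \(M'\) because \(X\notin\QQ\). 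The lemma produces a surjection \((M')^k\twoheadrightarrow X\), so \(X\in\Gen(\add\QQ)=\add\QQ\), a contradiction.

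The main obstacle is the splitting argument just described. One must check carefully that each component \(X\to X\) of the substituted map factors through \(Y^n\). This uses that the \(X^{mn}\) summand maps to \(X\) only via the \(Y^n\) part. Then, by locality of \(\End(X)\), a split surjection forces some such component to be an isomorphism, and so \(X\) is a summand of \(Y^n\). With that in place, the splitting-projective detour in the second paragraph becomes unnecessary; the direct substitution argument is the proof I would write.
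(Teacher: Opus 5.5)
Your final substitution argument is correct, and it takes a different route from the paper.

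\textbf{The paper's route.} The paper first uses finitarity to shrink $\QQ$ to a finite $F\subseteq\QQ$. It then treats $F\cup\{X\}$ and $F\cup\{Y\}$ as two finite covers of $\mathcal D=\Gen(\add(F\cup\{X,Y\}))$ and applies Proposition~\ref{prop:as-minimal-cover}(1),(3). The set $\mathcal P_0(\mathcal D)$ lies in both covers, hence in $F$. Since it is itself a cover, this forces $\mathcal D\subseteq\Gen(\add F)\subseteq\add\QQ$. This is also how your abandoned detour could have been closed. You never need $X$ or $Y$ in $\mathcal P_0$, only that $\mathcal P_0$ is a cover, which part~(3) supplies once the cover is finite; that is why the reduction to $F$ is needed.

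\textbf{Your route.} You build $Q_1\oplus Q_2^n\oplus X^{mn}\twoheadrightarrow Q_1\oplus Y^n\twoheadrightarrow X$. Every component out of a copy of $X$ factors through $Y\not\simeq X$, and summands of $Q_1\oplus Q_2^n$ cannot carry a retraction onto $X$ because $X\notin\QQ$. So the map is nonsplit, and you apply Lemma~\ref{lem:as-remove-target} once.

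\textbf{Comparison.} Your version is more elementary and self-contained. Finite direct sums take care of finitarity automatically. You use only the radical-nilpotence lemma rather than the minimal-cover theorem, which can itself be obtained by applying that lemma repeatedly to a finite cover. The paper's version, in exchange, ties anti-exchange directly to the uniqueness of Auslander--Smal{\o} minimal covers, the theme of its subsequent splitting-projective results.
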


\begin{proof}
Let \(\QQ\subseteq\ind A\) be \(\qcl\)-closed, and suppose that
nonisomorphic indecomposable modules \(X,Y\notin\QQ\) satisfy
\[
 X\in\qcl(\QQ\cup\{Y\}),\qquad
 Y\in\qcl(\QQ\cup\{X\}).
\]
Since \(\qcl\) is finitary, there is a finite set \(F\subseteq\QQ\) such
that
\[
 X\in\qcl(F\cup\{Y\}),\qquad
 Y\in\qcl(F\cup\{X\}).
\]
Put
\[
 \mathcal D=\Gen\bigl(\add(F\cup\{X,Y\})\bigr).
\]
The two membership relations give
\[
 \mathcal D
 =\Gen\bigl(\add(F\cup\{X\})\bigr)
 =\Gen\bigl(\add(F\cup\{Y\})\bigr).
\]
Thus \(F\cup\{X\}\) and \(F\cup\{Y\}\) are finite covers of
\(\mathcal D\).  By Proposition~\ref{prop:as-minimal-cover}(1),(3),
\(\mathcal P_0(\mathcal D)\) is a cover contained in both finite covers.
Since \(F\subseteq\QQ\) while
\(X,Y\notin\QQ\) and \(X\not\simeq Y\),
\[
 \mathcal P_0(\mathcal D)
 \subseteq(F\cup\{X\})\cap(F\cup\{Y\})=F.
\]
It follows that
\[
 \mathcal D
 =\Gen\bigl(\add\mathcal P_0(\mathcal D)\bigr)
 \subseteq\Gen(\add F)
 \subseteq\add\QQ,
\]
contrary to \(X\in\mathcal D\) and \(X\notin\QQ\).
\end{proof}

The closure operator defined by submodules is
\[
 \scl(\XX)=\ind\Cogen(\add\XX),
\]
where \(\Cogen(\add\XX)\) consists of the submodules of finite direct
sums of modules in \(\XX\).  The duality \(\DDual\)
exchanges factor modules and submodules between \(\mod A\) and
\(\mod A^{\op}\), so
\[
 \DDual\bigl(\Cogen(\add\XX)\bigr)
 =\Gen\bigl(\add\{\,\DDual X\mid X\in\XX\,\}\bigr)
\]
inside \(\mod A^{\op}\).  Proposition~\ref{prop:anti-exchange} for
\(A^{\op}\) therefore gives anti-exchange for \(\scl\).  Thus
\((\ind A,\qcl)\) and \((\ind A,\scl)\) are finitary convex geometries.

\begin{corollary}\label{cor:two-convex-geometries}
If \(A\) is representation-finite, then \(\LQ(A)\) and \(\LS(A)\) are the
lattices of closed sets of two convex geometries on \(\ind A\).
\end{corollary}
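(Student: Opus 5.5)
The statement is nearly immediate from what precedes it. The plan is to identify the closed sets of \(\qcl\) and \(\scl\) with the members of \(\LQ(A)\) and \(\LS(A)\), and then check that the axioms of a (finite) convex geometry hold on the finite set \(\ind A\).

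First, when \(A\) is representation-finite, the ground set \(E=\ind A\) is finite. Any closure operator on a finite set is then automatically finitary, and a finitary convex geometry on a finite set is exactly a convex geometry in the sense of Edelman--Jamison.

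Second, I recall from the discussion after Definition~\ref{def:quotient-closure-operator} that \(\qcl\) is a closure operator. Extensivity, monotonicity and idempotence follow because \(\Gen(\add\Gen(\add\XX))=\Gen(\add\XX)\): a factor module of a finite direct sum of factor modules of objects of \(\add\XX\) is itself a factor module of an object of \(\add\XX\). The same discussion gives \(\qcl(\varnothing)=\varnothing\), and it identifies the \(\qcl\)-closed sets with the members of \(\LQ(A)\). By Lemma~\ref{lem:indecomposable-quotient-test}, a subcategory \(\add K\) is quotient-closed exactly when \(\Gen(\add K)=\add K\), i.e.\ when \(\qcl(K)=K\). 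Anti-exchange is Proposition~\ref{prop:anti-exchange}. Hence \((\ind A,\qcl)\) is a convex geometry whose lattice of closed sets, ordered by inclusion, is \(\LQ(A)\).

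Third, for \(\scl\) I would argue by duality. The functor \(\DDual\) gives a bijection \(\ind A\to\ind A^{\op}\) satisfying \(\DDual(\scl(\XX))=\qcl(\DDual\XX)\). Transporting the convex geometry \((\ind A^{\op},\qcl)\) along this bijection therefore gives closure axioms, the empty-set condition and anti-exchange for \(\scl\). The \(\scl\)-closed sets are exactly the submodule-closed subcategories, by the dual of Lemma~\ref{lem:indecomposable-quotient-test}, so they form \(\LS(A)\).

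No step here is a real obstacle, since the substantive input, anti-exchange, is already proved in Proposition~\ref{prop:anti-exchange}. The only point needing care is the identification of closed sets with subcategories in \(\LQ(A)\) and \(\LS(A)\). This rests on the convention that subcategories are additive and closed under summands, so each is determined by its set of indecomposables; it is settled by Lemma~\ref{lem:indecomposable-quotient-test} and its dual.
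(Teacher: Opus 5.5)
Your proposal is correct and follows essentially the same route as the paper: since \(\ind A\) is finite, \(\qcl\) and \(\scl\) are convex geometries in the ordinary sense. Here \(\qcl(\varnothing)=\scl(\varnothing)=\varnothing\), anti-exchange for \(\qcl\) is Proposition~\ref{prop:anti-exchange}, anti-exchange for \(\scl\) comes from applying that proposition to \(A^{\op}\) via \(\DDual\), and the closed sets are identified with \(\LQ(A)\) and \(\LS(A)\) exactly as in the discussion preceding the corollary.
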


\begin{proof}
The closure operators \(\qcl\) and \(\scl\) are finitary and satisfy
anti-exchange, and \(\qcl(\varnothing)=\scl(\varnothing)=\varnothing\).
Since \(\ind A\) is finite, they define convex geometries.  The closed sets
of \(\qcl\) form \(\LQ(A)\), and the closed sets of \(\scl\) form
\(\LS(A)\).
\end{proof}

Proposition~\ref{prop:split-projective-extreme} translates splitting
projectivity into convex-geometric language and identifies the possible
one-point deletions.

\begin{proposition}\label{prop:split-projective-extreme}
Let \(\CC\in\LQ(A)\) and \(X\in\ind\CC\).  The following are equivalent.
\begin{enumerate}
\item \(X\in\ex(\ind\CC)\) for the closure operator \(\qcl\).
\item \(\ind\CC\setminus\{X\}\) is quotient-closed.
\item \(X\) is a splitting projective in \(\CC\).
\end{enumerate}
\end{proposition}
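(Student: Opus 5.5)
We prove (1)$\Leftrightarrow$(2) from the convex-geometric formalism and (2)$\Leftrightarrow$(3) from the definition of splitting projectives, with Lemma~\ref{lem:indecomposable-quotient-test} providing the translation. Throughout, put \(C=\ind\CC\) and \(K=C\setminus\{X\}\). Since \(\CC\) is quotient-closed, \(C\) is \(\qcl\)-closed.

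\emph{(1)$\Leftrightarrow$(2).} Monotonicity gives \(\qcl(K)\subseteq\qcl(C)=C\), and \(K\subseteq\qcl(K)\). Hence \(\qcl(K)\) equals either \(K\) or \(C\), according to whether \(X\notin\qcl(K)\) or \(X\in\qcl(K)\). Thus \(X\in\ex(C)\) holds exactly when \(\qcl(K)=K\), that is, when \(K\) is closed. Closedness of \(K\) means precisely that \(\add K\) is quotient-closed, which is statement (2).

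\emph{(2)$\Leftrightarrow$(3).} Apply Lemma~\ref{lem:indecomposable-quotient-test} to \(K\). The complement is \(D=(\ind A\setminus C)\cup\{X\}\). For every \(Y\in\ind A\setminus C\), we already have \(Y\notin\Gen(\add K)\subseteq\CC\). So (2) is equivalent to the single condition \(X\notin\Gen(\add K)\), i.e.\ there is no surjection \(M'\twoheadrightarrow X\) with \(M'\in\add K\).

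For (3)$\Rightarrow$(2): suppose such a surjection \(M'\twoheadrightarrow X\) exists. Since \(M'\in\CC\), the splitting property of \(X\) forces it to split. Then \(X\) is a direct summand of \(M'\), and by Krull--Schmidt \(X\) is isomorphic to an indecomposable summand of \(M'\), which lies in \(K\). This contradicts \(X\notin K\).

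For (2)$\Rightarrow$(3): suppose \(p\colon C'\twoheadrightarrow X\) is a nonsplit surjection with \(C'\in\CC\). Write \(C'=X^n\oplus M'\) with \(X\) not a summand of \(M'\). Every indecomposable summand of \(M'\) lies in \(C\) and is not \(X\), so \(M'\in\add K\). Lemma~\ref{lem:as-remove-target} then yields a surjection \((M')^m\twoheadrightarrow X\). Hence \(X\in\Gen(\add K)\), contradicting (2).

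The only genuine obstacle is this last implication, where a nonsplit surjection must be replaced by one avoiding \(X\). That step is exactly what Lemma~\ref{lem:as-remove-target} provides, so everything else is bookkeeping with Krull--Schmidt.
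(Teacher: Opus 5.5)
Your proof is correct and follows the paper's argument essentially step for step. For (1)$\Leftrightarrow$(2) you use \(\qcl(\ind\CC\setminus\{X\})\subseteq\ind\CC\) to reduce both conditions to \(X\notin\Gen(\add(\ind\CC\setminus\{X\}))\). For (2)$\Rightarrow$(3) you decompose a nonsplit surjection by Krull--Schmidt and apply Lemma~\ref{lem:as-remove-target}, and for (3)$\Rightarrow$(2) you note that a split surjection from \(\add(\ind\CC\setminus\{X\})\) would make \(X\) a summand; your explicit check that the omitted modules outside \(\ind\CC\) are automatically excluded only spells out what the paper leaves implicit.
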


\begin{proof}
(1) $\Leftrightarrow$ (2):
Since \(\qcl(\ind\CC\setminus\{X\})\subseteq\ind\CC\), both conditions are
equivalent to
\[
 X\notin\Gen\bigl(\add(\ind\CC\setminus\{X\})\bigr).
\]

(2) $\Rightarrow$ (3):
Suppose that (2) holds but \(X\) is not a splitting projective in \(\CC\).
Choose a nonsplit surjection \(M\twoheadrightarrow X\) with \(M\in\CC\).
By Krull--Schmidt,
\[
 M\simeq X^n\oplus M',
 \qquad M'\in\add(\ind\CC\setminus\{X\}).
\]
Lemma~\ref{lem:as-remove-target} gives a surjection
\((M')^m\twoheadrightarrow X\) for some \(m\geq1\).  Hence
\(X\in\Gen(\add(\ind\CC\setminus\{X\}))\), contrary to (2).

(3) $\Rightarrow$ (2):
If (2) does not hold, there is a surjection onto \(X\) from an object of
\(\add(\ind\CC\setminus\{X\})\). This surjection cannot split, since
Krull--Schmidt would otherwise make \(X\) a summand of its source.  Thus
\(X\) is not a splitting projective in \(\CC\), contrary to (3).
\end{proof}

\begin{corollary}\label{cor:finite-quotient-interval-chain}
Let \(\CC\subseteq\DD\) be quotient-closed subcategories such that
\(\ind\DD\setminus\ind\CC\) is finite.  There is an ordering
\(X_1,\ldots,X_n\) of \(\ind\DD\setminus\ind\CC\) such that, on putting
\[
 \DD_0=\DD,
 \qquad
 \DD_i=\add\bigl(\ind\DD\setminus\{X_1,\ldots,X_i\}\bigr)
 \quad(1\leq i\leq n),
\]
each \(\DD_i\) is quotient-closed, \(\DD_n=\CC\), and \(X_i\) is
splitting projective in \(\DD_{i-1}\) for every \(1\leq i\leq n\).
\end{corollary}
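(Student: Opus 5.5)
This is a direct specialization of Corollary~\ref{cor:finite-interval-chain} to the finitary convex geometry \((\ind A,\qcl)\), followed by a translation via Proposition~\ref{prop:split-projective-extreme}.

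First, we check the hypotheses of Corollary~\ref{cor:finite-interval-chain}. By Proposition~\ref{prop:anti-exchange} and the remarks after Definition~\ref{def:quotient-closure-operator}, \((\ind A,\qcl)\) is a finitary convex geometry. Its closed sets are exactly the sets \(\ind\CC\) with \(\CC\in\LQ(A)\). So \(C=\ind\CC\subseteq D=\ind\DD\) are closed, and \(D\setminus C\) is finite by hypothesis.

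The corollary then yields an ordering \(X_1,\ldots,X_n\) of \(D\setminus C\) such that every \(D_i=D\setminus\{X_1,\ldots,X_i\}\) is \(\qcl\)-closed. Hence \(\DD_i=\add D_i\) is quotient-closed. Moreover \(D_0=D\) gives \(\DD_0=\DD\), and \(D_n=C\) gives \(\DD_n=\add\ind\CC=\CC\), using the convention that subcategories are determined by their indecomposables.

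For the last assertion, fix \(1\leq i\leq n\). The subcategory \(\DD_{i-1}\) is quotient-closed and contains \(X_i\), and \(\ind\DD_{i-1}\setminus\{X_i\}=D_i\) is quotient-closed. The implication (2)\(\Rightarrow\)(3) of Proposition~\ref{prop:split-projective-extreme}, applied with \(\CC=\DD_{i-1}\) and \(X=X_i\), shows that \(X_i\) is splitting projective in \(\DD_{i-1}\).

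There is no real obstacle. The only points needing care are the identification of closed sets with \(\LQ(A)\) and the observation that \(\DD_{i-1}\) is itself quotient-closed, so that Proposition~\ref{prop:split-projective-extreme} applies at each step.
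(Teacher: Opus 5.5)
Your proof is correct and follows the paper's argument: apply Corollary~\ref{cor:finite-interval-chain} to the \(\qcl\)-closed sets \(\ind\CC\subseteq\ind\DD\), then use Proposition~\ref{prop:split-projective-extreme} at each deletion step. The only difference is cosmetic: you enter that proposition through condition (2), closedness of \(\ind\DD_{i-1}\setminus\{X_i\}\), while the paper enters through condition (1), extremality of \(X_i\); these are equivalent by the trivial part of its proof.
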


\begin{proof}
Apply Corollary~\ref{cor:finite-interval-chain} to the \(\qcl\)-closed sets
\(\ind\CC\subseteq\ind\DD\).  At the \(i\)-th deletion, \(X_i\) is an
extreme point of \(\ind\DD_{i-1}\), so
Proposition~\ref{prop:split-projective-extreme} shows that it is splitting
projective in \(\DD_{i-1}\).
\end{proof}

The abstract structure theorem now has the following module-theoretic form.

\begin{corollary}\label{cor:quotient-convex-structure}
Let \(A\) be a finite-dimensional algebra over a field.
\begin{enumerate}
\item The poset \(\LQ(A)\) is a complete algebraic and spatial lattice, with
\[
 \bigwedge_\lambda\CC_\lambda=\bigcap_\lambda\CC_\lambda,
 \qquad
 \bigvee_\lambda\CC_\lambda
 =\Gen\left(\add\left(\bigcup_\lambda\CC_\lambda\right)\right).
\]
Its completely join-irreducible elements are the subcategories
\(\Gen X\) with \(X\in\ind A\); these are compact and pairwise distinct.
\item The assignments
\[
 B\longmapsto\Gen(\add B),\qquad
 \CC\longmapsto
 \{\,X\in\ind\CC\mid X\text{ is a splitting projective in }\CC\,\}
\]
are inverse bijections between the finite \(\qcl\)-independent subsets of
\(\ind A\) and the compact quotient-closed subcategories.
\end{enumerate}
\end{corollary}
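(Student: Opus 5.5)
The plan is to transport Theorem~\ref{thm:infinite-convex-structure} and Proposition~\ref{prop:compact-extreme-bijection} along the identification of \(\LQ(A)\) with the lattice of \(\qcl\)-closed subsets of \(\ind A\). By Proposition~\ref{prop:anti-exchange}, the remarks after Definition~\ref{def:quotient-closure-operator}, and the vanishing \(\qcl(\varnothing)=\varnothing\), the pair \((\ind A,\qcl)\) is a finitary convex geometry. The map \(\CC\mapsto\ind\CC\) is an order isomorphism from \(\LQ(A)\) onto the closed sets, with inverse \(K\mapsto\add K\), because all subcategories are assumed additive and closed under summands. Every lattice-theoretic notion involved (complete, algebraic, spatial, compact, completely join-irreducible) is intrinsic to the poset, so each one transfers verbatim.

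For (1), I will take the meet and join formulas from Theorem~\ref{thm:infinite-convex-structure}(1). The meet of closed sets is their intersection, which corresponds to \(\bigcap_\lambda\CC_\lambda\). The join is \(\qcl(\bigcup_\lambda\ind\CC_\lambda)=\ind\Gen(\add\bigcup_\lambda\CC_\lambda)\). Applying \(\add\) to this gives the stated join, since \(\Gen(\add\,\cdot)\) is closed under summands and is determined by its indecomposables. Algebraicity and spatiality follow directly from parts (1) and (2) of that theorem. Part (2) also says the completely join-irreducible elements are the point closures \(J_X=\qcl(\{X\})=\ind\Gen(\add X)\), which correspond to \(\Gen X\). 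The same part gives that these are compact and pairwise distinct.

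For (2), I will apply Proposition~\ref{prop:compact-extreme-bijection}. The forward map \(B\mapsto\qcl(B)\) becomes \(B\mapsto\Gen(\add B)\) after applying \(\add\). The inverse map \(C\mapsto\ex(C)\) becomes the set of splitting projectives by Proposition~\ref{prop:split-projective-extreme}, using the equivalence (1)\(\Leftrightarrow\)(3) for each \(X\in\ind\CC\). Compactness of a closed set in the lattice of closed sets and compactness in \(\LQ(A)\) coincide because the two posets are isomorphic.

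None of these steps is a real obstacle. The only care needed is the bookkeeping between subcategories and sets of indecomposables, in particular checking that \(\add(\ind\Gen(\add\XX))=\Gen(\add\XX)\). That equality holds by Krull--Schmidt, because any summand of a factor module of an object of \(\add\XX\) is again such a factor module.
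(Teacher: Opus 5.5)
Your proposal is correct and follows the paper's proof: part (1) is Theorem~\ref{thm:infinite-convex-structure} applied to \(\qcl\), and part (2) is Proposition~\ref{prop:compact-extreme-bijection} applied to \(\qcl\), with Proposition~\ref{prop:split-projective-extreme} identifying \(\ex(\ind\CC)\) with the splitting projectives. The bookkeeping you add between subcategories and sets of indecomposables is left implicit in the paper but is handled correctly in your write-up.
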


\begin{proof}
(1)
Apply Theorem~\ref{thm:infinite-convex-structure} to \(\qcl\).

(2)
Apply Proposition~\ref{prop:compact-extreme-bijection} to \(\qcl\), and use
Proposition~\ref{prop:split-projective-extreme} to identify
\(\ex(\ind\CC)\) with \(\mathcal P_0(\CC)\).
\end{proof}

Thus, for a compact quotient-closed subcategory, its convex-geometric
extreme set is finite and is the unique minimal cover in the sense of
Auslander and Smal{\o}.

For a representation-finite algebra, the convex-geometric conclusions specialize
as follows.

\begin{corollary}\label{cor:finite-convex-consequences}
Let \(A\) be representation-finite.
\begin{enumerate}
\item Every nonzero quotient-closed subcategory contains a splitting
projective, and deleting any such indecomposable preserves
quotient closure.
\item If \(\CC'\subsetneq\CC\) is a covering relation in \(\LQ(A)\), then
\(\ind\CC\setminus\ind\CC'\) has one element.  Consequently,
\(\LQ(A)\) is graded by size.
\item Every quotient-closed subcategory is generated by its splitting
projectives.
\item For submodule-closed subcategories:
\begin{enumerate}
\item every nonzero submodule-closed subcategory contains a splitting
injective, and deleting any such indecomposable preserves submodule closure;
\item if \(\CC'\subsetneq\CC\) is a covering relation in \(\LS(A)\), then
\(\ind\CC\setminus\ind\CC'\) has one element, and hence \(\LS(A)\) is graded
by size;
\item every submodule-closed subcategory is cogenerated by its splitting
injectives.
\end{enumerate}
\end{enumerate}
\end{corollary}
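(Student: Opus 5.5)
The plan is to reduce every part to the finite convex-geometry facts in Corollary~\ref{cor:finite-abstract-convex}, applied to the finite set \(E=\ind A\). By Corollary~\ref{cor:two-convex-geometries}, both \((\ind A,\qcl)\) and \((\ind A,\scl)\) are finite convex geometries. Their closed sets are the members of \(\LQ(A)\) and \(\LS(A)\), respectively. Throughout, we identify a subcategory with its set of indecomposables.

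For (1), we start from a nonzero \(\CC\in\LQ(A)\). Corollary~\ref{cor:finite-abstract-convex}(1) gives \(\ex(\ind\CC)\neq\varnothing\), and shows that \(\ind\CC\setminus\{X\}\) is closed for every \(X\in\ex(\ind\CC)\). Proposition~\ref{prop:split-projective-extreme} identifies \(\ex(\ind\CC)\) with the splitting projectives of \(\CC\). It also identifies closedness of \(\ind\CC\setminus\{X\}\) with quotient closure. Together these give (1).

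Part (2) is Corollary~\ref{cor:finite-abstract-convex}(2) read in \(\LQ(A)\). A covering relation adds exactly one point, so the rank function \(\CC\mapsto|\ind\CC|\) grades the lattice. One small point should be checked: every maximal chain starts at \(\varnothing=\qcl(\varnothing)\), so the size is indeed the rank.

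For (3), Corollary~\ref{cor:finite-abstract-convex}(3) gives \(\ind\CC=\qcl(\ex(\ind\CC))\). This means \(\CC=\Gen(\add\mathcal P_0(\CC))\), again by Proposition~\ref{prop:split-projective-extreme}. Alternatively, since \(\ind A\) is finite, \(\CC\) has a finite cover, and Proposition~\ref{prop:as-minimal-cover}(3) applies directly.

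For (4), we apply (1)--(3) to \(A^{\op}\), which is also representation-finite, and transport through \(\DDual\). Under \(\DDual\), submodule-closed subcategories of \(\mod A\) correspond to quotient-closed subcategories of \(\mod A^{\op}\), sizes are preserved, and \(\Cogen\) corresponds to \(\Gen\). A surjection \(C\twoheadrightarrow \DDual X\) splits if and only if the dual injection \(X\hookrightarrow\DDual C\) splits. Hence splitting projectives of \(\DDual\CC\) are exactly the duals of splitting injectives of \(\CC\), meaning indecomposables \(X\in\CC\) for which every injection \(X\hookrightarrow C\) with \(C\in\CC\) splits.

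The only step needing any care is fixing this definition of splitting injective and checking that it matches under duality. Everything else is a direct citation, so I expect no real obstacle.
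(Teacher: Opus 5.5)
Your proposal is correct and follows essentially the paper's route: parts (1) and (2) come from Corollary~\ref{cor:finite-abstract-convex} together with Proposition~\ref{prop:split-projective-extreme}, and part (4) comes from applying (1)--(3) to \(A^{\op}\) and transporting through \(\DDual\). The only variation is in (3), where your main argument uses \(C=c(\ex(C))\) from Corollary~\ref{cor:finite-abstract-convex}(3) with \(\ex(\ind\CC)=\mathcal P_0(\CC)\), while the paper invokes the Auslander--Smal{\o} result Proposition~\ref{prop:as-minimal-cover}(3), which is the alternative you also give; both are valid.
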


\begin{proof}
(1)
Apply Corollary~\ref{cor:finite-abstract-convex}(1) and
Proposition~\ref{prop:split-projective-extreme}.

(2)
Apply Corollary~\ref{cor:finite-abstract-convex}(2).

(3)
For \(\CC\in\LQ(A)\), the set \(\ind\CC\) is a finite cover of \(\CC\).
Proposition~\ref{prop:as-minimal-cover}(3) shows that
\(\mathcal P_0(\CC)\) is a cover, so
\(\CC=\Gen(\add\mathcal P_0(\CC))\).

(4)(a)
Apply (1) to \(A^{\op}\), and then apply the duality
\(\DDual\colon\mod A\to\mod A^{\op}\).

(4)(b)
Apply (2) to \(A^{\op}\), and then apply \(\DDual\).

(4)(c)
Apply (3) to \(A^{\op}\), and then apply \(\DDual\).
\end{proof}

In particular, a maximal chain in \(\LQ(A)\), read in decreasing order,
removes one splitting projective at each step and ends at the zero
subcategory.

\subsection{Functorially finite subcategories and
\texorpdfstring{\(\Gen\)}{Gen}-minimal modules}

We next relate compactness to functorial finiteness.
\begin{definition}\label{def:functorially-finite}
A morphism \(C_X\to X\) with \(C_X\in\CC\) is a right
\(\CC\)-approximation if every morphism from an object of \(\CC\) to \(X\)
factors through it.  The subcategory is \emph{contravariantly finite} if
every module has a right \(\CC\)-approximation.  Left approximations and
\emph{covariantly finite} subcategories are defined analogously, and
\(\CC\) is \emph{functorially finite} if it has both properties.
\end{definition}

\begin{definition}[{\cite[Section~VI.6]{AssemSimsonSkowronski}}]
\label{def:gen-minimal}
A basic module \(M\) is \emph{\(\Gen\)-minimal} if
\(X\notin\Gen M'\) for every decomposition \(M=X\oplus M'\) with \(X\)
indecomposable.
Dually, \(M\) is \emph{\(\Cogen\)-minimal} if
\(X\notin\Cogen M'\) for every such decomposition.
\end{definition}

We use the following approximation result.

\begin{proposition}[{\cite[Proposition~4.6(a), (c)]
{AuslanderSmaloPreprojective}}]
\label{prop:as-approximation}
Let \(A\) be a finite-dimensional algebra over a field, and let \(\CC\) be
an additive quotient-closed subcategory of \(\mod A\).  Then \(\CC\) is
contravariantly finite.  Moreover, it is functorially finite if and only if
\(\CC=\Gen M\) for some \(M\in\CC\).
\end{proposition}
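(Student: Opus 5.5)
We prove the two assertions separately. Contravariant finiteness is elementary and uses only quotient closure and additivity. The characterization of functorial finiteness splits into an easy direction (a left approximation of $A_A$ produces a generator) and a harder direction, covariant finiteness of $\Gen M$, which is where I expect the real work.

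\emph{Contravariant finiteness.} Fix a module $Y$. If $U,V\subseteq Y$ are submodules lying in $\CC$, then $U+V$ is a factor module of $U\oplus V\in\CC$, so $U+V\in\CC$. Since $Y$ is finite-dimensional, there is therefore a largest submodule $t_\CC(Y)\subseteq Y$ belonging to $\CC$. The inclusion $t_\CC(Y)\hookrightarrow Y$ is a right $\CC$-approximation. Indeed, for $f\colon C\to Y$ with $C\in\CC$, the image $f(C)$ is a factor module of $C$, so $f(C)\in\CC$ and hence $f(C)\subseteq t_\CC(Y)$. Thus $f$ factors through the inclusion.

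\emph{Functorially finite $\Rightarrow$ $\CC=\Gen M$.} Take a left $\CC$-approximation $a\colon A_A\to M$ with $M\in\CC$. For $C\in\CC$, choose a surjection $p\colon A^n\twoheadrightarrow C$. Each component $A\to C$ of $p$ factors through $a$, so $p$ factors through $a^{\oplus n}\colon A^n\to M^n$. Hence the induced map $M^n\to C$ is surjective, and $C\in\Gen M$. Conversely $\Gen M\subseteq\CC$ because $M\in\CC$ and $\CC$ is quotient-closed. This gives $\CC=\Gen M$.

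\emph{$\CC=\Gen M$ $\Rightarrow$ covariantly finite.} This is the main obstacle, since a map $Y\to C'$ into a factor module of $M^m$ need not lift to $M^m$. My plan is to show that the functor $\Hom_A(Y,-)|_{\Gen M}$ is finitely generated, via a bound on the relevant targets.

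\begin{enumerate}
\item Reduce to images. Every $g\colon Y\to C'$ with $C'\in\Gen M$ factors as $Y\twoheadrightarrow g(Y)\hookrightarrow C'$. It suffices to find, for each such $g$, some $C''\in\Gen M$ whose dimension is bounded in terms of $\dim Y$ and $M$ only, together with a factorization $Y\to C''\to C'$.
\item Construct $C''$. Choose a surjection $p\colon M^m\twoheadrightarrow C'$, and let $U\subseteq M^m$ be the submodule generated by lifts of a $\kk$-basis of $g(Y)$. The candidate is $C''=M^{r}/(\ker p\cap M^{r})$, where $M^{r}$ is a bounded direct summand of $M^m$ containing $U$ after a change of basis of $M^m$. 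Since $\dim U\le\dim Y\cdot\dim M$, only boundedly many summands are needed. Then $C''\in\Gen M$, it maps to $C'$, and $g$ lifts to it.
\item Finish. Bounded-dimension objects of $\Gen M$ receiving maps from $Y$ are parametrized by finitely many Hom-spaces up to the relevant equivalence, so a finite direct sum of them gives a left approximation.
\end{enumerate}

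If the change-of-basis step fails, the fallback is to reproduce Auslander--Smal{\o}'s original argument in \cite[Proposition~4.6]{AuslanderSmaloPreprojective}. That argument deduces covariant finiteness of $\Gen M$ from covariant finiteness of $\add M$, combined with the surjective right $\add M$-approximations that characterize objects of $\Gen M$.
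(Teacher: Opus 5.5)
Your first two parts are correct. The largest submodule of \(Y\) lying in \(\CC\) gives a right \(\CC\)-approximation, and a left \(\CC\)-approximation \(A_A\to M\) forces \(\CC=\Gen M\). These are the standard arguments behind the Auslander--Smal{\o} result; the paper itself only cites it and gives no proof. Step 2 of your third part is also fine: every element of \(M\otimes_\kk\kk^m\) has tensor rank at most \(\dim_\kk M\), so the change of basis always exists. (The correct bound there is \(\dim U\le\dim Y\cdot\dim A\), but any bound suffices.)

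The gap is Step 3. Bounding the dimension of the intermediate targets does not bound how many of them there are. If \(A\) is representation-infinite over an infinite field, \(\Gen M\) can contain infinitely many isomorphism classes of a fixed dimension; for example, take \(M=A_A\) for the Kronecker algebra, so that \(\Gen M=\mod A\). Then there is no finite direct sum to take. A finite-dimensional parameter space for the maps also does not by itself produce one object through which all of them factor. As written, your argument covers only the representation-finite case, where Steps 1--2 are unnecessary anyway.

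The missing idea is to lift from a projective presentation of \(Y\) instead of from \(Y\) itself.
\begin{itemize}
\item Choose an exact sequence \(0\to\Omega\to Q\xrightarrow{\pi}Y\to0\) with \(Q\) projective.
\item Choose a left \(\add M\)-approximation \(\iota\colon Q\to M_0\). It exists because \(\Hom_A(Q,M)\) is finite-dimensional.
\item Put \(N=M_0/\iota(\Omega)\in\Gen M\), and let \(\eta\colon Y\to N\) be the map induced by \(\iota\).
\item Given \(g\colon Y\to C\) with \(C\in\Gen M\), choose a surjection \(p\colon M^m\twoheadrightarrow C\). Projectivity of \(Q\) lifts \(g\pi\) to a map \(Q\to M^m\), which factors as \(h\iota\) with \(h\colon M_0\to M^m\).
\item Since \(ph\iota=g\pi\) vanishes on \(\Omega\), the map \(ph\) induces \(\bar h\colon N\to C\). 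Then \(\bar h\eta\pi=ph\iota=g\pi\), and \(\bar h\eta=g\) because \(\pi\) is an epimorphism.
\end{itemize}
So \(\eta\) is a left \(\Gen M\)-approximation. This pushout is presumably what your fallback has in mind. The ingredient it leaves out is the projective presentation, which replaces the impossible lifting of \(g\) itself and makes your Steps 1--3 unnecessary.
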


Combining Propositions~\ref{prop:as-minimal-cover} and
\ref{prop:as-approximation} gives the bijection between basic
\(\Gen\)-minimal modules and functorially finite quotient-closed
subcategories.  The following formulation also records compactness and
\(\qcl\)-independence.

\begin{proposition}\label{prop:compact-functorially-finite}
Let \(A\) be a finite-dimensional algebra over a field.
\begin{enumerate}
\item For \(\CC\in\LQ(A)\), the following are equivalent:
\begin{enumerate}
\item \(\CC\) is compact;
\item \(\CC=\Gen M\) for some \(M\in\CC\);
\item \(\CC\) is functorially finite.
\end{enumerate}
\item The following sets are in bijection:
\[
 \left\{\begin{array}{c}
 \text{finite \(\qcl\)-independent}\\[-2pt]
 \text{subsets \(B\subseteq\ind A\)}
 \end{array}\right\},\quad
 \left\{\begin{array}{c}
 \text{functorially finite}\\[-2pt]
 \text{members of \(\LQ(A)\)}
 \end{array}\right\},\quad
 \left\{\begin{array}{c}
 \text{isomorphism classes of}\\[-2pt]
 \text{basic \(\Gen\)-minimal modules}
 \end{array}\right\}.
\]
The first set maps to the second by \(B\mapsto\Gen(\add B)\) and to the
third by \(B\mapsto\bigoplus_{X\in B}X\).
The inverse map from the second set to the first is
\(\CC\mapsto\mathcal P_0(\CC)\).
\end{enumerate}
\end{proposition}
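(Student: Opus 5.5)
For part (1) I will establish (a)$\Leftrightarrow$(b) directly from Theorem~\ref{thm:infinite-convex-structure}(1) and take (b)$\Leftrightarrow$(c) from Proposition~\ref{prop:as-approximation}. Applied to \(\qcl\), Theorem~\ref{thm:infinite-convex-structure}(1) says that \(\CC\) is compact exactly when \(\ind\CC=\qcl(F)\) for some finite \(F\subseteq\ind A\).

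If \(\ind\CC=\qcl(F)\) with \(F\) finite, I set \(M=\bigoplus_{X\in F}X\). Then \(\add F=\add M\), so \(\Gen(\add F)=\Gen M\). Since \(\CC\) is closed under summands, \(\CC=\Gen M\) with \(M\in\CC\).

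Conversely, if \(\CC=\Gen M\) with \(M\in\CC\), I let \(F\) be the finite set of isoclasses of indecomposable summands of \(M\), again using Krull--Schmidt. Then \(F\subseteq\ind\CC\) and \(\qcl(F)=\ind\Gen M=\ind\CC\), so \(\CC\) is compact.

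The equivalence (b)$\Leftrightarrow$(c) is exactly Proposition~\ref{prop:as-approximation}, since \(\CC\) is quotient-closed and additive.

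For part (2), the bijection between the first two sets comes from Corollary~\ref{cor:quotient-convex-structure}(2) combined with (1). That corollary gives inverse bijections \(B\mapsto\Gen(\add B)\) and \(\CC\mapsto\mathcal P_0(\CC)\) between finite \(\qcl\)-independent sets and compact quotient-closed subcategories. Part (1) identifies the latter with the functorially finite ones.

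For the third set, I need that \(B\mapsto\bigoplus_{X\in B}X\) is a bijection onto isoclasses of basic \(\Gen\)-minimal modules. Krull--Schmidt gives a bijection between finite subsets of \(\ind A\) and isoclasses of basic modules. The only real check is to match the two conditions. For \(M=\bigoplus_{X\in B}X\) and \(X\in B\), the complement \(M'\) in the decomposition \(M=X\oplus M'\) satisfies \(\add M'=\add(B\setminus\{X\})\), so \(\Gen M'=\Gen(\add(B\setminus\{X\}))\). Hence \(X\notin\Gen M'\) if and only if \(X\notin\qcl(B\setminus\{X\})\). Every decomposition \(M=X\oplus M'\) with \(X\) indecomposable arises this way up to isomorphism, again by Krull--Schmidt. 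Therefore \(M\) is \(\Gen\)-minimal exactly when \(B\) is \(\qcl\)-independent.

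Composing these maps yields the stated maps, including the inverse \(\CC\mapsto\mathcal P_0(\CC)\).

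I expect no serious obstacle. The only delicate point is Krull--Schmidt bookkeeping: decompositions \(M=X\oplus M'\) must correspond to elements of \(B\) via isomorphism classes. This holds because a basic module has pairwise nonisomorphic summands, so \(M'\) is determined up to isomorphism by \(X\).
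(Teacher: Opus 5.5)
Your proposal is correct and follows the paper's proof essentially step by step. You get (a)$\Leftrightarrow$(b) from Theorem~\ref{thm:infinite-convex-structure}(1) and (b)$\Leftrightarrow$(c) from Proposition~\ref{prop:as-approximation}. Part~(2) then comes from Corollary~\ref{cor:quotient-convex-structure}(2) together with the observation that a basic module is $\Gen$-minimal exactly when its set of indecomposable summands is $\qcl$-independent; you simply spell out the Krull--Schmidt bookkeeping that the paper leaves implicit.
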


\begin{proof}
(1)
We prove the equivalence in the order (a) $\Leftrightarrow$ (b)
$\Leftrightarrow$ (c).  By
Theorem~\ref{thm:infinite-convex-structure}(1), a quotient-closed subcategory
is compact if and only if it is generated by a finite set of indecomposables,
equivalently, if and only if it has the form \(\Gen M\).  Thus (a) and (b)
are equivalent.  Proposition~\ref{prop:as-approximation} shows that (b) and
(c) are equivalent.

(2)
By Corollary~\ref{cor:quotient-convex-structure}(2), every compact member of
\(\LQ(A)\) has the form \(\Gen(\add B)\) for a unique finite
\(\qcl\)-independent set \(B\subseteq\ind A\).  Part~(1) identifies these
compact subcategories with the functorially finite members of \(\LQ(A)\).
The indecomposable summands of a basic module \(M\) are
\(\qcl\)-independent exactly when \(M\) is \(\Gen\)-minimal, so
\[
 B\longmapsto\bigoplus_{X\in B}X
\]
identifies finite \(\qcl\)-independent sets with the isomorphism classes of
basic \(\Gen\)-minimal modules.  Under the bijection with quotient-closed
subcategories, Proposition~\ref{prop:split-projective-extreme} identifies
\(B\) with the splitting projectives of
\(\Gen(\add B)\).
\end{proof}

Applying Proposition~\ref{prop:compact-functorially-finite} to \(A^{\op}\)
gives the dual classification.  The submodule-closed subcategories that are
functorially finite are exactly
\(\Cogen M\) for basic \(\Cogen\)-minimal modules \(M\), unique up to
isomorphism.

\section{The Dynkin case and counts for one or two omitted indecomposables}
\label{sec:conjecture}

In this section we prove the quotient--submodule equidistribution conjecture
for path algebras of Dynkin quivers. We then count the subcategories omitting
one or two indecomposable modules.

\subsection{Boundary coefficients and the Dynkin formula}

We first recall the working conjecture.  For a representation-finite
finite-dimensional algebra \(A\) over a field, the quotient--submodule
equidistribution conjecture (Conjecture~\ref{conj:equidistribution})
asserts the equality of size generating polynomials
\[
 \RQ(A;q)=\RS(A;q).
\]

\begin{remark}\label{rem:opposite-duality}
The duality \(\DDual\) gives a contravariant equivalence between
\(\mod A\) and \(\mod A^{\op}\) which preserves size, and hence an isomorphism
\[
 \LQ(A^{\op})\xrightarrow{\sim}\LS(A).
\]
Consequently, the equidistribution conjecture holds whenever \(A\) is
Morita equivalent to \(A^{\op}\).
\end{remark}

\begin{lemma}\label{lem:boundary-coefficients}
Let \(A\) be a finite-dimensional algebra with \(r\) simple modules.
\begin{enumerate}
\item The quotient-closed subcategories of size one are precisely \(\add S\)
with \(S\) simple.  The submodule-closed subcategories of size one are also
precisely \(\add S\) with \(S\) simple.
\item The set \(\ind A\setminus\{X\}\) is quotient-closed if and only if
\(X\) is projective, and \(\ind A\setminus\{X\}\) is submodule-closed if
and only if \(X\) is injective.
\item If \(A\) is representation-finite and \(N=|\ind A|\), then
\[
 |\LQ^0(A)|=|\LS^0(A)|=|\LQ^N(A)|=|\LS^N(A)|=1
\]
and
\[
 |\LQ^1(A)|=|\LS^1(A)|=|\LQ^{N-1}(A)|=|\LS^{N-1}(A)|=r.
\]
\end{enumerate}
\end{lemma}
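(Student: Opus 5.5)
For part (1), I will argue directly. Let \(\CC\) be quotient-closed with \(\ind\CC=\{X\}\). Then \(\top X\), being a nonzero factor of \(X\), lies in \(\CC\), so every simple summand of \(\top X\) is isomorphic to \(X\). Hence \(X\) itself is simple. Conversely, for \(S\) simple, every factor module of \(S^n\) is semisimple and isomorphic to \(S^m\). So \(\add S\) is quotient-closed, and it has size one. The statement for submodule-closed subcategories is dual: use \(\soc X\) in place of \(\top X\), or apply the quotient statement to \(A^{\op}\) via \(\DDual\), as in Remark~\ref{rem:opposite-duality}, noting that \(\DDual\) sends simples to simples.

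For part (2), I will use Lemma~\ref{lem:indecomposable-quotient-test} with \(K=\ind A\setminus\{X\}\) and \(D=\{X\}\). This reduces the claim to: \(X\notin\Gen(\add(\ind A\setminus\{X\}))\) if and only if \(X\) is projective.

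Suppose first that \(X\) is projective. Any surjection \(M\twoheadrightarrow X\) splits, so \(X\) is a direct summand of \(M\). By Krull--Schmidt, \(M\) cannot lie in \(\add(\ind A\setminus\{X\})\).

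Suppose instead that \(X\) is not projective. Take the projective cover \(P\twoheadrightarrow X\). Since \(X\) is not projective, this map does not split, so \(X\) is not a summand of \(P\) and \(P\in\add(\ind A\setminus\{X\})\). Thus \(X\) lies in \(\Gen\) of the rest.

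The submodule statement again follows by duality, since \(\DDual\) exchanges projectives and injectives of \(A^{\op}\) and \(A\).

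For part (3), the coefficients at \(0\) and \(N\) are each \(1\), given by the zero subcategory and \(\mod A\). The count \(r\) at size one follows from (1), because there are exactly \(r\) simple modules up to isomorphism. For size \(N-1\), every subcategory of that size has the form \(\ind A\setminus\{X\}\). By (2), these are in bijection with the indecomposable projectives, of which there are \(r\), and dually with the indecomposable injectives, of which there are also \(r\).

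No step is a real obstacle. The only point needing care is the nonprojective case of (2): one must note that the projective cover of a nonprojective indecomposable \(X\) has no summand isomorphic to \(X\). This holds because any such summand would be projective, forcing \(X\) to be projective.
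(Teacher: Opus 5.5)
Your proof is correct and takes essentially the paper's route. Parts (1) and (3) match the paper's argument. For (2) you unwind, in the special case \(\CC=\mod A\), the paper's appeal to Proposition~\ref{prop:split-projective-extreme}: the projective cover directly supplies a surjection onto \(X\) from \(\add(\ind A\setminus\{X\})\), so no Auslander--Smal{\o} elimination is needed.

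One wording point: in the nonprojective case, it is not the nonsplitting of \(P\twoheadrightarrow X\) that keeps \(X\) out of \(\add P\). The reason is the fact you give at the end, that every summand of a projective module is projective.
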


\begin{proof}
(1)
If \(X\) is not simple, it has a simple quotient and a simple submodule,
neither isomorphic to \(X\).  Conversely, \(\add X\) is quotient-closed and
submodule-closed when \(X\) is simple.

(2)
Apply Proposition~\ref{prop:split-projective-extreme} to \(\CC=\mod A\).
The splitting projectives in \(\mod A\) are precisely the indecomposable
projective modules, so the quotient-closed assertion follows.  Applying the
assertion to \(A^{\op}\) and using \(\DDual\) gives
\(\ind A\setminus\{X\}\) is submodule-closed if and only if \(X\) is
injective.

(3)
The unique subcategories of sizes zero and \(N\) are \(0\) and \(\mod A\),
respectively, and both are quotient-closed and submodule-closed.  Therefore
\[
 |\LQ^0(A)|=|\LS^0(A)|=|\LQ^N(A)|=|\LS^N(A)|=1.
\]
Part~(1) gives
\[
 |\LQ^1(A)|=|\LS^1(A)|=r.
\]
By part~(2), the bijections from the simple modules to the indecomposable
projective and indecomposable injective modules give
\[
 |\LQ^{N-1}(A)|=|\LS^{N-1}(A)|=r.
\]
\end{proof}

\begin{example}\label{ex:inward-a3}
Let
\[
 Q:\quad 1\longrightarrow2\longleftarrow3,\qquad A=\kk Q.
\]
The six indecomposable modules are
\[
 S_1,\ S_2,\ S_3,\ M_{12},\ M_{23},\ M_{123},
\]
where the subscript records the support.  The indecomposable projectives are
\[
 \{M_{12},S_2,M_{23}\},
\]
and the indecomposable injectives are
\[
 \{S_1,M_{123},S_3\}.
\]
The two size generating polynomials of \(A\) are
\begin{equation}\label{eq:inward-a3-polynomial}
 \RQ(A;q)=\RS(A;q)
 =1+3q+5q^2+6q^3+5q^4+3q^5+q^6.
\end{equation}

Among the six elements of \(\LQ^3(A)\), one has exactly one lower cover,
whereas no element of \(\LS^3(A)\) has exactly one lower cover.  Hence the
two lattices are not isomorphic.  In particular, no permutation of
\(\ind A\) maps \(\LQ(A)\) onto \(\LS(A)\).
\end{example}

Let \(Q\) be a Dynkin quiver, and let
\(W_Q\) be its Weyl group.  For each vertex \(i\), put
\(P_i=e_i\kk Q\), and number the vertices so that
\(i<j\) implies \(\Hom_{\kk Q}(P_j,P_i)=0\).
Let \(s_i\) be the simple reflection at vertex \(i\), let \(r=|Q_0|\),
and put \(c=s_1\cdots s_r\).  Match the \(k\)-th occurrence of \(s_i\)
in the infinite word \(c^\infty=s_1\cdots s_rs_1\cdots s_r\cdots\) with
\(\tau^{-(k-1)}P_i\).
Since \(Q\) is Dynkin, \(\tau^{-(k-1)}P_i=0\) for all large \(k\).  After
the letters matched with zero modules are omitted, the remaining letters
form a finite word with \(N=|\ind\kk Q|\) letters, matched bijectively
with \(\ind\kk Q\).  The following is the Dynkin
specialization of the Oppermann--Reiten--Thomas correspondence.

\begin{proposition}[{\cite[Theorem~2.2]{ORT}}]
\label{prop:ort-dynkin}
For \(w\in W_Q\), take the reduced subword of \(c^\infty\) whose increasing
set of positions is lexicographically least among those representing
\(w\).  This subword uses only positions matched with nonzero modules.  Let
\(\XX(w)\) be the set of indecomposable \(\kk Q\)-modules matched with its
positions.
Then
\[
 w\longmapsto\add\bigl(\ind\kk Q\setminus\XX(w)\bigr)
\]
is a bijection from \(W_Q\) to \(\LQ(\kk Q)\).
\end{proposition}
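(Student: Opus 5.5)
We plan to prove the correspondence by induction along the adapted word, using the BGP reflection functor at the sink \(1\) to pass from \(Q\) to \(Q'=\sigma_1Q\). Since \(i<j\) implies \(\Hom(P_j,P_i)=0\), vertex \(1\) is a sink, so \(P_1=S_1\) is simple projective. The infinite word \(c^\infty\) for \(Q\), with its first letter removed, is the adapted infinite word \(c'^\infty\) for \(Q'\), where \(c'=s_2\cdots s_rs_1\). The matching of positions with modules is compatible with the reflection functor \(R^+_1\colon\mod\kk Q\to\mod\kk Q'\). Concretely, \(R^+_1\) induces a bijection \(\ind\kk Q\setminus\{S_1\}\to\ind\kk Q'\setminus\{S'_1\}\) that sends \(\tau^{-m}P_i\) to the module matched with the corresponding later position of \(c'^\infty\). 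Here \(S'_1\) is the simple injective of \(Q'\); it is matched with the last occurrence of \(s_1\), and it has no preimage. We will first record the standard fact, due to Gabriel and Auslander--Reiten theory, that the truncated word of length \(N\) is a reduced expression of the longest element \(w_0\). This yields the claim that lex-least subwords only use nonzero positions, and it lets us treat every \(w\) as a subword of a fixed reduced word.

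We then split according to whether \(S_1\in\CC\) for \(\CC\in\LQ(\kk Q)\), and whether \(s_1\) is a left descent of \(w\).

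\textbf{Case A: \(\ell(s_1w)<\ell(w)\).} The lexicographically least subword for \(w\) uses position \(1\), since greedily taking the earliest usable letter is what lex-least means. After that letter, the remainder is the lex-least subword of \(c'^\infty\) for \(s_1w\). On the module side this case corresponds to \(S_1\notin\CC\).

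\textbf{Case B: \(\ell(s_1w)>\ell(w)\).} Here position \(1\) is skipped, so \(S_1\in\CC\). The lex-least subword for \(w\) is a lex-least subword of \(c'^\infty\) that avoids the final \(s_1\), which is matched with \(S'_1\).

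To make this induction go through we will strengthen the statement. It will be phrased for all \(Q'\) reachable by sink reflections, and for subcategories of the \(\tau\)-closed piece of the preprojective component on which the reflections act. This stays finite because \(Q\) is Dynkin.

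The module-theoretic input is the following pair of claims.
\begin{enumerate}
\item For \(\CC\) not containing \(S_1\), \(\CC\) is quotient-closed if and only if \(R^+_1\CC\) is quotient-closed in \(\mod\kk Q'\) and does not contain \(S'_1\).
\item For \(\CC\ni S_1\), \(\CC\) is quotient-closed if and only if \(R^+_1(\CC\setminus\{S_1\})\cup\{S'_1\}\) is quotient-closed.
\end{enumerate}
Both should follow from the facts that \(R^+_1\) is an equivalence between \(\mod\kk Q\) with \(S_1\) removed and \(\mod\kk Q'\) with \(S'_1\) removed, and that \(\Gen\) can be tested by surjections. For surjections between modules without \(S_1\)-summands, \(R^+_1\) turns a surjection into a map whose cokernel lies in \(\add S'_1\). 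We will combine this with Lemma~\ref{lem:indecomposable-quotient-test}, which says that quotient closure only needs to be checked on omitted indecomposables.

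Given these two claims, induction on \(N\) (or on the position along \(c^\infty\)) matches the two cases on each side. The bijection then follows, with the statement that \(\XX(w)\) has exactly \(\ell(w)\) elements coming for free.

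The main obstacle is making the behaviour of \(\Gen\) under \(R^+_1\) precise. A surjection \(M\twoheadrightarrow X\) with \(M\) having no \(S_1\)-summand need not stay surjective after reflection. Likewise, the case \(S_1\in\CC\) requires controlling modules generated with the help of \(S_1\). Our first attempt will be to argue via the trace \(\tr_{\add K}(X)\) and dimension vectors at vertex \(1\): reflecting changes only the vertex-\(1\) component, by an explicit formula. If this becomes messy, we will fall back on citing \cite[Theorem~2.2]{ORT}, which is exactly this statement.
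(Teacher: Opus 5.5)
Your plan has a genuine gap: the two module-theoretic claims it rests on are false, already in type $A_2$. Take $Q\colon 2\to 1$, so that $\ind\kk Q=\{S_1=P_1,P_2,S_2\}$, and $Q'\colon 1\to 2$. Then $R^+_1P_2=S'_2$ is simple projective, and $R^+_1S_2=P'_1$ is projective-injective with top $S'_1$. The surjection $P_2\twoheadrightarrow S_2$ becomes the inclusion $S'_2\hookrightarrow P'_1$.

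\textbf{Claim (1) fails in both directions.}
\begin{itemize}
\item $\add S_2$ is quotient-closed, but $\add P'_1$ is not.
\item $\add P_2$ is not quotient-closed, but $\add S'_2$ is, and it does not contain $S'_1$.
\end{itemize}

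\textbf{The backward direction of claim (2) fails.} $\add(S_1\oplus P_2)$ is not quotient-closed, whereas $\add(S'_2\oplus S'_1)$ is.

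The cause is the obstacle you flag, and adjoining $S'_1$ does not cure it. For $X,Y$ without $S_1$-summands, $Y\in\Gen X$ holds if and only if $R^+_1Y/\tr_{\add R^+_1X}(R^+_1Y)\in\add S'_1$. Because $S'_1$ is injective rather than projective, this relative condition is strictly weaker than $R^+_1Y\in\Gen(R^+_1X\oplus S'_1)$. So quotient closure in $\mod\kk Q$ does not translate into quotient closure in $\mod\kk Q'$.

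The word-side bookkeeping also does not match your two module-side cases.
\begin{itemize}
\item The statement says the lex-least subword for $w$ avoids the zero positions of $c^\infty$. The position of $S'_1$ in $c'^\infty$ corresponds to such a zero position. In the example, $c'^\infty=s_2s_1s_2\cdots$ is matched with $S'_2,P'_1,S'_1$, so $S'_1$ sits at an $s_2$, not at a final $s_1$.
\item Consequently Case A ($w\mapsto s_1w$) and Case B ($w\mapsto w$) both land in $\{u\mid\ell(s_1u)>\ell(u)\}$, and $S'_1$ is \emph{kept} in both cases. Your claim (1) instead removes it.
\item Under the correspondence for $Q'$, this set of $u$ indexes only a proper subset of the quotient-closed subcategories containing $S'_1$. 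For example, $\add(S'_2\oplus S'_1)$ corresponds to $u=s_1$.
\end{itemize}
Even with corrected claims, you would therefore need an intrinsic description of this image. You would also need an induction parameter that actually decreases, since $Q'$ has the same $N$. The ``strengthened statement on a $\tau$-closed piece'' is where this relative closure notion would have to live, but you have not formulated it. Citing \cite{ORT} is legitimate, and it is what the paper does, but it is not a proof along your lines.

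Separately, reducedness of the truncated word does not by itself show that the lex-least subword of the infinite word $c^\infty$ uses only nonzero positions. That step needs the fact that the lex-first subword for every $u$ is contained in the one for $w_0$. The paper takes this from Armstrong's comparison theorem in Remark~\ref{rem:directed-recovers-ort}.

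For comparison, the paper recovers the statement there from Theorem~\ref{thm:directed-main}, without reflection functors. The relative problem is handled by mixed coordinates and the suffix truncations $D[a]$, with quotient closure equivalent to nonnegativity (Theorem~\ref{thm:directed-sorting}).
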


\begin{corollary}\label{cor:ort-dynkin-length}
For every \(w\in W_Q\), the subcategory
\(\add(\ind\kk Q\setminus\XX(w))\) omits exactly \(\ell(w)\)
indecomposable modules.
\end{corollary}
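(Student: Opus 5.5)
We need to prove: the subcategory \(\add(\ind kQ\setminus\XX(w))\) omits exactly \(\ell(w)\) indecomposables. Since \(\XX(w)\) is the set of modules matched with the positions of a reduced subword representing \(w\), that subword has exactly \(\ell(w)\) letters. The subword uses only positions matched with nonzero modules (Proposition~\ref{prop:ort-dynkin}), and the matching between those positions and \(\ind kQ\) is bijective. Hence \(|\XX(w)|=\ell(w)\), and the complement omits exactly \(\ell(w)\) indecomposables.

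Concretely, I would carry out the steps in this order.

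First, by definition a reduced subword representing \(w\) is an expression \(s_{j_1}\cdots s_{j_m}=w\) with \(m=\ell(w)\). Its set of positions \(p_1<\cdots<p_m\) in \(c^\infty\) therefore has cardinality \(\ell(w)\).

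Second, I would invoke the clause of Proposition~\ref{prop:ort-dynkin} stating that all \(p_t\) are matched with nonzero modules. Then I would use the fact, recorded just before that proposition, that after discarding positions matched with zero, the remaining positions are matched bijectively with \(\ind kQ\). Consequently the map \(\{p_1,\ldots,p_m\}\to\XX(w)\) is a bijection, giving \(|\XX(w)|=\ell(w)\).

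Third, since \(\XX(w)\subseteq\ind kQ\), the indecomposables of \(\add(\ind kQ\setminus\XX(w))\) are exactly \(\ind kQ\setminus\XX(w)\). So the omitted set is \(\XX(w)\), of size \(\ell(w)\).

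There is no real obstacle. The only point needing care is injectivity of the matching on nonzero positions: distinct positions should give nonisomorphic modules. This holds because for Dynkin \(Q\) every indecomposable is uniquely of the form \(\tau^{-(k-1)}P_i\) with \((i,k)\) determined, the preprojective component being the whole AR quiver. This is exactly what the preceding paragraph asserts, so I would simply cite it.
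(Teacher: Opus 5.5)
Your proposal is correct and follows essentially the same route as the paper: the reduced subword defining \(\XX(w)\) has \(\ell(w)\) positions, and these positions are matched with distinct indecomposable modules. Your extra justification of injectivity via the uniqueness of \((i,k)\) in \(\tau^{-(k-1)}P_i\) is a reasonable unpacking of the bijectivity statement the paper records just before Proposition~\ref{prop:ort-dynkin}.
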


\begin{proof}
The reduced subword defining \(\XX(w)\) has \(\ell(w)\) positions, and these
positions are matched with distinct indecomposable modules.
\end{proof}

Remark~\ref{rem:directed-recovers-ort} shows that
Theorem~\ref{thm:directed-main} specializes to
Proposition~\ref{prop:ort-dynkin}, and hence gives another proof of the
proposition.

\begin{lemma}[{\cite[Proposition~2.3.2(ii), Corollary~2.3.3(i), and
Theorem~7.1.5]{BjornerBrenti}}]
\label{lem:weyl-length-poincare}
Let \(W\) be a finite Weyl group of rank \(r\), with longest element \(w_0\)
and exponents \(m_1,\ldots,m_r\).  Then the following statements hold.
\begin{enumerate}
\item For every \(w\in W\),
\[
 \ell(w_0w)=\ell(w_0)-\ell(w).
\]
\item One has
\[
 \sum_{w\in W}q^{\ell(w)}
 =\prod_{i=1}^{r}(1+q+\cdots+q^{m_i}).
\]
\end{enumerate}
\end{lemma}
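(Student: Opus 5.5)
The lemma is a citation of standard facts about finite Coxeter groups, so I would simply assemble it from the references rather than prove anything new.

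For part~(1), I would use the characterization of the longest element \(w_0\) as the unique element with \(\ell(w_0s)<\ell(w_0)\) for every simple reflection \(s\). Equivalently, \(w_0\) sends every positive root to a negative root, and its length is the number \(|\Phi^+|\) of positive roots. Since \(\ell(w)\) equals the number of positive roots sent to negative roots by \(w\), one can count for \(w_0w\) as follows. Write \(\Phi^+=\mathrm{Inv}(w)\sqcup(\Phi^+\setminus\mathrm{Inv}(w))\), where \(\mathrm{Inv}(w)=\{\alpha\in\Phi^+ : w\alpha<0\}\).
\begin{itemize}
\item A positive root \(\alpha\) with \(w\alpha>0\) is sent to a negative root by \(w_0w\), since \(w_0\) negates positivity.
\item A positive root \(\alpha\) with \(w\alpha<0\) is sent to a positive root by \(w_0w\).
\end{itemize}
Hence \(\ell(w_0w)=|\Phi^+|-\ell(w)=\ell(w_0)-\ell(w)\). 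This is exactly \cite[Proposition~2.3.2(ii)]{BjornerBrenti}. The only care needed is that the root-system description of length applies to Weyl groups, which are finite reflection groups.

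For part~(2), I would quote the classical factorization of the Poincar\'e series of a finite Weyl group in terms of its exponents, \cite[Theorem~7.1.5]{BjornerBrenti}. If one wanted to sketch it, the proof goes via Chevalley's theorem on polynomial invariants, with degrees \(d_i=m_i+1\). Comparing Hilbert series of the coinvariant algebra, together with the Solomon-type identity \(\sum_{w}q^{\ell(w)}=\prod_i(1-q^{d_i})/(1-q)\), gives the product \(\prod_i(1+q+\cdots+q^{m_i})\).

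That comparison of Hilbert series is the one nontrivial step. The text relies on the reference for it rather than reproving it, which is appropriate here.
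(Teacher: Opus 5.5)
Your proposal is correct and follows the paper's approach: the paper gives no argument of its own here and simply cites Bj\"orner--Brenti for both parts, and your inversion-set count for (1) is the standard argument behind that citation. Note that your sketch for (2) is not an independent argument, since the displayed identity $\sum_w q^{\ell(w)}=\prod_i(1-q^{d_i})/(1-q)$ is just statement (2) rewritten with $d_i=m_i+1$; so, exactly as in the paper, part (2) rests entirely on \cite[Theorem~7.1.5]{BjornerBrenti}.
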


\begin{theorem}\label{thm:dynkin}
Let \(Q\) be a Dynkin quiver.  Then
\[
 \RQ(\kk Q;q)=\RS(\kk Q;q)
 =\sum_{w\in W_Q}q^{\ell(w)}
 =\prod_{i=1}^{r}(1+q+\cdots+q^{m_i}),
\]
where \(r=|Q_0|\) and \(m_1,\ldots,m_r\) are the exponents of \(W_Q\).
\end{theorem}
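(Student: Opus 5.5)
The plan is to compute \(\RQ(\kk Q;q)\) directly from Proposition~\ref{prop:ort-dynkin} and Corollary~\ref{cor:ort-dynkin-length}. The map \(w\mapsto\add(\ind\kk Q\setminus\XX(w))\) is a bijection \(W_Q\to\LQ(\kk Q)\), and the image of \(w\) has size \(N-\ell(w)\). Summing over \(W_Q\) therefore gives
\[
 \RQ(\kk Q;q)=\sum_{w\in W_Q}q^{N-\ell(w)}.
\]
To convert this into \(\sum_w q^{\ell(w)}\), I will use Lemma~\ref{lem:weyl-length-poincare}(1). The map \(w\mapsto w_0w\) is a bijection of \(W_Q\) with \(\ell(w_0w)=\ell(w_0)-\ell(w)\), so the length generating function is palindromic of degree \(\ell(w_0)\). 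I also need \(\ell(w_0)=N\). This follows from the bijection itself: the full subcategory \(\mod\kk Q\) is omitted only by \(w=1\), and the zero subcategory (size \(0\)) corresponds to some \(w\) with \(\ell(w)=N\). Since every \(\ell(w)\le N\), the maximal length is \(N\), and it is attained only by \(w_0\). Alternatively, \(\ell(w_0)\) equals the number of positive roots, which equals \(N\) by Gabriel's theorem. Either way,
\[
 \sum_{w}q^{N-\ell(w)}=\sum_w q^{\ell(w_0w)}=\sum_w q^{\ell(w)}.
\]
The product formula then follows from Lemma~\ref{lem:weyl-length-poincare}(2).

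For \(\RS\), I will use Remark~\ref{rem:opposite-duality}. The duality gives a size-preserving bijection \(\LQ(\kk Q^{\op})\cong\LS(\kk Q)\), so \(\RS(\kk Q;q)=\RQ(\kk Q^{\op};q)\). Here \(\kk Q^{\op}\cong\kk(Q^{\op})\), where \(Q^{\op}\) is the opposite quiver. It is again Dynkin with the same underlying graph, hence the same Weyl group \(W_Q\) and the same \(N\). Applying the first part to \(Q^{\op}\) gives the same polynomial \(\sum_{w\in W_Q}q^{\ell(w)}\).

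The only delicate point is identifying \(\ell(w_0)\) with \(N\), so that the degree reversal is exactly the symmetry \(w\mapsto w_0w\). I would argue it via the bijection as above, avoiding any appeal to root-system facts. Since the subword defining \(\XX(w)\) is reduced and uses distinct positions among the \(N\) nonzero ones, every length is at most \(N\). The zero subcategory lies in the image, so some element has length exactly \(N\). In a finite Coxeter group the longest element is the unique element of maximal length, which gives \(\ell(w_0)=N\).
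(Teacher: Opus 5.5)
Your proposal is correct and follows the paper's proof. You obtain \(\sum_{w}q^{N-\ell(w)}\) from Proposition~\ref{prop:ort-dynkin} and Corollary~\ref{cor:ort-dynkin-length}, convert it by \(w\mapsto w_0w\), handle \(\RS\) through duality with \(Q^{\op}\), and finish with Lemma~\ref{lem:weyl-length-poincare}(2). The one small difference is that you derive \(\ell(w_0)=N\) from the bijection itself (the zero subcategory forces an element of length \(N\), and every length is at most \(N\)), whereas the paper cites Gabriel's theorem and the inversion-set formula; both arguments are valid.
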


\begin{proof}
Put \(N=|\ind\kk Q|\), and let \(w_0\) be the longest element of \(W_Q\).
Proposition~\ref{prop:ort-dynkin} and
Corollary~\ref{cor:ort-dynkin-length} give
\[
 \RQ(\kk Q;q)=\sum_{w\in W_Q}q^{N-\ell(w)}.
\]
Gabriel's theorem
\cite[Theorem~VII.5.10(b),(c)]{AssemSimsonSkowronski} and the
inversion-set formula for Coxeter length
\cite[Proposition~4.4.4]{BjornerBrenti} give
\[
 N=|\Phi_Q^+|=\ell(w_0).
\]

Reversing the arrows of \(Q\) does not change its Weyl group or length
function.  Proposition~\ref{prop:ort-dynkin} and
Corollary~\ref{cor:ort-dynkin-length} for \(Q^{\op}\) therefore give
\[
 \RQ(\kk Q^{\op};q)=\sum_{w\in W_Q}q^{N-\ell(w)}.
\]
The size-preserving duality in Remark~\ref{rem:opposite-duality} gives
\[
 \LQ(\kk Q^{\op})\longrightarrow\LS(\kk Q).
\]
Consequently,
\[
 \RS(\kk Q;q)=\sum_{w\in W_Q}q^{N-\ell(w)}.
\]
Lemma~\ref{lem:weyl-length-poincare}(1) and \(N=\ell(w_0)\) show that
multiplication by \(w_0\) sends length \(\ell(w)\) to \(N-\ell(w)\).  Hence
\[
 \RQ(\kk Q;q)=\RS(\kk Q;q)=\sum_{w\in W_Q}q^{\ell(w)}.
\]
Finally, Lemma~\ref{lem:weyl-length-poincare}(2) gives the product formula.
\end{proof}

For type \(A_3\), Theorem~\ref{thm:dynkin} gives
\((1+q)(1+q+q^2)(1+q+q^2+q^3)\), which is precisely
\eqref{eq:inward-a3-polynomial}.

\subsection{A formula for two omitted indecomposable modules}
\label{sec:two-omitted}

Let \(A\) be a finite-dimensional algebra.  We count the subcategories whose
complement in \(\ind A\) consists of two indecomposable modules.
The case of one omitted indecomposable is
Lemma~\ref{lem:boundary-coefficients}; with two omitted modules the counts
need not agree
(Example~\ref{ex:infinite-counterexample}).

Let \(\overline q_2(A)\) be the number of subsets
\(D\subseteq\ind A\) with \(|D|=2\) such that
\(\add(\ind A\setminus D)\) is quotient-closed.  Let
\(\overline s_2(A)\) be the number of subsets
\(D\subseteq\ind A\) with \(|D|=2\) such that
\(\add(\ind A\setminus D)\) is submodule-closed.  In both symbols, the
index \(2\) is
\(|D|\), the number of omitted indecomposables, rather than the size of
\(\add(\ind A\setminus D)\).
We use the Auslander--Reiten mesh correspondence in the following form.

\begin{lemma}[{\cite[Proposition~IV.3.8(a)]{AssemSimsonSkowronski}}]
\label{lem:ar-mesh-correspondence}
Let \(X,Y\in\ind A\), and suppose that \(Y\) is nonprojective.  Then there
is an arrow \(X\to Y\) in the Auslander--Reiten quiver if and only if there
is an arrow \(\tau Y\to X\).
\end{lemma}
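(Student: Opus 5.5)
The plan is to derive both directions from the almost split sequence ending at \(Y\). Since \(Y\) is indecomposable and nonprojective, there is an almost split sequence
\[
 0\longrightarrow\tau Y\xrightarrow{\ f\ }E\xrightarrow{\ g\ }Y\longrightarrow0,
\]
in which \(g\) is minimal right almost split and \(f\) is minimal left almost split. The idea is that this single middle term \(E\) controls the arrows on both sides.

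The first step uses the standard description of irreducible maps via minimal almost split maps. For an indecomposable \(X\), a morphism \(h\colon X\to Y\) is irreducible if and only if \(X\) is isomorphic to a direct summand of \(E\) and \(h\) is, up to this identification, the restriction of \(g\) to that summand (composed with an automorphism of the summand). Dually, a morphism \(\tau Y\to X\) is irreducible if and only if \(X\) is a direct summand of \(E\) and the morphism is a component of \(f\). Both facts follow from the factorization property of almost split maps together with the definition of irreducibility, as in \cite[Section~IV.1]{AssemSimsonSkowronski}.

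Combining the two descriptions, each of the conditions ``there is an arrow \(X\to Y\)'' and ``there is an arrow \(\tau Y\to X\)'' is equivalent to \(X\) being a direct summand of \(E\). Hence they are equivalent to each other. If arrow multiplicities are tracked, over an algebraically closed field both \(\dim\operatorname{irr}(X,Y)\) and \(\dim\operatorname{irr}(\tau Y,X)\) equal the multiplicity of \(X\) in \(E\). The lemma only needs the existence statement, however.

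I expect the only step with any real content to be the identification of irreducible maps with components of minimal almost split maps. This identification requires the minimality of \(g\) and \(f\) and a Krull--Schmidt argument. Since the lemma is quoted directly from \cite[Proposition~IV.3.8(a)]{AssemSimsonSkowronski}, I would simply invoke that reference rather than reprove it.
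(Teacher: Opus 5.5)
Your proposal is correct and follows the same route as the paper. The paper gives no proof of its own and cites \cite[Proposition~IV.3.8(a)]{AssemSimsonSkowronski}, whose argument is exactly yours: an arrow \(X\to Y\) and an arrow \(\tau Y\to X\) each exist precisely when \(X\) is a direct summand of the middle term of the almost split sequence ending at \(Y\). One minor imprecision, which does not affect the existence statement: when \(X\) occurs with multiplicity in \(E\), the irreducible maps \(X\to Y\) are the composites \(g\circ s\) with \(s\colon X\to E\) a section, not only restrictions of \(g\) to one fixed summand up to an automorphism.
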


\begin{proposition}\label{prop:cofinite-two}
Let \(r\) be the number of simple \(A\)-modules.  Put
\[
\begin{aligned}
 \beta_{\quot}(A)
 &=|\{\,(P,Z)\mid P\text{ indecomposable projective},\
 Z\text{ nonprojective},\ \text{there is an arrow }P\to Z\,\}|,\\
 \beta_{\sub}(A)
 &=|\{\,(Z,I)\mid Z\text{ noninjective},\
 I\text{ indecomposable injective},\ \text{there is an arrow }Z\to I\,\}|.
\end{aligned}
\]
Then the following equalities hold.
\begin{enumerate}
\item \(\displaystyle
 \overline q_2(A)=\binom r2+\beta_{\quot}(A)\).
\item \(\displaystyle
 \overline s_2(A)=\binom r2+\beta_{\sub}(A)\).
\end{enumerate}
\end{proposition}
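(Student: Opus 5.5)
Part (2) follows from part (1) applied to \(A^{\op}\) via the duality \(\DDual\). Indeed, \(\DDual\) exchanges projectives and injectives, reverses the arrows of the Auslander--Reiten quiver, and turns submodule-closed sets into quotient-closed ones; it therefore identifies \(\beta_{\sub}(A)\) with \(\beta_{\quot}(A^{\op})\). So it suffices to prove (1).

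For (1) I would classify the two-element sets \(D=\{X,Y\}\) for which \(\add(\ind A\setminus D)\) is quotient-closed, using Corollary~\ref{cor:finite-interval-chain}. Apply it to the closed sets \(\ind A\setminus D\subseteq\ind A\); the difference is finite even if \(A\) is representation-infinite. It gives an ordering, say \(X\) first, such that \(\ind A\setminus\{X\}\) is closed and \(Y\) is an extreme point of it. By Lemma~\ref{lem:boundary-coefficients}(2), \(X\) is projective. By Proposition~\ref{prop:split-projective-extreme}, \(Y\) is splitting projective in \(\CC_X:=\add(\ind A\setminus\{X\})\). Conversely, any such pair yields a quotient-closed complement. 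So \(\overline q_2(A)\) counts the sets \(\{P,Y\}\) with \(P\) indecomposable projective and \(Y\in\mathcal P_0(\CC_P)\), where an unordered set is counted once even if both orders work.

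Next I would determine \(\mathcal P_0(\CC_P)\), and this is the main step. I claim \(Y\in\mathcal P_0(\CC_P)\) if and only if either \(Y\) is projective, or \(Y\) is nonprojective and there is an arrow \(P\to Y\) in the Auslander--Reiten quiver.

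If \(Y\) is projective, every surjection onto \(Y\) splits, so \(Y\) is splitting projective. If \(Y\) is nonprojective, take the almost split sequence \(0\to\tau Y\to E\to Y\to0\).
\begin{itemize}
\item Suppose \(P\) is not a summand of \(E\). Then \(E\in\CC_P\) gives a nonsplit surjection onto \(Y\), so \(Y\notin\mathcal P_0(\CC_P)\).
\item Suppose \(P\) is a summand of \(E\), that is, there is an arrow \(P\to Y\). I would argue that \(Y\notin\Gen(\add(\ind A\setminus\{P,Y\}))\). Otherwise there is a surjection \(M\to Y\) with \(M\in\add(\ind A\setminus\{P,Y\})\). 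It is nonsplit, so it factors through \(E\to Y\), and we obtain \(M\to E=P^a\oplus E'\).
\item Consider the composite \(M\to E\to P^a\to Y\). Its image is a proper submodule of \(Y\), because \(P^a\to Y\) is a component of a right almost split map and is not surjective by itself. To make the argument watertight, I would instead use the trace criterion of Lemma~\ref{lem:indecomposable-quotient-test}. Any map \(M\to P\) from \(M\) without summand \(P\) lands in \(\rad P\). So the image of \(M\) in \(Y\) is contained in \(\mathrm{Im}(E'\to Y)+\mathrm{Im}(\rad P^a\to Y)\).
\item The remaining point is that this sum is a proper submodule of \(Y\). The obstacle is to justify this. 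I would try to pass to the top: \(\rad P\to P\to Y\) composed with \(Y\to\top Y\) should vanish modulo the image of \(E'\). Failing that, I would use a minimal right \(\add(\ind A\setminus\{P,Y\})\)-approximation together with Lemma~\ref{lem:as-remove-target}.
\end{itemize}
Since \(Y\) is then not generated by \(\ind A\setminus\{P,Y\}\), Proposition~\ref{prop:split-projective-extreme} gives \(Y\in\mathcal P_0(\CC_P)\), proving the claim.

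Finally, the count. Pairs of two projectives give \(\binom r2\) sets. Each set \(\{P,Z\}\) with \(Z\) nonprojective and an arrow \(P\to Z\) can arise only in the order \(P\) first, because \(Z\) is not projective and cannot be deleted first. Hence these sets are in bijection with the pairs counted by \(\beta_{\quot}(A)\). Summing gives \(\overline q_2(A)=\binom r2+\beta_{\quot}(A)\).
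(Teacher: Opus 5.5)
You never prove the step you yourself call the main one, so there is a genuine gap. The rest is sound and matches the paper. That includes the duality argument for (2), and the reduction via Corollary~\ref{cor:finite-interval-chain}, Lemma~\ref{lem:boundary-coefficients}(2) and Proposition~\ref{prop:split-projective-extreme} to pairs \(\{P,Y\}\) with \(Y\in\mathcal P_0(\CC_P)\). It also includes the case of projective \(Y\), the case where \(P\) is not a summand of \(\theta Y\), and the final count.

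The missing step is that an arrow \(P\to Y\), with \(Y\) nonprojective, forces \(Y\) to be splitting projective in \(\CC_P\). Without it you only get \(\overline q_2(A)\leq\binom r2+\beta_{\quot}(A)\). Your intermediate attempts do not close it:
\begin{itemize}
\item The claim that \(P^a\to Y\) is not surjective on its own is false. For the path algebra of \(1\to2\), the map \(P_1\to S_1\) is the entire right almost split map.
\item The trace/top reformulation only moves the problem. Since \(f(\rad P)\subseteq\rad Y\) for every \(f\colon P\to Y\), Nakayama's lemma shows your sum is proper exactly when \(E'\to Y\) is not surjective. That is just a restatement of what you want, because \(E'\in\CC_P\) has no summand \(Y\).
\end{itemize}

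The missing idea is to use projectivity of \(P\) to lift the irreducible map, rather than to factor the surjection through the almost split map. Choose an irreducible \(f\colon P\to Y\) and a surjection \(g\colon M\twoheadrightarrow Y\) with \(M\in\CC_P\). Projectivity gives \(h\colon P\to M\) with \(gh=f\). Irreducibility of \(f\) forces \(h\) to be a section or \(g\) to be a retraction. The first is impossible because \(P\) is not a summand of \(M\), so \(g\) splits and \(Y\in\mathcal P_0(\CC_P)\). This is exactly the paper's argument. It needs no almost split sequence, trace computation, or approximation argument in this direction.
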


\begin{proof}
(1)
Put \(E=\ind A\).  Suppose that \(E\setminus D\) is quotient-closed for a
two-element set \(D\).  Applying
Corollary~\ref{cor:finite-quotient-interval-chain} to
\(\add(E\setminus D)\subseteq\mod A\) gives an ordering \(D=\{P,Z\}\)
such that \(P\) is splitting projective in \(\mod A\) and \(Z\) is
splitting projective in \(\CC_P=\add(E\setminus\{P\})\).
In particular, \(P\) is projective.  Conversely, if both members of \(D\)
are projective, then \(E\setminus D\) is the intersection of two
quotient-closed one-point complements, and hence is quotient-closed.

It remains to characterize the pairs \(D=\{P,Z\}\), where \(P\) is
projective and \(Z\) is nonprojective.  Put
\(\CC_P=\add(E\setminus\{P\})\).
Lemma~\ref{lem:boundary-coefficients}(2) shows that \(\CC_P\) is
quotient-closed.  Proposition~\ref{prop:split-projective-extreme} gives
\begin{equation}\label{eq:cofinite-two-splitting}
 \add(E\setminus\{P,Z\})\text{ is quotient-closed}
 \quad\Longleftrightarrow\quad
 Z\text{ is splitting projective in }\CC_P.
\end{equation}

Let \(\theta Z\to Z\) be the minimal right almost split map.  If \(Z\) is
splitting projective in \(\CC_P\), then the nonsplit surjection
\(\theta Z\to Z\) cannot have \(\theta Z\in\CC_P\).  Hence \(P\) is a summand of
\(\theta Z\), or equivalently, there is an arrow \(P\to Z\) in the
Auslander--Reiten quiver.

Conversely, suppose that there is an arrow \(P\to Z\), and choose an
irreducible map \(f:P\to Z\). For a surjection \(g:M\twoheadrightarrow
Z\) with \(M\in\CC_P\), projectivity gives a factorization
\[
 P\longrightarrow M\xrightarrow{g} Z
\]
of \(f\).  Since \(f\) is irreducible, the first map is a section or \(g\)
is a retraction.  The first possibility would make \(P\) a summand of an
object of \(\CC_P\), which is impossible.  Thus \(g\) is a retraction, so
\(Z\) is splitting projective in \(\CC_P\).  By
\eqref{eq:cofinite-two-splitting}, the complement of \(\{P,Z\}\) is
quotient-closed.  The allowed sets \(D\) are therefore the pairs of
indecomposable projectives and the pairs \(\{P,Z\}\) counted by
\(\beta_{\quot}(A)\), which proves the formula for \(\overline q_2(A)\).

(2)
Apply (1) to \(A^{\op}\) and then apply \(\DDual\).  The duality identifies
\(\overline q_2(A^{\op})\) with
\(\overline s_2(A)\) and \(\beta_{\quot}(A^{\op})\) with
\(\beta_{\sub}(A)\), which proves the formula for
\(\overline s_2(A)\).
\end{proof}

For \(M\in\mod A\), let \(\ind M\) be the subset of \(\ind A\) consisting
of the indecomposable direct summands of \(M\).

\begin{corollary}\label{cor:cofinite-two-rad-soc}
The quantities \(\overline q_2(A)\) and \(\overline s_2(A)\) are finite, and
\[
 \beta_{\quot}(A)
 =\sum_P|\ind(\rad P)|,
 \qquad
 \beta_{\sub}(A)
 =\sum_I|\ind(I/\soc I)|.
\]
Here \(P\) ranges over the indecomposable projective modules and \(I\)
ranges over the indecomposable injective modules.
\end{corollary}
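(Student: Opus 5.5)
The plan is to derive both formulas directly from the standard description of arrows in the Auslander--Reiten quiver that start at a projective or end at an injective. We then read off finiteness from Proposition~\ref{prop:cofinite-two}.

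For \(\beta_{\quot}(A)\), fix an indecomposable projective \(P\). The number of pairs \((P,Z)\) counted for this \(P\) is the number of nonprojective indecomposables \(Z\) with an arrow \(P\to Z\). By \cite[Proposition~IV.3.5]{AssemSimsonSkowronski} or the mesh correspondence, the arrows ending at \(P\) are exactly \(Y\to P\) with \(Y\) an indecomposable summand of \(\rad P\), since the inclusion \(\rad P\to P\) is minimal right almost split. Thus I want a bijection between nonprojective \(Z\) with an arrow \(P\to Z\) and the indecomposable summands of \(\rad P\).

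I would use Lemma~\ref{lem:ar-mesh-correspondence}. If \(Z\) is nonprojective, an arrow \(P\to Z\) exists if and only if there is an arrow \(\tau Z\to P\). This holds if and only if \(\tau Z\in\ind(\rad P)\). Conversely, take \(Y\in\ind(\rad P)\). Then \(Y\) is not injective: otherwise \(Y\) would be a direct summand of \(P\), since an injective submodule splits off, contradicting that \(Y\subseteq\rad P\subsetneq P\) with \(P\) indecomposable. So \(Z=\tau^{-1}Y\) is a nonprojective indecomposable with \(\tau Z=Y\). Since \(\tau\) is a bijection from nonprojective to noninjective indecomposables, the assignment \(Z\mapsto\tau Z\) is a bijection onto \(\ind(\rad P)\).

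Summing over the finitely many \(P\) gives the formula for \(\beta_{\quot}(A)\), and this number is finite. Together with Proposition~\ref{prop:cofinite-two}(1), this shows \(\overline q_2(A)\) is finite. For \(\beta_{\sub}(A)\), apply the statement for \(A^{\op}\) and use \(\DDual\). This sends \(\rad\) of projectives to \(I/\soc I\) of injectives, and arrows \(P\to Z\) to arrows \(\DDual Z\to\DDual P\).

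The only delicate point is the noninjectivity of the summands of \(\rad P\), which is needed for the surjectivity of the bijection. The argument above handles it. Also, the counts are of pairs rather than arrows with multiplicity, so we only need the sets \(\ind(\cdot)\), which matches the statement.
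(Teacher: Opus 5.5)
Your proposal is correct and follows the paper's proof essentially step for step. Both arguments use the minimal right almost split inclusion \(\rad P\to P\), the mesh correspondence \(P\to Z\iff\tau Z\to P\), the noninjectivity of the summands of \(\rad P\) (an injective submodule would split), the bijectivity of \(\tau\), duality for \(\beta_{\sub}(A)\), and finiteness via Proposition~\ref{prop:cofinite-two}; you state finiteness explicitly only for \(\overline q_2(A)\), but \(\overline s_2(A)\) follows in the same way from part~(2) of that proposition.
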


\begin{proof}
For an indecomposable projective module \(P\), the inclusion
\(\rad P\to P\) is minimal right almost split.  Hence the indecomposable
summands of \(\rad P\) are precisely the modules \(X\) for which
there is an arrow \(X\to P\).  No such summand is injective, since an injective
submodule of \(P\) would split.  The Auslander--Reiten translation \(\tau\)
gives a bijection from the nonprojective indecomposables to the noninjective
indecomposables, so
Lemma~\ref{lem:ar-mesh-correspondence} gives
\[
 |\{\,Z\in\ind A\mid Z\text{ nonprojective and }
       \text{there is an arrow }P\to Z\,\}|=|\ind(\rad P)|.
\]
Summing over the indecomposable projective modules gives the formula for
\(\beta_{\quot}(A)\).  Applying the formula for \(\beta_{\quot}(A)\) to
\(A^{\op}\) gives the formula for \(\beta_{\sub}(A)\).  The two right-hand
sums in Corollary~\ref{cor:cofinite-two-rad-soc} are finite, so
Proposition~\ref{prop:cofinite-two} shows that \(\overline q_2(A)\) and
\(\overline s_2(A)\) are finite.
\end{proof}

\begin{example}\label{ex:infinite-counterexample}
Let \(\kk\) be any field and consider
\[
 Q:\quad 1\xrightarrow{a}2
   \mathrel{\substack{\xrightarrow{\ b\ }\\[-.6ex]\xrightarrow[\ c\ ]{}}}3,
 \qquad A=\kk Q/(ac),
\]
where the path \(a\) followed by \(c\) is zero.  A direct calculation gives
\[
 \sum_P|\ind(\rad P)|=2,
 \qquad
 \sum_I|\ind(I/\soc I)|=3.
\]
Thus Corollary~\ref{cor:cofinite-two-rad-soc} gives
\(\beta_{\quot}(A)=2\) and \(\beta_{\sub}(A)=3\).  Since \(r=3\),
Proposition~\ref{prop:cofinite-two} gives
\[
 \overline q_2(A)=5,\qquad \overline s_2(A)=6.
\]
The factor algebra of \(A\) by the ideal generated by the vertex
idempotent at \(1\) is the Kronecker algebra, so \(A\) has infinite
representation type.  Thus the two counts can differ, over every
field, already with two omitted indecomposable modules:
\(\overline q_2(A)=5\ne6=\overline s_2(A)\).
\end{example}

When \(A\) is representation-finite, we later prove that
\(\beta_{\quot}(A)=\beta_{\sub}(A)\) by
Corollary~\ref{cor:cosize-two}.

\section{Product formulas for Nakayama and radical-square-zero algebras}
\label{sec:classes}

In this section, we prove the quotient--submodule equidistribution conjecture
for Nakayama algebras and radical square zero algebras. To this end, we obtain
closed product formulas for the polynomials
\(\RQ(A;q)\) and \(\RS(A;q)\).

\subsection{Lattice decompositions for Nakayama algebras}\label{sec:nakayama}

Recall that a finite-dimensional algebra is \emph{Nakayama} if all its
indecomposable projective and injective modules are uniserial.  Write
\([a]_q=1+q+\cdots+q^{a-1}\).

\begin{proposition}[{\cite[Theorem~VI.2.1 and
Lemma~IV.2.5]{ARS}}]\label{prop:nakayama-structure}
Every indecomposable module over a finite-dimensional Nakayama algebra is
uniserial and is uniquely determined by its simple top and its composition
length.
\end{proposition}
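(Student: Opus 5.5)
The plan is to reduce the statement to two separate facts: every indecomposable module over a Nakayama algebra is uniserial, and each one is determined by its top together with its composition length. The statement is cited from \cite{ARS}, so the route follows the standard one there.

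\emph{Step 1: indecomposable projectives are uniserial.} Let \(P\) be an indecomposable projective module. By definition of a Nakayama algebra, \(P\) is uniserial, so its radical series is a composition series, and
\[
 P\supsetneq\rad P\supsetneq\rad^2P\supsetneq\cdots\supsetneq\rad^{\ell}P=0
\]
has simple subquotients.

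\emph{Step 2: every indecomposable module \(M\) is uniserial.} This is the main step. I would argue by induction on Loewy length, using the fact that each quotient of a uniserial module is uniserial; dually, each submodule of a uniserial injective is uniserial.

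First take a projective cover \(P(M)\twoheadrightarrow M\), with \(P(M)=\bigoplus P_i\). Each \(P_i\) is uniserial by Step 1, so \(M\) is a sum of uniserial modules \(U_i\), namely the images of the \(P_i\).

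Next, order the \(U_i\) by Loewy length and pick \(U_1\) of maximal length \(m\). Because \(\rad^{m-1}U_1\) is simple, it embeds into the socle of \(M\). Dually, the injective envelope of \(\rad^{m-1}U_1\) is uniserial, and any map from \(U_1\) into it is injective, since it is injective on the socle.

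Extend this injective map to a map \(M\to I\) by injectivity of \(I\). Its restriction to \(U_1\) is injective, and the image of \(M\) is uniserial of length at least \(m\). Since every \(U_i\) has Loewy length at most \(m\), the image has length exactly \(m\). Hence \(U_1\to M\to\mathrm{im}\) is an isomorphism, so \(U_1\) is a direct summand of \(M\).

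Indecomposability now forces \(M=U_1\). Handling this splitting argument cleanly is the expected obstacle. The alternative route would be to quote that the radical layers of \(M\) are semisimple with one-dimensional tops for each composition factor.

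\emph{Step 3: \(M\) is determined by its top and its length.} A uniserial module \(M\) with top \(S\) and length \(\ell\) is a quotient of the indecomposable projective \(P_S\). Submodules of the uniserial module \(P_S\) form a chain, so there is exactly one submodule of colength \(\ell\), namely \(\rad^\ell P_S\). Therefore
\[
 M\cong P_S/\rad^{\ell}P_S,
\]
and \(M\) is uniquely determined by the pair \((S,\ell)\).
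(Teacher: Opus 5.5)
Your proof is correct and follows the standard argument for this classical fact; the paper itself gives no proof and only cites it from \cite{ARS}. The argument has two parts: a uniserial image \(U_1\) of maximal length splits off from \(M\) through a map into the uniserial injective envelope of \(\soc U_1\), and a uniserial module with top \(S\) and length \(\ell\) is isomorphic to \(P_S/\rad^\ell P_S\). Two wording fixes: the announced induction on Loewy length is never used, since your splitting argument is direct, and the map \(U_1\to I\) should be specified as an extension of the socle inclusion, because an arbitrary map (for instance the zero map) need not be injective.
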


Proposition~\ref{prop:nakayama-structure} shows that the indecomposables with
a fixed simple top form a chain under quotients.  The indecomposables with a
fixed simple socle form a chain under submodules.

\begin{theorem}\label{thm:nakayama}
Let \(A\) be a finite-dimensional Nakayama algebra over a field.  Let
\(S_1,\ldots,S_r\) be the simple modules, let \(P_i\) be the projective
cover of \(S_i\), and let \(c_i\) be the composition length of
\(P_i\).  Let \(I_i\) be the injective envelope of \(S_i\), and let \(d_i\)
be its composition length.  Then the following hold.
\begin{enumerate}
\item There is a graded lattice isomorphism
\[
 \LQ(A)\cong\prod_{i=1}^r[0,c_i].
\]
\item There is a graded lattice isomorphism
\[
 \LS(A)\cong\prod_{i=1}^r[0,d_i],
\]
and the multisets \(\{c_1,\ldots,c_r\}\) and
\(\{d_1,\ldots,d_r\}\) are equal.  In particular,
\(\LQ(A)\cong\LS(A)\) as graded lattices.
\item One has
\[
 \RQ(A;q)=\RS(A;q)=\prod_{i=1}^r[c_i+1]_q.
\]
\end{enumerate}
\end{theorem}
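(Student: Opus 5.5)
The plan is to describe quotient-closed subcategories of a Nakayama algebra one simple top at a time. By Proposition~\ref{prop:nakayama-structure}, every indecomposable is uniserial and is determined by its top \(S_i\) and its length \(\ell\), with \(1\le\ell\le c_i\). Write \(M_{i,\ell}\) for this module. Its indecomposable factor modules are exactly \(M_{i,\ell'}\) with \(\ell'\le\ell\), since a quotient of a uniserial module is uniserial with the same top.

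\emph{Step 1: quotient closure is detected one indecomposable at a time.} A factor module of a direct sum \(\bigoplus_j M_j\) of uniserials that is itself indecomposable with top \(S_i\) is a quotient of a single summand with top \(S_i\). To see this, lift the generator of its simple top. The lift lies in the image of \(\bigoplus_j M_j\), hence is a sum of images of the \(M_j\). Modulo the radical, some single \(M_j\) must hit the top. For a uniserial module, a submodule not contained in the radical is the whole module, so that \(M_j\) maps onto \(X\). Consequently, for \(\XX\subseteq\ind A\),
\[
 \qcl(\XX)=\{\,M_{i,\ell'}\mid M_{i,\ell}\in\XX,\ \ell'\le\ell\,\}.
\]
Hence a subset \(\CC\) is quotient-closed if and only if, for each \(i\), the set \(\CC\cap\{M_{i,1},\dots,M_{i,c_i}\}\) is an initial segment \(\{M_{i,1},\dots,M_{i,a_i}\}\) with \(0\le a_i\le c_i\).

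\emph{Step 2: part (1).} The assignment \(\CC\mapsto(a_i)_i\) is then an inclusion-preserving bijection onto \(\prod_i[0,c_i]\), and the size is \(\sum_i a_i\). This proves (1) as a graded lattice isomorphism. The dual argument, using socles and submodules and the fact that indecomposables with socle \(S_i\) have lengths \(1,\dots,d_i\) (again by Proposition~\ref{prop:nakayama-structure} applied to \(A^{\op}\)), gives \(\LS(A)\cong\prod_i[0,d_i]\). Alternatively, this follows from (1) for \(A^{\op}\) together with Remark~\ref{rem:opposite-duality}.

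\emph{Step 3: equality of multisets.} This is the only nonformal point. I would count indecomposables of each length. By Step 1 the number of indecomposables of length \(\ell\) equals \(|\{i: c_i\ge\ell\}|\). Counting by socle instead, the same number equals \(|\{i: d_i\ge\ell\}|\), because each indecomposable has a unique socle \(S_i\) and, for fixed socle, the lengths \(1,\dots,d_i\) occur exactly once. Since the functions \(\ell\mapsto|\{i:c_i\ge\ell\}|\) and \(\ell\mapsto|\{i:d_i\ge\ell\}|\) coincide for all \(\ell\ge1\), the multisets \(\{c_i\}\) and \(\{d_i\}\) agree. The main care needed is justifying that indecomposables with a fixed socle are exactly the lengths \(1,\dots,d_i\), each once; these are the submodules of \(I_i\), by duality. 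A permutation matching the \(c_i\) with the \(d_i\) then gives a graded isomorphism \(\LQ(A)\cong\LS(A)\).

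\emph{Step 4: part (3).} The rank generating function of the chain \([0,c]\) is \([c+1]_q\). Products of graded posets multiply these generating functions, so (3) follows.
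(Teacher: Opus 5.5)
Your proposal is correct and follows the paper's proof essentially step by step. You parametrize indecomposables by top and length, use the top/Nakayama argument to show that an indecomposable quotient of a sum of uniserials is a quotient of a single summand, read off initial segments of chains, and match the multisets by counting the modules of each length once by top and once by socle. The only difference is on the submodule side: you obtain it by applying part (1) to \(A^{\op}\) and dualizing, whereas the paper argues directly that a monomorphism from a uniserial module into a finite direct sum has a monomorphic component. Both routes are valid, since \(A^{\op}\) is again Nakayama.
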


\begin{proof}
(1)
We may assume that \(A\) is basic.  By
Proposition~\ref{prop:nakayama-structure}, every indecomposable \(A\)-module
is uniserial and has a projective cover \(P_i\).  For each \(i\), the
indecomposable modules with top \(S_i\) are the \(c_i\) nonzero quotients of
\(P_i\), one in each possible composition length.  Ordered by increasing
length, they form a chain in which each module is a quotient of the
next.  Every quotient of a member of a quotient-closed
subcategory again belongs to it, so a quotient-closed subcategory contains
precisely the \(t_i\) shortest modules of this chain for some
\(0\leq t_i\leq c_i\).

Every indecomposable quotient of a finite direct sum of uniserial modules is
a quotient of one of its indecomposable summands.  Indeed, if a uniserial
module \(U\) is generated by maps
from finitely many uniserial modules, their images generate
\(U/\rad U\).  Since this quotient is simple, the image of one of the
modules is nonzero in \(U/\rad U\); by Nakayama's lemma, a submodule of
\(U\) with nonzero image in \(U/\rad U\) equals \(U\).  Thus \(U\) is a
quotient of that single module.  We have therefore obtained
\[
 \LQ(A)\cong\prod_{i=1}^r[0,c_i]
\]
as a graded lattice, where \([0,c_i]\) is the chain
\(0<1<\cdots<c_i\) and the grade of an element is its integer value.

(2)
Dually, the indecomposables with socle \(S_j\) form a submodule chain
of lengths \(1,\ldots,d_j\).  A submodule-closed
subcategory contains an
initial segment of the chain ordered by increasing length.  A monomorphism
from a uniserial module into a finite direct sum has a monomorphic component:
otherwise every component has a nonzero kernel, and the linearly ordered
nonzero kernels have a nonzero intersection, which lies in the kernel of
the map to the direct sum, contradicting its injectivity.  Thus
\[
 \LS(A)\cong\prod_{j=1}^r[0,d_j].
\]

For every \(a\geq1\), the
indecomposables of length \(a\) are exactly the length-\(a\) quotients
of the projective covers \(P_i\) with \(c_i\geq a\), one for each such
\(i\); dually, they are exactly the length-\(a\) submodules of the
injective envelopes \(I_j\) with \(d_j\geq a\), one for each such
\(j\).  Counting them in these two ways gives
\[
 |\{i\mid c_i\geq a\}|=|\{j\mid d_j\geq a\}|.
\]
Thus the multisets \(\{c_i\}\) and \(\{d_j\}\) agree, which proves (2).

(3)
Since the isomorphisms in (1) and (2) preserve size, they give
\[
 \RQ(A;q)=\prod_{i=1}^r(1+q+\cdots+q^{c_i}),
 \qquad
 \RS(A;q)=\prod_{j=1}^r(1+q+\cdots+q^{d_j}).
\]
The multiset equality in (2) and the definition of \([a]_q\) give the
formula in (3).
\end{proof}

\begin{example}
\label{ex:nakayama-a3}
Let
\[
 Q:\quad 1\longrightarrow2\longrightarrow3,\qquad A=\kk Q.
\]
The indecomposable projective modules have composition lengths
\((c_1,c_2,c_3)=(3,2,1)\).  Hence Theorem~\ref{thm:nakayama} gives
\[
 \RQ(A;q)=\RS(A;q)=[4]_q[3]_q[2]_q.
\]
\end{example}

Example~\ref{ex:inward-a3} shows that the graded lattice isomorphism in
Theorem~\ref{thm:nakayama} can fail outside the Nakayama case.

\subsection{The separated-quiver formula for radical-square-zero algebras}
\label{sec:radical-square-zero}

Let \(A\) be basic over the algebraically closed field \(\kk\), suppose
that \((\rad A)^2=0\), and let \(Q\) be its Gabriel quiver.  Let \(J_Q\)
denote the ideal of \(\kk Q\) generated by the arrows of \(Q\).

\begin{proposition}\label{prop:radsq-presentation}
There is an isomorphism \(A\cong\kk Q/J_Q^2\).
\end{proposition}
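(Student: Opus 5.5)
We want to show that the basic algebra \(A\) with \((\rad A)^2=0\) is isomorphic to \(\kk Q/J_Q^2\). The plan is the standard Gabriel presentation argument specialised to radical square zero. Let \(e_1,\ldots,e_r\) be a complete set of primitive orthogonal idempotents of \(A\). Since \(\kk\) is algebraically closed and \(A\) is basic, \(A/\rad A\cong\kk^r\). By definition of the Gabriel quiver, the number of arrows \(i\to j\) equals \(\dim_\kk e_i(\rad A/\rad^2A)e_j\), and here \(\rad^2A=0\).

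The first step is to define an algebra homomorphism \(\varphi\colon\kk Q\to A\). Send the trivial path at \(i\) to \(e_i\). For each pair \((i,j)\), choose a basis of \(e_i(\rad A)e_j\) and send the arrows \(i\to j\) bijectively to these basis elements. Under the left-to-right composition convention, the resulting arrow images lie in \(e_i(\rad A)e_j\), so this respects the relations of the path algebra. By the universal property of \(\kk Q\), it extends uniquely to an algebra map.

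The second step is surjectivity. As vector spaces,
\[
 A=\bigoplus_i\kk e_i\oplus\bigoplus_{i,j}e_i(\rad A)e_j,
\]
using \(A/\rad A\cong\kk^r\) and the Peirce decomposition of \(\rad A\). Every summand is hit by vertices or arrows, so \(\varphi\) is surjective.

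The third step identifies the kernel. Every path of length at least two maps into \((\rad A)^2=0\), so \(J_Q^2\subseteq\ker\varphi\). Consequently \(\varphi\) factors through \(\kk Q/J_Q^2\), which has a basis consisting of the vertices and arrows. The dimension of \(\kk Q/J_Q^2\) is therefore
\[
 r+\sum_{i,j}\dim e_i(\rad A)e_j=\dim A.
\]
A surjection between spaces of equal finite dimension is an isomorphism.

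The only mildly delicate point is justifying that \(e_i\) lifts the standard idempotents and that \(A/\rad A\cong\kk^r\). This uses that \(A\) is basic and \(\kk\) is algebraically closed, so that each \(e_iAe_i/e_i(\rad A)e_i\cong\kk\). Alternatively, one may simply cite the general Gabriel theorem \cite{AssemSimsonSkowronski} that \(A\cong\kk Q/I\) with \(J_Q^m\subseteq I\subseteq J_Q^2\), and then observe that \((\rad A)^2=0\) forces \(I=J_Q^2\), since \(J_Q^2/I\) is the square of the radical of \(\kk Q/I\).
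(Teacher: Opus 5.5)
Your proof is correct. The paper's proof is exactly the alternative in your last paragraph. It takes the bound-quiver presentation \(A\cong\kk Q/I\) with \(I\subseteq J_Q^2\) and the identification \((\rad A)^2=J_Q^2/I\) from \cite[Theorem~II.3.7 and Corollary~II.2.11]{AssemSimsonSkowronski}, and reads off \(I=J_Q^2\).

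Your main argument instead unpacks, for this special case, the construction behind that theorem:
you build \(\kk Q\to A\) from the universal property of the path algebra;
you get surjectivity from the decomposition \(A=\bigoplus_i\kk e_i\oplus\bigoplus_{i,j}e_i(\rad A)e_j\);
and you replace the kernel analysis of the general theorem (showing the kernel is admissible) by the single count \(\dim\kk Q/J_Q^2=r+|Q_1|=\dim A\).
That count is available because \(\rad^2A=0\) makes the chosen arrow images a basis of \(\rad A\).

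The citation buys brevity. Your route is self-contained: it uses only \(A/\rad A\cong\kk^r\), the Peirce decomposition, and the universal property of \(\kk Q\). It also applies verbatim to disconnected \(A\), whereas the presentation theorem in the cited source is stated for basic connected algebras and so must be applied blockwise.
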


\begin{proof}
By \cite[Theorem~II.3.7 and
Corollary~II.2.11]{AssemSimsonSkowronski}, there is a bound-quiver
presentation \(A\cong\kk Q/I\) with \(I\subseteq J_Q^2\), and
\((\rad A)^2=J_Q^2/I\).  The assumption \((\rad A)^2=0\) gives
\(I=J_Q^2\).
\end{proof}

The separated quiver
records the top and the radical of an \(A\)-module as a representation of a
hereditary algebra.  We use it to compare quotient closure for \(A\) with
quotient closure for that hereditary algebra.

\begin{definition}\label{def:separated-quiver}
The \emph{separated quiver} \(Q^{\rm s}\) has two copies \(i^+,i^-\) of
every vertex \(i\) of \(Q\), and an arrow \(i^+\to j^-\) for every arrow
\(i\to j\) of \(Q\).  Isolated copies are also vertices of
\(Q^{\rm s}\).  Let \(G\) be its underlying unoriented graph.
\end{definition}

Write \(W(G)\) for the Coxeter group generated by the vertices of \(G\),
write \(\ell\) for its Coxeter length function, and put
\[
 P_G(q)=\sum_{w\in W(G)}q^{\ell(w)}.
\]

\begin{construction}\label{con:separated-representation}
Put \(B=\kk Q^{\rm s}\).  For an \(A\)-module \(M\), define its separated
representation \(M^{\rm s}\) by
\[
 (M^{\rm s})_{i^+}=(M/\rad M)e_i,\qquad
 (M^{\rm s})_{i^-}=(\rad M)e_i,
\]
with the maps induced by the action of the arrows.
\end{construction}

\begin{proposition}[{\cite[Lemma~X.2.1]{ARS}}]
\label{prop:separated-representations}
The assignment \(M\mapsto M^{\rm s}\) defines a full functor
\(\mod A\to\mod B\), and for all \(M,N\in\mod A\) there is an exact sequence
\begin{equation}\label{eq:radsq-hom-sequence}
 0\longrightarrow\Hom_A(M,\rad N)
 \longrightarrow\Hom_A(M,N)
 \longrightarrow\Hom_B(M^{\rm s},N^{\rm s})
 \longrightarrow0.
\end{equation}
The functor reflects isomorphisms and indecomposability and induces a
bijection between the nonsimple indecomposable \(A\)-modules and the
nonsimple indecomposable \(B\)-modules.
\end{proposition}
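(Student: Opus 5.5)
The plan is to verify the statement directly from the presentation \(A\cong\kk Q/J_Q^2\) of Proposition~\ref{prop:radsq-presentation}. The key fact used throughout is \((\rad M)J_Q=0\) for every \(A\)-module \(M\), which holds because \((\rad A)^2=0\). The order is: define the functor, compute its kernel and prove fullness, then deduce the remaining assertions from the fact that the kernel is a square-zero ideal.

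\emph{Functor and exact sequence.} Any \(f\colon M\to N\) maps \(\rad M\) into \(\rad N\), so it induces maps on \((M/\rad M)e_i\) and \((\rad M)e_i\). For an arrow \(\alpha\colon i\to j\), the structure map \((M/\rad M)e_i\to(\rad M)e_j\), \(\bar m\mapsto m\alpha\), is well defined because \((\rad M)\alpha=0\). The induced pair of maps commutes with these structure maps, so \(M\mapsto M^{\rm s}\) is a functor. Its kernel on \(\Hom_A(M,N)\) consists of those \(f\) whose top component vanishes, i.e.\ \(f(M)\subseteq\rad N\), and whose radical component vanishes. The second condition is automatic, since \(f(\rad M)=f(M)\rad A\subseteq(\rad N)\rad A=0\). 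Hence the kernel is exactly \(\Hom_A(M,\rad N)\).

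\emph{Fullness (the main step).} Take a morphism \((g^+,g^-)\colon M^{\rm s}\to N^{\rm s}\). For each \(i\), choose a vector space complement \(C_i\) with \(Me_i=C_i\oplus(\rad M)e_i\), and choose a linear lift \(h_i\colon C_i\to Ne_i\) of \(g^+_{i^+}\) composed with \(C_i\cong(M/\rad M)e_i\). Define \(f\) to be \(h_i\) on \(C_i\) and \(g^-_{i^-}\) on \((\rad M)e_i\). Checking \(A\)-linearity reduces to two cases. Arrows kill radical elements on both sides, so those are fine. For \(m\in C_i\) and an arrow \(\alpha\),
\[
 f(m\alpha)=g^-(m\alpha)=g^+(\bar m)\alpha=h_i(m)\alpha ,
\]
where the last step holds because the product \(h_i(m)\alpha\) does not depend on the chosen lift, as \((\rad N)\alpha=0\). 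This independence of the lift is the only delicate point, and it is exactly where radical square zero is used. Thus \(f^{\rm s}=(g^+,g^-)\), and the sequence \eqref{eq:radsq-hom-sequence} is exact.

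\emph{Isomorphisms, indecomposability and the bijection.} If \(f^{\rm s}\) is an isomorphism, then \(f\) is surjective on tops, hence surjective by Nakayama's lemma. Moreover
\[
 \dim M=\dim M^{\rm s}=\dim N^{\rm s}=\dim N ,
\]
so \(f\) is an isomorphism. The ideal \(\Hom_A(M,\rad M)\subseteq\End_A(M)\) squares to zero, since \(g\circ f\) maps into \(g(\rad M)=0\). Therefore \(\End_B(M^{\rm s})\cong\End_A(M)/\Hom_A(M,\rad M)\) is local exactly when \(\End_A(M)\) is local, so the functor preserves and reflects indecomposability.

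If \(M\) is indecomposable and nonsimple, then both \(M/\rad M\) and \(\rad M\) are nonzero, so \(M^{\rm s}\) is indecomposable and nonsimple. Injectivity on isomorphism classes follows by lifting a \(B\)-isomorphism through fullness and then applying reflection of isomorphisms. For surjectivity, let \(V\) be a nonsimple indecomposable \(B\)-module. Define an \(A\)-module by
\[
 Me_i=V_{i^+}\oplus V_{i^-},
\]
where each arrow \(\alpha\colon i\to j\) acts by the structure map \(V_{i^+}\to V_{j^-}\) of \(V\) and kills the minus parts. Since \(V\) is indecomposable and not some \(S(i^-)\), the images of these maps jointly span all the minus spaces; otherwise a complement would split off, because the vertices \(j^-\) are sinks. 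This gives \(\rad M=\bigoplus_i V_{i^-}\) and \(M/\rad M\cong\bigoplus_i V_{i^+}\), hence \(M^{\rm s}\cong V\). Finally, \(M\) is indecomposable because the functor reflects indecomposability, and it is nonsimple because \(\rad M\ne0\).
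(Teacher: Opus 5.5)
Your argument is correct. The paper gives no proof of this proposition; it imports it from \cite[Lemma~X.2.1]{ARS}, which treats artin algebras with radical square zero in general. What you give instead is a self-contained, elementwise verification in exactly the setting the paper uses: basic \(A\cong\kk Q/J_Q^2\) over a field, and \(B=\kk Q^{\rm s}\). It has three parts.
\begin{enumerate}
\item Fullness: you choose \(\kk\)-linear complements \(Me_i=C_i\oplus(\rad M)e_i\) and lift. The only real input is that \(h_i(m)\alpha\) does not depend on the lift, because \((\rad N)\alpha=0\) for every arrow \(\alpha\).
\item Reflection of isomorphisms: this follows from Nakayama's lemma together with \(\dim M=\dim M^{\rm s}\).
\item Indecomposability: you transfer it through the square-zero ideal \(\Hom_A(M,\rad M)\), which is the kernel of the surjective ring map \(\End_A(M)\to\End_B(M^{\rm s})\).
\end{enumerate}
The use of linear complements is special to working over a field, but that is precisely the paper's setting. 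Your version also makes visible the facts the paper later relies on, for instance the surjectivity in \eqref{eq:radsq-hom-sequence} invoked in Step~1 of the proof of Theorem~\ref{thm:radical-square-zero}.

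One small point to tidy: in the surjectivity step you use \(\rad M\neq0\) without justifying it. It holds because a \(B\)-module supported only on the vertices \(i^+\) is semisimple, since there are no arrows between those vertices. Hence a nonsimple indecomposable \(V\) has some \(V_{i^-}\neq0\). Alternatively, \(M^{\rm s}\cong V\) is not simple, whereas the functor sends simple \(A\)-modules to simple \(B\)-modules.
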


\begin{lemma}[{\cite[Theorem~X.2.6]{ARS}}]
\label{lem:radsq-separated-dynkin}
If \(A\) is representation-finite, then the underlying graph \(G\) of its
separated quiver is a disjoint union of Dynkin diagrams.
\end{lemma}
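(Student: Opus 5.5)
The plan is to transfer representation-finiteness from \(A\) to the hereditary algebra \(B=\kk Q^{\rm s}\) through Proposition~\ref{prop:separated-representations}, and then apply Gabriel's theorem to each connected component of \(Q^{\rm s}\).

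First I record the relevant properties of \(Q^{\rm s}\). It has \(2r\) vertices, where \(r=|Q_0|\), and finitely many arrows, since \(Q\) is finite. Every arrow goes from a vertex \(i^+\) to a vertex \(j^-\), so there is no path of length two. Hence \(Q^{\rm s}\) has no oriented cycles, and \(B\) is a finite-dimensional hereditary algebra. Moreover, \(B\) decomposes as the product of the path algebras \(\kk Q'\), where \(Q'\) ranges over the connected components of \(Q^{\rm s}\). Isolated vertices are allowed here, and they contribute components of type \(A_1\).

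Next, suppose \(A\) is representation-finite. Proposition~\ref{prop:separated-representations} gives a bijection between the nonsimple indecomposable \(A\)-modules and the nonsimple indecomposable \(B\)-modules. Therefore \(B\) has only finitely many nonsimple indecomposables. It also has only \(2r\) simple modules, so \(B\) is representation-finite. Every indecomposable \(\kk Q'\)-module is an indecomposable \(B\)-module via the product decomposition. Consequently each \(\kk Q'\) is representation-finite over the algebraically closed field \(\kk\).

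Finally, I apply Gabriel's theorem \cite[Theorem~VII.5.10]{AssemSimsonSkowronski} to each connected finite acyclic quiver \(Q'\): its underlying graph is a Dynkin diagram. This conclusion also excludes multiple edges, because the Kronecker quiver already has infinitely many indecomposables. Hence \(G\) is a disjoint union of Dynkin diagrams.

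I expect no serious obstacle. The only point needing care is that the bijection in Proposition~\ref{prop:separated-representations} concerns \emph{nonsimple} modules, so the finite count of simples must be added separately to conclude that \(B\) is representation-finite.
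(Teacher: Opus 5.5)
Your proof is correct, and it is essentially the argument behind the cited \cite[Theorem~X.2.6]{ARS}; the paper itself gives no proof beyond that citation. You transfer representation-finiteness to the hereditary algebra \(B=\kk Q^{\rm s}\) using the bijection on nonsimple indecomposables in Proposition~\ref{prop:separated-representations}, adding the \(2r\) simples separately, and then apply Gabriel's theorem to each connected component, which is the standard route in the algebraically closed case.
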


\begin{theorem}\label{thm:radical-square-zero}
Let \(A\) be a representation-finite finite-dimensional algebra over an
algebraically closed field, and suppose that \((\rad A)^2=0\).  Let \(G\)
be the underlying graph of the separated quiver of a basic algebra Morita
equivalent to \(A\), including its isolated vertices, and let \(r\) be
the number of simple \(A\)-modules.  Then
\[
 \RQ(A;q)=\RS(A;q)=\frac{P_G(q)}{(1+q)^r}.
\]
\end{theorem}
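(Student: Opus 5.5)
The plan is to transfer quotient closure along the separated-representation functor \(M\mapsto M^{\rm s}\) of Construction~\ref{con:separated-representation}, and then to apply the Dynkin formula of Theorem~\ref{thm:dynkin} to \(B=\kk Q^{\rm s}\). We may assume \(A=\kk Q/J_Q^2\) is basic (Proposition~\ref{prop:radsq-presentation}), since \(\LQ\) and \(\LS\) are Morita invariant. By Lemma~\ref{lem:radsq-separated-dynkin}, \(B\) is a product of path algebras of Dynkin quivers, one factor for each component of \(G\); an isolated vertex contributes \(\kk\). Both \(\LQ\) and \(P_G\) are multiplicative over such products, and for \(\kk\) we have \(\RQ(\kk;q)=1+q=P_{A_1}(q)\). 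Hence Theorem~\ref{thm:dynkin} gives \(\RQ(B;q)=P_G(q)\).

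\textbf{Step 1: the simples \(S_{j^-}\) split off.} Every arrow of \(Q^{\rm s}\) goes \(i^+\to j^-\), and the convention is that representation maps go along arrows. So each \(j^-\) is a sink, and \(S_{j^-}\) is a simple projective \(B\)-module. A surjection onto a projective splits, so \(S_{j^-}\in\Gen_B(\add\XX)\) only if \(S_{j^-}\in\XX\). Therefore, for \(\DD\in\LQ(B)\), adding or removing any of the \(S_{j^-}\) preserves quotient closure, and these choices are independent. Let \(\LQ'(B)\) be the set of quotient-closed subcategories containing no \(S_{j^-}\). Then
\[
 \RQ(B;q)=(1+q)^r\sum_{\DD\in\LQ'(B)}q^{|\ind\DD|}.
\]

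\textbf{Step 2: a size-preserving bijection \(\LQ(A)\to\LQ'(B)\).} By Proposition~\ref{prop:separated-representations}, \(X\mapsto X^{\rm s}\) is a bijection from \(\ind A\) onto \(\ind B\setminus\{S_{j^-}\}\); on the simples it sends \(S_i\mapsto S_{i^+}\). The key claim is that for \(M\in\mod A\) and \(X\in\ind A\),
\[
 X\in\Gen_A M\iff X^{\rm s}\in\Gen_B M^{\rm s}.
\]
For the forward direction, a surjection \(M^n\to X\) is surjective on tops and on radicals, so it induces a surjection \((M^n)^{\rm s}\to X^{\rm s}\). For the converse, a surjection \((M^{\rm s})^n\to X^{\rm s}\) lifts by fullness to a map \(M^n\to X\). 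This lift is surjective on \(X/\rad X=(X^{\rm s})_{+}\), hence surjective by Nakayama's lemma. The case \(X=S_i\) is consistent with this, since both sides say that \(S_i\) lies in the top of \(M\). It follows that \(\CC\mapsto\{X^{\rm s}\mid X\in\ind\CC\}\) sends \(\LQ(A)\) into \(\LQ'(B)\), using that \(S_{j^-}\) never appears in a Gen-closure by Step~1. Its inverse is pullback along the bijection, which is well defined by the same equivalence. The map preserves size, so \(\RQ(A;q)=\RQ(B;q)/(1+q)^r=P_G(q)/(1+q)^r\).

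\textbf{Step 3: the submodule side.} Apply the same argument to \(A^{\op}\cong\kk Q^{\op}/J^2\). Its separated quiver has the same underlying graph \(G\), because reversing arrows just interchanges the roles of \(+\) and \(-\). It also has \(r\) simples. Remark~\ref{rem:opposite-duality} then gives \(\RS(A;q)=\RQ(A^{\op};q)=P_G(q)/(1+q)^r\).

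I expect the main obstacle to be Step~2. Two points need care there. First, the equivalence must be checked for finite direct sums, i.e.\ with \(M\) decomposable; this works because the functor is additive. Second, I must verify that the lifted map is surjective, which uses that the \(+\)-part of \(X^{\rm s}\) is precisely \(X/\rad X\). I also need to confirm the orientation convention, namely that the \(S_{j^-}\) rather than the \(S_{i^+}\) are the simple projective \(B\)-modules. If the convention goes the other way, the same argument applies with \(S_{i^+}\) and simple injectives, after dualizing.
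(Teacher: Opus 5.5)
Your proposal is correct and follows essentially the paper's route: you transfer \(\Gen\) along the full functor \(M\mapsto M^{\rm s}\) via Nakayama's lemma on tops, apply the Dynkin formula to \(B\), and get \(\RS\) by duality through \(A^{\op}\). The difference is only in bookkeeping. You keep \(S_i\mapsto S_{i^+}\) inside the bijection and split off just the simple projectives \(S_{j^-}\), which even gives \(\LQ(B)\cong\LQ(A)\times\{0,1\}^r\) as posets, whereas the paper separates all simples on both sides and counts the free ones via the top sets \(T(\XX)\).

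One step needs spelling out: your splitting remark only shows that \(S_{j^-}\) can be removed, not that adding it to a quotient-closed \(\DD\) preserves closure. For that, let \(Y\not\simeq S_{j^-}\) be an indecomposable quotient of \(D\oplus S_{j^-}^m\) with \(D\in\DD\). The image of \(D\) must be all of \(Y\), since otherwise \(Y\) would surject onto the projective \(S_{j^-}\) and have it as a direct summand.
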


\begin{proof}
We may assume that \(A\) is basic.  Proposition~\ref{prop:radsq-presentation}
gives \(A=\kk Q/J_Q^2\), and we put \(B=\kk Q^{\rm s}\).
We divide the proof into four steps.

\medskip
\noindent\textbf{Step 1: Comparison of nonsimple modules.}
Proposition~\ref{prop:separated-representations} identifies the nonsimple
indecomposable \(A\)-modules with the nonsimple indecomposable \(B\)-modules.

Let \(\XX\) be a set of nonsimple indecomposable \(A\)-modules, put
\(\XX^{\rm s}=\{X^{\rm s}\mid X\in\XX\}\), and let \(T(\XX)\) be the set of
vertices which occur in the tops of members of \(\XX\).  For every nonsimple
indecomposable \(Y\), Nakayama's lemma gives
\begin{equation}\label{eq:radsq-top-coverage}
 Y\in\Gen(\add\XX)
 \quad\Longleftrightarrow\quad
 \tr_{\add\XX}(Y)+\rad Y=Y.
\end{equation}
Here the trace is the sum of the images of all maps from \(\add\XX\) to
\(Y\).  The generation condition at the \(-\) vertices gives
\[
 \rad_B(Y^{\rm s})=\bigoplus_i(Y^{\rm s})_{i^-},\qquad
 \top_B(Y^{\rm s})=\bigoplus_i(Y^{\rm s})_{i^+}.
\]
For every vertex \(i\), the surjection in
\eqref{eq:radsq-hom-sequence} gives the equality
\[
 \bigl(\tr_{\add(\XX^{\rm s})}(Y^{\rm s})\bigr)_{i^+}
 =\left(\frac{\tr_{\add\XX}(Y)+\rad Y}{\rad Y}\right)e_i
 \subseteq (Y^{\rm s})_{i^+}.
\]
Nakayama's lemma therefore gives
\[
 Y\in\Gen_A(\add\XX)
 \quad\Longleftrightarrow\quad
 Y^{\rm s}\in\Gen_B(\add(\XX^{\rm s})).
\]
Moreover, for \(\Lambda\in\{A,B\}\), a simple \(\Lambda\)-module \(S\),
and a nonsimple indecomposable \(\Lambda\)-module \(Y\),
\[
 \Image g\subseteq\rad_\Lambda Y
 \qquad\text{for every }g\in\Hom_\Lambda(S,Y);
\]
otherwise \(S\) would be a direct summand of \(Y\).
Consequently, Lemma~\ref{lem:indecomposable-quotient-test} shows that
\(\XX\) is the set of nonsimple members of a quotient-closed subcategory of
\(\mod A\) if and only if \(\XX^{\rm s}\) is the set of nonsimple members of
a quotient-closed subcategory of \(\mod B\).  By
\eqref{eq:radsq-top-coverage}, these two conditions are equivalent to
\begin{equation}\label{eq:radsq-nonsimple-closed}
 \tr_{\add\XX}(Y)+\rad Y=Y \quad\Longrightarrow\quad Y\in\XX
 \qquad\text{for every nonsimple indecomposable \(Y\)}.
\end{equation}

Let \(\mathfrak F\) be the collection of sets \(\XX\) of nonsimple
indecomposable \(A\)-modules satisfying
\eqref{eq:radsq-nonsimple-closed}.

\medskip
\noindent\textbf{Step 2: Counting the choices of simple modules.}
Fix \(\XX\in\mathfrak F\).  A quotient-closed subcategory of \(\mod A\) with
nonsimple members \(\XX\) must contain the simples in \(T(\XX)\), because
each is a quotient of a member of \(\XX\).  Every simple outside \(T(\XX)\)
may independently be included or omitted.  Indeed, for every simple module
\(S\) and every nonsimple indecomposable \(Y\), the containment
\(\Image g\subseteq\rad Y\) for \(g\in\Hom_A(S,Y)\) gives
\[
 \frac{\tr_{\add(\XX\cup\{S\})}(Y)+\rad Y}{\rad Y}
 =
 \frac{\tr_{\add\XX}(Y)+\rad Y}{\rad Y}.
\]
Hence
\begin{equation}\label{eq:radsq-A-sum}
 \RQ(A;q)=
 \sum_{\XX\in\mathfrak F}
 q^{|\XX|+|T(\XX)|}(1+q)^{r-|T(\XX)|}.
\end{equation}
For \(B\), every quotient-closed subcategory with nonsimple members
\(\XX^{\rm s}\) must contain the simple at \(i^+\) for each vertex
\(i\in T(\XX)\).  The simples at the \(+\)-vertices outside \(T(\XX)\) and
at all \(r\) vertices \(i^-\) may be chosen independently, giving the exponent
\((r-|T(\XX)|)+r=2r-|T(\XX)|\).  Therefore
\begin{equation}\label{eq:radsq-B-sum}
 \RQ(B;q)=
 \sum_{\XX\in\mathfrak F}
 q^{|\XX|+|T(\XX)|}(1+q)^{2r-|T(\XX)|}.
\end{equation}
Equations~\eqref{eq:radsq-A-sum} and~\eqref{eq:radsq-B-sum} give
\begin{equation}\label{eq:radsq-factor}
 \RQ(A;q)=\frac{\RQ(B;q)}{(1+q)^r}.
\end{equation}

\medskip
\noindent\textbf{Step 3: Evaluation of the generating polynomial for \(B\).}
By Lemma~\ref{lem:radsq-separated-dynkin}, \(G\) is a disjoint union of
Dynkin diagrams.  Hence \(B\) is representation-finite.  Apply
Theorem~\ref{thm:dynkin} to every Dynkin component; each isolated vertex
contributes the factor \(1+q\).  Multiplying the component formulas gives
\(\RQ(B;q)=P_G(q)\).  Equation~\eqref{eq:radsq-factor} now gives
\(\RQ(A;q)=P_G(q)/(1+q)^r\).

\medskip
\noindent\textbf{Step 4: The submodule-closed case by duality.}
The underlying graph of the separated quiver of \(Q^{\op}\) is isomorphic
to \(G\), by exchanging the \(+\) and \(-\) copies.  Applying the established
formula for \(\RQ(A;q)\) to \(A^{\op}\) gives
\(\RQ(A^{\op};q)=P_G(q)/(1+q)^r\).
Remark~\ref{rem:opposite-duality} gives
\[
 \RS(A;q)=\RQ(A^{\op};q)
 =\frac{P_G(q)}{(1+q)^r}.
\]
\end{proof}

For the quiver \(1\to2\leftarrow3\) in Example~\ref{ex:inward-a3}, the
separated graph is \(A_3\sqcup A_1^3\), so
Theorem~\ref{thm:radical-square-zero} recovers
\eqref{eq:inward-a3-polynomial}.

\begin{remark}\label{rem:radsq-coset}
The vertices \(i^-\) generate a standard parabolic subgroup \(W_J\) of
type \(A_1^r\).  Hence both \(\RQ(A;q)\) and \(\RS(A;q)\) are the Poincar\'e
polynomial of the minimal coset representatives for \(W(G)/W_J\)
\cite[Lemma~7.1.2]{BjornerBrenti}.
\end{remark}

\section{Factor categories and Auslander--Reiten criteria for closure}
\label{sec:categorical}

For representation-finite algebras, we give criteria for quotient and
submodule closure using the Auslander--Reiten quiver.

\subsection{Factor ladders as projective covers of radical layers}
\label{sec:factor-ladders}

Assume that \(A\) is representation-finite.  For a
nonprojective indecomposable \(X\), write
\[
 0\longrightarrow\tau X\longrightarrow \theta X\longrightarrow X
 \longrightarrow0
\]
for its almost split sequence.  For an indecomposable projective \(P\), put
\(\theta P=\rad P\), the domain of the minimal right almost split map to
\(P\).

\begin{lemma}[{\cite[Proposition~IV.4.9]{AssemSimsonSkowronski}}]
\label{lem:representation-finite-no-multiple-arrows}
The Auslander--Reiten quiver of \(A\) has no multiple arrows.
\end{lemma}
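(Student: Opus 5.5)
The lemma asserts that the Auslander--Reiten quiver of a representation-finite algebra over an algebraically closed field has no multiple arrows. My plan is to argue by contradiction, assuming an arrow \(X\to Y\) of multiplicity \(d\geq2\), that is, \(\dim_\kk\operatorname{Irr}(X,Y)\geq2\) (over an algebraically closed field the valuation is symmetric and equals this dimension). From this I will build infinitely many pairwise nonisomorphic indecomposable modules, contradicting representation-finiteness.

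The main tool is the standard structure theory of representation-finite algebras. Since \(A\) is representation-finite, \(\Gamma(\mod A)\) is finite and connected components are whole, and by Auslander's theorem every nonzero morphism between indecomposables is a finite sum of composites of irreducible maps; equivalently \(\rad^\infty(\mod A)=0\). The first step is to use this to see that, for a representation-finite algebra, \(\rad(X,Y)/\rad^2(X,Y)\) has dimension \(d\) and that the irreducible maps \(f_1,f_2\colon X\to Y\) can be chosen linearly independent modulo \(\rad^2\).

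The key step, and the main obstacle, is producing an infinite family from two independent irreducible maps. I would first try the pencil argument: for \(\lambda\in\kk\) consider \(g_\lambda=f_1+\lambda f_2\). Each \(g_\lambda\) is irreducible, hence either a proper mono or a proper epi, and its cokernel (respectively kernel) \(C_\lambda\) should give modules whose isomorphism classes vary with \(\lambda\) (comparing via the almost split sequence ending at \(Y\) or starting at \(X\), where \(X^d\) or \(Y^d\) occurs as a summand of the middle term). Showing that the \(C_\lambda\) have indecomposable summands that are pairwise nonisomorphic for infinitely many \(\lambda\) is delicate. If it becomes unwieldy, I would instead reduce to a hereditary situation. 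Restricting to the full subcategory of \(\mod A\) on \(\add(X\oplus Y)\), one can form the finite-dimensional algebra \(\End(X\oplus Y)/\rad^2\), which contains a Kronecker subquiver, and push Kronecker modules into \(\mod A\) via the functor \(-\otimes\). This is, however, what the cited textbook proposition does, via the fact that a component with multiple arrows has unbounded lengths, together with Harada--Sai type bounds.

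Given the paper's conventions, the cleanest route is simply to invoke \cite[Proposition~IV.4.9]{AssemSimsonSkowronski}, where exactly this statement is proved. I would record that the hypotheses (algebraically closed \(\kk\), representation-finite \(A\)) match those used there and note that it holds after passing to a basic Morita-equivalent algebra, since the AR quiver is invariant under Morita equivalence.
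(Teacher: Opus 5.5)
Your proposal is correct and, in the end, takes the same route as the paper: the paper gives no argument of its own and simply cites \cite[Proposition~IV.4.9]{AssemSimsonSkowronski} under the standing hypotheses (\(A\) representation-finite, \(\kk\) algebraically closed). The exploratory pencil and Kronecker-tensor sketches are not needed, and as written they leave the key step open (producing infinitely many pairwise nonisomorphic indecomposables), so it is the citation that actually carries the proof.
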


\begin{corollary}\label{cor:ar-middle-terms-squarefree}
Let \(Y\in\ind A\).  No indecomposable occurs more than once in \(\theta Y\).
\end{corollary}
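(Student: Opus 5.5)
The corollary should follow directly from Lemma~\ref{lem:representation-finite-no-multiple-arrows}, once we recall how \(\theta Y\) encodes the arrows ending at \(Y\). The standard description of the minimal right almost split map \(\theta Y\to Y\) (\cite[Section~IV.4]{AssemSimsonSkowronski}) says the following. For each \(X\in\ind A\), the multiplicity of \(X\) as a direct summand of \(\theta Y\) equals \(\dim_\kk \operatorname{Irr}(X,Y)\), the dimension of the space of irreducible maps \(\rad(X,Y)/\rad^2(X,Y)\). Since \(\kk\) is algebraically closed, the endomorphism rings of indecomposables have residue field \(\kk\), so no further division-ring factor enters.

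I will treat the two cases for \(Y\) separately. If \(Y\) is nonprojective, \(\theta Y\) is the middle term of the almost split sequence ending at \(Y\). If \(Y\) is projective, then \(\theta Y=\rad Y\), and the inclusion \(\rad Y\to Y\) is minimal right almost split. In both cases the cited description applies. By the convention on the Auslander--Reiten quiver, that dimension is the number of arrows \(X\to Y\) in \(\Gamma(\mod A)\). So the multiplicity of \(X\) in \(\theta Y\) is the number of arrows \(X\to Y\).

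Lemma~\ref{lem:representation-finite-no-multiple-arrows} says there are no multiple arrows, so every such number is \(0\) or \(1\). Hence each indecomposable occurs at most once in \(\theta Y\).

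The main point to check is the identification of multiplicities in \(\theta Y\) with dimensions of irreducible-map spaces, including the projective case \(\theta Y=\rad Y\). It is standard, so I would cite it rather than reprove it. If more detail were wanted, I would argue via the radical of the category: a map \(\theta Y\to Y\) is minimal right almost split exactly when its components induce a basis of \(\bigoplus_X \rad(X,Y)/\rad^2(X,Y)\) over the summands.
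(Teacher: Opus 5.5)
Your proposal is correct and follows the paper's proof: identify the multiplicity of \(X\) in \(\theta Y\) with the number of arrows \(X\to Y\) in \(\Gamma(\mod A)\), then apply Lemma~\ref{lem:representation-finite-no-multiple-arrows}. Your separate treatment of the projective case \(\theta Y=\rad Y\) and your remark that algebraic closure makes the residue division rings equal to \(\kk\) only spell out what the paper's one-line argument leaves implicit.
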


\begin{proof}
For each \(X\in\ind A\), the multiplicity of \(X\) in \(\theta Y\) is the number
of arrows \(X\to Y\) in the Auslander--Reiten quiver.  Apply
Lemma~\ref{lem:representation-finite-no-multiple-arrows}.
\end{proof}

Fix a subset \(K\subseteq\ind A\), and put \(D=\ind A\setminus K\).  We call
the members of \(D\) \emph{omitted}.  We now use the Auslander--Reiten quiver
to define a recursion starting at each \(X\in D\).

\begin{construction}\label{con:factor-ladder}
We use the free abelian group on \(D\).  Define
\(\theta_D\) on a basis element \(X\in D\) to be the sum, with
multiplicities, of the indecomposable summands belonging to \(D\) in
\(\theta X\).
Define
\[
 \tau_DX=
 \begin{cases}
 \tau X,&X\text{ is nonprojective},\ \tau X\in D,
          \text{ and }\theta_DX\ne0,\\
 0,&\text{otherwise}.
 \end{cases}
\]
Extend both operators additively.  If \(v\) is an integral linear
combination of elements of \(D\), let \(v_+\) be obtained by replacing every
negative coefficient by zero.  The \emph{factor ladder} starting at \(X\) is
defined by
\[
 \theta^D_0X=X,\qquad \theta^D_1X=\theta_DX,
\]
and, for \(n\geq2\),
\begin{equation}\label{eq:factor-ladder}
 \theta^D_nX=
 \bigl(\theta_D\theta^D_{n-1}X
       -\tau_D\theta^D_{n-2}X\bigr)_+.
\end{equation}
If \(\theta^D_nX=0\), then
\(\theta^D_{n+1}X=(-\tau_D\theta^D_{n-1}X)_+=0\), and inductively every
later term is zero.
\end{construction}

We next give a categorical interpretation of the terms \(\theta_n^D X\).
Put
\[
 \overline{\AA}=(\mod A)/[\add K],
\]
where \([\add K]\) is the ideal of morphisms which factor through an object
of \(\add K\).
Write \(J_{\overline{\AA}}\) for the categorical radical of
\(\overline{\AA}\), namely the ideal generated by the maps between
indecomposable objects which are not isomorphisms.  For a finite formal sum
\(Z=\sum_i n_iZ_i\), where the \(Z_i\) are indecomposable and
\(n_i\in\NN\), use the additive convention
\[
 \mathsf P_Z^{\overline\AA}=\bigoplus_i
 \overline\AA(-,Z_i)^{n_i}.
\]

The factor category also records the quotient by the trace of \(\add K\).

\begin{lemma}\label{lem:trace-factor-hom}
For every \(A\)-module \(X\), evaluation at \(1\in A\) induces an
isomorphism
\[
 \frac{X}{\tr_{\add K}(X)}
 \cong\overline{\AA}(A,X).
\]
\end{lemma}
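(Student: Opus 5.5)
The plan is to compose the standard isomorphism \(\Hom_A(A,X)\cong X\), \(f\mapsto f(1)\), with the quotient map \(\Hom_A(A,X)\to\overline{\AA}(A,X)\), and to identify the kernel. The map \(\Hom_A(A,X)\to\overline{\AA}(A,X)\) is surjective by definition of the factor category. Its kernel is \([\add K](A,X)\), the set of maps \(A\to X\) factoring through some object of \(\add K\). So the claim reduces to checking that evaluation at \(1\) carries \([\add K](A,X)\) exactly onto \(\tr_{\add K}(X)\).

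First, I will show that the image of \([\add K](A,X)\) lies in the trace. Let \(f=hg\) with \(g\colon A\to C\), \(h\colon C\to X\) and \(C\in\add K\). Then
\[
 f(1)=h(g(1))\in\Image h\subseteq\tr_{\add K}(X).
\]

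Next, I will show the converse inclusion. The trace is the sum of the images \(\Image h\) over all \(h\colon C\to X\) with \(C\in\add K\). An element of the trace is therefore a finite sum \(\sum_j h_j(c_j)\) with \(h_j\colon C_j\to X\) and \(C_j\in\add K\). Collect these into a single map
\[
 h=(h_j)\colon C=\bigoplus_j C_j\to X,\qquad c=(c_j)\in C,
\]
so the element is \(h(c)\) with \(C\in\add K\). Let \(g\colon A\to C\) be the map \(a\mapsto ca\); this is right-module linear, since modules are right modules. Then \(hg\colon A\to X\) factors through \(C\) and sends \(1\) to \(h(c)\). So every element of the trace is \(f(1)\) for some \(f\in[\add K](A,X)\).

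It remains to check that \([\add K](A,X)\) is closed under addition, so that it is a subgroup and the quotient makes sense. Given \(hg\) and \(h'g'\) factoring through \(C\) and \(C'\), their sum is \((h,h')\circ\binom{g}{g'}\), which factors through \(C\oplus C'\in\add K\). Hence \([\add K](A,X)\) is the full kernel and corresponds exactly to \(\tr_{\add K}(X)\), which gives the isomorphism \(X/\tr_{\add K}(X)\cong\overline{\AA}(A,X)\).

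There is no genuine obstacle in this argument. The only point needing care is to use finite direct sums, so that a whole trace element, not just a single image, is realized by one factoring map.
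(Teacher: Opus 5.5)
Your proof is correct and follows the same route as the paper: identify \(\overline{\AA}(A,X)\) with \(\Hom_A(A,X)\) modulo the maps factoring through \(\add K\), and check that under \(f\mapsto f(1)\) these maps take exactly the values in \(\tr_{\add K}(X)\). The paper states this in one line; your two inclusions, including the use of one finite direct sum to realize a whole trace element, supply the details it leaves implicit.
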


\begin{proof}
Under \(\Hom_A(A,X)\cong X\), the maps which factor through \(\add K\)
have values at \(1\) equal to the trace of \(\add K\) in \(X\).
\end{proof}

We use the following terminology for the factor category.

\begin{definition}[{\cite[Section~1.3]{IyamaRealization}}]
A \emph{\(\tau\)-category} is a Krull--Schmidt category satisfying
Iyama's right and left \(\tau\)-sequence axioms.  If \(\mathcal B\) is a
\(\tau\)-category and \(X\) is indecomposable, write
\[
 \tau_{\mathcal B}X\longrightarrow\theta_{\mathcal B}X
 \longrightarrow X
\]
for the right \(\tau\)-sequence ending at \(X\).  This sequence is
\emph{strict} if its first map is a monomorphism.  Dually, a left
\(\tau\)-sequence is strict if its second map is an epimorphism.  A
\(\tau\)-category is \emph{strict} if all its right and left
\(\tau\)-sequences are strict.
\end{definition}

\begin{proposition}\label{prop:iyama-factor-category}
The ideal quotient \(\overline\AA\) is a \(\tau\)-category whose
Auslander--Reiten species is obtained by restricting the species of
\(\mod A\) to the objects in \(D\).  Moreover, the following hold.
\begin{enumerate}
\item For \(X\in D\) and every \(n\geq0\), there is a projective cover in
the category of finitely presented contravariant functors on
\(\overline\AA\),
\[
 \mathsf P_{\theta^D_nX}^{\overline{\AA}}
 \longrightarrow J_{\overline{\AA}}^n(-,X)
 \longrightarrow0.
\]
\item The radical \(J_{\overline\AA}\) is nilpotent, and
\(\theta^D_nX=0\) for all sufficiently large \(n\).
\end{enumerate}
\end{proposition}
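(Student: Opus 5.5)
We will deduce the \(\tau\)-category structure from Iyama's theory of ideal quotients, and then obtain (1) by induction on \(n\), tracking the functors \(J^n_{\overline\AA}(-,X)\) through the right \(\tau\)-sequences of \(\overline\AA\).

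\textbf{Step 1: the \(\tau\)-category structure.} First, \(\mod A\) is a strict \(\tau\)-category. Its right \(\tau\)-sequences are the almost split sequences \(\tau X\to\theta X\to X\) for nonprojective \(X\), and \(0\to\rad P\to P\) for projective \(P\). The subcategory \(\add K\) is functorially finite because \(A\) is representation-finite. Iyama's results on factor categories of \(\tau\)-categories \cite{IyamaRealization} then show that \(\overline\AA\) is again a \(\tau\)-category. Its right \(\tau\)-sequence at \(X\in D\) is obtained from that of \(\mod A\) by deleting the summands lying in \(\add K\). Hence \(\theta_{\overline\AA}X=\theta_DX\). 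Moreover, \(\tau_{\overline\AA}X\) equals \(\tau X\) when \(\tau X\in D\) and the sequence has not degenerated; this explains the condition \(\theta_DX\neq0\) in the definition of \(\tau_D\). I expect to quote this part rather than reprove it.

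\textbf{Step 2: induction for (1).} For \(n=0\) the claim is the Yoneda projective cover \(\overline\AA(-,X)\). For \(n=1\), the right \(\tau\)-sequence gives the projective cover \(\mathsf P_{\theta_DX}\to J(-,X)\), since the minimal right almost split map of \(\overline\AA\) is the image of that of \(\mod A\). For \(n\geq2\), apply the right \(\tau\)-sequence to each summand of the term \(\theta^D_{n-1}X\) and compare \(J^n(-,X)\) with the image of \(J(-,\theta^D_{n-1}X)\to J^{n-1}(-,X)\). The mesh relations, that is, the exactness of \(\overline\AA(-,\tau_{\overline\AA}Y)\to\overline\AA(-,\theta_{\overline\AA}Y)\to J(-,Y)\to0\), produce cancellations against \(\tau_D\theta^D_{n-2}X\).

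This is exactly Iyama's construction of minimal projective resolutions of radical layers in \(\tau\)-categories (the ``\(\tau\)-sequence ladder''). The formula \((\theta_D\theta^D_{n-1}-\tau_D\theta^D_{n-2})_+\) expresses, in the Grothendieck group of indecomposables, that the projective cover of \(J^n(-,X)\) has top \(J^n/J^{n+1}\). The positive part arises because the mesh cancellation can remove at most the summands that are actually present. I expect the main obstacle here. The difficulty is checking that the cancellation is exactly \(\tau_D\theta^D_{n-2}X\) with no extra terms, and that taking the positive part is correct rather than a genuine subtraction. This needs the strictness of the relevant \(\tau\)-sequences together with the squarefreeness in Corollary~\ref{cor:ar-middle-terms-squarefree}. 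I would follow Iyama's ladder theorem and verify that its hypotheses survive the passage to \(\overline\AA\).

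\textbf{Step 3: part (2).} Since \(\mod A\) is representation-finite, its radical is nilpotent: by the Harada--Sai lemma, \(J^m=0\) for large \(m\). The radical \(J_{\overline\AA}\) is the image of \(J_{\mod A}\), so it is nilpotent as well. Then \(J^n_{\overline\AA}(-,X)=0\) for large \(n\). By (1) its projective cover \(\mathsf P_{\theta^D_nX}\) vanishes, and hence \(\theta^D_nX=0\).
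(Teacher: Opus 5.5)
Your plan matches the paper's proof: the paper quotes \cite[1.4(2),(3)]{IyamaTauII} to get the factor $\tau$-category with restricted species, so that $\theta^+=\theta_D$ and $\tau^+=\tau_D$. It then applies Iyama's ladder result \cite[7.2(2)]{IyamaTauI} directly to the $\tau$-category $\overline\AA$ for (1), with no strictness or squarefreeness to verify; strictness of $\overline\AA$ is in fact not automatic (the paper proves it only under a right additive function, in Corollary~\ref{cor:directed-right-additive-strictness}). Part (2) then follows from nilpotency of the radical exactly as in your Step~3, except for one detail you skip: every $Y\in D$ stays nonzero in $\overline\AA$, since otherwise $1_Y$ would factor through $\add K$, and this is what lets $\mathsf P^{\overline\AA}_{\theta^D_nX}=0$ force $\theta^D_nX=0$.
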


\begin{proof}
By \cite[1.4(2), (3)]{IyamaTauII}, the ideal quotient \(\overline\AA\) is a
\(\tau\)-category whose Auslander--Reiten species is obtained by restricting
the species of \(\mod A\).  Let \(\theta^+\) and \(\tau^+\) denote the
predecessor and translation operators of \(\overline\AA\).  The restricted
species description gives
\[
 \theta^+X=\theta_DX,
 \qquad
 \tau^+X=\tau_DX
 \quad (X\in D).
\]
In \cite[7.2]{IyamaTauI}, the operators \(\theta_n^+\) are defined by
\[
 \theta_0^+X=X,\qquad \theta_1^+X=\theta^+X,
 \qquad
 \theta_n^+X=
 \bigl(\theta^+\theta_{n-1}^+X
       -\tau^+\theta_{n-2}^+X\bigr)_+
 \quad(n\geq2).
\]
Consequently, \(\theta_n^+X=\theta_n^D X\) for every \(n\geq0\).

(1)
By \cite[7.2(2)]{IyamaTauI}, there is a projective cover
\[
 \mathsf P_{\theta^D_nX}^{\overline{\AA}}
 \longrightarrow J_{\overline{\AA}}^n(-,X)
 \longrightarrow0
\]
for this recursion.

(2)
By \cite[1.3(5)]{IyamaTauII}, the radical powers in \(\overline\AA\) are the
images of the radical powers in \(\mod A\).  Since
\(A\) is representation-finite, the radical of \(\mod A\) is nilpotent,
and hence so is \(J_{\overline{\AA}}\).  Choose \(n\) such that
\(J_{\overline{\AA}}^n=0\).  Part~(1) gives a projective cover
\(\mathsf P_{\theta^D_nX}^{\overline\AA}\to0\), so
\(\mathsf P_{\theta^D_nX}^{\overline\AA}=0\).  Every \(Y\in D\) is
nonzero in \(\overline\AA\): otherwise \(1_Y\) would factor through
\(\add K\), making \(Y\) a summand of an object of \(\add K\), contrary to
\(Y\notin K\).  Thus \(\overline\AA(-,Y)\ne0\) for every \(Y\in D\), and
since \(\mathsf P_{\theta^D_nX}^{\overline\AA}=0\), it follows that
\(\theta^D_nX=0\).  By
Construction~\ref{con:factor-ladder}, every later term is also zero.
\end{proof}

\subsection{Characterizations of quotient and submodule closure}
\label{sec:factor-ladder-criterion}

\begin{construction}\label{con:reverse-factor-ladder}
The \emph{reverse factor ladder} of \(X\in D\) is defined by duality:
form the factor ladder of \(\DDual X\) over \(A^{\op}\), with omitted set
\(\{\,\DDual Y\mid Y\in D\,\}\), and apply \(\DDual\) to each term.  Its
terms are again integral
combinations of modules in \(D\).
\end{construction}

\begin{theorem}\label{thm:factor-ladder}
Let \(K\subseteq\ind A\), and put \(D=\ind A\setminus K\).  Then
the following hold.
\begin{enumerate}
\item The subcategory \(\add K\) is quotient-closed if and only if, for
every \(X\in D\), some
term \(\theta^D_nX\) contains an indecomposable
projective module belonging to \(D\).
\item The subcategory \(\add K\) is submodule-closed if and only if, for
every \(X\in D\), some
term of the reverse factor ladder of \(X\) contains an indecomposable
injective module belonging to \(D\).
\end{enumerate}
\end{theorem}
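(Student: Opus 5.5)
We prove part (1) and deduce part (2) by duality. By Lemma~\ref{lem:indecomposable-quotient-test}, \(\add K\) is quotient-closed if and only if \(X/\tr_{\add K}(X)\ne0\) for every \(X\in D\). Lemma~\ref{lem:trace-factor-hom} rewrites this as \(\overline\AA(A,X)\ne0\) for every \(X\in D\). Since \(\overline\AA(A,X)=\bigoplus_P\overline\AA(P,X)^{m_P}\) with \(P\) running over the indecomposable projectives, the criterion becomes: for each \(X\in D\) there is an indecomposable projective \(P\) with \(\overline\AA(P,X)\ne0\). Such a \(P\) must lie in \(D\), because every object of \(K\) is zero in \(\overline\AA\).

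So it suffices to show, for \(X\in D\) and \(Y\in D\), that
\[
 \overline\AA(Y,X)\ne0\iff Y\text{ occurs in }\theta^D_nX\text{ for some }n\geq0.
\]
We use Proposition~\ref{prop:iyama-factor-category}. Since \(J_{\overline\AA}\) is nilpotent, \(\overline\AA(-,X)\) has the finite filtration \(J^n_{\overline\AA}(-,X)\), \(n\geq0\). Its layers are
\[
 J^n_{\overline\AA}(-,X)/J^{n+1}_{\overline\AA}(-,X)=\top J^n_{\overline\AA}(-,X).
\]
Part~(1) of that proposition says \(\mathsf P^{\overline\AA}_{\theta^D_nX}\to J^n_{\overline\AA}(-,X)\) is a projective cover. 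Hence the top of \(J^n_{\overline\AA}(-,X)\) is the direct sum of the simple functors \(S_Y\) over the summands \(Y\) of \(\theta^D_nX\), with multiplicity.

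Evaluating at \(Y\), we have \(S_Y(Y)\ne0\), since it equals \(\End(Y)/J(Y,Y)\), which is nonzero because \(Y\ne0\) in \(\overline\AA\) (shown in the proof of Proposition~\ref{prop:iyama-factor-category}(2)). Also \(S_Z(Y)=0\) for \(Z\ne Y\). Because evaluation is exact, the dimension of \(\overline\AA(Y,X)\) equals the sum over \(n\) of the multiplicities of \(Y\) in \(\theta^D_nX\), each times \(\dim S_Y(Y)\). In particular \(\overline\AA(Y,X)\ne0\) exactly when \(Y\) occurs in some \(\theta^D_nX\). Taking \(Y=P\) projective in \(D\) gives (1).

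For (2), apply (1) to \(A^{\op}\) with omitted set \(\DDual D\). Under \(\DDual\), quotient-closed subcategories of \(\mod A^{\op}\) correspond to submodule-closed subcategories of \(\mod A\), and projectives correspond to injectives. By Construction~\ref{con:reverse-factor-ladder}, the terms of the factor ladder of \(\DDual X\) are exactly \(\DDual\) of the terms of the reverse factor ladder of \(X\).

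The main obstacle is the layer computation: identifying the top of \(J^n(-,X)\) with \(\theta^D_nX\) and ensuring that the simple functor \(S_Y\) is nonzero at \(Y\) and vanishes at other objects. This requires that \(\overline\AA\) be Krull--Schmidt with local endomorphism rings at the objects of \(D\). Both properties are inherited from \(\mod A\) under the ideal quotient, and we will take them from Iyama's \(\tau\)-category framework.
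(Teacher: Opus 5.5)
Your proposal is correct and follows the paper's proof essentially step for step. You use the trace lemma and Lemma~\ref{lem:trace-factor-hom} to reduce to nonvanishing of $\overline\AA(P,X)$ for some indecomposable projective $P\in D$. You then read off the radical layers $J^n_{\overline\AA}/J^{n+1}_{\overline\AA}$ from the projective covers $\mathsf P^{\overline\AA}_{\theta^D_nX}$ of Proposition~\ref{prop:iyama-factor-category}(1), and obtain (2) by duality. Your explicit dimension count is a harmless refinement of the paper's nonvanishing argument.
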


\begin{proof}
(1)
By Lemma~\ref{lem:indecomposable-quotient-test}, \(\add K\) is
quotient-closed if and only if
\[
 \frac{X}{\tr_{\add K}(X)}\ne0
 \qquad\text{for every }X\in D.
\]
Lemma~\ref{lem:trace-factor-hom} identifies this quotient with
\(\overline\AA(A,X)\).

Decomposing \(A\) into indecomposable projectives shows that
\(\overline\AA(A,X)\) is nonzero if and only if
\(\overline{\AA}(P,X)\ne0\) for some projective \(P\in D\), because a
projective belonging to \(K\) is zero in the factor category.

It remains to determine from the recursion when
\(\overline{\AA}(P,X)\) is nonzero.  Since the radical of
\(\overline{\AA}\) is nilpotent
(Proposition~\ref{prop:iyama-factor-category}(2)),
\(\overline{\AA}(P,X)\ne0\) if and only if
\[
 \bigl(J_{\overline{\AA}}^n/
 J_{\overline{\AA}}^{n+1}\bigr)(P,X)\ne0
\]
for some \(n\).  The radical of \(J_{\overline\AA}^n(-,X)\) is
\(J_{\overline\AA}^{n+1}(-,X)\), obtained by composing once
more with the categorical radical.  By
Proposition~\ref{prop:iyama-factor-category}(1), its projective cover is
\(\mathsf P_{\theta^D_nX}^{\overline\AA}\).  Hence the radical layer
\(J_{\overline\AA}^n/J_{\overline\AA}^{n+1}\) is the direct sum, with
multiplicities, of the simple functors indexed by the indecomposable summands
of \(\theta^D_nX\).
Consequently,
\[
 \bigl(J_{\overline{\AA}}^n/
 J_{\overline{\AA}}^{n+1}\bigr)(P,X)\ne0
 \quad\Longleftrightarrow\quad
 P\text{ is an indecomposable summand of }\theta^D_nX.
\]
Hence
\(\overline{\AA}(P,X)\ne0\) precisely when \(P\) occurs in some term of
the factor ladder starting at \(X\), which proves (1).

(2)
Apply (1) to \(A^{\op}\) and then apply \(\DDual\).
The factor ladder of \(\DDual X\) becomes the reverse factor ladder of
\(X\), projective modules become injective modules, and quotient closure
becomes submodule closure.  This gives the criterion in (2).
\end{proof}

\begin{example}\label{ex:factor-ladder-a2}
Let \(A\) be the path algebra of \(1\to2\), with
\(\ind A=\{P_2,P_1,S_1\}\) and almost split sequence
\(0\to P_2\to P_1\to S_1\to0\).  At \(S_1\), the factor ladder is
\(S_1,P_1,(P_2-P_2)_+=0\) for \(K=\varnothing\), so it contains the
omitted projective \(P_1\).  For \(K=\{P_1\}\), it is \(S_1,0\), so the
criterion detects that \(\add P_1\) is not quotient-closed.
\end{example}

\begin{corollary}\label{cor:ar-determination}
For a representation-finite algebra, the two families \(\LQ(A)\) and
\(\LS(A)\), their inclusion orders, and their size generating polynomials
are determined by the Auslander--Reiten quiver.
\end{corollary}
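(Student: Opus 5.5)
The plan is to read the statement off Theorem~\ref{thm:factor-ladder}, after checking that every ingredient of the factor-ladder criterion is a function of the Auslander--Reiten quiver \(\Gamma(\mod A)\) alone. By Lemma~\ref{lem:representation-finite-no-multiple-arrows} the quiver has no multiple arrows, so \(\Gamma(\mod A)\) is a finite translation quiver. Its vertex set is \(\ind A\). Its projective vertices are those where \(\tau\) is undefined, and its injective vertices are those where \(\tau^{-1}\) is undefined.

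Next, fix \(K\subseteq\ind A\) and \(D=\ind A\setminus K\), and check that the factor ladder depends only on the quiver. The summands of \(\theta X\) are the direct predecessors of \(X\) in \(\Gamma(\mod A)\), with multiplicities given by the arrow numbers; this holds both for nonprojective \(X\) (almost split sequence) and for projective \(X\) (\(\theta X=\rad P\)). Hence \(\theta_D\) is determined by the quiver and \(D\). The operator \(\tau_D\) uses only \(\tau\), the projectivity of \(X\), membership in \(D\), and whether \(\theta_DX\ne0\), so it is determined as well. The recursion \eqref{eq:factor-ladder} is purely formal, so every term \(\theta^D_nX\) is computed from \(\Gamma(\mod A)\) and \(D\).

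The same must be shown for the reverse factor ladder. The Auslander--Reiten quiver of \(A^{\op}\) is the opposite translation quiver under \(\DDual\): arrows are reversed, \(\tau\) is replaced by \(\tau^{-1}\), and projectives and injectives are exchanged. This standard fact follows from \(\DDual\) sending almost split sequences to almost split sequences. Consequently the reverse factor ladder is computed from the same data: each step takes direct successors instead of predecessors and uses \(\tau^{-1}\) instead of \(\tau\).

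With both ladders in hand, Theorem~\ref{thm:factor-ladder}(1) and~(2) decide, for each subset \(K\), whether \(\add K\) is quotient-closed or submodule-closed using only the quiver. This determines \(\LQ(A)\) and \(\LS(A)\) as families of subsets of \(\ind A\). Their inclusion orders are just containment of these subsets, and their size generating polynomials record the cardinalities, so both are determined too. The only step needing care is this opposite-quiver identification for the reverse ladder, and it is a routine application of duality.
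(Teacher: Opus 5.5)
Your proposal is correct and follows the paper's proof: the factor and reverse factor ladders are computed from the arrows and the translation alone, so Theorem~\ref{thm:factor-ladder} decides closure of every vertex subset, and inclusion and size then come from the subsets themselves. Your explicit identification of the Auslander--Reiten quiver of \(A^{\op}\) with the opposite translation quiver fills in the step the paper leaves implicit for the reverse ladder.
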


\begin{proof}
The arrows and translation determine the factor and reverse factor ladders.
Theorem~\ref{thm:factor-ladder} therefore determines which vertex subsets
give quotient-closed and submodule-closed subcategories; the vertex subsets
also determine inclusion and size.
\end{proof}

Consequently, Conjecture~\ref{conj:equidistribution} depends only on the
Auslander--Reiten quiver of the module category.

\section{Proof of equidistribution for sizes and cosizes at most four}
\label{sec:coefficients}

Let \(N=|\ind A|\). The \emph{cosize} of a subcategory \(\CC\) is
\(N-|\ind\CC|\). In this section we prove
Theorem~\ref{thm:size-four-intro}: the numbers
of quotient-closed and submodule-closed subcategories agree at every
size at most four and at every cosize at most four.

\subsection{Equidistribution in sizes at most three}\label{sec:sizes-three}

In this subsection we prove equidistribution in sizes at most three and give
an explicit formula in size three.

Recall that a module is \emph{torsionless} if it embeds in a finite direct
sum of projective modules.
For a finite-dimensional algebra \(B\), put
\[
 \mathcal K_{\quot}(B)=\ind\Gen\bigl(\DDual({}_B B)\bigr),\qquad
 \mathcal K_{\sub}(B)=\ind\Cogen(B_B).
\]
The set \(\mathcal K_{\sub}(B)\) consists of the indecomposable torsionless
\(B\)-modules.

\begin{theorem}[{\cite[Theorem~1.1 and
Corollary~2.1]{RingelTorsionless}}]\label{thm:ringel-torsionless-bijection}
Let \(B\) be a finite-dimensional algebra.
There is a bijection between the indecomposable torsionless \(B\)-modules and
the indecomposable torsionless \(B^{\op}\)-modules.
\end{theorem}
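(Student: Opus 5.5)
The plan is to separate the projective modules from the rest. Projectives are handled by the duality \(P\mapsto P^*=\Hom_B(P,B)\), which is a bijection between the indecomposable projective \(B\)-modules and the indecomposable projective \(B^{\op}\)-modules. All of these are torsionless. For the nonprojective torsionless indecomposables, I would construct a map \(M\mapsto\Omega\,\mathrm{Tr}\,\Omega^{-}M\) (notation introduced only inside this argument) and show that the analogous construction over \(B^{\op}\) inverts it.

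Let \(M\) be indecomposable, torsionless and nonprojective. \textbf{Step 1.} Take a minimal left \(\add(B_B)\)-approximation \(f\colon M\to P\). Since \(M\) is torsionless, \(f\) is injective, so there is an exact sequence
\[
 0\longrightarrow M\xrightarrow{f}P\longrightarrow N\longrightarrow0,
\]
and we write \(\Omega^{-}M:=N\). By left minimality of \(f\), the map \(P\to N\) is a projective cover, so \(M\cong\Omega N\). \textbf{Step 2.} Take a minimal projective presentation \(P_1\to P\to N\to0\) and form \(\mathrm{Tr}\,N=\operatorname{Coker}(P^*\to P_1^*)\). Because \(f\) is an approximation, applying \((-)^*\) to the short exact sequence gives an exact sequence
\[
 0\to N^*\to P^*\to M^*\to0.
\]
So the image of \(P^*\to P_1^*\) is a syzygy, namely \(\Omega_{B^{\op}}\mathrm{Tr}\,N\), and we define
\[
 M^{\dagger}:=\Omega_{B^{\op}}\mathrm{Tr}\,N.
\]
\textbf{Step 3.} \(M^{\dagger}\) is a submodule of the projective \(P_1^*\), hence torsionless.

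\textbf{Step 4.} Show that \(M^{\dagger}\) is indecomposable and nonprojective. The standard facts I would use are these: \(\mathrm{Tr}\) is a bijection on indecomposable nonprojective modules up to projectives, and \(\Omega\) is injective on modules without projective summands in the stable category. Hence it suffices that \(N\) is indecomposable and nonprojective. Indecomposability of \(N\) would follow from indecomposability of \(M\) via \(M\cong\Omega N\) together with minimality. Nonprojectivity holds because \(f\) is not split.

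\textbf{Step 5.} Show that the same recipe over \(B^{\op}\) inverts \(M\mapsto M^{\dagger}\). The inclusion \(M^{\dagger}\hookrightarrow P_1^*\) should be exactly the minimal left projective approximation of \(M^{\dagger}\). Its cokernel should be \(\mathrm{Tr}\,N\) modulo projective summands. Applying \(\Omega\,\mathrm{Tr}\) again and using \(\mathrm{Tr}^2\cong\mathrm{id}\) stably recovers \(\Omega N\cong M\).

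The main obstacle is Step 5, and specifically proving that \(M^{\dagger}\hookrightarrow P_1^*\) is a left \(\add B\)-approximation that is also minimal. Approximation amounts to surjectivity of \(P_1^{**}\cong P_1\to(M^{\dagger})^*\). I would get this by dualizing the presentation and using that the exact sequence \(0\to N^*\to P^*\to M^*\to 0\) identifies \((M^{\dagger})^*\) with the image of \(P_1\to P\), which is \(\Omega N\). Minimality would come from minimality of the presentation \(P_1\to P\to N\to0\), which guarantees that \(P_1^*\to\mathrm{Tr}\,N\) has no superfluous projective summand.
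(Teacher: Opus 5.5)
The paper does not prove this statement; it only cites Ringel. Your argument therefore has to stand on its own, and it has a genuine gap: the module you build in Step~2 is the wrong one. Write \(d=f\circ p\) with \(p\colon P_1\to M\) the projective cover. Since \(f^*\) is surjective and \(p^*\) is injective,
\[
 M^{\dagger}=\Omega_{B^{\op}}\mathrm{Tr}\,N=\Image(d^*)=p^*(M^*)\cong M^*=\Hom_B(M,B).
\]
Steps~4 and~5 both fail for this module.

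In Step~5, \((M^\dagger)^*\cong M^{**}\), and the map \(P_1\cong P_1^{**}\to M^{**}\) is \(p\) followed by the evaluation \(M\to M^{**}\). So \(M^\dagger\hookrightarrow P_1^*\) is a left \(\add B^{\op}\)-approximation exactly when \(M\) is reflexive, and you have silently identified \(M^{**}\) with \(\Omega N=M\). In Step~4 you rely on \(\Omega\) preserving indecomposability, which is false in general: \(\Omega S=\rad P\) may decompose.

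Here is a concrete counterexample. Let \(B=\kk Q/(\alpha\gamma,\beta\gamma)\) with \(Q\colon 1\xrightarrow{\alpha}2\xleftarrow{\beta}1'\), \(2\xrightarrow{\gamma}3\); this is a representation-finite quotient of a path algebra of type \(D_4\). Then \(M=S_2\cong\alpha B\) is indecomposable, torsionless and nonprojective, but
\[
 M^*\cong\{\,b\in Be_2\mid b\gamma=0\,\}=B\alpha\oplus B\beta\cong Be_1\oplus Be_{1'}
\]
is projective and decomposable. So \(M\mapsto\Omega\,\mathrm{Tr}\,\Omega^{-}M\) does not even land in the indecomposable nonprojective torsionless \(B^{\op}\)-modules.

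The fix is to drop the final syzygy and send \(M\) to \(\mathrm{Tr}\,N=\mathrm{Tr}\,\Omega^{-}M\). Your map is this one followed by an extra \(\Omega_{B^{\op}}\). The approximation property of \(f\) gives \(\operatorname{Ext}^1_B(N,B)=\operatorname{Coker}(P^*\to M^*)=0\). The Auslander--Bridger sequence
\[
 0\to\operatorname{Ext}^1(\mathrm{Tr}X,B)\to X\to X^{**}\to\operatorname{Ext}^2(\mathrm{Tr}X,B)\to0
\]
shows that \(X\) is torsionless if and only if \(\operatorname{Ext}^1(\mathrm{Tr}X,B)=0\). Applied to \(X=\mathrm{Tr}\,N\), with \(\mathrm{Tr}^2N\cong N\), this makes \(\mathrm{Tr}\,N\) torsionless. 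It is indecomposable and nonprojective because \(N\) is, which your Step~4 correctly shows.

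The inverse is \(X\mapsto\Omega\,\mathrm{Tr}\,X\). Indecomposability of \(\Omega N\), for \(N\) indecomposable nonprojective with \(\operatorname{Ext}^1_B(N,B)=0\), comes not from any general property of \(\Omega\). It comes from three facts: \(\Omega N\hookrightarrow P(N)\) is then the minimal left \(\add B\)-approximation, since its cokernel has no projective summand; minimal left approximations are additive; and no nonzero summand of \(P(N)\) lies in \(\Omega N\subseteq\rad P(N)\). In the example above, \(\mathrm{Tr}\,\Omega^{-}S_2\) is the simple left module at \(2\), isomorphic to the torsionless module \(B\gamma\subseteq Be_3\).
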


\begin{proposition}\label{prop:ringel-torsionless}
For every finite-dimensional algebra \(B\), there is a bijection
\[
 \mathcal K_{\sub}(B)\longrightarrow\mathcal K_{\quot}(B).
\]
Write \(d(B)=|\mathcal K_{\quot}(B)|=|\mathcal K_{\sub}(B)|\).
In particular, \(d(B)\) is finite when \(B\) is
representation-finite.
\end{proposition}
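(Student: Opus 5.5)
The plan is to compose Ringel's bijection (Theorem~\ref{thm:ringel-torsionless-bijection}) with the duality \(\DDual\).

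First, the duality \(\DDual\colon\mod B^{\op}\to\mod B\) exchanges submodules and factor modules, and it sends \({}_BB\), regarded as a right \(B^{\op}\)-module, to \(\DDual({}_BB)\). An indecomposable right \(B^{\op}\)-module \(Y\) is torsionless exactly when \(Y\in\Cogen(B^{\op}_{B^{\op}})\), that is, when \(Y\) embeds in a finite direct sum of copies of \({}_BB\). Applying \(\DDual\), this holds exactly when \(\DDual Y\) is a factor module of a finite direct sum of copies of \(\DDual({}_BB)\), that is, when \(\DDual Y\in\Gen(\DDual({}_BB))\). Since \(\DDual\) preserves indecomposability and isomorphism classes, it therefore restricts to a bijection
\[
 \mathcal K_{\sub}(B^{\op})\longrightarrow\mathcal K_{\quot}(B).
\]
The only point to check is the bookkeeping of sides: left \(B\)-modules are right \(B^{\op}\)-modules, and \(\DDual\) turns direct sums into direct sums and monomorphisms into epimorphisms.

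Second, by Theorem~\ref{thm:ringel-torsionless-bijection} there is a bijection \(\mathcal K_{\sub}(B)\to\mathcal K_{\sub}(B^{\op})\) between the indecomposable torsionless modules on the two sides. Composing it with the bijection from the first step gives the required bijection \(\mathcal K_{\sub}(B)\to\mathcal K_{\quot}(B)\), and so \(|\mathcal K_{\sub}(B)|=|\mathcal K_{\quot}(B)|\). This justifies the notation \(d(B)\).

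Finally, if \(B\) is representation-finite, then \(\ind B\) is finite, so its subset \(\mathcal K_{\quot}(B)\) is finite and \(d(B)<\infty\).

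There is no real obstacle, since the substantive input is Ringel's theorem, which we quote. The only care needed is in the first step: \(\DDual\) must carry Cogen of the regular \(B^{\op}\)-module onto Gen of \(\DDual({}_BB)\), and not onto Gen of \(\DDual(B_B)\).
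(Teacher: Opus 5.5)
Your proposal is correct and follows the paper's proof: compose Ringel's bijection between indecomposable torsionless \(B\)- and \(B^{\op}\)-modules with the duality \(\DDual\), which sends torsionless \(B^{\op}\)-modules to factor modules of \(\DDual({}_BB)\). Finiteness then follows because \(\ind B\) is finite. You also spell out the side bookkeeping for \(\DDual\), which the paper leaves implicit.
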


\begin{proof}
Compose the bijection in Theorem~\ref{thm:ringel-torsionless-bijection} with
the duality from torsionless \(B^{\op}\)-modules to factor modules of
injective \(B\)-modules.  The resulting map is a bijection
\(\mathcal K_{\sub}(B)\to\mathcal K_{\quot}(B)\).  If \(B\) is
representation-finite, \(\mathcal K_{\sub}(B)\) and
\(\mathcal K_{\quot}(B)\) are finite.
\end{proof}

\begin{definition}\label{def:faithful-subcategory}
An additive subcategory \(\CC\subseteq\mod B\) is \emph{faithful} if
\(\ann_B\CC=\bigcap_{X\in\CC}\ann_BX=0\).
\end{definition}

\begin{lemma}\label{lem:faithful-core}
Let \(B\) be a finite-dimensional algebra.
\begin{enumerate}
\item The set \(\mathcal K_{\quot}(B)\) is the unique minimal faithful
quotient-closed set.
\item The set \(\mathcal K_{\sub}(B)\) is the unique minimal faithful
submodule-closed set.
\end{enumerate}
\end{lemma}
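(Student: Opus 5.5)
Part~(1) is the substantive claim, and part~(2) will follow from it by duality. I will show two things: \(\mathcal K_{\quot}(B)\) is faithful, and every faithful quotient-closed set \(\CC\) contains \(\mathcal K_{\quot}(B)\). Together these give existence and uniqueness of the minimal one. Here ``quotient-closed set'' means a \(\qcl\)-closed subset of \(\ind B\), identified with \(\add\) of it. Note that \(\mathcal K_{\quot}(B)=\ind\Gen(\DDual B)\) is quotient-closed by definition.

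\emph{Faithfulness of \(\mathcal K_{\quot}(B)\).} The module \(\DDual(_BB)\) is a faithful right \(B\)-module. Indeed, if \(\DDual(B)\,b=0\), then every linear functional on \(B\) vanishes on \(Bb\), so \(b=0\). Every indecomposable summand of \(\DDual B\) lies in \(\mathcal K_{\quot}(B)\). Hence \(\ann_B\mathcal K_{\quot}(B)\subseteq\ann_B\DDual B=0\).

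\emph{Minimality.} Let \(\CC\) be faithful and quotient-closed. The first step is to pass from ``faithful'' to ``contains a faithful module''. Since \(B\) is finite-dimensional, the descending chain of annihilators of finite subsets of \(\ind\CC\) stabilizes. So finitely many \(X_1,\dots,X_m\in\CC\) already satisfy \(\bigcap\ann X_i=0\), and \(M=\bigoplus X_i\in\CC\) is a faithful module. A faithful module \(M\) with \(\dim_k M\) finite cogenerates \(B_B\): choosing a \(k\)-basis \(x_1,\dots,x_t\) of \(M\), the map \(B\to M^t\), \(b\mapsto(x_jb)_j\), is injective. Dualizing this is not what I want, though. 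What I need is that \(M\) generates \(\DDual B\). For that I use the standard dual fact: a faithful module \(M\) generates every injective module. Concretely, \(\DDual M\) is a faithful left module, so \({}_BB\hookrightarrow(\DDual M)^t\) as left modules by the same basis argument. Applying \(\DDual\) gives a surjection \(M^t\twoheadrightarrow\DDual({}_BB)\). Thus \(\DDual B\in\Gen M\subseteq\CC\). Since \(\CC\) is quotient-closed, \(\Gen(\DDual B)\subseteq\CC\), i.e.\ \(\mathcal K_{\quot}(B)\subseteq\CC\).

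Part~(2) follows by applying part~(1) to \(B^{\op}\) and transporting along \(\DDual\). The duality preserves annihilators, since \(\ann_{B^{\op}}\DDual X=\ann_BX\) as subsets of \(B\). It also exchanges quotient-closed and submodule-closed sets. Finally, it sends \(\Gen(\DDual(B^{\op}))\) to \(\Cogen(B_B)\), so the set \(\mathcal K_{\quot}(B^{\op})\) corresponds to \(\mathcal K_{\sub}(B)\). The only step requiring care is the reduction from a faithful family of modules to a single faithful module in \(\CC\), together with the fact that a faithful module generates \(\DDual B\). Both are elementary and settled above, so I expect no real obstacle.
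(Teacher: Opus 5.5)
Your proof is correct and follows the paper's strategy: extract a single faithful module $M\in\CC$, show that it generates $\DDual({}_BB)$, and use the faithfulness of $\DDual({}_BB)$ for minimality. The differences are local. The paper obtains $\DDual B\in\Gen M$ by extending a surjection $B^m\twoheadrightarrow E$ along $B^m\hookrightarrow M^{nm}$, using that $E$ is injective, whereas you dualize ${}_BB\hookrightarrow(\DDual M)^t$. The paper also proves (2) directly from $B_B\hookrightarrow M^n$ rather than by duality. One small slip: in your faithfulness check the relevant subspace is $bB$, not $Bb$, which is harmless since both contain $b$.
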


\begin{proof}
(1)
Let \(\CC\) be faithful.  Since \(B\) is finite dimensional, there is an object
\(X\in\CC\) with \(\ann_BX=0\), and hence an embedding \(B\hookrightarrow
X^n\) for some \(n\).  If \(\CC\) is quotient-closed and \(E\) is injective,
choose a surjection \(B^m\twoheadrightarrow E\). It extends across
\(B^m\hookrightarrow X^{nm}\), and the extension remains surjective. Thus every
injective module belongs to \(\CC\).  Since \(\DDual({}_B B)\) is faithful,
\(\mathcal K_{\quot}(B)\) is the unique minimal faithful
quotient-closed set.

(2)
Let \(\CC\) be a faithful submodule-closed subcategory, and choose
\(X\in\CC\) with \(\ann_BX=0\).  The resulting embedding
\(B\hookrightarrow X^n\) gives \(B_B\in\CC\).  Since \(B_B\) is faithful,
\(\mathcal K_{\sub}(B)\) is the unique minimal faithful submodule-closed set.
\end{proof}

\begin{proposition}\label{prop:bottom-two}
There is a bijection between
\(\LQ^2(A)\) and \(\LS^2(A)\) which preserves annihilators.  More precisely,
both sets are in bijection with
\[
 \mathcal J_2(A)=
 \{\,I\lhd A\mid d(A/I)=2\,\}.
\]
In particular, \(|\LQ^2(A)|=|\LS^2(A)|\).
\end{proposition}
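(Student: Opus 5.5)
Group the subcategories by annihilator. For a quotient-closed subcategory \(\CC\) of size two, put \(I=\ann_A\CC\), a two-sided ideal. Then \(\CC\) lies in \(\mod(A/I)\) and is faithful there. Quotient-closedness in \(\mod A\) and in \(\mod(A/I)\) coincide for subcategories of \(\mod(A/I)\), since \(\mod(A/I)\) is closed under quotients and submodules in \(\mod A\). Lemma~\ref{lem:faithful-core}(1), applied to \(B=A/I\), shows \(\mathcal K_{\quot}(A/I)\subseteq\ind\CC\), so \(d(A/I)\le 2\).

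The key step is to show \(d(A/I)=2\) and \(\ind\CC=\mathcal K_{\quot}(A/I)\). First, \(\mathcal K_{\quot}(A/I)\) is itself faithful and quotient-closed over \(A/I\), hence over \(A\), and its annihilator is \(I\). It remains to rule out \(d(A/I)\le1\) with \(\CC\) strictly larger. If \(d(A/I)=1\), then \(\Gen\DDual(A/I)\) has one indecomposable, the indecomposable injective \(E\), and every indecomposable is a quotient of \(E\). Hence \(A/I\) is local, \(E\) is uniserial-like with simple top, and \(\mod(A/I)\) has quotient-closed set \(\{E\}\) only if \(E\) is simple. So \(A/I\) is a field (\(\ann E=0\) and \(E\) simple force \(A/I\) simple, and local over an algebraically closed field gives \(\kk\)). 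Then \(\mod(A/I)\) has a single indecomposable, contradicting \(|\ind\CC|=2\). The case \(d(A/I)=0\) means \(A/I=0\), which is impossible. Thus \(\ind\CC=\mathcal K_{\quot}(A/I)\) with \(d(A/I)=2\).

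Conversely, for \(I\in\mathcal J_2(A)\) the set \(\mathcal K_{\quot}(A/I)\) is a quotient-closed subcategory of \(\mod A\) of size two. Its annihilator is exactly \(I\), because \(\DDual(A/I)\) is a faithful \(A/I\)-module. Hence \(\CC\mapsto\ann_A\CC\) and \(I\mapsto\add\mathcal K_{\quot}(A/I)\) are inverse bijections \(\LQ^2(A)\leftrightarrow\mathcal J_2(A)\), with inverse compositions checked by the two facts above.

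The submodule-closed side is the verbatim dual argument, using Lemma~\ref{lem:faithful-core}(2) and \(\mathcal K_{\sub}(A/I)\). Since \(|\mathcal K_{\sub}(B)|=|\mathcal K_{\quot}(B)|=d(B)\) by Proposition~\ref{prop:ringel-torsionless}, the index set is the same \(\mathcal J_2(A)\). For the \(d(A/I)=1\) exclusion, the dual statement says \(A/I\) itself is indecomposable with all indecomposables torsionless, forcing it simple.

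Composing the two bijections gives an annihilator-preserving bijection \(\LQ^2(A)\to\LS^2(A)\). The main obstacle I expect is the degenerate-case argument excluding \(d(A/I)=1\) with a larger faithful \(\CC\). If the direct argument above is shaky, I would instead argue by minimality. \(\CC\) minus a splitting projective is quotient-closed of size one, hence \(\add S\) for a simple \(S\) (Lemma~\ref{lem:boundary-coefficients}(1)). So \(\CC=\{S,X\}\), and faithfulness of \(\CC\) over \(A/I\) combined with \(\mathcal K_{\quot}(A/I)\subseteq\CC\) pins down \(\mathcal K_{\quot}=\CC\) unless \(\mathcal K_{\quot}(A/I)=\{S\}\). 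That case makes \(S\) faithful over \(A/I\), so \(A/I\cong\kk\) and \(X\) cannot exist.
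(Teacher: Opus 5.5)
Your proposal is correct and is essentially the paper's argument: you group by annihilator, use Lemma~\ref{lem:faithful-core} to get $\mathcal K_{\quot}(A/I)\subseteq\ind\CC$, exclude $d(A/I)=1$ because a one-element faithful quotient-closed set consists of a faithful simple module, and identify the index sets of the two sides via Proposition~\ref{prop:ringel-torsionless}. Your explicit dual treatment of $\LS^2(A)$ also spells out a half that the paper leaves implicit. Two small repairs are needed in the $d(A/I)=1$ step. First, the uniqueness of the simple $A/I$-module comes from every indecomposable injective lying in $\mathcal K_{\quot}(A/I)=\{E\}$; your claim that every indecomposable is a quotient of $E$ is unjustified and not needed. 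Second, since $A$ is not assumed basic you only get $A/I\cong M_n(\kk)$ rather than $\kk$, but this still has a single indecomposable module, so the contradiction stands.
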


\begin{proof}
Let \(\CC\in\LQ^2(A)\), put \(I=\ann_A\CC\) and \(B=A/I\), and view \(\CC\)
as a faithful subcategory of \(\mod B\).  Lemma~\ref{lem:faithful-core}
gives \(\mathcal K_{\quot}(B)\subseteq\CC\).  This set cannot have only one
element.  Indeed, if \(\mathcal K_{\quot}(B)=\{E\}\), then all
indecomposable injective modules are isomorphic, so \(B\) has a unique
simple module.  A simple quotient \(S\) of \(E\) belongs to the
quotient-closed set \(\mathcal K_{\quot}(B)\), and hence \(S\simeq E\).
Thus the unique simple module is injective, every finite-length \(B\)-module
is semisimple, and \(B\) has only one indecomposable module.  This
contradicts the fact that \(\CC\) has two indecomposable objects.  Hence the
nonempty set \(\mathcal K_{\quot}(B)\subseteq\CC\) has two elements, so
\(d(B)=2\) and
\(\CC=\mathcal K_{\quot}(B)\).

Conversely, for every \(I\in\mathcal J_2(A)\), restriction of scalars along
\(A\to A/I\) makes \(\mathcal K_{\quot}(A/I)\) a quotient-closed subcategory
of \(\mod A\).  Proposition~\ref{prop:ringel-torsionless} shows that
\(\mathcal K_{\sub}(A/I)\) also has two elements, and restriction of scalars
gives a submodule-closed subcategory with annihilator \(I\).  The mutually
inverse bijections are
\[
 \CC\longmapsto
 \mathcal K_{\sub}(A/\ann_A\CC),
 \qquad
 \DD\longmapsto
 \mathcal K_{\quot}(A/\ann_A\DD).
\]
\end{proof}

For the remainder of this subsection, assume that \(A\) is basic,
representation-finite, and defined over the algebraically closed field
\(\kk\).  Every factor \(A/I\) is then basic.  To write the quotient and
submodule cases simultaneously, let the subscript \(*\) range over
\(\{\quot,\sub\}\).  For a representation-finite algebra \(B\), let
\(\mathcal F_*^j(B)\) be the faithful members of \(\LL_*^j(B)\), and put
\[
 c_*^j(B)=|\LL_*^j(B)|,\qquad
 f_*^j(B)=|\mathcal F_*^j(B)|.
\]
Put
\[
 F_*(B;q)=\sum_j f_*^j(B)q^j,\qquad
 R_*(B;q)=\sum_jc_*^j(B)q^j.
\]

\begin{lemma}\label{lem:annihilator-strata}
Let \(B\) be representation-finite.
For every \(j\geq0\), restriction of scalars gives a decomposition as a
disjoint union
\[
 \LL_*^j(B)\simeq
 \coprod_{I\lhd B}\mathcal F_*^j(B/I).
\]
Consequently,
\[
 c_*^j(B)=\sum_{I\lhd B}f_*^j(B/I).
\]
Only finitely many summands are nonzero.
\end{lemma}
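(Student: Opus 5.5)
The plan is to partition \(\LL_*^j(B)\) by annihilator.  For \(\CC\in\LL_*^j(B)\), put \(I=\ann_B\CC\), a two-sided ideal.  Every module in \(\CC\) is annihilated by \(I\), so \(\CC\) is a subcategory of \(\mod B/I\), which we view as the full subcategory of \(\mod B\) consisting of modules annihilated by \(I\).  This subcategory is closed under submodules and factor modules in \(\mod B\).  Hence a subcategory of \(\mod B/I\) is quotient-closed (resp.\ submodule-closed) in \(\mod B/I\) if and only if it is so in \(\mod B\).  Indeed, factor modules and submodules of \(B/I\)-modules are computed the same way over \(B\) and over \(B/I\), and restriction of scalars is fully faithful, so the notions of indecomposability, isomorphism, and size agree.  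Moreover \(\ann_{B/I}\CC=\ann_B\CC/I=0\), so \(\CC\) is a faithful member of \(\LL_*^j(B/I)\).

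Conversely, given any ideal \(I\) and \(\CC\in\mathcal F_*^j(B/I)\), restriction of scalars gives a member of \(\LL_*^j(B)\) by the observation above.  Its annihilator in \(B\) is the preimage of \(\ann_{B/I}\CC=0\), namely \(I\).  The two assignments are therefore mutually inverse.  Distinct ideals give disjoint pieces, because the annihilator is recovered from the subcategory.  This yields the disjoint-union decomposition, and taking cardinalities gives \(c_*^j(B)=\sum_I f_*^j(B/I)\).  Each \(B/I\) is again representation-finite, being a factor algebra of \(B\), so \(\LL_*^j(B/I)\) and \(f_*^j(B/I)\) are defined.

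For the finiteness claim, note that the ideals \(I\) with \(\mathcal F_*^j(B/I)\neq\varnothing\) are exactly the annihilators of members of \(\LL_*^j(B)\).  These annihilators are determined by the subsets of the finite set \(\ind B\).  Hence at most \(2^{|\ind B|}\) ideals contribute, even though \(B\) may have infinitely many ideals when \(\kk\) is infinite.

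No step is a genuine obstacle.  The only point needing care is the compatibility of closure and size under restriction of scalars, namely that \(\mod B/I\) is closed under submodules and quotients inside \(\mod B\).  This is immediate because annihilation by \(I\) passes to submodules and quotients.
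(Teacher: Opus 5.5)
Your proof is correct and follows the same route as the paper. You partition \(\LL_*^j(B)\) by the annihilator \(I=\ann_B\CC\), regard \(\CC\) as a faithful closed subcategory of \(\mod B/I\), and invert by restriction of scalars, which recovers \(I\) as the annihilator. You also give more detail than the paper on why closure, indecomposability and size are unchanged by restriction, and your bound of \(2^{|\ind B|}\) contributing ideals makes explicit the paper's one-line finiteness remark.
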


\begin{proof}
For \(\CC\in\LL_*^j(B)\), put \(I=\ann_B\CC\).  Then \(\CC\) is faithful
as a subcategory of \(\mod(B/I)\).  Conversely, restriction of scalars sends
a faithful member of \(\LL_*^j(B/I)\) to a member of \(\LL_*^j(B)\) with
annihilator \(I\).  These operations are inverse and prove the displayed
decomposition.  Taking cardinalities gives the formula for \(c_*^j(B)\).
Since \(B\) is representation-finite, only finitely many summands can be
nonzero.
\end{proof}

\begin{lemma}\label{lem:faithful-descent}
Let \(\mathscr C\) be a class of representation-finite
finite-dimensional algebras which is closed under factor algebras.  If
\(\RQ(C;q)=\RS(C;q)\) for every \(C\in\mathscr C\), then the faithful
size generating polynomials of every \(C\in\mathscr C\) also agree.
\end{lemma}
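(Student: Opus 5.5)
We argue by Möbius inversion on the poset of two-sided ideals, using Lemma~\ref{lem:annihilator-strata}. Fix \(C\in\mathscr C\). For every ideal \(I\lhd C\), the factor algebra \(C/I\) lies in \(\mathscr C\), and its ideals are exactly \(J/I\) with \(I\subseteq J\lhd C\). Applying Lemma~\ref{lem:annihilator-strata} to \(B=C/I\) and using \((C/I)/(J/I)\cong C/J\) gives, for each \(*\in\{\quot,\sub\}\),
\[
 R_*(C/I;q)=\sum_{J\supseteq I}F_*(C/J;q),
\]
where \(J\) ranges over the ideals of \(C\) containing \(I\).

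The difficulty is that the ideal lattice of \(C\) may be infinite, so we cannot invert this relation by Möbius inversion over all ideals. Instead we restrict to the finite set
\[
 \mathcal I=\{\,J\lhd C\mid F_{\quot}(C/J;q)\neq0\text{ or }F_{\sub}(C/J;q)\neq0\,\}.
\]
A nonzero faithful member of \(\LL_*(C/J)\) is a subcategory of \(\mod C\) with annihilator exactly \(J\). Since \(C\) is representation-finite, there are only finitely many subsets of \(\ind C\), so only finitely many \(J\) occur. The zero subcategory has annihilator \(C\), so \(J=C\) also belongs to \(\mathcal I\). We treat this degenerate case \(C/J=0\) with the convention \(F_*(0;q)=R_*(0;q)=1\), so that the relation above stays consistent.

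Put \(\Delta(J)=F_{\quot}(C/J;q)-F_{\sub}(C/J;q)\). The hypothesis \(\RQ(C/I;q)=\RS(C/I;q)\) for every \(I\) gives
\[
 \sum_{J\in\mathcal I,\,J\supseteq I}\Delta(J)=0\qquad\text{for every }I\lhd C.
\]
We now show by downward induction on the finite poset \(\mathcal I\), ordered by inclusion, that \(\Delta(J)=0\) for every \(J\in\mathcal I\).

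For \(J_0\in\mathcal I\), apply the identity with \(I=J_0\). Every \(J\in\mathcal I\) with \(J\supsetneq J_0\) has \(\Delta(J)=0\) by induction, so \(\Delta(J_0)=0\). The base case is the maximal element \(J_0=C\), where \(\Delta(C)=1-1=0\) directly. For \(J\notin\mathcal I\) we have \(\Delta(J)=0\) trivially. Taking \(J=0\) yields \(F_{\quot}(C;q)=F_{\sub}(C;q)\), as required.

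The only delicate points are the finiteness of \(\mathcal I\), which keeps all sums finite, and the bookkeeping convention for \(J=C\). We expect both to be routine.
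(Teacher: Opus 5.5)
Your proof is correct and is essentially the paper's argument: both invert the annihilator stratification of Lemma~\ref{lem:annihilator-strata} triangularly. The paper does this by induction on \(\dim_\kk C\), peeling off the zero-ideal term and applying the inductive hypothesis to each \(C/I\) with \(I\ne0\); you do it by downward induction over the ideals of a fixed \(C\). Your restriction to the finite set \(\mathcal I\) is harmless but unnecessary, since a strict inclusion of ideals strictly decreases \(\dim_\kk C/J\), and that already makes the induction well-founded.
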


\begin{proof}
Induct on \(\dim_\kk C\).  Separate the term indexed by the zero ideal in
Lemma~\ref{lem:annihilator-strata}:
\[
 F_*(C;q)=R_*(C;q)-
 \sum_{0\ne I\lhd C}F_*(C/I;q).
\]
The hypothesis identifies \(R_{\quot}(C;q)\) with \(R_{\sub}(C;q)\).  For
every nonzero ideal \(I\), induction identifies \(F_{\quot}(C/I;q)\) with
\(F_{\sub}(C/I;q)\).
Hence \(F_{\quot}(C;q)=F_{\sub}(C;q)\).
\end{proof}

Let \(Q_A\) be the Gabriel quiver of \(A\).

\begin{lemma}[{\cite[Lemma~III.2.12]{AssemSimsonSkowronski}}]
\label{lem:no-multiple-gabriel-arrows}
The quiver \(Q_A\) has at most one arrow with any fixed ordered pair of
endpoints.
\end{lemma}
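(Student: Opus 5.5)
The plan is to identify the Gabriel quiver with the $\Ext^1$-quiver between simples. This reduces the statement to the claim that $\dim_\kk\Ext^1_A(S_i,S_j)\le1$ for all simple modules $S_i,S_j$.

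Since $A$ is basic over the algebraically closed field $\kk$, the number of arrows $i\to j$ in $Q_A$ is $\dim_\kk e_i(\rad A/\rad^2A)e_j$, and this equals $\dim_\kk\Ext^1_A(S_i,S_j)$ (up to the left/right convention). So it suffices to show that every one of these spaces has dimension at most one.

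Suppose instead that $\dim_\kk e_i(\rad A/\rad^2 A)e_j\ge2$ for some $i,j$. I would pass to the factor algebra
\[
 B=A/(\rad^2A+\textstyle\sum_{l\ne i,j}Ae_lA).
\]
This $B$ is a quotient of $A$, hence representation-finite, because every $B$-module is an $A$-module by restriction of scalars and indecomposability is preserved.

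There are two cases to handle.

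\begin{itemize}
\item If $i\ne j$, then $B$ is a radical-square-zero algebra with two vertices and at least two arrows $i\to j$. Its separated quiver contains the Kronecker quiver, so Lemma~\ref{lem:radsq-separated-dynkin} contradicts representation-finiteness. One can also see this directly: $B$ has the Kronecker algebra as a factor, which has infinitely many indecomposables.
\item If $i=j$, then $B$ is local with $\dim \rad B/\rad^2B\ge2$. Its separated quiver then has a double arrow $i^+\to i^-$, which again is not Dynkin. Alternatively, $\kk[x,y]/(x,y)^2$ is representation-infinite, since it has the one-parameter family of two-dimensional-top modules.
\end{itemize}

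The main obstacle is bookkeeping rather than anything conceptual. One must check that the quotient by the idempotent ideal and by $\rad^2$ really keeps two independent arrows, which holds because the arrows lie in $e_i(\rad A)e_j\setminus\rad^2A$. One must also check that representation-finiteness passes to factor algebras. Beyond that, the argument simply applies the earlier radical-square-zero lemma (or ARS X.2.6) to $B$.
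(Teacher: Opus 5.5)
Your argument is correct, but it follows a different route from the paper. The paper cites \cite[Lemma~III.2.12]{AssemSimsonSkowronski}, which identifies the number of arrows $i\to j$ with $\dim_\kk\operatorname{Ext}^1_A(S_i,S_j)$. The bound then comes from the observation recorded in the proof of Lemma~\ref{lem:small-faithful-core}: two independent extension classes give a projective line of pairwise nonisomorphic length-two modules (nonsplit self-extensions of a simple when $i=j$). This family is infinite because $\kk$ is algebraically closed.

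You instead pass to the two-vertex radical-square-zero factor $B$ and apply Lemma~\ref{lem:radsq-separated-dynkin}. The $\operatorname{Ext}^1$ identification in your first paragraph is never actually used. Both arguments detect the same $\mathbb{P}^1$-family of length-two modules with top $S_i$ and socle $S_j$. These are the $(1,1)$-dimensional representations of two of the arrows $i^+\rightrightarrows j^-$ in the separated quiver of $B$, so your Kronecker-factor alternative is essentially the paper's argument in different dress. The separated-quiver version gains the reuse of a lemma already in the paper. It pays for this by invoking \cite[Theorem~X.2.6]{ARS}, and through it Gabriel's theorem, for a two-line fact, and it adds factor-algebra bookkeeping.

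In that bookkeeping, the operative reason is that $e_iAe_lAe_j=(e_iAe_l)(e_lAe_j)\subseteq\rad^2A$ for $l\notin\{i,j\}$ (by basicness). This gives $e_i(\rad B)e_j\cong e_i(\rad A/\rad^2A)e_j$. Merely knowing that the arrows lie outside $\rad^2A$ would not guarantee that two of them stay independent in $B$.

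Finally, the one-parameter family over $\kk[x,y]/(x,y)^2$ consists of two-dimensional modules with one-dimensional top, not modules with two-dimensional top.
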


\begin{lemma}\label{lem:two-element-faithful-core-classification}
Let \(B\) be a nonzero basic representation-finite algebra over \(\kk\).
Then \(d(B)=2\) if and only if \(B\) is isomorphic to one of
\[
 \kk\times\kk,\qquad \kk A_2,\qquad
 \kk[\varepsilon]/(\varepsilon^2).
\]
\end{lemma}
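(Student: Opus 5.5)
The plan is to prove the backward direction by direct computation and the forward direction by bounding the number of simple modules and then the Loewy structure of \(B\).

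\emph{Backward direction.} For each of the three algebras I will list the indecomposable modules and compute \(\mathcal K_{\quot}(B)=\ind\Gen(\DDual({}_BB))\).
\begin{itemize}
\item For \(\kk\times\kk\), the injectives are the two simples, so \(d=2\).
\item For \(\kk A_2\) (quiver \(1\to2\)), the injectives are \(P_1=I_2\) and \(S_1=I_1\). Their quotients are \(P_1\) and \(S_1\), so \(d=2\).
\item For \(\kk[\varepsilon]/(\varepsilon^2)\), the only injective is \(B\), with quotients \(B\) and \(S\), so \(d=2\).
\end{itemize}

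\emph{Forward direction.} Assume \(d(B)=2\). Since \(\mathcal K_{\quot}(B)\) is quotient-closed, it contains every indecomposable injective and every simple top of one.

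\textbf{Bounding the number of simples.} The tops of the indecomposable injectives are all the simples appearing as tops of injectives. Let \(r\) be the number of simples. The \(r\) pairwise nonisomorphic indecomposable injectives lie in \(\mathcal K_{\quot}(B)\), so \(r\leq 2\).

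\textbf{Case \(r=2\).} Now \(\mathcal K_{\quot}(B)=\{I_1,I_2\}\) consists exactly of the injectives. Every simple quotient of an injective is again an indecomposable injective. I will show the two injectives cannot both be nonsimple.
\begin{itemize}
\item If both were nonsimple, then no simple would lie in \(\mathcal K_{\quot}(B)\). But each \(I_i\) has a simple top inside the set, which is a contradiction.
\item So some \(I_1=S_a\) is simple. If both injectives are simple, then \(B\) is semisimple and \(B\cong\kk\times\kk\).
\item Otherwise \(I_2\) is nonsimple, and its simple tops must lie in \(\{I_1\}\). Hence \(\top I_2=S_a^{m}\). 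Since \(I_2\) is indecomposable in a set closed under quotients, \(I_2/\rad^2\) and similar quotients must also be \(I_1\) or \(I_2\). This forces \(I_2\) to be uniserial of length \(2\) with top \(S_a\) and socle \(S_b\), where \(S_b\neq S_a\) because \(S_a\) is injective.
\item So \(\dim B=3\) with quiver \(a\to b\) (or its reverse, depending on conventions), and the arrow is unique by Lemma~\ref{lem:no-multiple-gabriel-arrows}. Thus \(B\cong\kk A_2\).
\end{itemize}

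\textbf{Case \(r=1\).} Here \(B\) is local with injective \(I\), and \(\mathcal K_{\quot}(B)\) consists of \(I\) and exactly one other module.
\begin{itemize}
\item The quotients \(I/\rad^jI\) are indecomposable (they have simple top, since \(I\) has simple top when it is a quotient of the local \(B\)). They are distinct for different lengths, so \(I\) has Loewy length at most \(2\).
\item \(I\) cannot be simple, since then \(d=1\). So \(I\) has Loewy length exactly \(2\).
\item Any quotient of \(I\) of the form \(I/U\), with \(U\) a proper nonzero submodule of the socle, is indecomposable. It is not \(I\) for dimension reasons, so it must be the simple module. This forces \(\dim\soc I=1\), hence \(\dim I=2\).
\item Then \(\dim B=\dim I=2\) by duality, so \(B\cong\kk[\varepsilon]/(\varepsilon^2)\).
\end{itemize}

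\emph{Expected obstacle.} The hardest part is the structural step in the \(r=2\) case: showing that \(I_2\) is uniserial of length \(2\) and that \(B\) has dimension \(3\) rather than allowing extra radical layers. I will handle this by checking that the quotient \(I_2/\rad^2 I_2\) and its indecomposable quotients give new modules unless the lengths are minimal. I will then use \(\dim B=\sum\dim I_i\) (via duality with \(\DDual B\)) to pin down the algebra.
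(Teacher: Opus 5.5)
Your backward direction and the bound \(r\le2\) are fine, but the forward direction has a genuine gap. Closure of \(\mathcal K_{\quot}(B)\) under quotients can never bound how many copies of a simple occur in the top of the nonsimple injective, and that is exactly where representation-finiteness has to enter.

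For \(r=1\) you assert that \(I=\DDual({}_BB)\) has simple top because it is a quotient of the local algebra \(B\). But \(I\) need not be cyclic, since \(\top I\cong\DDual(\soc{}_BB)\). Your \(r=1\) argument never uses representation-finiteness, so it cannot be correct as it stands. For \(B=\kk[x,y]/(x,y)^2\), the indecomposable torsionless modules are just \(B\) and \(S\), so \(d(B)=2\); yet \(\top I\cong S^2\) and \(\dim B=3\). Your socle bullet does not help: \(\soc I\) is simple anyway, because \(I\) is the injective envelope of \(S\), so there is no proper nonzero submodule of the socle to factor out.

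The same issue appears for \(r=2\). The Kronecker algebra is hereditary with two indecomposable projectives, so \(d=2\). Its \(\mathcal K_{\quot}\) consists of the simple injective \(S_a\) and the other indecomposable injective, whose top is \(S_a^2\). Every quotient condition you impose holds there, so ``\(I_2/\rad^2 I_2\) and similar quotients'' cannot force \(I_2\) to be uniserial, and the check sketched in your last paragraph would detect nothing. You do cite Lemma~\ref{lem:no-multiple-gabriel-arrows}, but only afterwards, for uniqueness of the arrow, not at the step that needs it.

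To close the gap, argue uniformly. In both cases \(\mathcal K_{\quot}(B)=\{S,I\}\), with \(S\) simple and \(I\) a nonsimple indecomposable injective. By dimension, every indecomposable summand of a proper quotient of \(I\) is isomorphic to \(S\). Hence \(I/\soc I\cong S^t\) and \(I=\soc^2 I\). The standard computation of \(\soc^2\) of an injective envelope then gives \(t=\dim_\kk\operatorname{Ext}^1(S,\soc I)\). This is the number of Gabriel arrows between the two relevant vertices in the appropriate direction, or the number of loops when \(r=1\). Lemma~\ref{lem:no-multiple-gabriel-arrows} now gives \(t=1\), so \(\dim I=2\). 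Then \(\dim B=\dim\DDual B\) equals \(2\) for \(r=1\) and \(3\) for \(r=2\), which pins \(B\) down.

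This is the same place where the paper uses representation-finiteness, although it works on the torsionless side. For \(r=2\) it shows \(B\) is hereditary and invokes the Dynkin classification. For \(r=1\) it uses that there is at most one loop, so \(B\cong\kk[x]/(x^m)\) with \(m\le2\).
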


\begin{proof}
Suppose that \(d(B)=2\).  Since the
indecomposable projectives belong to \(\mathcal K_{\sub}(B)\), the
algebra \(B\) has at most two simple modules.  If it has two, every
indecomposable torsionless module is
projective, so every right ideal is projective and \(B\) is hereditary.
By \cite[Theorem~VII.1.7(b)]{AssemSimsonSkowronski}, each connected block
of \(B\) is the path algebra of its Gabriel quiver.  Since \(B\) is
representation-finite,
\cite[Theorem~VII.5.10(a)]{AssemSimsonSkowronski} shows that the
underlying graph of each connected block is Dynkin.  As \(B\) has two
simple modules, its Gabriel quiver therefore consists either of two
isolated vertices or of an orientation of \(A_2\).  Hence
\[
 B\cong\kk\times\kk\qquad\text{or}\qquad B\cong\kk A_2.
\]
If
\(B\) has one simple module,
Lemma~\ref{lem:no-multiple-gabriel-arrows}, applied to \(B\), shows that
its Gabriel quiver has at most one loop.  The bound-quiver presentation
theorem then gives
\(B\cong\kk[x]/(x^m)\)
\cite[Theorem~II.3.7]{AssemSimsonSkowronski}, and
\(\mathcal K_{\sub}(B)\) contains its
modules of lengths \(1,\ldots,m\).  Hence \(m\leq2\).  The case
\(m=1\) has \(d(B)=1\), so \(m=2\).
Conversely, each of the three displayed algebras has exactly two
indecomposable torsionless modules.
\end{proof}

\begin{remark}\label{rem:size-two-gabriel-formula}
Proposition~\ref{prop:bottom-two} has the explicit consequence
\[
 |\LQ^2(A)|=|\LS^2(A)|=
 \binom r2+|(Q_A)_1|,
\]
where \(r\) is the number of simple \(A\)-modules.
Indeed, choose a bound-quiver presentation
\(A\cong\kk Q_A/J\), where \(J\) is contained in the square of the arrow
ideal, by applying
\cite[Theorem~II.3.7]{AssemSimsonSkowronski} to the connected blocks.
An unordered pair of vertices gives a factor \(\kk\times\kk\) by
killing every other vertex and every arrow.  An arrow between distinct
vertices gives a factor \(\kk A_2\), and a loop gives a factor
\(\kk[\varepsilon]/(\varepsilon^2)\), by keeping only that arrow and its
endpoints and killing all other vertices, arrows, and paths of length at
least two.  These maps annihilate \(J\).  Conversely,
Lemma~\ref{lem:two-element-faithful-core-classification} and the vanishing
of the radical square in the three factors show that an ideal
\(I\in\mathcal J_2(A)\) is determined by its surviving vertices and, in
the cases \(A/I\simeq\kk A_2\) and
\(A/I\simeq\kk[\varepsilon]/(\varepsilon^2)\), its surviving arrow in the
radical modulo its square.
Lemma~\ref{lem:no-multiple-gabriel-arrows} makes this arrow unique.  Thus
\(\mathcal J_2(A)\) is in bijection with the disjoint union of the unordered
pairs of vertices and the arrows of \(Q_A\).
\end{remark}

\begin{proposition}\label{prop:bottom-three}
One has \(\lvert\LQ^3(A)\rvert=\lvert\LS^3(A)\rvert\), and this number is
\[
 \#\{\,I\lhd A\mid d(A/I)=3\,\}
 +\#\{\,I\lhd A\mid A/I\text{ is Morita equivalent to }\kk A_2\,\}.
\]
\end{proposition}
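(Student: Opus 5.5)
Using the annihilator decomposition of Lemma~\ref{lem:annihilator-strata} with \(j=3\), it suffices to show that for every basic representation-finite factor algebra \(B=A/I\), the number of faithful quotient-closed subcategories of size three equals the number of faithful submodule-closed ones, and to compute this common value. Let \(\CC\in\mathcal F_{\quot}^3(B)\). By Lemma~\ref{lem:faithful-core}, \(\mathcal K_{\quot}(B)\subseteq\CC\). As in the proof of Proposition~\ref{prop:bottom-two}, \(d(B)\geq2\) unless \(B=\kk\), and \(B=\kk\) admits no faithful set of size three. Hence \(d(B)\in\{2,3\}\).

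If \(d(B)=3\), then \(\CC=\mathcal K_{\quot}(B)\) is the unique faithful quotient-closed set of size three, and dually \(\mathcal K_{\sub}(B)\) is the unique faithful submodule-closed one; by Proposition~\ref{prop:ringel-torsionless} it also has size three. Each such \(B\) therefore contributes exactly \(1\) on both sides. This accounts for the first summand.

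If \(d(B)=2\), then Lemma~\ref{lem:two-element-faithful-core-classification} gives \(B\in\{\kk\times\kk,\ \kk A_2,\ \kk[\varepsilon]/(\varepsilon^2)\}\), and the remaining count is a direct inspection. The algebra \(\kk\times\kk\) has only two indecomposables, so it contributes \(0\). The algebra \(\kk[\varepsilon]/(\varepsilon^2)\) also has only two indecomposables, so it contributes \(0\). The algebra \(\kk A_2\) has three indecomposables \(S_1,P_1,S_2\), so it contributes exactly one faithful set of size three, namely all of \(\ind B\), which is both quotient-closed and submodule-closed. Summing over \(I\lhd A\) gives
\[
 |\LQ^3(A)|=|\LS^3(A)|
 =\#\{I\mid d(A/I)=3\}+\#\{I\mid A/I\cong\kk A_2\}.
\]
Since \(A\) is basic, "\(\cong\)" agrees with "Morita equivalent to" here.

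The main obstacle I expect is the reduction step that excludes \(d(B)\geq4\) and \(d(B)\leq1\), and in particular the claim that when \(d(B)=2\) a faithful set of size three forces \(B\) to have at most three indecomposables. The classification lemma disposes of this cleanly, but one must check that it applies to every factor algebra: each \(A/I\) is basic and representation-finite over \(\kk\), which holds. The other point needing care is that in the \(d(B)=3\) case the dual side is also exactly one set. This follows from the equality \(|\mathcal K_{\sub}(B)|=|\mathcal K_{\quot}(B)|\) in Proposition~\ref{prop:ringel-torsionless}, together with Lemma~\ref{lem:faithful-core}(2).
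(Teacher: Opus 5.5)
Your proposal is correct and follows essentially the same route as the paper. Both arguments stratify by annihilators via Lemma~\ref{lem:annihilator-strata}, use that every faithful closed subcategory contains \(\mathcal K_*(B)\) (so \(d(B)=3\) gives exactly one set on each side and \(d(B)>3\) gives none), and treat \(d(B)\leq2\) with Lemma~\ref{lem:two-element-faithful-core-classification}. The differences are minor. You handle \(d(B)=1\) by reusing the injective-module argument from Proposition~\ref{prop:bottom-two}, where the paper uses a direct argument via \(\rad B\). You also leave the zero factor \(B=A/A\) implicit, although it trivially contributes nothing.
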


\begin{proof}
By Lemma~\ref{lem:annihilator-strata}, it suffices to prove that every
factor algebra \(B=A/I\) satisfies
\[
 f_{\quot}^3(B)=f_{\sub}^3(B)=
 \begin{cases}
  1 & \text{if \(d(B)=3\) or \(B\) is Morita equivalent to \(\kk A_2\),}\\
  0 & \text{otherwise.}
 \end{cases}
\]
Indeed, Lemma~\ref{lem:faithful-core} shows that every faithful
\(*\)-closed subcategory of \(\mod B\) contains \(\mathcal K_*(B)\), which
has \(d(B)\) elements.  Therefore a factor with \(d(B)>3\) contributes
nothing, whereas for \(d(B)=3\), the set \(\mathcal K_*(B)\) is the unique
faithful \(*\)-closed subcategory of size three.  The zero algebra also
contributes nothing.

It remains to consider the nonzero factors \(B\) with \(d(B)<3\).  If
\(d(B)=1\), then \(B\) has a unique indecomposable projective module.  Since
\(B\) is basic, \(B_B\) is indecomposable.  If \(\rad B\ne0\), an
indecomposable summand of \(\rad B\) is a torsionless module not isomorphic
to \(B_B\), contradicting \(d(B)=1\).  Hence \(\rad B=0\), and
\(B\cong\kk\).  If \(d(B)=2\),
Lemma~\ref{lem:two-element-faithful-core-classification} gives
\[
 B\cong\kk\times\kk,\qquad B\cong\kk A_2,
 \qquad\text{or}\qquad
 B\cong\kk[\varepsilon]/(\varepsilon^2).
\]
Among these four factors with \(0<d(B)<3\), only \(\kk A_2\) has three
indecomposable modules.  Its full module category is both the unique
faithful quotient-closed subcategory and the unique faithful
submodule-closed subcategory of size three.  This proves the displayed formula
for every factor algebra \(B\).  Summing it over the ideals of \(A\) by
Lemma~\ref{lem:annihilator-strata} gives the asserted equality and count.
\end{proof}

\subsection{Equidistribution in size four}\label{sec:size-four}

We now prove equidistribution in size four.

\begin{lemma}\label{lem:small-faithful-core}
Let \(B\ne0\) be a basic representation-finite algebra over \(\kk\).  If
\(d(B)\leq3\), then one of the following holds:
\begin{enumerate}
\item \(B\) is Nakayama;
\item \(B\) is the path algebra of
\(1\leftarrow2\to3\) or \(1\to2\leftarrow3\);
\item after replacing \(B\) by \(B^{\op}\) if necessary,
\[
 B=L_0=\kk Q/(x^2,xa)
 \quad\text{or}\quad
 B=L_1=\kk Q/(x^2),
 \qquad
 Q:\ \begin{tikzcd}[ampersand replacement=\&,column sep=large,baseline=-.5ex]
  1 \arrow[loop left,"x"]\arrow[r,"a"]\&2.
 \end{tikzcd}
\]
\end{enumerate}
\end{lemma}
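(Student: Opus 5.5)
The plan is to bound the number \(r\) of simple \(B\)-modules and then split into cases. We may treat each connected block separately, except that a disconnected \(B\) will turn out to be Nakayama in every case below.

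Throughout, we use two descriptions of \(\mathcal K_{\sub}(B)\) and \(\mathcal K_{\quot}(B)\), whose cardinalities both equal \(d(B)\) by Proposition~\ref{prop:ringel-torsionless}:
\begin{itemize}
\item \(\mathcal K_{\sub}(B)\) contains every indecomposable projective module and every indecomposable summand of \(\rad P\) (indeed of any submodule of a projective);
\item dually, \(\mathcal K_{\quot}(B)\) contains every indecomposable injective module and every indecomposable summand of \(I/\soc I\).
\end{itemize}
Since the \(r\) indecomposable projectives are pairwise nonisomorphic members of \(\mathcal K_{\sub}(B)\), we get \(r\leq d(B)\leq3\).

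\emph{Case \(r=3\).} Here \(\mathcal K_{\sub}(B)\) consists of the projectives only. As in the proof of Lemma~\ref{lem:two-element-faithful-core-classification}, every right ideal is then projective, so \(B\) is hereditary. Each block is therefore the path algebra of a Dynkin quiver. With three vertices, the possibilities are:
\begin{itemize}
\item a disjoint union of Nakayama pieces (\(A_1\) or \(A_2\));
\item a linearly oriented \(A_3\), which is Nakayama;
\item one of the two non-linear orientations \(1\leftarrow2\to3\) and \(1\to2\leftarrow3\), which is case (2).
\end{itemize}

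\emph{Case \(r=1\).} The argument of Lemma~\ref{lem:two-element-faithful-core-classification} applies verbatim. Lemma~\ref{lem:no-multiple-gabriel-arrows} allows at most one loop, so \(B\cong\kk[x]/(x^m)\), which is Nakayama.

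\emph{Case \(r=2\).} This case carries the real work. If the Gabriel quiver has no loops, it is a subquiver of \(1\rightleftarrows2\), using Lemma~\ref{lem:no-multiple-gabriel-arrows}. Every vertex then has at most one incoming and at most one outgoing arrow, so \(B\) is Nakayama. Hence we may assume there is a loop \(x\) at vertex \(1\).

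If there is no arrow between \(1\) and \(2\), then \(B\) is a product of a local Nakayama algebra and \(\kk\), which is Nakayama. So there is an arrow between the two vertices. After replacing \(B\) by \(B^{\op}\), which preserves \(d\), we may assume there is an arrow \(a\colon1\to2\).

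We then rule out the remaining configurations by exhibiting at least four distinct members of \(\mathcal K_{\sub}(B)\) or of \(\mathcal K_{\quot}(B)\):
\begin{itemize}
\item a loop at \(2\);
\item an arrow \(2\to1\);
\item the relation \(x^2\neq0\).
\end{itemize}
For example, suppose \(x^2\ne0\). Then \(\mathcal K_{\sub}(B)\) contains \(P_1\), \(P_2\), and the submodules \(xB\) and \(x^2B\) (or suitable indecomposable summands of them) of \(P_1\). One checks that these are pairwise nonisomorphic by comparing composition length and top. For the arrow \(2\to1\) or the loop at \(2\), one uses instead the indecomposable summands of \(\rad P_1\) and \(\rad P_2\), or dually of \(I_1/\soc I_1\). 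It is here that using both \(\mathcal K_{\sub}(B)\) and \(\mathcal K_{\quot}(B)\) helps, since whichever side is larger yields the contradiction.

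Once these are excluded, the quiver is \(Q\), and the admissible ideal \(I\) contains \(x^2\). The only remaining paths of length two or more are \(xa\) and its multiples. Since \(x^2=0\), the only possible further relation is whether \(xa=0\). This gives exactly \(L_0=\kk Q/(x^2,xa)\) and \(L_1=\kk Q/(x^2)\).

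The main obstacle is this last bookkeeping in the case \(r=2\). One must verify that each excluded configuration genuinely produces four pairwise nonisomorphic torsionless modules (or quotients of injectives) for every admissible choice of relations, and not only for the free path algebra. I would organize this by the Loewy length of \(P_1\) and \(P_2\), using that a non-uniserial radical \(\rad P_1=xB+aB\) already supplies two extra summands or a module distinct from both projectives.
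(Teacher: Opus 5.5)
Most of your outline is sound and follows the paper closely:
\begin{itemize}
\item the bound \(r\leq d(B)\);
\item the hereditary case \(r=3\), the local case \(r=1\), and the loopless case;
\item the exclusion of a second loop: the cyclic ideals \(xB\subsetneq P_1\) and \(yB\subsetneq P_2\) are two nonprojective torsionless modules with different tops;
\item the exclusion of \(x^2\neq0\) via \(xB\supsetneq x^2B\), and the final reading off of \(L_0,L_1\).
\end{itemize}

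The gap is the exclusion of an arrow \(b\colon2\to1\) when there is a loop \(x\) at \(1\) and an arrow \(a\colon1\to2\). This cannot be done by exhibiting four members of \(\mathcal K_{\sub}(B)\) or \(\mathcal K_{\quot}(B)\), because counting alone does not see representation type here.

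Take \(\Lambda=\kk Q'/(x^2,xa,bx,ba)\) on this quiver. It has basis \(e_1,e_2,x,a,b,ab\), with \(\rad P_1=x\Lambda\oplus a\Lambda\simeq S_1\oplus P_2\) and \(\rad P_2\simeq S_1\). An element of a torsionless module whose image in the top of the ambient projective is nonzero splits off a projective summand. Applying this twice shows that the indecomposable torsionless \(\Lambda\)-modules are exactly \(P_1,P_2,S_1\). Dually, \(I_1/\soc I_1\simeq S_1\oplus I_2\). Thus \(d(\Lambda)=3\), and the summands of radicals and of \(I/\soc I\) that you propose to use produce nothing new. Note also that \(|\mathcal K_{\sub}(B)|=|\mathcal K_{\quot}(B)|=d(B)\) by Ringel's theorem, so there is no ``larger side'' to exploit.

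Worse, this algebra is forced. Suppose \(d(B)=3\), and let \(T\) be the unique nonprojective indecomposable torsionless module.
\begin{itemize}
\item Comparing tops and lengths gives \(xB\simeq T\) and \(aB\simeq P_2\), hence \(\ell P_2<\ell P_1\).
\item It follows that \(bB\simeq abB\simeq T\).
\item Further length comparisons kill \(x^2,xa,bx,ba\), while \(ab\neq0\).
\end{itemize}
Your claim that each excluded configuration yields four torsionless modules ``for every admissible choice of relations'' therefore fails for exactly this choice.

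The missing idea is an argument that uses representation-finiteness in this configuration; your sketch never invokes it there. The paper derives \(xB\simeq bB\simeq abB\simeq T\) and \(aB\simeq P_2\) as above. It then passes to the factor \(B/H\), where \(H\) is generated by \(J^3,x^2,xa,bx,ba\), and checks that \(ab\notin H\). Finally it observes that \(B/H\) is the string algebra \(\Lambda\), which has the band \(abx^{-1}\) and hence is representation-infinite. So \(B\) is representation-infinite, a contradiction.

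Without such a step your case analysis does not rule out arrows in both directions. The reduction to the quiver \(Q\), and hence to \(L_0\) and \(L_1\), is therefore incomplete.
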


\begin{proof}
The classification is proved in Appendix~\ref{app:small-core}.
\end{proof}

\begin{lemma}\label{lem:exceptional-faithful-four}
The following statements hold.
\begin{enumerate}
\item For \(B=\kk(1\leftarrow2\to3)\) or
\(B=\kk(1\to2\leftarrow3)\), one has
\[
 f_{\quot}^4(B)=f_{\sub}^4(B)=3.
\]
\item For \(e=0,1\), one has
\[
 f_{\quot}^4(L_e)=f_{\sub}^4(L_e)=2,
\]
and
\[
 f_{\quot}^4(L_e^{\op})=f_{\sub}^4(L_e^{\op})=2.
\]
\end{enumerate}
\end{lemma}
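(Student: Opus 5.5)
The plan is to reduce every faithful count in size four to ordinary counts $c_*^4$ by the stratification of Lemma~\ref{lem:annihilator-strata}:
\[
 f_*^4(B)=c_*^4(B)-\sum_{0\ne I\lhd B}f_*^4(B/I).
\]
Only proper factors with at least four indecomposable modules contribute to the sum. First, part~(2) for $L_e^{\op}$ follows from part~(2) for $L_e$. The duality $\DDual$ of Remark~\ref{rem:opposite-duality} preserves size and annihilators, so it gives $f^4_{\quot}(L_e^{\op})=f^4_{\sub}(L_e)$ and $f^4_{\sub}(L_e^{\op})=f^4_{\quot}(L_e)$.

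For part~(1), $B$ is a Dynkin path algebra of type $A_3$. By Theorem~\ref{thm:dynkin}, $c_{\quot}^4(B)=c_{\sub}^4(B)=5$, the coefficient of $q^4$ in $(1+q)(1+q+q^2)(1+q+q^2+q^3)$, as displayed in \eqref{eq:inward-a3-polynomial}. Next I list the proper factor algebras of $B$:
\begin{itemize}
\item killing one of the two arrows gives $\kk A_2\times\kk$, which has four indecomposables;
\item every other nonzero proper factor, such as $\kk A_2$ or a product of at most three copies of $\kk$, has at most three indecomposables.
\end{itemize}
For $B'=\kk A_2\times\kk$, the only size-four subcategory is $\mod B'$ itself. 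It is faithful and both quotient- and submodule-closed, so $f_*^4(B')=1$. Subtracting these two contributions gives $f_*^4(B)=5-2=3$ for both values of $*$.

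For part~(2), first take $e=0$. Here the paths $xa$ and $x^2$ vanish and $ax$ is not composable, so $L_0$ has radical square zero. Theorem~\ref{thm:radical-square-zero} then gives $c_{\quot}^4(L_0)=c_{\sub}^4(L_0)$. Its separated graph is $A_3$ on $1^+,1^-,2^-$ together with an isolated $2^+$, so the value is the $q^4$-coefficient of
\[
 (1+q)(1+q+q^2)(1+q+q^2+q^3)(1+q)/(1+q)^2.
\]
The proper factors of $L_0$ are $\kk A_2$, $\kk[\varepsilon]/(\varepsilon^2)\times\kk$, and smaller algebras, and each has at most three indecomposables. Hence they contribute nothing, and $f^4_*(L_0)=c^4_*(L_0)$. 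I will check that this equals $2$.

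For $e=1$, the algebra $L_1$ is a representation-finite gentle algebra. I will list its indecomposables as string modules: $S_1$, $S_2$, $x$, $a$, $xa$, $ax^{-1}$-type strings, and so on. I will also compute its Auslander--Reiten quiver. Then I count size-four quotient-closed and submodule-closed subcategories, either directly or with the criterion of Theorem~\ref{thm:factor-ladder}. Among the proper factors, $L_1/(xa)=L_0$ contributes $f^4_*(L_0)=2$; all other proper factors have at most three indecomposables. So the claim is equivalent to $c_{\quot}^4(L_1)=c_{\sub}^4(L_1)=4$.

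I expect the main obstacle to be this finite but error-prone enumeration for $L_1$. To control it, I would use Lemma~\ref{lem:faithful-core} first. A faithful size-four set must contain $\mathcal K_*(L_1)$, which has $d(L_1)\le3$ elements. So faithful candidates are $\mathcal K_*(L_1)$ plus one further indecomposable, and one only needs to test which single additions keep the set closed. This avoids computing all of $c^4_*(L_1)$ and gives $f_*^4(L_1)$ directly. The same shortcut also serves as a cross-check for $L_0$.
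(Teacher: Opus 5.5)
Your proposal is correct. For part~(1) and for \(L_0\) it takes a different route from the paper.

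The paper stays inside the faithful stratum. By Lemma~\ref{lem:faithful-core}, a faithful closed set of size four is \(\mathcal K_*(B)\) plus one module. For the two \(A_3\) orientations the paper simply checks that each of the three modules outside \(\mathcal K_*(B)\) may be adjoined. For \(L_0\) and \(L_1\) it lists the strings and asserts the analogous finite check.

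You instead obtain the full counts \(c^4_*\) from the closed formulas of Theorems~\ref{thm:dynkin} and~\ref{thm:radical-square-zero}, and then peel off proper factors with Lemma~\ref{lem:annihilator-strata}. Your factor analysis is right:
\begin{itemize}
\item for \(A_3\), only the two ideals generated by a single arrow give a factor with four indecomposables, namely \(\kk A_2\times\kk\);
\item \(L_0\) has no such proper factor;
\item for \(L_1\), only \(L_1/(xa)=L_0\) has at least four indecomposables.
\end{itemize}
The arithmetic \(5-2=3\) and the \(q^4\)-coefficient \(2\) of \([3]_q[4]_q\) are also correct. This trades hand checks for results already proved. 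The cost is that you invoke the Oppermann--Reiten--Thomas classification and the separated-quiver machinery for what the paper does in a few lines, and this route gives nothing for \(L_1\). Your duality reduction for \(L_e^{\op}\) is the paper's.

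For \(L_1\) you stop at a plan, so here is the check you still owe. The indecomposables are
\begin{itemize}
\item \(S_1\) and \(S_2=P_2\);
\item \(I_1\) (string \(x\)) and \(M_a\) (string \(a\));
\item \(I_2\) (string \(xa\), uniserial);
\item \(N\) (string \(x^{-1}a\), socle \(S_1\oplus S_2\));
\item \(P_1\) (string \(a^{-1}xa\), socle \(S_2^2\)).
\end{itemize}

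On the quotient side, every indecomposable has simple top, so \(\mathcal K_{\quot}(L_1)=\{S_1,I_1,I_2\}\). One may adjoin \(S_2\) or \(M_a\). One may not adjoin \(N\), which has quotient \(M_a\), or \(P_1\), which has quotient \(N\).

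On the submodule side, \(\mathcal K_{\sub}(L_1)=\{S_2,M_a,P_1\}\). One may adjoin \(S_1\) or \(I_2\); nothing new is cogenerated, because a module with \(S_1\) in its socle needs a map that is nonzero on that socle element, and only \(I_1\) or \(N\) receive such a map. One may not adjoin \(I_1\) or \(N\), since each contains \(S_1\) and would force a fifth module.

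Hence \(f^4_{\quot}(L_1)=f^4_{\sub}(L_1)=2\), equivalently \(c^4_*(L_1)=4\).

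If you instead compute \(c^4_{\sub}(L_1)\) in full, cogeneration by direct sums is essential. One has \(N\hookrightarrow I_1\oplus M_a\) and \(P_1\hookrightarrow I_2\oplus I_2\), although neither embeds in a single summand. Testing only submodules of single modules would give \(6\) rather than \(4\) (and \(3\) rather than \(2\) for \(L_0\)).
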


\begin{proof}
The calculation is given in Appendix~\ref{app:small-core}.
\end{proof}

\begin{lemma}\label{lem:faithful-four}
For every basic representation-finite algebra \(B\) over \(\kk\),
\[
 f_{\quot}^4(B)=f_{\sub}^4(B).
\]
\end{lemma}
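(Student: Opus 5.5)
We split according to the value of \(d(B)=|\mathcal K_{\quot}(B)|=|\mathcal K_{\sub}(B)|\), which Proposition~\ref{prop:ringel-torsionless} makes common to both sides. By Lemma~\ref{lem:faithful-core}, every faithful quotient-closed subcategory of \(\mod B\) contains \(\mathcal K_{\quot}(B)\), and every faithful submodule-closed subcategory contains \(\mathcal K_{\sub}(B)\). Hence if \(d(B)>4\), then \(f_{\quot}^4(B)=f_{\sub}^4(B)=0\).

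If \(d(B)=4\), then \(\mathcal K_*(B)\) is the unique faithful \(*\)-closed subcategory of size four. Indeed, any faithful \(*\)-closed set of size four contains the four-element set \(\mathcal K_*(B)\), so it equals it. Conversely, \(\mathcal K_*(B)\) is itself \(*\)-closed and faithful. Thus both counts equal \(1\). The zero algebra contributes nothing on either side.

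It remains to treat \(d(B)\leq3\), where Lemma~\ref{lem:small-faithful-core} restricts \(B\) to three families, and we handle each in turn.

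\textbf{(1) \(B\) is Nakayama.} Theorem~\ref{thm:nakayama} gives \(\RQ(C;q)=\RS(C;q)\) for every Nakayama algebra \(C\). Factor algebras of Nakayama algebras are again Nakayama, since quotients of uniserial modules are uniserial. So the class of basic Nakayama algebras is closed under factor algebras. Lemma~\ref{lem:faithful-descent} then gives \(F_{\quot}(B;q)=F_{\sub}(B;q)\), and in particular equality of the \(q^4\) coefficients.

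\textbf{(2) \(B\) is the path algebra of \(1\leftarrow2\to3\) or \(1\to2\leftarrow3\).} This is Lemma~\ref{lem:exceptional-faithful-four}(1).

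\textbf{(3) \(B\) or \(B^{\op}\) equals \(L_0\) or \(L_1\).} This is Lemma~\ref{lem:exceptional-faithful-four}(2). That lemma covers \(L_e\) and \(L_e^{\op}\) separately, so no additional duality argument is required.

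Essentially all the difficulty has been moved into the two appendix results, Lemma~\ref{lem:small-faithful-core} and Lemma~\ref{lem:exceptional-faithful-four}. In this proof, the only point needing care is applying Lemma~\ref{lem:faithful-descent} in case (1). There we must check that the Nakayama class is closed under factor algebras and consists of representation-finite algebras; Proposition~\ref{prop:nakayama-structure} gives the latter.

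One could instead try to apply Lemma~\ref{lem:faithful-descent} in cases (2) and (3) as well, using the Dynkin and radical-square-zero theorems. This fails because the proper factors of \(L_1\) are not all covered by those theorems. That is why those cases go through the explicit computation of Lemma~\ref{lem:exceptional-faithful-four} rather than through descent.
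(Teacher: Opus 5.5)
Your proof is correct and follows the paper's argument essentially step for step. It splits by \(d(B)\), uses uniqueness of \(\mathcal K_*(B)\) when \(d(B)=4\), invokes Lemma~\ref{lem:small-faithful-core} for \(d(B)\leq3\), applies descent via Lemma~\ref{lem:faithful-descent} in the Nakayama case, and uses Lemma~\ref{lem:exceptional-faithful-four} for the remaining algebras.

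One correction to your closing aside. Every nonzero ideal of \(L_1\) contains \(xa\), so all proper factors of \(L_1\) are factors of the radical-square-zero algebra \(L_0\), and Theorem~\ref{thm:radical-square-zero} covers them. Descent is unavailable only because \(L_1\) itself, which has \(xa\neq0\) and a loop, is covered by none of the available theorems. In case (2) descent would in fact work, since those \(A_3\) path algebras have radical square zero.
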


\begin{proof}
If \(B=0\), then \(\mod B\) has no indecomposable objects, so
\(f_{\quot}^4(B)=f_{\sub}^4(B)=0\).  Assume that \(B\ne0\), and put
\(d=d(B)\).  By Lemma~\ref{lem:faithful-core}, every faithful
\(*\)-closed subcategory of \(\mod B\) contains \(\mathcal K_*(B)\), which
has \(d\) elements.  Hence, if \(d>4\), both counts vanish.  If \(d=4\),
then \(\mathcal K_{\quot}(B)\) and \(\mathcal K_{\sub}(B)\) are the unique
faithful subcategories of size four on their respective sides, so both
counts are one.

It remains to treat \(d\leq3\).  By
Lemma~\ref{lem:small-faithful-core}, the algebra \(B\) belongs to one of the
three classes listed there.  If \(B\) is Nakayama, then every factor algebra
\(C\) of \(B\) is Nakayama.  Theorem~\ref{thm:nakayama} gives
\(R_{\quot}(C;q)=R_{\sub}(C;q)\) for every such \(C\), and
Lemma~\ref{lem:faithful-descent}, applied to the class of factor algebras of
\(B\), gives
\(F_{\quot}(B;q)=F_{\sub}(B;q)\).
Comparing the coefficients of \(q^4\) gives the desired equality.  If \(B\)
is one of the algebras in parts~(2) or~(3) of
Lemma~\ref{lem:small-faithful-core}, the equality follows from the explicit
calculations in Lemma~\ref{lem:exceptional-faithful-four}.
\end{proof}

\begin{proposition}\label{prop:bottom-four}
Let \(A\) be representation-finite over the algebraically closed field
\(\kk\).  Then
\[
 |\LQ^4(A)|=|\LS^4(A)|.
\]
\end{proposition}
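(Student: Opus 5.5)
The plan is to reduce to the basic case and then stratify by annihilators, exactly as in Proposition~\ref{prop:bottom-three}. First, Morita equivalence induces a size-preserving bijection on quotient-closed and on submodule-closed subcategories. So we may assume that \(A\) is basic, representation-finite, and defined over \(\kk\); then every factor algebra \(A/I\) is again basic and representation-finite (its indecomposables are among those of \(A\) via restriction of scalars).

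Next, apply Lemma~\ref{lem:annihilator-strata} with \(j=4\) to both sides:
\[
 |\LQ^4(A)|=\sum_{I\lhd A}f_{\quot}^4(A/I),\qquad
 |\LS^4(A)|=\sum_{I\lhd A}f_{\sub}^4(A/I).
\]
Only finitely many summands are nonzero, and the index sets coincide. It therefore suffices to show \(f_{\quot}^4(A/I)=f_{\sub}^4(A/I)\) for each ideal \(I\). This is exactly Lemma~\ref{lem:faithful-four}, applied to \(B=A/I\); the case \(B=0\) is covered there. Summing gives the claim.

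The real content lies in Lemma~\ref{lem:faithful-four}, hence in the classification Lemma~\ref{lem:small-faithful-core} and the finite computations of Lemma~\ref{lem:exceptional-faithful-four}. Those are taken as given here, so the only points to check carefully are the following.
\begin{enumerate}
\item Restriction of scalars along \(A\to A/I\) preserves quotient closure, submodule closure and size, which Lemma~\ref{lem:annihilator-strata} already encodes.
\item The factor algebras meet the hypotheses of Lemma~\ref{lem:faithful-four}: they are basic, representation-finite, and over the algebraically closed field \(\kk\).
\end{enumerate}
I expect no genuine obstacle beyond this bookkeeping.
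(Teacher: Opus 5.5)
Your proposal is correct and is essentially the paper's own proof: reduce to the basic case by Morita invariance, write both counts as sums over ideals via Lemma~\ref{lem:annihilator-strata}, and apply Lemma~\ref{lem:faithful-four} to each factor algebra \(A/I\). The bookkeeping you flag also holds: \(A/I\) is basic and representation-finite, and the case \(A/I=0\) is handled inside Lemma~\ref{lem:faithful-four}.
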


\begin{proof}
By Morita invariance, we may assume that \(A\) is basic.  Apply
Lemma~\ref{lem:faithful-four} to every factor in the annihilator
decomposition:
\[
 |\LQ^4(A)|
 =\sum_{I\lhd A}f_{\quot}^4(A/I)
 =\sum_{I\lhd A}f_{\sub}^4(A/I)
 =|\LS^4(A)|.
\]
\end{proof}

\subsection{Equidistribution in cosizes at most two}
\label{sec:cosize-two}

The cases of cosize zero and one are Lemma~\ref{lem:boundary-coefficients}.
At cosize two, we prove that the two arrow counts of
Proposition~\ref{prop:cofinite-two} agree; this is the case of cosize
two in Theorem~\ref{thm:size-four-intro}.

\begin{corollary}\label{cor:cosize-two}
Let \(A\) be representation-finite, and put
\(N=|\ind A|\).  Let \(r\) be the number of simple \(A\)-modules.  Then
the following hold.
\begin{enumerate}
\item \(\beta_{\quot}(A)=\beta_{\sub}(A)\).
\item If \(N\geq2\), then
\[
 |\LQ^{N-2}(A)|=|\LS^{N-2}(A)|
 =\binom r2+\beta_{\quot}(A).
\]
\end{enumerate}
\end{corollary}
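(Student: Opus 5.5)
We must show \(\beta_{\quot}(A)=\beta_{\sub}(A)\). Part~(2) then follows at once from Proposition~\ref{prop:cofinite-two}, because the subcategories of cosize two are exactly the complements \(\add(\ind A\setminus D)\) with \(|D|=2\). By Corollary~\ref{cor:cofinite-two-rad-soc}, the identity to prove is
\[
 \sum_P|\ind(\rad P)|=\sum_I|\ind(I/\soc I)|,
\]
where \(P\) runs over the indecomposable projectives and \(I\) over the indecomposable injectives. We may assume \(A\) is basic.

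The plan is to count arrows of \(\Gamma(\mod A)\) in two ways.

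\textbf{Step 1.} Let \(a\) be the total number of arrows of \(\Gamma(\mod A)\). By Lemma~\ref{lem:representation-finite-no-multiple-arrows}, every arrow count is a count of ordered pairs. Count arrows by their target. An arrow into a projective \(P\) comes from a summand of \(\rad P\), and Corollary~\ref{cor:ar-middle-terms-squarefree} says these summands are distinct. So the arrows ending at projectives number \(\sum_P|\ind(\rad P)|\). Dually, counting by source, the arrows starting at injectives number \(\sum_I|\ind(I/\soc I)|\). It therefore suffices to show
\[
 \#\{\text{arrows with nonprojective target}\}=\#\{\text{arrows with noninjective source}\}.
\]

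\textbf{Step 2.} Use Lemma~\ref{lem:ar-mesh-correspondence}. For a nonprojective \(Y\), the arrows \(X\to Y\) are in bijection with the arrows \(\tau Y\to X\). Since \(\tau\) is a bijection from nonprojective indecomposables onto noninjective indecomposables, summing over \(Y\) gives the bijection
\[
 (X\to Y)\longmapsto(\tau Y\to X)
\]
from arrows with nonprojective target onto arrows with noninjective source. Both sides are finite because \(A\) is representation-finite.

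\textbf{Step 3.} Subtract the two equal counts from \(a\). The arrows with projective target number \(a\) minus the first count, and the arrows with injective source number \(a\) minus the second, so the two remaining counts agree. This gives the displayed identity and hence part~(1).

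The only delicate point is that arrows counted with multiplicity coincide with counts of distinct summands. This is exactly what Lemma~\ref{lem:representation-finite-no-multiple-arrows} supplies. Finiteness of \(a\) is where representation-finiteness is needed; Example~\ref{ex:infinite-counterexample} shows the identity can fail without it.
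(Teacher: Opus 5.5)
Your proof is correct and is essentially the paper's argument. Both double count Auslander--Reiten arrows and use the mesh bijection \((X\to Y)\mapsto(\tau Y\to X)\) from arrows with nonprojective target to arrows with noninjective source, subtracting from the finite total to get \(e(\ind A,\PP)=e(\mathcal I,\ind A)\). The paper then splits off \(e(\mathcal I,\PP)\) (which is in fact zero) and identifies the remainders with \(\beta_{\quot}(A)\) and \(\beta_{\sub}(A)\) by a second use of the mesh correspondence, whereas you make that identification directly through Corollary~\ref{cor:cofinite-two-rad-soc}.
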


\begin{proof}
(1)
Let \(\PP\) be the set of indecomposable projective modules, and let
\(\mathcal I\) be the set of indecomposable injective modules.  For
\(\UU,\VV\subseteq\ind A\), put
\[
 e(\UU,\VV)
 =\bigl|\{(X,Y)\in\UU\times\VV\mid
             \text{there is an arrow }X\to Y\}\bigr|.
\]
Lemma~\ref{lem:ar-mesh-correspondence} and the bijectivity of \(\tau\) give
a bijection from the ordered adjacent pairs ending at nonprojective modules
to the ordered adjacent pairs starting at noninjective modules.  Hence
\[
 e(\ind A,\ind A\setminus\PP)
 =e(\ind A\setminus\mathcal I,\ind A).
\]
The set \(\ind A\) is finite.  Subtracting both sides from
\(e(\ind A,\ind A)\) gives
\[
 e(\ind A,\PP)=e(\mathcal I,\ind A).
\]
For every pair \((P,Z)\) counted by \(\beta_{\quot}(A)\),
Lemma~\ref{lem:ar-mesh-correspondence} identifies the adjacent pair
\((P,Z)\) with \((\tau Z,P)\).  Therefore
\[
 \beta_{\quot}(A)=e(\ind A\setminus\mathcal I,\PP).
\]
For every noninjective \(Z\), write \(Z=\tau Y\) with \(Y\) nonprojective.
Lemma~\ref{lem:ar-mesh-correspondence} identifies the adjacent pair
\((Z,I)\) with \((I,Y)\).
Hence
\[
 \beta_{\sub}(A)=e(\mathcal I,\ind A\setminus\PP).
\]
Therefore
\[
\begin{aligned}
 e(\ind A,\PP)
 &=e(\mathcal I,\PP)+\beta_{\quot}(A),\\
 e(\mathcal I,\ind A)
 &=e(\mathcal I,\PP)+\beta_{\sub}(A).
\end{aligned}
\]
The equality \(e(\ind A,\PP)=e(\mathcal I,\ind A)\) now gives
\(\beta_{\quot}(A)=\beta_{\sub}(A)\).

(2)
Apply Proposition~\ref{prop:cofinite-two} and part~(1).
\end{proof}

\subsection{Equidistribution in cosizes three and four}
\label{sec:cosizes-three-four}

In this subsection we prove the equality at every cosize at most four,
using the closure test of Theorem~\ref{thm:factor-ladder}.  Put
\(N=|\ind A|\), and write
\(\Gamma:=\Gamma(\mod A)\).
For \(D\subseteq\ind A\), let \(\Gamma[D]\) denote the full subquiver
of \(\Gamma\) with vertex set \(D\).
Let \(\PP,\mathcal I\subseteq\ind A\) be the sets of indecomposable
projective and injective \(A\)-modules, respectively.

\begin{definition}\label{def:projectively-rooted}
A subset \(D\subseteq\ind A\) is \emph{projectively rooted} if, for every
\(X\in D\), there exists \(P\in\PP\cap D\) together with a path from \(P\)
to \(X\) in \(\Gamma[D]\).  It is \emph{injectively corooted} if, for
every \(X\in D\), there exists \(I\in\mathcal I\cap D\) together with a path
from \(X\) to \(I\) in \(\Gamma[D]\).  Paths of length zero are
allowed.
\end{definition}

For a vertex set \(W\), write \(W^-\)
and \(W^+\) for the sets of immediate predecessors and successors of
vertices of \(W\), computed in the whole quiver \(\Gamma\) rather
than in \(\Gamma[W]\); these sets may meet \(W\).

\begin{lemma}\label{lem:factor-ladder-support-path}
Let \(D\subseteq\ind A\) and \(X\in D\).  If an indecomposable module
\(Y\) occurs in \(\theta_n^D X\), then there is a path from \(Y\) to
\(X\) in \(\Gamma[D]\).
\end{lemma}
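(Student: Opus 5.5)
We argue by induction on \(n\), proving the slightly stronger statement that every indecomposable \(Y\) with positive coefficient in \(\theta^D_nX\) lies in \(D\) and admits a path \(Y=Y_n\to Y_{n-1}\to\cdots\to Y_0=X\) of length exactly \(n\) in \(\Gamma[D]\). For \(n=0\) the only summand is \(X\) itself, and the path of length zero works. For \(n=1\), the summands of \(\theta_DX\) are by Construction~\ref{con:factor-ladder} the indecomposable summands of \(\theta X\) lying in \(D\). Each such summand \(Y\) has an arrow \(Y\to X\) in \(\Gamma\), because \(\theta X\) is the domain of the minimal right almost split map to \(X\) (for projective \(X\) it is \(\rad X\)). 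Since \(X,Y\in D\), this arrow lies in \(\Gamma[D]\).

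For the inductive step with \(n\geq2\), we use the recursion \eqref{eq:factor-ladder}. Taking the positive part \((\,\cdot\,)_+\) only replaces negative coefficients by zero. Hence if \(Y\) has positive coefficient in \(\theta^D_nX\), it already has positive coefficient in \(\theta_D\theta^D_{n-1}X-\tau_D\theta^D_{n-2}X\). The term \(\tau_D\theta^D_{n-2}X\) has nonnegative coefficients, because \(\theta^D_{n-2}X\) does (it is a positive part, or \(X\) or \(\theta_DX\)) and \(\tau_D\) sends basis elements to basis elements or \(0\). So \(Y\) must have positive coefficient in \(\theta_D\theta^D_{n-1}X=\sum_Z m_Z\,\theta_DZ\), where \(m_Z>0\) runs over the summands \(Z\) of \(\theta^D_{n-1}X\). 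Consequently \(Y\) occurs in \(\theta_DZ\) for some such \(Z\).

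By the case \(n=1\) applied to \(Z\in D\), there is an arrow \(Y\to Z\) in \(\Gamma[D]\). By induction, there is a path from \(Z\) to \(X\) in \(\Gamma[D]\). Concatenating the two gives the required path from \(Y\) to \(X\).

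The only point needing care is the sign bookkeeping: the subtraction can cancel summands but never create new ones, so support only shrinks relative to \(\theta_D\theta^D_{n-1}X\). This rests on the nonnegativity of every \(\theta^D_mX\), which follows from the definition and should be included in the induction hypothesis. No deeper input, such as Proposition~\ref{prop:iyama-factor-category}, is required.
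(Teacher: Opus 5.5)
Your proof is correct and follows essentially the same argument as the paper: induction on \(n\), the observation that the subtracted \(\tau_D\)-term cannot create a positive coefficient (so \(Y\) occurs in \(\theta_DZ\) for some \(Z\) occurring in \(\theta^D_{n-1}X\)), and concatenation of the arrow \(Y\to Z\) with the inductive path. Your explicit tracking of the nonnegativity of every term, and the sharper path length \(n\), only make precise what the paper leaves implicit.
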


\begin{proof}
We argue by induction on \(n\).  For \(n=0\), one has \(Y=X\), and the
case \(n=1\) follows directly from the definition of \(\theta_DX\).  Now
suppose that \(n\geq2\).  In the factor-ladder recursion, the subtracted
\(\tau_D\)-term cannot create a positive coefficient.  Hence \(Y\) occurs
in \(\theta_D Z\) for some indecomposable \(Z\) occurring in
\(\theta_{n-1}^D X\).  The definition of \(\theta_D\) gives an arrow
\(Y\to Z\), while the induction hypothesis gives a path from \(Z\) to
\(X\).  Their concatenation is the required path.
\end{proof}

\begin{corollary}\label{cor:reverse-factor-ladder-support-path}
Let \(D\subseteq\ind A\) and \(X\in D\).  If \(Y\) occurs in the reverse
factor ladder starting at \(X\), then there is a path from \(X\) to \(Y\) in
\(\Gamma[D]\).
\end{corollary}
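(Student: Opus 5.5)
The plan is to deduce the corollary from Lemma~\ref{lem:factor-ladder-support-path} applied to the opposite algebra, and then transport the resulting path back with the duality \(\DDual\). This follows the same pattern the paper uses to pass from quotient statements to submodule statements.

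By Construction~\ref{con:reverse-factor-ladder}, the reverse factor ladder of \(X\in D\) is obtained as follows. Put \(D'=\{\,\DDual Z\mid Z\in D\,\}\subseteq\ind A^{\op}\). Form the factor ladder \(\theta^{D'}_n(\DDual X)\) over \(A^{\op}\), and apply \(\DDual\) termwise. So if \(Y\) occurs in some term of the reverse factor ladder, then \(\DDual Y\) occurs in \(\theta^{D'}_n(\DDual X)\) for some \(n\). The algebra \(A^{\op}\) is again representation-finite over \(\kk\), so Lemma~\ref{lem:factor-ladder-support-path} applies to it with the omitted set \(D'\). It gives a path
\[
 \DDual Y=Z_0\to Z_1\to\cdots\to Z_m=\DDual X
\]
in the full subquiver \(\Gamma(\mod A^{\op})[D']\).

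The remaining step is the standard fact that \(\DDual\) induces an anti-isomorphism of Auslander--Reiten quivers \(\Gamma(\mod A)\to\Gamma(\mod A^{\op})\). It reverses arrows because a map \(f\) is irreducible if and only if \(\DDual f\) is irreducible, and the multiplicities match. It sends vertices of \(D\) to vertices of \(D'\) by definition. Hence it restricts to an anti-isomorphism \(\Gamma[D]\to\Gamma(\mod A^{\op})[D']\). Applying \(\DDual\) to the path above gives a path
\[
 X=\DDual Z_m\to\cdots\to\DDual Z_0=Y
\]
in \(\Gamma[D]\), which is the assertion. The case of length zero is consistent with this: if \(n=0\), then \(Y=X\) and the empty path works.

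The only point needing care is the arrow reversal under \(\DDual\), including that full subquivers on \(D\) and \(D'\) correspond. Everything else is formal bookkeeping inherited from the lemma, so I expect no real obstacle.
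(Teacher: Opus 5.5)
Your proposal is correct and follows the same route as the paper: apply Lemma~\ref{lem:factor-ladder-support-path} to \(A^{\op}\) with the omitted set \(\{\DDual Z\mid Z\in D\}\), then transport the resulting path back through \(\DDual\), which reverses arrows of the Auslander--Reiten quiver. You spell out, more explicitly than the paper's one-line argument, the arrow reversal and the correspondence of full subquivers.
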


\begin{proof}
Apply Lemma~\ref{lem:factor-ladder-support-path} to \(A^{\op}\) and then apply
\(\DDual\) to the resulting path.
\end{proof}

\begin{lemma}\label{lem:rooted-balance}
Let \(A\) be representation-finite.
For every \(j\geq0\), the numbers of projectively rooted and injectively
corooted sets with \(j\) elements in the Auslander--Reiten quiver agree.
\end{lemma}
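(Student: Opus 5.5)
I plan to build an explicit size-preserving bijection between projectively rooted and injectively corooted subsets of \(\ind A\). The natural tool is the Auslander--Reiten translation, together with the mesh correspondence of Lemma~\ref{lem:ar-mesh-correspondence}, because that is how cosize two was handled in Corollary~\ref{cor:cosize-two}. Since \(A\) is representation-finite, \(\Gamma\) is finite, \(\tau\) is a bijection from \(\ind A\setminus\PP\) to \(\ind A\setminus\mathcal I\), and \(\Gamma\) has no multiple arrows (Lemma~\ref{lem:representation-finite-no-multiple-arrows}). The statement only involves the translation quiver, so the whole argument will be combinatorial in \(\Gamma\).

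As a first step I will fix a convenient counting framework based on a unique decomposition of arbitrary subsets. For any \(D\subseteq\ind A\), let \(R(D)\) be the set of vertices of \(D\) reachable in \(\Gamma[D]\) from \(\PP\cap D\). Then \(R(D)\) is projectively rooted, and \(U=D\setminus R(D)\) avoids \(\PP\cup R(D)\cup R(D)^+\). Conversely, every pair \((R,U)\) with \(R\) projectively rooted and \(U\subseteq\ind A\setminus(\PP\cup R\cup R^+)\) arises from exactly one \(D\). Dually, writing \(C\) for the corooted part, every \(D\) is uniquely \(C\sqcup V\) with \(V\subseteq\ind A\setminus(\mathcal I\cup C\cup C^-)\). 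Comparing the two expansions of \(\binom{N}{j}\) reduces the lemma, by induction on \(j\), to showing the following: the statistic \(|\PP\cup R\cup R^+|\) on projectively rooted sets of each size has the same distribution as \(|\mathcal I\cup C\cup C^-|\) on injectively corooted sets of the same size.

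The heart of the proof is matching these boundaries. For a projectively rooted \(R\), each nonprojective \(Z\in R^+\setminus R\) has \(\tau Z\to X\) for its predecessor \(X\in R\) by the mesh correspondence, so outgoing boundary of \(R\) is traded against incoming boundary of a translated set. I would try first the map \(R\mapsto R'\), where \(R'\) is obtained by replacing each nonprojective member \(X\) of \(R\) by \(\tau X\), keeping the projective members, and then correcting collisions. I would then prove that \(R'\) is injectively corooted and that sizes and boundary sizes are preserved, using the edge-counting identity \(e(\ind A,\PP)=e(\mathcal I,\ind A)\) from the proof of Corollary~\ref{cor:cosize-two} locally at each mesh.

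I expect this last step to be the main obstacle. A naive \(\tau\)-shift does not respect rootedness, since paths from projectives can leave and re-enter \(\tau\)-orbits. If the bijection fails, the fallback is to prove equality of the boundary-statistic distributions by a second induction on \(|\ind A|\). In that induction I would delete a source projective \(P\) of \(\Gamma\), together with the corresponding injective \(\tau^{-m}P\) at the end of its orbit, and check the recursion on both sides with the mesh correspondence.
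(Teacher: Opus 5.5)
Your decomposition \(D=R(D)\sqcup U\) is correct, but the statement you reduce the lemma to is false. So neither the \(\tau\)-shift bijection nor the fallback induction on \(|\ind A|\) can work.

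Take the algebra of Example~\ref{ex:inward-a3}. Its Auslander--Reiten quiver has arrows \(S_2\to M_{12}\), \(S_2\to M_{23}\), \(M_{12}\to M_{123}\), \(M_{23}\to M_{123}\), \(M_{123}\to S_1\), \(M_{123}\to S_3\), with \(\PP=\{S_2,M_{12},M_{23}\}\) and \(\mathcal I=\{M_{123},S_1,S_3\}\).
\begin{itemize}
\item The projectively rooted singletons \(\{S_2\},\{M_{12}\},\{M_{23}\}\) have \(|\PP\cup R\cup R^+|=3,4,4\).
\item The injectively corooted singletons \(\{S_1\},\{S_3\},\{M_{123}\}\) have \(|\mathcal I\cup C\cup C^-|=3,3,5\).
\end{itemize}
Already for \(j=1\), where the lemma trivially holds with both counts equal to \(3\), the boundary statistics are distributed differently. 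Hence no bijection preserving both size and boundary size exists.

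The reduction is also empty as a reduction. Equality of the joint distributions would give the lemma directly, with no binomial bookkeeping needed. Conversely, the identity \(\sum_R t^{|R|}(1+t)^{N-|\PP\cup R\cup R^+|}=(1+t)^N\) from your decomposition holds on each side separately. It only fixes one specialization of a bivariate generating function, so it cannot be inverted to compare the size counts.

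The missing idea is to apply \(\tau\) not to rooted sets but to the obstructions to rootedness. Rooted sets contain projectives, where \(\tau\) is undefined, and their boundaries genuinely differ. The paper's argument runs as follows.
\begin{itemize}
\item A set \(D\) fails to be projectively rooted exactly when it has a nonempty predecessor-closed subset inside \(D\setminus\PP\).
\item The minimal such subsets are strongly connected, pairwise disjoint, and joined by no arrows. Summing \((-1)^{c(W)}\) over their unions \(W\) gives the indicator of rootedness.
\item Exchanging summations gives the signed count \(\sum_W(-1)^{c(W)}\binom{N-|W\cup W^-|}{j-|W|}\). Here \(W\) runs over nonprojective sets whose induced subquiver is a disjoint union of mutually non-adjacent strongly connected pieces.
\item For such \(W\), \(\tau\) is defined everywhere and gives \(\Gamma[W]\cong\Gamma[\tau W]\). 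The mesh correspondence gives \((\tau W)^+=W^-\), and one checks \(|W^-\cap W|=|W^-\cap\tau W|\).
\item Hence \(|W\cup W^-|=|\tau W\cup(\tau W)^+|\). The signed sum then matches the dual sum for injectively corooted sets term by term.
\end{itemize}
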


\begin{proof}
The inclusion--exclusion count is given in
Appendix~\ref{sec:four-ladders}.
\end{proof}

A projectively rooted set \(D\) need not have quotient-closed complement:
there may be \(X\in D\) such that no term \(\theta_n^D X\) of its factor
ladder (Construction~\ref{con:factor-ladder}) contains a
projective summand belonging to \(D\).  Conversely,
Lemma~\ref{lem:factor-ladder-support-path} shows that the factor-ladder
condition in Theorem~\ref{thm:factor-ladder} forces \(D\) to be
projectively rooted.  Thus, among projectively rooted sets, failure of the
factor-ladder condition is the only obstruction.
Theorem~\ref{thm:factor-ladder}(2) and
Corollary~\ref{cor:reverse-factor-ladder-support-path} give the analogous
description for submodule closure in terms of reverse factor ladders.  The
next lemma balances the two failure counts for at most four omitted modules.

\begin{lemma}\label{lem:four-vertex-ladder}
Let \(A\) be representation-finite over an algebraically closed field.
For every \(0\leq j\leq4\), the following two classes have the same
cardinality.
\begin{enumerate}
\item The projectively rooted \(j\)-element subsets \(D\subseteq\ind A\)
for which there exists \(X\in D\) such that \(\theta_n^D X\) has no summand
in \(\PP\cap D\) for every \(n\geq0\).
\item The injectively corooted \(j\)-element subsets \(D\subseteq\ind A\)
for which there exists \(X\in D\) such that no term of the reverse factor
ladder starting at \(X\) has a summand in \(\mathcal I\cap D\).
\end{enumerate}
\end{lemma}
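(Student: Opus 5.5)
We will prove the statement by enumerating, for each \(j\leq4\), the local configurations in \(\Gamma\) that make a projectively rooted \(j\)-set fail the factor-ladder test, and then matching them with their duals. The key point is that Lemma~\ref{lem:factor-ladder-support-path} confines every term \(\theta^D_nX\) to vertices of \(D\) from which there is a path to \(X\) inside \(\Gamma[D]\). Hence, for \(|D|\leq4\), the whole ladder of each \(X\in D\) is determined by a small amount of data. This data consists of the induced subquiver \(\Gamma[D]\) (no multiple arrows, by Lemma~\ref{lem:representation-finite-no-multiple-arrows}), the projectivity/injectivity status of each vertex of \(D\), and the partial translation \(\tau\) restricted to pairs inside \(D\).

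\textbf{Step 1: reduce to a finite list of configurations.} Call this data a labelled configuration. For \(j\leq2\) there is nothing to do: a rooted set either contains a projective that reaches \(X\) after at most one step, or the ladder dies at once. In fact Corollary~\ref{cor:cosize-two} and Lemma~\ref{lem:boundary-coefficients}, combined with Lemma~\ref{lem:rooted-balance}, already force the failure counts to agree for \(j\leq2\). For \(j=3,4\) we enumerate the rooted configurations and compute the ladders. A failure can only occur through cancellation in the recursion~\eqref{eq:factor-ladder}, namely when \(\tau_D\theta^D_{n-2}X\) cancels the only projective-reaching summand of \(\theta_D\theta^D_{n-1}X\). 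With at most four vertices this forces a mesh inside \(D\): vertices \(\tau Z, Y, Z\) with \(\tau Z\in D\), the arrows \(\tau Z\to Y\to Z\), and \(Y\) the unique middle term of that mesh lying in \(D\). We list the failing configurations explicitly, for example \(\{P,Y,Z\}\) with \(P=\tau Z\) projective and \(Y\) the only \(D\)-middle term, together with their four-vertex extensions. Each entry is recorded with the condition it imposes on the ambient quiver, namely which middle terms of the relevant meshes lie outside \(D\).

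\textbf{Step 2: count failures via ambient local data.} For each failing configuration type we express the number of occurrences in \(\Gamma\) as a count of local patterns. Examples are meshes whose starting vertex \(\tau Z\) is projective and whose middle term \(Y\) satisfies some adjacency condition, or paths \(P\to Y\to Z\) with \(P=\tau Z\). The dual configuration, obtained by reversing arrows and swapping projectives with injectives, gives the analogous count for reverse ladders. The matching then uses the same device as in the proof of Corollary~\ref{cor:cosize-two}. The mesh bijection of Lemma~\ref{lem:ar-mesh-correspondence}, together with the bijectivity of \(\tau\) between nonprojective and noninjective modules, converts patterns anchored at a projective start into patterns anchored at an injective end. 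Where a single pattern does not match directly, we take differences of global edge and mesh counts, as was done for \(e(\ind A,\PP)=e(\mathcal I,\ind A)\).

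\textbf{Main obstacle.} We expect the main difficulty at \(j=4\). There, failures can involve two overlapping meshes, or a ladder that first cancels and then reappears through a second path. The naive configuration-by-configuration duality is not a bijection in that case, because \(\mathcal{P}\) and \(\mathcal I\) sit asymmetrically in \(\Gamma\). Our plan is to organize the count as an inclusion--exclusion over the set of ``blocked'' projectives, in parallel with the argument behind Lemma~\ref{lem:rooted-balance}. Each term would then be a count of small pattern occurrences, and we would balance each such count separately by the mesh correspondence. Finally, we would check the finite case list by machine, as the paper's verification scripts allow.
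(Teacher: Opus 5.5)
Your treatment of \(j\leq2\) is fine. Both classes are in fact empty, since in a projectively rooted set with at most two vertices a projective already occurs in \(\theta^D_0X\) or \(\theta^D_1X\). For \(j=3\) your local device can be completed, although you do not carry it out. By Lemma~\ref{lem:small-support}(2) and the two-cycle argument in the proof of Lemma~\ref{lem:alternative-three-killed}, the killed three-sets are exactly the sets \(\{P,u,\tau^{-1}P\}\) given by arrows \(P\to u\) with \(P\in\PP\setminus\mathcal I\) and \(u\notin\PP\). Their number is \(\beta_{\quot}(A)\) minus the number of arrows from projective-injective modules to nonprojective modules. The mesh correspondence turns the latter into the arrows from noninjective modules to projective-injective modules, which is exactly the number subtracted from \(\beta_{\sub}(A)\) on the dual side. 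Corollary~\ref{cor:cosize-two}(1) then finishes. (The paper instead reads each maximal \(\sigma\)-chain of arrows from both ends.) For \(j=4\), however, the proposal contains no argument: the ``main obstacle'' paragraph names the difficulty and defers it to an unspecified inclusion--exclusion and a machine check.

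Two ingredients are missing. The first is the classification of killed four-sets: at most two hooks, with two exactly for a double hook, and otherwise the hookless types~F and~T of Definition~\ref{def:four-support-configurations}. This gives \(K_{4,\quot}=W_{4,\quot}-B_{4,\quot}+F_{4,\quot}+T_{4,\quot}\), and a finite enumeration can indeed supply it. The second, and the real gap, is a mechanism matching the two sides. No finite check can supply it, because the equality is an identity over all Auslander--Reiten quivers.

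The number of fourth vertices of a hook \((a,u,b)\) is \(|\PP|\) (if \(a\in\PP\)) plus a signed correction built from \(m_1,\dots,m_4\). These count projective predecessors of \(u\), \(b\), or \(a\), and successors of \(u\), \(b\) not blocked by a return arrow. On the quotient side each such term is a contact near the projective end of one of two \(\sigma\)-orbits of arrows. On the submodule side it is a contact near an injective end, arbitrarily many translations away. So single-step mesh correspondences cannot balance ``each such count separately''.

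In the paper these terms are attached to pairs of maximal \(\sigma\)-orbits (Construction~\ref{con:alternative-arrow-orbits}) and balance only in aggregate:
\begin{itemize}
\item a path of indexed contacts under \((f,e)\mapsto(f+2,e+2)\) can start at a term of one type on the submodule side and end at a term of another type on the quotient side (Lemma~\ref{lem:alternative-raw-contact-balance});
\item the preliminary terms removed by return arrows must be classified (Lemma~\ref{lem:alternative-computer-blockers}), and they are compensated only through the combination \(W_{4,*}-B_{4,*}+T_{4,*}\) (Lemmas~\ref{lem:alternative-contact-rectangle}, \ref{lem:alternative-self-contact}, and~\ref{lem:alternative-cyclic-hook});
\item type~F needs a separate count of rises and falls of a cyclic binary word along a periodic \(\tau\)-orbit around a \(\tau\)-fixed vertex (Lemma~\ref{lem:alternative-fixed-balance}).
\end{itemize}
Your inclusion--exclusion over ``blocked'' projectives specifies neither its terms nor why they cancel. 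Unlike non-rootedness in Lemma~\ref{lem:rooted-balance}, killedness is not witnessed by a canonical family of disjoint subsets, so the \((1-1)^{b}\) cancellation used there has no evident analogue.
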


\begin{proof}
The proof is given at the end of Appendix~\ref{sec:four-ladders}.
\end{proof}

\begin{proposition}\label{prop:top-four}
Let \(A\) be representation-finite over the algebraically closed field
\(\kk\), and put \(N=|\ind A|\).  Then, for
\(0\leq i\leq\min\{4,N\}\), one has
\[
 |\LQ^{N-i}(A)|=|\LS^{N-i}(A)|.
\]
\end{proposition}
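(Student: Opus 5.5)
The plan is to count, for each \(0\leq i\leq\min\{4,N\}\), the \(i\)-element sets \(D\subseteq\ind A\) whose complement \(\add(\ind A\setminus D)\) is quotient-closed, and likewise the ones whose complement is submodule-closed. Passing to complements matches these sets with \(\LQ^{N-i}(A)\) and \(\LS^{N-i}(A)\), respectively. The cases \(i=0,1\) are Lemma~\ref{lem:boundary-coefficients}(3), and \(i=2\) is Corollary~\ref{cor:cosize-two}(2). We nevertheless treat all \(i\leq4\) uniformly, so that the small cases serve only as consistency checks.

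First, by Theorem~\ref{thm:factor-ladder}(1), \(\add(\ind A\setminus D)\) is quotient-closed if and only if every \(X\in D\) has some factor-ladder term \(\theta^D_nX\) containing a summand in \(\PP\cap D\). By Lemma~\ref{lem:factor-ladder-support-path}, any summand occurring in such a term is joined to \(X\) by a path in \(\Gamma[D]\). So the factor-ladder condition implies that \(D\) is projectively rooted. We therefore obtain a decomposition
\[
 |\LQ^{N-i}(A)|
 =\#\{\text{projectively rooted }D,\ |D|=i\}
 -\#\{\text{sets in class (1) of Lemma~\ref{lem:four-vertex-ladder} with }|D|=i\}.
\]
The subtracted term is exactly the number of projectively rooted sets for which some \(X\in D\) has no projective summand in \(\PP\cap D\) in any term of its factor ladder. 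Dually, Theorem~\ref{thm:factor-ladder}(2) together with Corollary~\ref{cor:reverse-factor-ladder-support-path} gives
\[
 |\LS^{N-i}(A)|
 =\#\{\text{injectively corooted }D,\ |D|=i\}
 -\#\{\text{sets in class (2) of Lemma~\ref{lem:four-vertex-ladder} with }|D|=i\}.
\]

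Second, Lemma~\ref{lem:rooted-balance} equates the first terms of the two displays for every \(j=i\). Lemma~\ref{lem:four-vertex-ladder} equates the subtracted terms for \(0\leq i\leq4\). Subtracting gives \(|\LQ^{N-i}(A)|=|\LS^{N-i}(A)|\) for \(0\leq i\leq\min\{4,N\}\). When \(N<4\), the range is automatically truncated, since no \(i\)-element subsets exist for \(i>N\).

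The substantive difficulty lies entirely in the two appendix lemmas, which we take as given here. The main care point in the proposition itself is checking that the decomposition is exact. Every set whose complement is quotient-closed must be projectively rooted, so no quotient-closed complement is missed by restricting to projectively rooted \(D\). Conversely, a projectively rooted set fails to have quotient-closed complement exactly when it lies in class (1) of Lemma~\ref{lem:four-vertex-ladder}. Both directions follow directly from the equivalence in Theorem~\ref{thm:factor-ladder}.
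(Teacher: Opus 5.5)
Your proposal is correct and follows the paper's argument. You use Theorem~\ref{thm:factor-ladder} and the support-path lemmas to write each count as rooted sets minus failing sets, then cancel using Lemma~\ref{lem:rooted-balance} and Lemma~\ref{lem:four-vertex-ladder}. The only difference is that the paper cites Lemma~\ref{lem:boundary-coefficients} and Corollary~\ref{cor:cosize-two} for \(i\leq2\), whereas you run the same cancellation for those cases too; this is legitimate because both lemmas are stated for every \(0\leq j\leq4\).
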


\begin{proof}
For \(i=0,1\), the required equality follows from
Lemma~\ref{lem:boundary-coefficients}; for \(i=2\), it is
Corollary~\ref{cor:cosize-two}.

For \(i=3,4\), Theorem~\ref{thm:factor-ladder}(1) says that a complement
is quotient-closed exactly when, for every \(X\) in its omitted set \(D\),
some \(\theta_n^D X\) contains a projective summand belonging to \(D\).
Theorem~\ref{thm:factor-ladder}(2) says that the complement is
submodule-closed exactly when, for every \(X\in D\), the reverse factor
ladder of \(X\) contains an injective summand belonging to \(D\).
Lemma~\ref{lem:rooted-balance} gives
\[
 \#\{D\subseteq\ind A\mid |D|=i,\ D\text{ projectively rooted}\}
 =
 \#\{D\subseteq\ind A\mid |D|=i,\ D\text{ injectively corooted}\}.
\]
Lemma~\ref{lem:four-vertex-ladder} shows that the number of projectively rooted
\(D\) with \(|D|=i\) that do not satisfy the condition in
Theorem~\ref{thm:factor-ladder}(1) equals the number of injectively corooted
\(D\) with \(|D|=i\) that do not satisfy the condition in
Theorem~\ref{thm:factor-ladder}(2).  Subtracting these failure counts from the
two rooted counts proves
\(|\LQ^{N-i}(A)|=|\LS^{N-i}(A)|\).
\end{proof}

\section{Classification by a Bruhat interval and equidistribution for
representation-directed algebras}
\label{sec:representation-directed}

Recall that \(A\) is \emph{representation-directed} if it is
representation-finite and there is no cycle of nonzero nonisomorphisms
between indecomposable modules.  Throughout this section, assume that
\(A\) is representation-directed.  We prove Theorem~\ref{thm:directed-main},
which classifies the quotient-closed and submodule-closed subcategories and
establishes the equidistribution conjecture for \(A\).

\begin{definition}\label{def:hom-compatible-order}
An order \(X_1<\cdots<X_N\) on \(\ind A\) is \emph{Hom-compatible} if
\begin{equation}\label{eq:directed-hom-order-property}
 \Hom_A(X_i,X_j)\ne0,\quad i\ne j
 \quad\Longrightarrow\quad i<j.
\end{equation}
\end{definition}

\subsection{A mixed-coordinate criterion for quotient closure}

We first express quotient closure as the nonnegativity of mixed coordinates.

Directedness gives a Hom-compatible total order
\begin{equation}\label{eq:directed-order}
 \ind A=\{X_1<\cdots<X_N\}.
\end{equation}
Indeed, directedness makes the relation generated by nonzero
nonisomorphisms a partial order, so it has a linear extension.  Moreover,
\(\End_A(X_i)\) is a division algebra: a nonzero noninvertible
endomorphism would itself be a cycle of nonzero nonisomorphisms.  Since \(A\) is finite dimensional and the field is
algebraically closed, this gives
\(\End_A(X_i)=\kk\).

For \(M\in\mod A\) and \(X\in\ind A\), put
\[
 \mathsf P_M=\Hom_A(-,M),\qquad
 \mathsf S_X=\mathsf P_X/\rad_A(-,X).
\]
\begin{proposition}[{\cite[Example~A.2.10, Lemma~IV.6.5,
Corollary~IV.6.7, and Lemma~IV.6.8]{AssemSimsonSkowronski}}]
\label{prop:directed-functor-bases}
The category \(\mod(\mod A)\) of finitely presented contravariant functors
on \(\mod A\) has finite length, the \(\mathsf P_X\) are
its indecomposable projectives, the \(\mathsf S_X\) are its simple objects,
and
\[
 [\mathsf P_Y:\mathsf S_X]=\dim_\kk\Hom_A(X,Y).
\]
\end{proposition}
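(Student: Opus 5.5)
The statement collects standard facts about functors on \(\mod A\) for a representation-directed (in particular representation-finite) algebra over an algebraically closed field. I would prove the four assertions in this order: projectives, simples, finite length, composition multiplicities.

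\textbf{Projectives.} By Yoneda, \(\Hom(\mathsf P_M,F)\cong F(M)\), so every representable functor \(\mathsf P_M\) is projective in \(\mod(\mod A)\). A finitely presented functor is a cokernel of a map \(\mathsf P_{M_1}\to\mathsf P_{M_0}\). Hence every projective object is a summand of some \(\mathsf P_M\), and since \(\mod A\) is Krull--Schmidt, of a direct sum of \(\mathsf P_X\) with \(X\in\ind A\). Each \(\mathsf P_X\) is indecomposable because \(\End(\mathsf P_X)\cong\End_A(X)\) is local. Krull--Schmidt in \(\mod(\mod A)\), which holds because endomorphism rings of objects are finite dimensional, then shows the \(\mathsf P_X\) are exactly the indecomposable projectives.

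\textbf{Simples.} The subfunctor \(\rad_A(-,X)\) is the unique maximal subfunctor of \(\mathsf P_X\), since \(\mathsf P_X\) has local endomorphism ring. So \(\mathsf S_X\) is simple, and it is finitely presented because a minimal right almost split map \(\theta X\to X\) gives the presentation \(\mathsf P_{\theta X}\to\mathsf P_X\to\mathsf S_X\to0\). Conversely, let \(S\) be a simple object. It has \(S(X)\neq0\) for some indecomposable \(X\), so Yoneda gives a nonzero, hence surjective, map \(\mathsf P_X\to S\). This identifies \(S\) with \(\mathsf S_X\). The evaluation \(\mathsf S_X(Y)\) is \(\End_A(X)/\rad\End_A(X)=\kk\) for \(Y=X\) and \(0\) for \(Y\not\cong X\) indecomposable.

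\textbf{Finite length.} Because \(A\) is representation-finite, the radical of \(\mod A\) is nilpotent (as used in Proposition~\ref{prop:iyama-factor-category}(2)). Each \(\mathsf P_Y\) therefore has the finite radical filtration \(\rad^n_A(-,Y)\). Each layer is a finitely presented functor annihilated by the radical, so it is a finite direct sum of simples \(\mathsf S_X\), by semisimplicity of such functors over the finitely many indecomposables. Thus each \(\mathsf P_Y\) has finite length, and so does every finitely presented functor as a quotient of a finite sum of them.

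\textbf{Multiplicity.} Evaluation at \(X\) is exact on functors. By the simples step it sends \(\mathsf S_Z\) to a space of dimension \(\delta_{XZ}\) (using \(\End_A(X)=\kk\), established just above the statement). Evaluating a composition series of \(\mathsf P_Y\) at \(X\) and counting dimensions gives \([\mathsf P_Y:\mathsf S_X]=\dim_\kk\mathsf P_Y(X)=\dim_\kk\Hom_A(X,Y)\).

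The main obstacle is the finite-length step: making precise that the radical layers are semisimple and finitely presented. This is exactly where representation-finiteness enters; I would simply cite nilpotency of the radical of \(\mod A\) together with the cited results in \cite{AssemSimsonSkowronski}.
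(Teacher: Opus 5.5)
Your proof is correct and reconstructs the standard argument that the paper does not write out but imports by citation from \cite{AssemSimsonSkowronski}: Yoneda gives the projectives, the unique maximal subfunctor \(\rad_A(-,X)\) gives the simples, and exactness of evaluation together with \(\End_A(X)=\kk\) gives the multiplicities (the same evaluation argument appears in the paper's Lemma~\ref{lem:directed-mesh-basis}(2)). The finite-length step you flagged as the main obstacle has a shorter proof that avoids both radical nilpotency and the question of whether the layers \(\rad^n_A(-,Y)/\rad^{n+1}_A(-,Y)\) are finitely presented: exact sequences in \(\mod(\mod A)\) are pointwise exact, so a proper subobject of a finitely presented \(F\) has strictly smaller total dimension \(\sum_{X\in\ind A}\dim_\kk F(X)<\infty\), and hence every chain of subobjects has bounded length.
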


Let \(\mathcal B\) be a finite additive category with split idempotents.
\begin{construction}\label{con:cartan-homomorphism}
Write \([M]_{\rm sp}\) for the class of an object \(M\) in its split
Grothendieck group \(K_0^{\rm sp}(\mathcal B)\).  Yoneda identifies this
group with
\(K_0(\proj(\mod(\mathcal B)))\).  The inclusion of projective functors
induces the \emph{Cartan homomorphism}
\begin{equation}\label{eq:directed-cartan-map}
 \mathfrak c_{\mathcal B}:K_0^{\rm sp}(\mathcal B)
 \longrightarrow K_0(\mod(\mathcal B)),\qquad
 [M]_{\rm sp}\longmapsto[\mathcal B(-,M)].
\end{equation}
\end{construction}

\begin{corollary}\label{cor:directed-unitriangular-bases}
The Cartan homomorphism \(\mathfrak c_{\mod A}\) is an isomorphism.
The simple and representable classes are bases of
\(K_0(\mod(\mod A))\).  With respect to the basis
\(\bigl([X_j]_{\rm sp}\bigr)_j\) of the source and the basis
\(\bigl([\mathsf S_{X_i}]\bigr)_i\) of the target, the matrix of
\(\mathfrak c_{\mod A}\) is
\[
 \bigl(\dim_\kk\Hom_A(X_i,X_j)\bigr)_{i,j};
\]
it is upper unitriangular in the Hom-compatible order, with diagonal
entries \(\dim_\kk\End_A(X_i)=1\).
\end{corollary}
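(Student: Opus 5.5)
The plan is to read off the matrix entries first and then deduce the basis and isomorphism claims, using Proposition~\ref{prop:directed-functor-bases} throughout.

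\emph{Matrix entries.} By Yoneda, the source \(K_0^{\rm sp}(\mod A)\) is free abelian on the classes \([X_j]_{\rm sp}\), because \(\mod A\) is Krull--Schmidt with indecomposables \(X_1,\ldots,X_N\). The category \(\mod(\mod A)\) has finite length and its simple objects are the \(\mathsf S_{X_i}\). Jordan--H\"older therefore makes \(K_0(\mod(\mod A))\) free on the classes \([\mathsf S_{X_i}]\), which settles the claim that the simple classes form a basis. The Cartan homomorphism sends \([X_j]_{\rm sp}\) to \([\mathsf P_{X_j}]\). Expanding this class in composition factors gives
\[
 [\mathsf P_{X_j}]=\sum_i[\mathsf P_{X_j}:\mathsf S_{X_i}]\,[\mathsf S_{X_i}]
 =\sum_i\dim_\kk\Hom_A(X_i,X_j)\,[\mathsf S_{X_i}],
\]
so the matrix is \(\bigl(\dim_\kk\Hom_A(X_i,X_j)\bigr)_{i,j}\).

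\emph{Unitriangularity.} For \(i\neq j\), the Hom-compatibility property \eqref{eq:directed-hom-order-property} shows that \(\Hom_A(X_i,X_j)\neq0\) forces \(i<j\). Hence every entry below the diagonal vanishes. On the diagonal, the remark preceding Proposition~\ref{prop:directed-functor-bases} gives \(\End_A(X_i)=\kk\), so each diagonal entry is \(1\).

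\emph{Isomorphism and representable basis.} An upper unitriangular integer matrix has determinant \(1\), so it is invertible over \(\ZZ\). Hence \(\mathfrak c_{\mod A}\) is an isomorphism of free abelian groups. It carries the basis \(([X_j]_{\rm sp})_j\) to \(([\mathsf P_{X_j}])_j\), so the representable classes also form a basis.

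The argument is routine. The only point needing care is the finite-length statement that justifies the Jordan--H\"older expansion, and this is exactly what Proposition~\ref{prop:directed-functor-bases} supplies.
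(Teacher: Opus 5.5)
Your proposal is correct and follows the paper's proof essentially step for step. The multiplicity formula of Proposition~\ref{prop:directed-functor-bases} gives the matrix, Hom-compatibility together with \(\End_A(X_i)=\kk\) gives unitriangularity, and invertibility over \(\ZZ\) yields both the isomorphism and the basis of representable classes.
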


\begin{proof}
The classes \([X_i]_{\rm sp}\) form a basis of
\(K_0^{\rm sp}(\mod A)\), while finite length gives the basis of simple
classes in the target.  The multiplicity formula in
Proposition~\ref{prop:directed-functor-bases} identifies the matrix of
\(\mathfrak c_{\mod A}\) with
\(\bigl(\dim_\kk\Hom_A(X_i,X_j)\bigr)_{i,j}\).
The Hom-compatible order makes it upper triangular, and
\(\End_A(X_i)=\kk\) gives diagonal entries one.  Hence the Cartan
homomorphism is an isomorphism over \(\ZZ\), and its images, the
representable classes, also form a basis.
\end{proof}

Recall that \(\theta X\) is the domain of the minimal right almost split map
to \(X\): it is the middle term of the almost split sequence ending at a
nonprojective \(X\), while \(\theta P=\rad P\) for a projective \(P\).

\begin{lemma}[{\cite[Theorem~IV.6.11(a)]{AssemSimsonSkowronski}}]
\label{lem:directed-functor-ar-resolutions}
The functorial Auslander--Reiten resolutions give
\begin{equation}\label{eq:directed-mesh-class}
 [\mathsf S_X]=[\mathsf P_X]-[\mathsf P_{\theta X}]
                 +[\mathsf P_{\tau X}]
 \qquad(X\text{ nonprojective}),
\end{equation}
\[
 [\mathsf S_P]=[\mathsf P_P]-[\mathsf P_{\theta P}]
 \qquad(P\text{ projective}).
\]
\end{lemma}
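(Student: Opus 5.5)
The plan is to derive both identities from exact sequences of functors on \(\mod A\), and then take classes in \(K_0(\mod(\mod A))\). By Proposition~\ref{prop:directed-functor-bases}, every finitely presented functor has finite length, so any exact sequence of finitely presented functors gives an alternating-sum identity of classes. The task therefore reduces to producing, for each indecomposable \(X\), a projective resolution of \(\mathsf S_X\) with the stated terms.

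\textbf{Nonprojective case.} Take the almost split sequence
\[
 0\longrightarrow\tau X\longrightarrow\theta X\longrightarrow X\longrightarrow0.
\]
Applying the left exact functor \(\Hom_A(-,?)\) and using Yoneda gives an exact sequence
\[
 0\longrightarrow\mathsf P_{\tau X}\longrightarrow\mathsf P_{\theta X}
 \longrightarrow\mathsf P_X .
\]
Since \(\theta X\to X\) is minimal right almost split, the image of the last map is exactly \(\rad_A(-,X)\). Its cokernel is therefore \(\mathsf S_X\), and we obtain the four-term exact sequence
\[
 0\to\mathsf P_{\tau X}\to\mathsf P_{\theta X}\to\mathsf P_X\to\mathsf S_X\to0.
\]
Additivity of classes in \(K_0\) gives \eqref{eq:directed-mesh-class}.

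\textbf{Projective case.} For projective \(P\), the inclusion \(\rad P\to P\) is minimal right almost split. So the image of \(\mathsf P_{\rad P}\to\mathsf P_P\) is \(\rad_A(-,P)\). The map is injective because \(\Hom_A(-,?)\) is left exact and \(\rad P\to P\) is a monomorphism. This yields
\[
 0\to\mathsf P_{\theta P}\to\mathsf P_P\to\mathsf S_P\to0,
\]
and hence the second formula.

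\textbf{Main point.} The only nonformal input is that a minimal right almost split map \(g\colon E\to X\) induces a surjection \(\Hom_A(-,E)\to\rad_A(-,X)\). For an indecomposable \(Z\), every radical map \(Z\to X\) is not a split epimorphism, so it factors through \(g\) by the almost split property; additivity then extends this to all \(Z\). Conversely, the image lies in the radical because \(g\) itself is not a split epimorphism. Once this is in hand, taking classes in \(K_0\) completes the argument. This is precisely the content of \cite[Theorem~IV.6.11(a)]{AssemSimsonSkowronski}, so we could simply cite it.
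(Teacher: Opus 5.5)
Your proof is correct and is essentially the standard argument behind Theorem~IV.6.11(a) of Assem--Simson--Skowro\'nski, which the paper cites without further proof. You use the almost split sequence (respectively the minimal right almost split inclusion $\rad P\to P$), left exactness of $\Hom_A(Z,-)$, and the right almost split property to identify the image with $\rad_A(-,X)$, and then take the Euler characteristic of the resulting projective resolution of $\mathsf S_X$ in $K_0(\mod(\mod A))$, which is legitimate by the finite length statement of Proposition~\ref{prop:directed-functor-bases}.
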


Proposition~\ref{prop:directed-functor-bases} and
Lemma~\ref{lem:directed-functor-ar-resolutions} show, by induction on
length, that every object of \(\mod(\mod A)\) has a projective resolution
of length at most two.  The inverse of \(\mathfrak c_{\mod A}\) is
therefore the Euler characteristic of a projective resolution.  In
particular,
\begin{equation}\label{eq:directed-cartan-inverse}
 \mathfrak c_{\mod A}^{-1}([\mathsf S_X])
 =
 \begin{cases}
  [X]_{\rm sp}-[\theta X]_{\rm sp}+[\tau X]_{\rm sp},
     &X\text{ nonprojective},\\
  [X]_{\rm sp}-[\theta X]_{\rm sp},&X\text{ projective}.
 \end{cases}
\end{equation}

\begin{definition}\label{def:directed-mixed-coordinates}
Let \(D\subseteq[N]\) be a set of omitted positions, and put
\[
 \CC_D=\add\{X_i\mid i\notin D\}.
\]
Use the mixed basis
\[
 \mathsf B_D(i)=
 \begin{cases}
 [\mathsf P_{X_i}],&i\notin D,\\
 [\mathsf S_{X_i}],&i\in D.
 \end{cases}
\]
It is a basis because its transition matrix is unitriangular in the chosen
order.  Define integers \(\mu_D(i,j)\), for all \(i,j\), by
\begin{equation}\label{eq:directed-mixed-expansion}
 [\mathsf P_{X_j}]=\sum_i\mu_D(i,j)\mathsf B_D(i).
\end{equation}
Extend \(\mu_D(i,-)\) additively to arbitrary modules, so that
\(\mu_D(i,X_j)=\mu_D(i,j)\).
Since \(\mathsf B_D(j)=[\mathsf P_{X_j}]\) for \(j\notin D\),
\begin{equation}\label{eq:directed-projective-column}
 \mu_D(i,j)=\delta_{i,j}\qquad(j\notin D).
\end{equation}
The set \(D\) is \emph{nonnegative} if \(\mu_D(i,j)\geq0\) for every pair
\(i,j\).
\end{definition}

\begin{remark}
The case \(D=[N]\) explains the relation between the mixed coordinates and
ordinary Hom spaces.  In this case the mixed basis is the simple basis, and
Proposition~\ref{prop:directed-functor-bases} gives
\[
 [\mathsf P_{X_x}]
 =\sum_{a=1}^N\dim_\kk\Hom_A(X_a,X_x)[\mathsf S_{X_a}].
\]
Thus
\[
 \mu_{[N]}(a,x)=\dim_\kk\Hom_A(X_a,X_x).
\]
For a general subset \(D\), the mixed basis makes this replacement only at
the omitted positions: it uses \([\mathsf S_{X_a}]\) for \(a\in D\) and
uses \([\mathsf P_{X_a}]\) for \(a\notin D\).  The
Auslander--Reiten resolutions in
Lemma~\ref{lem:directed-functor-ar-resolutions} express each replaced simple
class as an alternating sum of representable classes, and therefore give a
recurrence for \(\mu_D\).

When \(\CC_D\) is quotient-closed, the trace construction in the proof of
Proposition~\ref{prop:directed-closure-effectivity} identifies the omitted
coordinates with ordinary Hom dimensions in the ideal quotient:
\[
 \mu_D(a,x)
 =\dim_\kk\bigl((\mod A)/[\CC_D]\bigr)(X_a,X_x)
 \qquad(a,x\in D).
\]
Lemma~\ref{lem:directed-principal-positive} proves the same equality from the
Coxeter-theoretic condition introduced below, before quotient closure is
known.  Its nonnegative right-hand side then gives the positivity required
by Proposition~\ref{prop:directed-closure-effectivity}.
\end{remark}

By additivity, \(D\) is nonnegative if and only if
\(\mu_D(i,M)\geq0\) for every \(i\in[N]\) and every \(M\in\mod A\).

\begin{proposition}\label{prop:directed-closure-effectivity}
The subcategory \(\CC_D\) is quotient-closed if and only if \(D\) is
nonnegative.
\end{proposition}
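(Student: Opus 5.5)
The forward implication should follow by comparing $\mathsf P_{X_j}$ with the representable functor of the trace; for the converse I would realise a nonnegative mixed expansion by an actual subobject. Throughout, fix $j$ with $j\in D$: for $j\notin D$ the column is the unit vector \eqref{eq:directed-projective-column}, so nothing needs checking there. Write $t X_j=\tr_{\CC_D}(X_j)$. Let $T_j\subseteq\mathsf P_{X_j}$ be the subfunctor of maps factoring through $\CC_D$.

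\textbf{Quotient-closed $\Rightarrow$ nonnegative.} Assume $\CC_D$ is quotient-closed. Then $tX_j\in\CC_D$, because it is a factor of an object of $\CC_D$. Every map $Y\to X_j$ that factors through $\CC_D$ lands in $tX_j$. Conversely, every map $Y\to tX_j$ composed with the inclusion factors through $tX_j\in\CC_D$. Hence $T_j=\mathsf P_{tX_j}$. The quotient $\mathsf P_{X_j}/T_j=\bigl((\mod A)/[\CC_D]\bigr)(-,X_j)$ vanishes at every $X_a$ with $a\notin D$. Its composition factors are therefore among the $\mathsf S_{X_a}$ with $a\in D$, with multiplicity $\dim_\kk\bigl((\mod A)/[\CC_D]\bigr)(X_a,X_j)$. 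The relation $[\mathsf P_{X_j}]=[\mathsf P_{tX_j}]+[\mathsf P_{X_j}/T_j]$ in $K_0(\mod(\mod A))$ then gives an expansion in the mixed basis $\mathsf B_D$ with nonnegative coefficients:
\begin{itemize}
\item for $a\notin D$, the coefficient is the multiplicity of $X_a$ in $tX_j$;
\item for $a\in D$, it is the Hom dimension in the ideal quotient.
\end{itemize}
By uniqueness of coordinates these coefficients are the $\mu_D(a,j)$. This also gives the identification in the preceding remark.

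\textbf{Nonnegative $\Rightarrow$ quotient-closed.} Assume $D$ is nonnegative, and put $M_j=\bigoplus_{a\notin D}X_a^{\mu_D(a,j)}\in\CC_D$. Then
\[
[\mathsf P_{X_j}]-[\mathsf P_{M_j}]=\sum_{a\in D}\mu_D(a,j)[\mathsf S_{X_a}].
\]
Evaluating at objects of $\CC_D$ gives $\dim\Hom_A(C,X_j)=\dim\Hom_A(C,M_j)$ for all $C\in\CC_D$. Evaluating at $X_j$ itself, and using $\mu_D(j,j)=1$ from unitriangularity, gives $\dim\Hom_A(X_j,X_j)-\dim\Hom_A(X_j,M_j)\geq1$. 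Now suppose $X_j\in\Gen(\CC_D)$. Take a minimal right $\CC_D$-approximation $C\to X_j$; it is surjective, with kernel $L$. Then $0\to\mathsf P_L\to\mathsf P_C\to\mathsf P_{X_j}$ has cokernel $\bigl((\mod A)/[\CC_D]\bigr)(-,X_j)$. Comparing with the mixed expansion, using the unitriangular bases of Corollary~\ref{cor:directed-unitriangular-bases}, gives
\[
[\mathsf P_{M_j}]+[\mathsf P_L]=[\mathsf P_C]
\]
modulo simple classes at positions in $D$. The plan is to induct along the Hom-compatible order. By directedness every summand of $L$ precedes $X_j$, since $L$ maps nonzero into $C$ and $C$ maps onto $X_j$, so nonnegativity for smaller indices is available. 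The aim is to deduce that some coordinate $\mu_D(a,L)$ with $a\in D$ is forced to exceed what the positivity of the cokernel allows. This would produce a negative $\mu_D(a,j)$, a contradiction. Lemma~\ref{lem:indecomposable-quotient-test} then yields quotient closure.

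\textbf{Main obstacle.} The converse is the hard step: turning the purely numerical nonnegativity into the nonexistence of a surjection from $\CC_D$ onto some $X_j$, $j\in D$. The first thing I would try is to choose $j$ minimal in $D\cap\Gen(\CC_D)$. By the forward argument applied to the smaller indices, this should make $L$ and $C$ honestly decomposable in the mixed basis. The remaining task is then to trace the $\mathsf S_{X_a}$-coefficients through the AR formula \eqref{eq:directed-cartan-inverse} and find the negative entry. If that bookkeeping fails, the fallback is to use the factor-ladder criterion of Theorem~\ref{thm:factor-ladder} in place of the approximation sequence.
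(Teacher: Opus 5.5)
Your forward implication is correct and is essentially the paper's argument: the trace \(tX_j\) lies in \(\CC_D\), the cokernel of \(\mathsf P_{tX_j}\to\mathsf P_{X_j}\) is supported on \(D\), and uniqueness of mixed coordinates gives nonnegativity.

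\textbf{The converse has a genuine gap.} It is only a plan, and it lacks the idea that makes the argument work. Your approximation sequence gives, for \(a\in D\),
\[
 \mu_D(a,j)=\dim_\kk F(X_a)-\mu_D(a,L),\qquad F=\bigl((\mod A)/[\CC_D]\bigr)(-,X_j).
\]
Surjectivity of \(C\to X_j\) constrains this only at projective positions. If \(X_p\) is projective with \(p\in D\), every map \(X_p\to X_j\) lifts through \(C\), so \(F(X_p)=0\) and \(\mu_D(p,j)=-\mu_D(p,L)\leq0\). Under nonnegativity this merely forces \(\mu_D(p,j)=0\), which is not yet a contradiction.

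Choosing \(j\) minimal in \(D\cap\Gen(\CC_D)\) does not supply one either. It only reduces you to a sequence \(0\to L\to C\to X_j\to0\) with \(L,C\in\CC_D\), and the contradiction there must still come from the absence of cycles. Likewise, your observation that the coordinate of \(X_j\) at its own position is \(1\) is irrelevant unless \(X_j\) is projective.

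\textbf{The missing ingredient} is a positivity statement. If \(D\) is nonnegative, then every \(j\in D\) admits a projective \(X_p\) with \(p\in D\) and \(\mu_D(p,j)>0\). This is where directedness genuinely enters; it is Lemma~\ref{lem:directed-hom-rigidity} and Step~1 of the paper's appendix proof. With your \(M_j\), put \(d_X=[X,X_j]-[X,M_j]\geq0\), where \([X,Y]=\dim_\kk\Hom_A(X,Y)\). Suppose \(d_P=0\) for every projective \(P\). Apply \(\mathfrak c_{\mod A}^{-1}\) via \eqref{eq:directed-cartan-inverse}, then evaluate \(\Hom_A(-,Z)\) on almost split sequences. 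This yields \([X_j,Z]\geq[M_j,Z]\) for every \(Z\). Hence any summand \(Z\) of \(M_j\) satisfies \(\Hom_A(Z,X_j)\neq0\neq\Hom_A(X_j,Z)\), which is a cycle.

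With this lemma, your approximation sequence finishes the converse at once: \(\mu_D(p,j)>0\) contradicts \(\mu_D(p,j)\leq0\). That route is in fact more direct than the paper's. After the same positivity, the paper goes on to show that \(\lambda=\sum_{p}\mu_D(p,-)\) is right additive on the \(\tau\)-category \((\mod A)/[\CC_D]\). It then invokes Iyama's strictness theorem to identify \(\lambda\) with \(\dim_\kk\overline{\AA}_D(\Pi_D,-)\). Without the positivity lemma, however, your converse is not a proof.
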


\begin{proof}
\((\Rightarrow)\):
Suppose first that \(\CC_D\) is quotient-closed.  Fix \(j\in[N]\), put
\(Y=X_j\), and let \(\tr_{\CC_D}(Y)\) be the trace of \(\CC_D\) in \(Y\).  It
belongs to \(\CC_D\), and there is an exact sequence
of functors
\[
 0\longrightarrow\mathsf P_{\tr_{\CC_D}(Y)}
 \longrightarrow\mathsf P_Y\longrightarrow F_Y\longrightarrow0.
\]
The functor \(F_Y\) vanishes on \(\CC_D\), so all its composition
factors are \(\mathsf S_{X_i}\) with \(i\in D\), and \([F_Y]\) is a
nonnegative combination of the \(\mathsf B_D(i)\) with \(i\in D\).
Since \(\tr_{\CC_D}(Y)\in\CC_D\), the functor \(F_Y\) is the inflation of the
representable functor \(\bigl((\mod A)/[\CC_D]\bigr)(-,Y)\).
Since \(\tr_{\CC_D}(Y)\in\CC_D\) is a direct sum of modules \(X_i\) with
\(i\notin D\), the class \([\mathsf P_{\tr_{\CC_D}(Y)}]\) is a nonnegative
combination of the \(\mathsf B_D(i)\) with \(i\notin D\).  Hence
\([\mathsf P_Y]=[\mathsf P_{\tr_{\CC_D}(Y)}]+[F_Y]\) has nonnegative
coefficients in the mixed basis.  The uniqueness of the expansion
\eqref{eq:directed-mixed-expansion} proves that \(\mu_D(i,j)\geq0\) for
every \(i\).  For \(a\in D\), evaluation at \(X_a\) also gives
\[
 \mu_D(a,j)
 =[F_Y:\mathsf S_{X_a}]
 =\dim_\kk\bigl((\mod A)/[\CC_D]\bigr)(X_a,Y).
\]
Since \(j\) was arbitrary, \(D\) is nonnegative.

\((\Leftarrow)\):
The proof that nonnegativity of \(D\) implies that \(\CC_D\) is
quotient-closed is given in Appendix~\ref{sec:directed-tau-closure}.
\end{proof}

Thus quotient closure has been reduced to the signs of the integers
\(\mu_D(i,j)\).  We next compute these integers from Auslander--Reiten meshes,
thereby expressing the categorical criterion in terms of a word.

\subsection{The Auslander--Reiten word and the mixed-coordinate recurrence}

We now translate the mixed-coordinate recurrence into Coxeter combinatorics.

\begin{construction}\label{constr:directed-ar-word}
Fix the Hom-compatible order \(X_1<\cdots<X_N\) in
\eqref{eq:directed-order}.  Define a finite simple graph \(\Delta_A\) as
follows.  Its vertices are the \(\tau\)-orbits in \(\ind A\), and two
distinct vertices are adjacent if an Auslander--Reiten arrow connects the
two \(\tau\)-orbits.  Write \(i\sim j\) when the vertices \(i\) and \(j\)
are adjacent.  Let \(W(\Delta_A)\) be the simply-laced Coxeter group
\[
 \left\langle s_i\ \middle|\
 s_i^2=1,\ (s_is_j)^3=1\text{ if }i\sim j,\
 (s_is_j)^2=1\text{ if }i\neq j\text{ and }i\not\sim j
 \right\rangle .
\]
For each \(d\in[N]\), let \(i_d\) be the vertex of \(\Delta_A\)
corresponding to the \(\tau\)-orbit of \(X_d\).  We call \(d\) a
\emph{position} of the word and \(i_d\) its \emph{label}.  We call
\begin{equation}\label{eq:directed-ar-word}
 \underline{w}_A:=s_{i_1}\cdots s_{i_N}
\end{equation}
the \emph{Auslander--Reiten word} in the simple generators of
\(W(\Delta_A)\) associated with the chosen order.
Let \(w_A\in W(\Delta_A)\) be the element represented by
\(\underline{w}_A\).
\end{construction}

In the Dynkin construction of Proposition~\ref{prop:ort-dynkin}, the
position matched with \(\tau^{-a}P_i\), where \(a\geq0\), receives the label \(i\).
Construction~\ref{constr:directed-ar-word} uses the same labeling by
\(\tau\)-orbits for a representation-directed algebra.  We next determine
how Auslander--Reiten arrows occur between two such labels.

\begin{lemma}\label{lem:directed-two-orbits}
Every \(\tau\)-orbit can be indexed as
\((U_0,\ldots,U_m)\) with \(U_0\) projective, \(U_m\) injective, and
\(U_{i+1}=\tau^{-1}U_i\) for \(0\leq i<m\).  For \(i<i'\), there is a
directed path \(U_i\rightsquigarrow U_{i'}\) in \(\Gamma(\mod A)\), but
there is no Auslander--Reiten arrow whose source and target belong to the
same \(\tau\)-orbit.

If two different orbits
\[
 (U_0,\ldots,U_m),\qquad(V_0,\ldots,V_n)
\]
are connected by an arrow, then, after possibly exchanging the two
orbits, all arrows between them occur in chains of the form
\begin{equation}\label{eq:directed-orbit-arrows}
 U_i\longrightarrow V_{i+c}\longrightarrow U_{i+1}
\end{equation}
for one fixed integer \(c\), whenever the displayed vertices exist.
\end{lemma}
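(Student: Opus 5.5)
The plan uses three ingredients: directedness (no cycles of nonzero nonisomorphisms), the bijectivity of \(\tau\) between nonprojective and noninjective indecomposables, and the mesh correspondence of Lemma~\ref{lem:ar-mesh-correspondence}.

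\textbf{Step 1: shape of an orbit.} Let \(U\) be in an orbit. If \(\tau^aU\ne0\) for all \(a\geq0\), the modules \(\tau^aU\) are pairwise nonisomorphic. Otherwise some \(\tau^aU\simeq\tau^bU\) with \(a<b\), and repeated meshes give a cycle of nonzero nonisomorphisms. Pairwise nonisomorphic modules cannot be infinitely many, since \(\ind A\) is finite. Hence applying \(\tau\) repeatedly ends at a projective \(U_0\), and applying \(\tau^{-1}\) ends at an injective \(U_m\). So the orbit is a finite sequence \(U_0,\ldots,U_m\) with \(U_{i+1}=\tau^{-1}U_i\).

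For \(i<m\), the module \(U_i\) is noninjective, so the almost split sequence \(0\to U_i\to\theta U_{i+1}\to U_{i+1}\to0\) exists. Its middle term is nonzero, since otherwise \(U_i\simeq0\). This gives a path \(U_i\to E\to U_{i+1}\) for a summand \(E\) of \(\theta U_{i+1}\), and concatenating these paths gives \(U_i\rightsquigarrow U_{i'}\) for \(i<i'\).

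An arrow \(U_i\to U_j\) inside the orbit cannot exist. If \(j>i\), the path \(U_j\rightsquigarrow\) from the previous paragraph does not return to \(U_i\), so argue instead with meshes. An arrow \(U_i\to U_j\) with \(U_j\) nonprojective gives an arrow \(\tau U_j\to U_i\) by Lemma~\ref{lem:ar-mesh-correspondence}. If \(j\leq i\), the path \(U_j\rightsquigarrow U_i\) (or equality) together with the arrow \(U_i\to U_j\) closes a cycle of irreducible maps; a cycle of irreducible maps consists of nonzero nonisomorphisms, contradicting directedness. If \(j>i\), the arrow \(\tau U_j=U_{j-1}\to U_i\) combined with the path \(U_i\rightsquigarrow U_{j-1}\) (when \(j-1>i\)) is again a cycle. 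When \(j-1=i\), the arrow \(U_i\to U_i\) is a loop, which directedness also rules out. If \(U_j\) is projective then \(j=0\leq i\), which is covered by the first case.

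\textbf{Step 2: arrows between two orbits.} Suppose there is an arrow \(U_i\to V_k\), and set \(c=k-i\). Applying Lemma~\ref{lem:ar-mesh-correspondence} and its dual repeatedly transports arrows along meshes in both directions. If \(V_k\) is nonprojective, the arrow \(U_i\to V_k\) gives \(V_{k-1}\to U_i\). If \(U_{i+1}\) exists, then \(U_i\) is noninjective, and applying the dual correspondence to \(U_i\to V_k\) gives \(V_k\to U_{i+1}\). Iterating from these two moves produces the chains \eqref{eq:directed-orbit-arrows} for the fixed shift \(c\), whenever the displayed vertices exist.

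\textbf{Step 3: uniqueness of the shift (main obstacle).} It remains to show that every arrow between the two orbits belongs to this one family. An arrow \(U_{i'}\to V_{k'}\) has shift \(c'=k'-i'\), and an arrow \(V_{k'}\to U_{i'}\) corresponds, by the mesh correspondence, to an arrow \(U_{i'-1}\to V_{k'}\) and hence to shift \(c'=k'-i'+1\). So the task is to rule out two distinct shifts \(c'\neq c\).

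Suppose \(c'>c\). The arrow with shift \(c\) gives \(U_i\to V_{i+c}\), and the arrow with shift \(c'\) gives \(V_{i+c'-1}\to U_i\); the existence of these indices is where the argument needs care. Since \(i+c<i+c'\), the index \(i+c\leq i+c'-1\), so Step 1 supplies a path \(V_{i+c}\rightsquigarrow V_{i+c'-1}\) within the \(V\)-orbit, or the two vertices coincide. This yields a cycle
\[
 U_i\to V_{i+c}\rightsquigarrow V_{i+c'-1}\to U_i
\]
of irreducible maps, contradicting directedness. The case \(c'<c\) is symmetric.

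The obstacle is making sure the two arrows can be moved to a common index \(i\). The translates may fall off the ends of an orbit, at projectives or injectives, before reaching a common index. To handle this I would shift the two arrows to a common index in the \(V\)-orbit rather than in the \(U\)-orbit. If that fails too, I would run the same cycle argument using the path \(U_i\rightsquigarrow U_{i'}\) of Step 1, so that no individual translate needs to exist. After any exchange of the roles of the two orbits, the orientation is the one that makes all arrows appear in the form \(U\to V\to U\).
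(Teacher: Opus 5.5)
Your Steps~1 and~2 match the paper. The orbit shape and the paths \(U_i\to Z\to U_{i+1}\) come from directedness and almost split sequences. Your transport of arrows by the mesh correspondence is the paper's left and right rotation, which preserves the shift \(c\). Excluding arrows inside an orbit by a single rotation, as you do, is fine and slightly shorter than the paper's rotation all the way to a projective target.

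The gap is in Step~3, where you located it. Your cycle \(U_i\to V_{i+c}\rightsquigarrow V_{i+c'-1}\to U_i\) needs one index \(i\) at which both families have arrows. Rotation chains stop at the ends of the orbits, so such an index need not exist.

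Concretely, suppose the orbits are \(U_0,\ldots,U_t\) and \(V_0,\ldots,V_j\) with \(t,j\geq1\), and the only arrows between them are \(U_t\to V_0\) and \(V_j\to U_0\). Neither arrow can be rotated, since both targets are projective and both sources injective. So the two families are single arrows, with shifts \(c=-t\) and \(c'=j+1\). Aligning in the \(U\)-orbit would require \(i=t\) and \(i=0\); aligning in the \(V\)-orbit would require \(k=0\) and \(k=j\). This configuration is excluded only by the cycle \(U_0\rightsquigarrow U_t\to V_0\rightsquigarrow V_j\to U_0\), which uses paths in both orbits. Your fallback points in this direction. However, you do not say which two arrows to compare or check that a cycle closes, and that choice is the missing idea.

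The paper supplies it by replacing each arrow with its terminal left rotation, the arrow in its chain whose target is projective. Every family is then represented by an arrow \(U_s\to V_0\) or \(V_s\to U_0\).

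The two forms cannot both occur, by the cycle just displayed. So after exchanging orbits, every terminal arrow has the form \(U_s\to V_0\).

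Two such arrows \(U_s\to V_0\) and \(U_{s'}\to V_0\) with \(s<s'\) cannot coexist either. Here \(U_s\) is noninjective, so its right rotation \(V_0\to U_{s+1}\) exists, and \(V_0\to U_{s+1}\rightsquigarrow U_{s'}\to V_0\) is a cycle.

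Hence there is a single terminal arrow \(U_t\to V_0\). All arrows lie in its right-rotation chain \(U_{t+n}\to V_n\to U_{t+n+1}\), and \(c=-t\). Because the comparison takes place at the projective ends, the required arrows always exist.
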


\begin{proof}
See Appendix~\ref{sec:directed-orbit-geometry}.
\end{proof}

Lemma~\ref{lem:directed-two-orbits} allows the labeled word to recover each
almost split mesh.  We now define the corresponding positions; the precise
dictionary is given in Lemma~\ref{lem:directed-word-dictionary}.

\begin{construction}\label{constr:directed-ar-mesh-positions}
For the Auslander--Reiten word
\(\underline{w}_A=s_{i_1}\cdots s_{i_N}\) and a position
\(x\in[N]\), define
\[
 p(x):=\max\{\,y\in[N]\mid y<x,\ i_y=i_x\,\}
\]
when the set on the right is nonempty.  Define a subset
\(\mathcal W(x)\subseteq[N]\) by
\[
 \mathcal W(x):=
 \begin{cases}
  \{\,y\in[N]\mid p(x)<y<x\,\},&p(x)\text{ exists},\\
  \{\,y\in[N]\mid y<x\,\},&p(x)\text{ does not exist}.
 \end{cases}
\]
The set of \emph{middle positions at \(x\)} is the subset of \([N]\)
defined by
\begin{equation}\label{eq:directed-middle-positions}
 \operatorname{mid}(x):=
 \left\{
  \max\{\,y\in\mathcal W(x)\mid i_y=h\,\}
  \ \middle|\
  \begin{array}{c}
   h\sim i_x\text{ in }\Delta_A,\\[-2pt]
   \{\,y\in\mathcal W(x)\mid i_y=h\,\}\ne\varnothing
  \end{array}
 \right\}.
\end{equation}
\end{construction}

For \(D\subseteq[N]\), a position of \(\underline{w}_A\) is called
\emph{selected} if it belongs to \(D\), and \emph{unselected} otherwise.
Thus \(d\in D\) means simultaneously that the position \(d\) is selected
in the word and that the module \(X_d\) is omitted from \(\CC_D\).

\begin{lemma}\label{lem:directed-word-dictionary}
For every position \(x\), the following statements hold.
\begin{enumerate}
\item \(p(x)\) exists exactly when \(X_x\) is nonprojective, and then
\(X_{p(x)}=\tau X_x\);
\item the modules indexed by \(\operatorname{mid}(x)\) are exactly the
indecomposable summands of \(\theta X_x\), and each occurs with multiplicity
one;
\end{enumerate}
\end{lemma}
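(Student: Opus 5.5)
Both parts follow by reading the Hom-compatible order against the orbit structure of Lemma~\ref{lem:directed-two-orbits}. The key preliminary fact is that an Auslander--Reiten arrow \(Y\to Z\) is a nonzero nonisomorphism, so \(Y\) precedes \(Z\) in the order. In particular, for each nonprojective \(X\) one has \(\tau X<X\), since the almost split sequence gives a nonzero composite of irreducible maps from \(\tau X\) through a summand of \(\theta X\) to \(X\), and hence a path \(\tau X\rightsquigarrow X\). This uses that \(\theta X\neq0\) for nonprojective \(X\) in the representation-finite setting.

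For (1), write the \(\tau\)-orbit of \(X_x\) as \((U_0,\ldots,U_m)\) with \(X_x=U_k\). Lemma~\ref{lem:directed-two-orbits} gives paths \(U_i\rightsquigarrow U_{i'}\) for \(i<i'\), so the positions of \(U_0,\ldots,U_m\) increase with the index. The positions labelled \(i_x\) are exactly these, so the largest one before \(x\) is the position of \(U_{k-1}=\tau X_x\), provided \(k\geq1\). It exists precisely when \(X_x\neq U_0\), that is, when \(X_x\) is nonprojective.

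For (2), fix a label \(h\neq i_x\) with orbit \((V_0,\ldots,V_n)\). By Corollary~\ref{cor:ar-middle-terms-squarefree}, summands of \(\theta X_x\) occur with multiplicity one, and by definition of \(\Delta_A\) they have labels adjacent to \(i_x\). Since there are no arrows inside an orbit, no summand has label \(i_x\). It therefore suffices to show the following for each neighbour \(h\): the orbit of \(h\) contains at most one module with an arrow to \(X_x\), and such a module exists iff \(\mathcal W(x)\) contains an \(h\)-position, in which case it is the last such position.

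Apply Lemma~\ref{lem:directed-two-orbits} to the pair of orbits; there are two cases depending on orientation.

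\emph{Case of chains \(U_i\to V_{i+c}\to U_{i+1}\).} The only possible arrow into \(X_x=U_k\) is from \(V_{k-1+c}\). The neighbouring arrows \(U_{k-1}\to V_{k-1+c}\to U_k\) and \(U_k\to V_{k+c}\) place \(V_{k-1+c}\) strictly between \(\tau X_x\) and \(X_x\), and place \(V_{k+c}\) after \(X_x\). So \(V_{k-1+c}\) is the last \(h\)-position in \(\mathcal W(x)\), provided it exists.

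\emph{Case of chains \(V_j\to U_{j+c}\to V_{j+1}\).} Here the candidate is \(V_{k-c}\to U_k\), and the same sandwiching argument applies via \(U_{k-1}\to V_{k-c}\).

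The main obstacle is the existence direction: if an \(h\)-position lies in \(\mathcal W(x)\) but the expected neighbour "\(V\)" does not exist, we must rule this out. I would handle it with the mesh correspondence, Lemma~\ref{lem:ar-mesh-correspondence}, to transfer the arrow across \(\tau\). Suppose a module \(V_j\) lies between \(\tau X_x\) and \(X_x\) in the order but has no arrow to \(X_x\). The chain structure forces a contradiction with Hom-compatibility, because every \(V_j\) is linked by arrows to \(U\)-modules on both sides. So if the designated \(V\) is missing, all \(h\)-modules lie outside the window \((p(x),x)\).

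The projective case, where \(\mathcal W(x)\) is everything before \(x\), uses \(\theta P=\rad P\) together with the same chain bookkeeping at the beginning of the orbit.
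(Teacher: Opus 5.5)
Your proposal follows the paper's proof: part (1) comes from each \(\tau\)-orbit appearing in increasing order along the Hom-compatible order, and part (2) comes from reading the zigzag arrow pattern of Lemma~\ref{lem:directed-two-orbits} against that order, with multiplicity one from Corollary~\ref{cor:ar-middle-terms-squarefree}. One correction to your converse step: it is not true that every \(V_j\) is joined to \(U\)-modules on both sides, since those at the ends of the \(V\)-orbit are not; but by the within-orbit ordering from (1) these lie before \(U_0\) or after \(U_m\), hence outside \((p(x),x)\), while a \(V_j\) with arrows \(U_i\to V_j\to U_{i+1}\) lies only in the window \((U_i,U_{i+1})\), so your conclusion stands.
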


\begin{proof}
\((1)\)
The directed paths in Lemma~\ref{lem:directed-two-orbits} force the
Hom-compatible order to list
its modules from the projective end to the injective end.  Thus the first
occurrence is projective, and the preceding occurrence of the label of a
nonprojective \(X_x\) is \(\tau X_x\).

\((2)\)
Fix an orbit adjacent to that of \(X_x\).  The arrow family in
\eqref{eq:directed-orbit-arrows} shows that the unique arrow from this orbit
into \(X_x\), when it exists, starts at its last occurrence in
\((p(x),x)\), or at its last occurrence before \(x\) when \(X_x\) is
projective.  These sources are precisely the indecomposable summands of
\(\theta X_x\).  They are the
positions in \(\operatorname{mid}(x)\), and
Corollary~\ref{cor:ar-middle-terms-squarefree} gives multiplicity one.
\end{proof}

Consequently, the mesh identities for the inverse of the Cartan homomorphism
can be read using word positions alone.  Taking their coefficients in the
mixed basis gives the recurrence which will be compared with Coxeter root
reflections.

\begin{lemma}\label{lem:directed-coordinate-recurrence}
For every \(D\subseteq[N]\) and \(a,x\in[N]\), the mixed coordinates
satisfy
\begin{equation}\label{eq:directed-coordinate-recurrence}
\mu_D(a,x)=
\begin{cases}
\delta_{a,x},&x\notin D,\\
\displaystyle\delta_{a,x}
\mathbin{+}\displaystyle\sum_{y\in\operatorname{mid}(x)}\mu_D(a,y)
-\mu_D(a,p(x)),&x\in D,\ p(x)\text{ exists},\\
\displaystyle\delta_{a,x}
\mathbin{+}\displaystyle\sum_{y\in\operatorname{mid}(x)}\mu_D(a,y),
 &x\in D,\ p(x)\text{ does not exist}.
\end{cases}
\end{equation}
\end{lemma}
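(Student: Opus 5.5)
The plan is to split by whether \(x\) is selected and, for selected \(x\), to read the mesh class identity of Lemma~\ref{lem:directed-functor-ar-resolutions} in the mixed basis.

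If \(x\notin D\), there is nothing to prove: \eqref{eq:directed-projective-column} already gives \(\mu_D(a,x)=\delta_{a,x}\).

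Now suppose \(x\in D\). Then \(\mathsf B_D(x)=[\mathsf S_{X_x}]\). Rearranging \eqref{eq:directed-mesh-class} gives, in \(K_0(\mod(\mod A))\),
\[
 [\mathsf P_{X_x}]=[\mathsf S_{X_x}]+[\mathsf P_{\theta X_x}]-[\mathsf P_{\tau X_x}]
\]
when \(X_x\) is nonprojective. When \(X_x\) is projective, the same identity holds without the last term.

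By Lemma~\ref{lem:directed-word-dictionary}(2), \(\theta X_x\) is the multiplicity-free sum \(\bigoplus_{y\in\operatorname{mid}(x)}X_y\). Hence \([\mathsf P_{\theta X_x}]=\sum_{y\in\operatorname{mid}(x)}[\mathsf P_{X_y}]\). By Lemma~\ref{lem:directed-word-dictionary}(1), \(p(x)\) exists exactly when \(X_x\) is nonprojective, and in that case \(\tau X_x=X_{p(x)}\).

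Substituting these into the identity, I expand each \([\mathsf P_{X_y}]\) and \([\mathsf P_{X_{p(x)}}]\) via \eqref{eq:directed-mixed-expansion}. I then take the coefficient of \(\mathsf B_D(a)\) on both sides. The term \([\mathsf S_{X_x}]=\mathsf B_D(x)\) contributes \(\delta_{a,x}\). The remaining terms contribute \(\sum_{y\in\operatorname{mid}(x)}\mu_D(a,y)-\mu_D(a,p(x))\), with the last term omitted when \(p(x)\) does not exist. Since the \(\mathsf B_D(i)\) form a basis (Definition~\ref{def:directed-mixed-coordinates}), the coefficients are unique, and this is exactly \eqref{eq:directed-coordinate-recurrence}.

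The main point needing care is the translation between modules and word positions. That is, \(\operatorname{mid}(x)\) must list the summands of \(\theta X_x\) with the correct multiplicity, and \(p(x)\) must index \(\tau X_x\). Both facts are supplied by Lemma~\ref{lem:directed-word-dictionary}, using Corollary~\ref{cor:ar-middle-terms-squarefree} for the multiplicities. No recursion or well-foundedness argument is needed: the formula is an identity among already-defined integers. (The positions in \(\operatorname{mid}(x)\) and \(p(x)\) all precede \(x\), which is what later makes the recurrence computable.)
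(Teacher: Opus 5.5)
Your proposal is correct and follows the paper's proof essentially step for step. You rewrite the mesh identity of Lemma~\ref{lem:directed-functor-ar-resolutions}, identify \(\theta X_x\) and \(\tau X_x\) with the positions \(\operatorname{mid}(x)\) and \(p(x)\) via Lemma~\ref{lem:directed-word-dictionary}, and take the \(\mathsf B_D(a)\)-coefficient, while the case \(x\notin D\) comes from \eqref{eq:directed-projective-column}.
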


\begin{proof}
Rewrite \eqref{eq:directed-mesh-class} as
\[
 [\mathsf P_{X_x}]=[\mathsf S_{X_x}]+[\mathsf P_{\theta X_x}]
 -[\mathsf P_{\tau X_x}].
\]
When \(x\in D\), one has \([\mathsf S_{X_x}]=\mathsf B_D(x)\).  If
\(p(x)\) exists, Lemma~\ref{lem:directed-word-dictionary} identifies the
modules \(\theta X_x\) and \(\tau X_x\) with those indexed by
\(\operatorname{mid}(x)\) and \(p(x)\), respectively.  Taking the
coefficient of \(\mathsf B_D(a)\) gives the second case of
\eqref{eq:directed-coordinate-recurrence}.  If \(p(x)\) does not exist,
then \(X_x\) is projective, and the identity
\[
 [\mathsf S_{X_x}]
 =[\mathsf P_{X_x}]-[\mathsf P_{\theta X_x}]
\]
and Lemma~\ref{lem:directed-word-dictionary}(2) give the third case.
Finally, \eqref{eq:directed-projective-column} gives the first case when
\(x\notin D\).
\end{proof}

For comparison with suffixes of the word, we consider the selected positions
at and to the right of a fixed position.  For \(a\in[N]\), put
\[
 D[a]=\{a\}\cup\{d\in D\mid d>a\}.
\]
Thus the selected positions of \(D[a]\) are \(a\) and the positions of
\(D\) strictly to the right of \(a\).
Lemma~\ref{lem:directed-truncated-coordinates} shows that this truncation does
not change the mixed-coordinate row indexed by \(a\).

\begin{lemma}\label{lem:directed-truncated-coordinates}
For every \(D\subseteq[N]\) and \(a,x\in[N]\), one has
\[
 \mu_D(a,x)=\mu_{D[a]}(a,x).
\]
\end{lemma}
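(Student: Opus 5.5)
The plan is to prove the equality by strong induction on the position $x$, with $a$ and $D$ fixed, using only the recurrence of Lemma~\ref{lem:directed-coordinate-recurrence} for both $D$ and $D[a]$. The key observation is that the recurrence computes the column $\mu_D(-,x)$ from columns at strictly smaller positions: $p(x)<x$ and every element of $\operatorname{mid}(x)$ is $<x$. Moreover, the only dependence on $D$ is through whether $x\in D$. So it suffices to compare, for each $x$, the recurrence cases for $D$ and $D[a]$ when the row is fixed to be $a$.

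First I establish a triangularity statement: $\mu_E(a,x)=0$ whenever $x<a$, for any $E\subseteq[N]$. I prove this by induction on $x$. If $x\notin E$, then $\mu_E(a,x)=\delta_{a,x}=0$. If $x\in E$, then $\delta_{a,x}=0$, and all terms on the right side of \eqref{eq:directed-coordinate-recurrence} are indexed by positions $y<x<a$, so they vanish by induction. (Alternatively, this follows from the unitriangularity of the mixed basis in the Hom-compatible order, but the recurrence argument is self-contained.)

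Now I compare $\mu_D(a,x)$ and $\mu_{D[a]}(a,x)$ by induction on $x$, splitting into three cases.
\begin{enumerate}
\item If $x<a$, both values vanish by the triangularity step.
\item If $x=a$, then $x\in D[a]$. If also $a\in D$, both recurrences have the same shape, and the right-hand sides involve only positions $<a$. There the values are $0$ for both sets, so both sides equal $\delta_{a,a}=1$. If $a\notin D$, then $\mu_D(a,a)=1$ by the first case of the recurrence. For $D[a]$ the recurrence gives $1+\sum(\text{terms at positions}<a)=1$. So both values are $1$.
\item If $x>a$, then $x\in D$ if and only if $x\in D[a]$ by the definition of $D[a]$. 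Hence the same case of \eqref{eq:directed-coordinate-recurrence} applies to both sets, and $\delta_{a,x}=0$ in both. The right-hand sides involve $\mu(a,y)$ for $y<x$, which agree by the induction hypothesis.
\end{enumerate}

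The only delicate point is the case $x=a$ with $a\notin D$. There the two recurrences have different shapes, and the triangularity step must be in place to kill the extra terms. I expect no further obstacle, since Lemma~\ref{lem:directed-coordinate-recurrence} already holds for arbitrary subsets of $[N]$ and so applies verbatim to $D[a]$.
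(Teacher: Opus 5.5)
Your proposal is correct and follows the paper's proof essentially step for step. It shows vanishing for \(x<a\) by induction on \(x\) via Lemma~\ref{lem:directed-coordinate-recurrence}, checks that both coordinates equal one at \(x=a\) (you correctly flag the case \(a\notin D\)), and notes that for \(x>a\) membership in \(D\) and \(D[a]\) coincides, so the same recurrence case applies and induction on \(x\) concludes.
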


\begin{proof}
First let \(x<a\).  For \(E=D\) and \(E=D[a]\), induction on \(x\) using
Lemma~\ref{lem:directed-coordinate-recurrence} gives
\(\mu_E(a,x)=0\): the term \(\delta_{a,x}\) vanishes, and every other
term has second index smaller than \(x\).

At \(x=a\), the coefficient \(\mu_D(a,a)\) equals one by
\eqref{eq:directed-projective-column} when \(a\notin D\), and by
Lemma~\ref{lem:directed-coordinate-recurrence} and the vanishing just proved
when \(a\in D\).  Since \(a\in D[a]\), applying the recurrence and the same
vanishing gives \(\mu_{D[a]}(a,a)=1\).

Finally, suppose that \(x>a\).  Then
\(x\in D\) if and only if \(x\in D[a]\).  Hence the recurrence uses the
same case for \(D\) and \(D[a]\).  All
coefficients on its right-hand side have second index smaller than \(x\),
so induction on \(x\) proves the claimed equality.
\end{proof}

Thus each row may be computed from a selected suffix of the word.
Example~\ref{ex:directed-a3-negative} illustrates how a negative coordinate
records a failure of quotient closure.

\begin{example}\label{ex:directed-a3-negative}
Let \(A=\kk Q\), where
\[
 Q:\quad 1\longleftarrow2\longrightarrow3.
\]
Write \(M_{12},M_{23},M_{123}\) for the interval modules on the indicated
supports.  One Hom-compatible order is
\[
\begin{array}{c|cccccc}
x&1&2&3&4&5&6\\ \hline
X_x&S_1&S_3&M_{123}&M_{23}&M_{12}&S_2.
\end{array}
\]
The Auslander--Reiten quiver of \(A\) is
\begin{center}
\begin{tikzpicture}[xscale=2.0,yscale=1.0]
\node (S1) at (0,1) {\(S_1\)};
\node (S3) at (0,-1) {\(S_3\)};
\node (M123) at (1,0) {\(M_{123}\)};
\node (M23) at (2,1) {\(M_{23}\)};
\node (M12) at (2,-1) {\(M_{12}\)};
\node (S2) at (3,0) {\(S_2\)};
\draw[->] (S1) -- (M123);
\draw[->] (S3) -- (M123);
\draw[->] (M123) -- (M23);
\draw[->] (M123) -- (M12);
\draw[->] (M23) -- (S2);
\draw[->] (M12) -- (S2);
\draw[dashed] (M23) -- (S1);
\draw[dashed] (M12) -- (S3);
\draw[dashed] (S2) -- (M123);
\end{tikzpicture}
\end{center}
where a dashed line joins each nonprojective module to its translate.
The three \(\tau\)-orbits are
\[
 \{S_1,M_{23}\},\qquad
 \{M_{123},S_2\},\qquad
 \{S_3,M_{12}\}.
\]
The graph \(\Delta_A\) is the path \(1-2-3\), and its Auslander--Reiten word is
\[
 \underline{w}_A=s_1s_3s_2s_1s_3s_2.
\]

Take \(D=\{1,4\}\).  At position \(4\), one has \(p(4)=1\), and
\(\operatorname{mid}(4)=\{3\}\).  Therefore
\[
 \mu_D(1,4)=\delta_{1,4}+\mu_D(1,3)-\mu_D(1,1)=0+0-1=-1.
\]
This negative coordinate records an actual failure of quotient closure:
there is a surjection
\[
 X_3=M_{123}\twoheadrightarrow X_4=M_{23},
 \qquad X_3\in\CC_D,\quad X_4\notin\CC_D.
\]
\end{example}

For the implication from reduced selected suffixes to nonnegative mixed
coordinates, it remains to prove positivity when the selected subword is
reduced.  For a subset \(D\subseteq[N]\), the
\emph{\(D\)-subword} of \(\underline{w}_A\) is the subword on the positions in
\(D\), read in increasing order.  The precise positivity statement is
Lemma~\ref{lem:directed-principal-positive}.

\begin{lemma}\label{lem:directed-principal-positive}
If the \(D\)-subword of \(\underline{w}_A\) is reduced, then
\[
 \mu_D(a,x)
 =\dim_\kk\bigl((\mod A)/[\CC_D]\bigr)(X_a,X_x)
 \geq0
 \qquad(a,x\in D).
\]
\end{lemma}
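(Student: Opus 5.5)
The plan is to prove the two assertions simultaneously: first the sign, via Coxeter combinatorics, then the equality with Hom dimensions, via Iyama's $\tau$-category structure on $\overline\AA=(\mod A)/[\CC_D]$ from Proposition~\ref{prop:iyama-factor-category}.

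\textbf{Reduction of the recurrence.} By Lemma~\ref{lem:directed-truncated-coordinates} we may replace $D$ by $D[a]$. The $D[a]$-subword is a suffix of the $D$-subword, so it is still reduced, and we may assume that $a=\min D$. For $a\in D$ and $y\notin D$, equation~\eqref{eq:directed-projective-column} gives $\mu_D(a,y)=\delta_{a,y}=0$. Hence, for $x\in D$, the recurrence of Lemma~\ref{lem:directed-coordinate-recurrence} only involves the positions in $\operatorname{mid}(x)\cap D$ and, if $p(x)\in D$, the position $p(x)$. By Lemma~\ref{lem:directed-word-dictionary}, these index exactly the summands of $\theta_DX_x$ and the module $\tau X_x$ when it lies in $D$.

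\textbf{Coxeter interpretation and positivity.} Let $d_1=a<d_2<\cdots<d_k$ be the positions of $D$, with labels $j_1,\ldots,j_k$. For $x\in D$ I would identify $\mu_D(a,x)$ with a coefficient attached to the prefix $s_{j_1}\cdots s_{j_m}$ ending at $x$. The candidate is the pairing $\langle \alpha_{j_1},\, s_{j_1}\cdots s_{j_{m-1}}\alpha_{j_m}\rangle$ up to sign, or equivalently a coordinate of the root $s_{j_m}\cdots s_{j_2}\alpha_{j_1}$ read from the other side. The mechanism is that the mesh recurrence ``sum over neighbours of the last occurrence minus the previous occurrence of the same label'' is precisely how the coefficients of $s_i\beta=\beta-\langle\beta,\alpha_i^\vee\rangle\alpha_i$ propagate in a simply-laced root system. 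I would verify this by induction on $x$, using the description of $\operatorname{mid}(x)$ and $p(x)$ through last occurrences. Since the subword is reduced, the vectors $s_{j_1}\cdots s_{j_{m-1}}\alpha_{j_m}$ are positive roots (the inversion sequence), which gives $\mu_D(a,x)\ge0$.

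\textbf{Equality with Hom dimensions.} Put $h(x)=\dim_\kk\overline\AA(X_a,X_x)$ and show $h(x)=\mu_D(a,x)$ by induction on $x\in D$. The radical filtration of Proposition~\ref{prop:iyama-factor-category}(1) gives $h(x)=\delta_{a,x}+\sum_{y}h(y)-h(p(x))$, with $y$ running over the summands of $\theta_DX_x$. This identity holds whenever the right $\tau$-sequence $\tau_DX_x\to\theta_DX_x\to X_x$ in $\overline\AA$ is exact after evaluation at $X_a$. The only defect is the truncation $(\cdot)_+$ together with the convention $\tau_DX=0$ when $\theta_DX=0$. I expect this to be the main obstacle: the recurrence must not be cut off, that is, the $\tau$-sequences involved must be strict in the relevant degree. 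My first attempt would be to use the already-proved nonnegativity (and, more sharply, positivity of the root coordinates) of $\mu_D$ at earlier positions to show that whenever $\tau X_x\in D$ the map $\overline\AA(X_a,\tau X_x)\to\overline\AA(X_a,\theta X_x)$ is injective. If that fails, I would instead match the recurrence with $\dim$ of the functor $F_Y$ from the proof of Proposition~\ref{prop:directed-closure-effectivity}, combined with an exchange argument on reduced words. The alternative of reading a nonstrict sequence as a non-reduced (braid/nil) step, via the Coxeter lemmas in Appendix~\ref{sec:directed-technical}, would also suffice.
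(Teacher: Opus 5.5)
There is a genuine gap. Your positivity step rests on an identification that is false, and your equality step leaves open exactly the ingredient that makes it work.

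\textbf{The root identification fails.} Take Example~\ref{ex:directed-factor-word}: $\underline{w}_A=s_1s_2s_3s_1s_2s_1$ and $D=\{1,2,3,5,6\}$, with $a=1=\min D$. The $D$-subword $s_1s_2s_3s_2s_1$ is reduced, so the hypothesis holds. The recurrence \eqref{eq:directed-coordinate-recurrence} gives $\mu_D(1,x)=1,1,1,0,0$ for $x=1,2,3,5,6$. For instance, $\mu_D(1,5)=\mu_D(1,3)+\mu_D(1,4)-\mu_D(1,2)=1+0-1=0$, which matches $\Hom_A(S_3,M_{12})=0$.

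Your candidates give different values. At $x=5$ the root $s_2s_3s_2\alpha_1=\alpha_1+\alpha_2+\alpha_3$ has no zero coordinate. At $x=6$ the pairing of $\alpha_1$ with the positive root $s_1s_2s_3s_2\alpha_1=\alpha_2+\alpha_3$ is $-1$ in the Cartan normalization, which is not $0$ even up to sign. So positivity of inversion roots does not control such pairings either.

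The two processes genuinely differ. The coordinate recurrence kills the coordinate at each position outside $D$ and adds only the neighbours indexed by $\operatorname{mid}(x)$, whereas $s_i$ adds every neighbouring coordinate. The resulting correction terms (here from position $4\notin D$) are exactly what the proof of Lemma~\ref{lem:directed-first-cancellation} has to track.

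Nor do coordinates of inversion roots of the factor word $\underline{w}_D$ in $W(\Delta_D)$ give Hom dimensions. In the same example $\beta_6=\alpha_{a_0}+\alpha_b+\alpha_c+\alpha_{a_1}$, while $\dim_\kk\overline{\AA}_D(X_1,X_6)=0$. So positivity cannot be established first; in the paper it is a consequence of the equality.

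\textbf{What the equality step needs.} Your induction $h(x)=\delta_{a,x}+\sum_y h(y)-h(p(x))$ has the right shape, but it requires two facts you leave open.
\begin{enumerate}
\item $\theta_DX_x\neq0$ whenever $p(x)\in D$. Otherwise $\tau_DX_x=0$, the $\tau$-category recurrence differs from \eqref{eq:directed-coordinate-recurrence}, and with $a=p(x)$ the coordinate recurrence would even give $-1$.
\item Every right $\tau$-sequence of $\overline{\AA}_D$ is strict. Numerical nonnegativity of earlier coordinates does not by itself give injectivity of $\overline{\AA}_D(X_a,\tau_DX_x)\to\overline{\AA}_D(X_a,\theta_DX_x)$, and that nonnegativity is not available to you anyway.
\end{enumerate}

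The paper supplies both facts as follows.
\begin{enumerate}
\item Reducedness forces $\operatorname{mid}(x)\cap D\neq\varnothing$ when $p(x)\in D$, since otherwise the two occurrences of $s_{i_x}$ could be commuted together (Lemma~\ref{lem:directed-factor-mesh-positions}).
\item The factor word $\underline{w}_D$ is reduced in $W(\Delta_D)$ (Proposition~\ref{prop:directed-factor-word-reduced}). By the alternation property, the heights of its inversion roots give a positive function vanishing on $\kappa^D_x$ when $\tau_DX_x\neq0$ and positive on it otherwise. This is a right additive function (Lemma~\ref{lem:directed-factor-mesh-basis}).
\item Iyama's theorem \cite[Theorem~2.1]{IyamaRealization} then makes $\overline{\AA}_D$ strict (Corollary~\ref{cor:directed-right-additive-strictness}). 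Hence the $\kappa^D_x$ form a basis and $[X_x]_{\rm sp}=\sum_{a\in D}\dim_\kk\overline{\AA}_D(X_a,X_x)\,\kappa^D_a$.
\item The coordinate recurrence says exactly that $[X_x]_{\rm sp}\mapsto\mu_D(a,x)$ is the functional dual to $\kappa^D_a$. This yields the equality, and the sign follows from it.
\end{enumerate}
Roots enter only through this right additive function, never as Hom dimensions.

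Your reduction to $D[a]$ is harmless, but you should also check that the right-hand side is unchanged. A factorization of $X_a\to X_x$ through $\CC_D$ can only pass through $X_i$ with $a<i<x$, and for such $i$ one has $i\in D$ if and only if $i\in D[a]$.
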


\begin{proof}
The factor-category identification is proved in
Appendix~\ref{sec:directed-positivity}.
\end{proof}

\subsection{Nonnegative sets and canonical subwords}

We now identify nonnegative omitted sets with canonical reduced subwords of
\(\underline{w}_A\).

\begin{definition}\label{def:directed-canonical-position-sets}
Let \(\underline{w}=s_{j_1}\cdots s_{j_m}\) be a word in the simple generators
of a Coxeter group \(W\).  For \(u\in W\), define
\[
 \operatorname{RedPos}_{\underline{w}}(u):=
 \left\{\,
  \{d_1<\cdots<d_k\}\subseteq[m]
  \ \middle|\
  k=\ell(u),\quad
  s_{j_{d_1}}\cdots s_{j_{d_k}}=u
 \right\}.
\]
For two \(k\)-subsets \(D=\{d_1<\cdots<d_k\}\) and
\(E=\{e_1<\cdots<e_k\}\), define the \emph{lexicographic order} by
\(D<_{\rm lex}E\) if \(d_t<e_t\) at the smallest index where they differ,
and define the \emph{reverse lexicographic order} by
\(D<_{\rm rlex}E\) if \(d_t<e_t\) at the largest such index.  Whenever
\(\operatorname{RedPos}_{\underline{w}}(u)\) is nonempty, let
\(I_{\underline{w}}(u)\) be its lexicographically first member and let
\(I_{\underline{w}}^{\rm R}(u)\) be its
last member in reverse lexicographic order.  For \(D\subseteq[m]\), let
\(u_D\) be the element represented by the \(D\)-subword of
\(\underline{w}\).  The set \(D\) is \emph{\(\underline{w}\)-sorted} if
\(D=I_{\underline{w}}(u_D)\).
\end{definition}

For an element \(v\) of a Coxeter group \(W\), its left descent set is
\[
 \operatorname{Des}_L(v):=\{\,s_i\mid \ell(s_iv)<\ell(v)\,\}.
\]
For \(D\subseteq[m]\) and \(a\in[m]\), let \(v_a(D)\in W\) be the
element represented by the selected subword strictly to the right of
\(a\): if \(\{\,d\in D\mid d>a\,\}=\{d_1<\cdots<d_t\}\), then
\(v_a(D):=s_{j_{d_1}}\cdots s_{j_{d_t}}\).

\begin{proposition}[{\cite[Corollary~3.4]{Armstrong}}]
\label{prop:armstrong-sorting-criterion}
A subset \(D\subseteq[m]\) is \(\underline{w}\)-sorted if and only if the
following conditions hold.
\begin{enumerate}
\item The \(D\)-subword of \(\underline{w}\) is reduced.
\item For every \(a\in[m]\setminus D\), one has
\[
 s_{j_a}\notin\operatorname{Des}_L(v_a(D)).
\]
\end{enumerate}
\end{proposition}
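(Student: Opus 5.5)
We prove both directions by induction on the length \(m\) of \(\underline{w}\), peeling off the first letter. Write \(s=s_{j_1}\) and \(\underline{w}'=s_{j_2}\cdots s_{j_m}\), and shift positions of \(\underline{w}'\) by one. The basic recursion for lexicographically first reduced position sets is the following. Let \(u\in W\) with \(\operatorname{RedPos}_{\underline{w}}(u)\ne\varnothing\).
\begin{enumerate}
\item If \(s\in\operatorname{Des}_L(u)\), then \(I_{\underline{w}}(u)=\{1\}\cup(I_{\underline{w}'}(su)+1)\).
\item If \(s\notin\operatorname{Des}_L(u)\), then \(I_{\underline{w}}(u)=I_{\underline{w}'}(u)+1\).
\end{enumerate}

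For (1), we use the subword property of Bruhat order. A reduced expression for \(u\) of length \(\ell(u)\) exists inside \(\underline{w}\). Since \(\ell(su)<\ell(u)\), the lifting property shows that some reduced subword for \(su\) of length \(\ell(u)-1\) sits inside \(\underline{w}'\). Prepending position \(1\) gives a reduced subword for \(u\), and it is lexicographically smaller than any member of \(\operatorname{RedPos}_{\underline{w}}(u)\) avoiding position \(1\). Hence the lex-first member contains \(1\), and its remainder must be the lex-first member for \(su\) in \(\underline{w}'\).

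For (2), a reduced subword using position \(1\) would begin with \(s\), which would force \(s\in\operatorname{Des}_L(u)\). So position \(1\) is never used, and \(\operatorname{RedPos}_{\underline{w}}(u)=\operatorname{RedPos}_{\underline{w}'}(u)+1\).

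With this recursion in hand, let \(D\subseteq[m]\) and set \(D'=\{d-1\mid d\in D,\ d>1\}\subseteq[m-1]\). For every \(a\ge2\), the element \(v_a(D)\) computed in \(\underline{w}\) coincides with \(v_{a-1}(D')\) computed in \(\underline{w}'\). Also \(v_1(D)=u_{D'}\). We now split into two cases according to whether \(1\in D\).

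Case \(1\in D\). Condition (2) at positions \(a\ge2\) is exactly condition (2) for \(D'\) in \(\underline{w}'\). The \(D\)-subword is reduced if and only if the \(D'\)-subword is reduced and \(\ell(s\,u_{D'})>\ell(u_{D'})\), that is, \(s\notin\operatorname{Des}_L(u_{D'})\). On the other side, \(D\) is sorted if and only if \(1\in I_{\underline{w}}(u_D)\) and the rest equals \(I_{\underline{w}'}(u_{D'})\). By (1), this holds if and only if the \(D\)-subword is reduced and \(D'\) is \(\underline{w}'\)-sorted. Induction then closes this case.

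Case \(1\notin D\). Condition (2) at \(a=1\) says \(s\notin\operatorname{Des}_L(u_{D'})=\operatorname{Des}_L(u_D)\). The remaining conditions are those for \(D'\). By (2) and (1), \(D\) is sorted if and only if \(s\notin\operatorname{Des}_L(u_D)\) and \(D'\) is \(\underline{w}'\)-sorted. Indeed, if \(s\in\operatorname{Des}_L(u_D)\), then (1) shows that the lex-first member contains position \(1\), so it cannot equal \(D\). The base case \(m=0\) is trivial.

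The main obstacle is step (1), namely the existence of a reduced subword for \(su\) inside \(\underline{w}'\) of length \(\ell(u)-1\). I would prove it by taking a reduced subword \(E\in\operatorname{RedPos}_{\underline{w}}(u)\). If \(1\notin E\), then \(s\in\operatorname{Des}_L(u)\) and the exchange condition let us delete one letter from the \(E\)-subword to obtain a reduced expression of \(su\) that still lies in \(\underline{w}'\).
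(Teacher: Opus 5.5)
Your proof is correct, but it does not follow the paper. The paper gives no argument of its own here and simply imports the criterion from Armstrong's theory of sorting words \cite{Armstrong}. You instead prove it from scratch by induction on $m$.

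Both ingredients of your induction check out:
\begin{itemize}
\item The first-letter recursion is right: $I_{\underline w}(u)=\{1\}\cup(I_{\underline w'}(su)+1)$ when $s\in\operatorname{Des}_L(u)$, and $I_{\underline w}(u)=I_{\underline w'}(u)+1$ otherwise.
\item The case analysis $1\in D$ versus $1\notin D$ is right, using $v_1(D)=u_{D'}$ and $v_a(D)=v_{a-1}(D')$ for $a\geq2$.
\end{itemize}

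The only Coxeter-theoretic input is the exchange condition, and nothing requires $\underline w$ to be reduced. That generality matters in this paper. Reducedness of $\underline w_A$ is itself deduced in Proposition~\ref{prop:directed-quotient-classification}(1) from Theorem~\ref{thm:directed-sorting}, which rests on this very criterion through Corollary~\ref{cor:directed-sorting-suffixes}. So the criterion must hold for words not yet known to be reduced, and your argument makes this visible without consulting the source.

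The citation, for its part, places the criterion in Armstrong's framework. The paper reuses that framework in Remark~\ref{rem:directed-recovers-ort}, for the infinite word $c^\infty$ and for his comparison theorem. Your finite-word induction does not address those uses, but the statement at hand concerns only finite words.

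Two small presentation points:
\begin{itemize}
\item In step (1), the appeal to the lifting property is misplaced. That property compares elements in Bruhat order; it does not by itself place a reduced word for $su$ inside $\underline w'$, particularly when $\underline w$ is not reduced. What actually proves step (1) is your closing exchange-condition argument, together with the trivial case $1\in E$.
\item The recursion presupposes $\operatorname{RedPos}_{\underline w}(u)\neq\varnothing$. This is harmless: whenever you apply it to $u=u_D$, the hypothesis of the implication being proved makes the $D$-subword reduced, so $D$ itself lies in $\operatorname{RedPos}_{\underline w}(u_D)$. It is worth saying so explicitly.
\end{itemize}
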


\begin{corollary}\label{cor:directed-sorting-suffixes}
For \(D\subseteq[N]\), one has
\begin{equation}\label{eq:directed-armstrong}
 D\text{ is }\underline{w}_A\text{-sorted}
 \quad\Longleftrightarrow\quad
 \text{the }D[a]\text{-subword of }\underline{w}_A
 \text{ is reduced for every }a\in[N].
\end{equation}
\end{corollary}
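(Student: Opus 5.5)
We derive Corollary~\ref{cor:directed-sorting-suffixes} from Armstrong's criterion, Proposition~\ref{prop:armstrong-sorting-criterion}, applied to \(\underline{w}=\underline{w}_A\). The key observation is that, for any \(a\in[N]\), the \(D[a]\)-subword is \(s_{i_a}\) followed by the selected subword strictly to the right of \(a\). The latter represents \(v_a(D)\), so the \(D[a]\)-subword is the word \(s_{i_a}\) concatenated with the \(\{d\in D\mid d>a\}\)-subword. A word of the form \(s\cdot\underline{v}\) is reduced if and only if \(\underline{v}\) is reduced and \(\ell(s v)>\ell(v)\), that is, \(s\notin\operatorname{Des}_L(v)\). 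We apply this throughout.

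For \((\Rightarrow)\), assume \(D\) is sorted. Condition (1) of Proposition~\ref{prop:armstrong-sorting-criterion} says the \(D\)-subword is reduced, so every suffix of it is reduced. If \(a\in D\), then \(D[a]=\{d\in D\mid d\geq a\}\) indexes a suffix of the \(D\)-subword, which is therefore reduced. If \(a\notin D\), the suffix to the right of \(a\) is reduced and represents \(v_a(D)\). Condition (2) gives \(s_{i_a}\notin\operatorname{Des}_L(v_a(D))\), so prepending \(s_{i_a}\) keeps the word reduced.

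For \((\Leftarrow)\), assume every \(D[a]\)-subword is reduced. If \(D=\varnothing\), both conditions hold trivially, apart from (2), which we treat below. Otherwise let \(a=\min D\); then \(D[a]=D\), so condition (1) holds. For \(a\notin D\), the reducedness of \(s_{i_a}\) followed by the reduced word for \(v_a(D)\) gives \(\ell(s_{i_a}v_a(D))=\ell(v_a(D))+1\). Hence \(s_{i_a}\notin\operatorname{Des}_L(v_a(D))\), which is condition (2). Armstrong's criterion then shows that \(D\) is sorted.

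There is no substantial obstacle. The only point needing care is the standard fact that prepending a generator to a reduced word yields a reduced word exactly when that generator is not a left descent. This follows from \(\ell(sv)=\ell(v)\pm1\).
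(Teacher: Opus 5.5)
Your proposal is correct and follows the paper's proof essentially step for step. In both, each direction reduces to Armstrong's criterion through the observation that the \(D[a]\)-subword is \(s_{i_a}\) followed by the subword representing \(v_a(D)\), and \(a=\min D\) yields condition (1) in the converse. The only difference is that you state explicitly the fact about prepending a generator to a reduced word, which the paper uses inline.
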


\begin{proof}
Suppose first that \(D\) is \(\underline{w}_A\)-sorted.  By
Proposition~\ref{prop:armstrong-sorting-criterion}(1), the \(D\)-subword
of \(\underline{w}_A\) is reduced.  If \(a\in D\), the \(D[a]\)-subword of
\(\underline{w}_A\) is a suffix of this reduced word, and hence is reduced.
If \(a\notin D\), the subword of \(\underline{w}_A\) representing
\(v_a(D)\) is also a suffix of the \(D\)-subword of \(\underline{w}_A\)
and is therefore reduced.
Proposition~\ref{prop:armstrong-sorting-criterion}(2) gives
\(s_{i_a}\notin\operatorname{Des}_L(v_a(D))\).  Hence
\(\ell(s_{i_a}v_a(D))=\ell(v_a(D))+1\).
The \(D[a]\)-subword of \(\underline{w}_A\) is \(s_{i_a}\) followed by the
reduced subword representing \(v_a(D)\), and hence is reduced.

Conversely, suppose that the \(D[a]\)-subword of \(\underline{w}_A\) is reduced
for every \(a\in[N]\).  If \(D=\varnothing\), then the \(D\)-subword of
\(\underline{w}_A\) is the empty word and is reduced.  If
\(D\ne\varnothing\), take \(a=\min D\).  Then \(D[a]=D\), so the
\(D\)-subword of \(\underline{w}_A\) is again reduced.  Now let \(a\notin D\).
The \(D[a]\)-subword of \(\underline{w}_A\) is \(s_{i_a}\) followed by the
subword of \(\underline{w}_A\) representing \(v_a(D)\).  Since this
\(D[a]\)-subword is reduced, its suffix is reduced and
\(s_{i_a}\notin\operatorname{Des}_L(v_a(D))\).
Both conditions in Proposition~\ref{prop:armstrong-sorting-criterion}
hold, so \(D\) is \(\underline{w}_A\)-sorted.
\end{proof}

To prove that nonnegativity of \(D\) implies that \(D\) is
\(\underline{w}_A\)-sorted, we show that the first negative root forces a negative
mixed coordinate.  Lemma~\ref{lem:directed-first-cancellation} compares
the first sign change with the mixed-coordinate recurrence.

\begin{lemma}\label{lem:directed-first-cancellation}
Let \(D\subseteq[N]\) and \(a\in[N]\).  Define roots \(\beta_y\), for
\(a\leq y\leq N\), by \(\beta_a=\alpha_{i_a}\) and, for \(y>a\),
\[
 \beta_y=\begin{cases}
 s_{i_y}\beta_{y-1},&y\in D,\\
 \beta_{y-1},&y\notin D.
 \end{cases}
\]
Suppose that the root first becomes negative at the position \(x\),
which is then necessarily in \(D\).  For \(a<e<x\), put
\(c_e=(\beta_e)_{i_e}\).
Then the following statements hold.
\begin{enumerate}
\item One has \(c_e\geq0\) for every \(a<e<x\).
\item One has
\begin{equation}\label{eq:directed-first-cancellation}
 \mu_D(a,x)
 =-1-\sum_{\substack{a<e<x\\e\notin D}}c_e\mu_D(e,x).
\end{equation}
\end{enumerate}
\end{lemma}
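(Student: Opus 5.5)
The plan is to compare the simple-root coordinates of the roots \(\beta_y\) with the mixed-coordinate recurrence of Lemma~\ref{lem:directed-coordinate-recurrence}. Positions outside \(D\) do not change the root, so the root can first become negative only at a selected position; hence \(x\in D\). Moreover \(x>a\), since \(\beta_a=\alpha_{i_a}\) is positive.

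Part (1) is immediate: for \(a<e<x\) the root \(\beta_e\) is still positive, so every simple-root coordinate of it, in particular \(c_e=(\beta_e)_{i_e}\), is nonnegative.

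For part (2), for \(a\leq y\leq N\) put \(\gamma(y)=(\beta_y)_{i_y}\), and put \(\gamma(y)=0\) for \(y<a\). In the simply-laced group \(W(\Delta_A)\), a reflection changes only one coordinate:
\[
 (s_i\beta)_i=-\beta_i+\sum_{h\sim i}\beta_h .
\]
The key claim is that for \(a<y\) the coordinate \((\beta_{y-1})_h\) equals \(\gamma\) at the last position of label \(h\) in \([a,y-1]\), and is \(0\) if there is none.

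\begin{itemize}
\item For \(h=i_y\), that last position is \(p(y)\), or there is none and the coordinate is \(0\).
\item For \(h\sim i_y\), the claim together with the interleaving of adjacent \(\tau\)-orbits in Lemma~\ref{lem:directed-two-orbits} should show that this last position is exactly the element of \(\operatorname{mid}(y)\) with label \(h\), when that element exists.
\end{itemize}

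Granting this, \(\gamma\) satisfies for \(y\in D\), \(y>a\), the same recurrence as \(\mu_D(a,y)\) in Lemma~\ref{lem:directed-coordinate-recurrence}, with starting value \(\gamma(a)=1\). At an unselected position \(e>a\) the two differ: \(\gamma(e)=c_e\), whereas \(\mu_D(a,e)=0\) by \eqref{eq:directed-projective-column}.

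The recurrence is linear and triangular in the second index. For each \(e\), the row \(\mu_D(e,-)\) is its solution with a single unit source at \(e\): it vanishes below \(e\), equals \(1\) at \(e\) (Lemma~\ref{lem:directed-truncated-coordinates} and its proof), and vanishes at every other unselected position. Superposing these solutions, I would prove by induction on \(y\) that
\[
 \gamma(y)=\mu_D(a,y)+\sum_{\substack{a<e<y\\ e\notin D}}c_e\,\mu_D(e,y)
 \qquad(a\leq y\leq x).
\]
At \(y\in D\) both sides obey the same recurrence, and at \(y\notin D\) both sides equal \(c_y\).

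Finally I evaluate at \(y=x\). Since \(\beta_{x-1}\) is positive and \(s_{i_x}\beta_{x-1}\) is negative, the only positive root sent to a negative root by \(s_{i_x}\) is \(\alpha_{i_x}\). So \(\beta_{x-1}=\alpha_{i_x}\) and \(\beta_x=-\alpha_{i_x}\), giving \(\gamma(x)=-1\). This yields \eqref{eq:directed-first-cancellation}.

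The main obstacle is the window claim for adjacent labels. One must rule out an adjacent label \(h\) whose last occurrence lies at or before \(p(x)\), or before \(a\), yet still contributes a nonzero coordinate that \(\operatorname{mid}(x)\) omits. I would first try to use the chain pattern \(U_i\to V_{i+c}\to U_{i+1}\) of Lemma~\ref{lem:directed-two-orbits}, together with Lemma~\ref{lem:directed-word-dictionary}: between consecutive occurrences of \(i_x\), the adjacent orbit appears exactly when there is an arrow into \(X_x\). If the orbit is absent from the window, I would argue that its earlier contribution was already absorbed at \(p(x)\), so that the net contribution to the difference vanishes. If this bookkeeping does not close, I would instead state the identity for the full coordinate vectors of \(\beta_y\), rather than for the single coordinates \(\gamma(y)\), and use the appendix lemmas on orbit geometry.
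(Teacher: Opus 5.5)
There is a genuine gap, at the step you flag as the main obstacle. Your overall structure matches the paper's: compare \(\gamma(y)=(\beta_y)_{i_y}\) with the recurrence of Lemma~\ref{lem:directed-coordinate-recurrence}, superpose the rows \(\mu_D(e,-)\) for unselected \(e\), and evaluate at \(x\) using \(\beta_{x-1}=\alpha_{i_x}\). But the window step is left open, and the repair you sketch cannot work.

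For \(\gamma\) to satisfy the recurrence at a selected \(y\in(a,x]\), you need \((\beta_{y-1})_h=0\) for every \(h\sim i_y\) with no occurrence in the window \(\mathcal W(y)\). The case you treat, ``when that element exists'', is trivial: the largest occurrence of \(h\) in the window is simply the last occurrence of \(h\) before \(y\). The vanishing in the other case does not follow from orbit geometry, and no ``absorption'' happens at \(p(\cdot)\). In Example~\ref{ex:directed-affine-d4}, take \(D=[9]\) and \(a=1\). Then \(\beta_4=\alpha_4\) and \(\beta_5=\alpha_0+\alpha_4\), so \(\gamma(5)=1\). On the other hand, \(\operatorname{mid}(5)=\{4\}\) and \(\mu_D(1,5)=\dim_\kk\Hom_A(S_5,S_3)=0\). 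The discrepancy is exactly \((\beta_4)_4=1\), coming from the label \(4\sim0\), whose last occurrence (position \(2\)) precedes \(p(5)=3\). Your fallback of working with full coordinate vectors does not avoid this. The paper's proof does use full vectors, but it still needs exactly this vanishing at every selected step.

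The missing idea is that the vanishing comes from the hypothesis defining \(x\), which you use only at the very end. If \(p(y)\) does not exist or \(p(y)<a\), the window contains \([a,y)\), so \(h\) does not occur in \([a,y)\) and the coordinate is trivially zero. Otherwise \(h\) occurs in \([a,p(y))\) but not in \((p(y),y)\). Then in \(\underline w_A|_{h,i_y}\) the positions \(p(y)\) and \(y\) lie in one block of \(i_y\)'s of size at least two, preceded by an \(h\). By Lemma~\ref{lem:directed-two-label-alternation} this is the last block, so \(h\) never occurs after \(y\). Hence the \(h\)-coordinate is constant from \(\beta_{y-1}\) to \(\beta_{x-1}=\alpha_{i_x}\). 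Since \(h\neq i_x\) (for \(y<x\) because \(x\) lies after \(y\), for \(y=x\) because \(h\sim i_x\)), it is zero.

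With this vanishing in place, your superposition argument goes through after one small correction. The identity must sum over \(a<e\le y\), not \(a<e<y\): at an unselected \(y\), the right-hand side with \(e<y\) is \(0\) rather than \(c_y\). This does not affect the evaluation at \(x\in D\).
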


\begin{proof}
See Appendix~\ref{sec:directed-cancellation}.
\end{proof}

\begin{lemma}[{\cite[Proposition~4.2.5]{BjornerBrenti}}]
\label{lem:coxeter-root-sign}
If \(v\) is represented by a reduced word, then \(s_iv\) is reduced if and
only if \(v^{-1}(\alpha_i)\) is a positive root.
\end{lemma}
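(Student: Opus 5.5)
The plan is to reduce the statement to the standard sign criterion for the geometric representation of a Coxeter group, and then prove that criterion by the classical induction on length. Here ``\(s_iv\) is reduced'' means that the word \(s_i\) followed by a reduced word for \(v\) is reduced, that is, \(\ell(s_iv)=\ell(v)+1\). Since \(\ell(u)=\ell(u^{-1})\), this is equivalent to \(\ell(v^{-1}s_i)>\ell(v^{-1})\). Putting \(w=v^{-1}\) and \(s=s_i\), the claim becomes
\[
 \ell(ws)>\ell(w)\quad\Longleftrightarrow\quad w(\alpha_s)>0
\]
in the geometric (Tits) representation of \(W(\Delta_A)\), in which every root is either positive or negative.

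The main step is the implication \(\ell(ws)>\ell(w)\Rightarrow w(\alpha_s)>0\), which I would prove by induction on \(\ell(w)\) following Humphreys. If \(w\neq1\), choose \(t\) with \(\ell(wt)<\ell(w)\); then \(t\neq s\). Let \(I=\{s,t\}\) and \(W_I\) the corresponding dihedral parabolic subgroup. Among all factorizations \(w=w'u\) with \(u\in W_I\) and \(\ell(w)=\ell(w')+\ell_I(u)\), choose one with \(\ell(w')\) minimal; then \(\ell(w's)>\ell(w')\) and \(\ell(w't)>\ell(w')\), and \(\ell(w')<\ell(w)\) because \(wt\) is itself such a factorization candidate. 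By induction, \(w'(\alpha_s)\) and \(w'(\alpha_t)\) are positive. It then suffices to show that \(u(\alpha_s)\) is a nonnegative combination of \(\alpha_s\) and \(\alpha_t\). Since \(\ell(us)\geq\ell(u)\) in \(W_I\) (otherwise \(\ell(ws)<\ell(w)\)), this is an explicit dihedral computation: any reduced word for \(u\) in \(s,t\) ends in \(t\) and has length less than \(m_{st}\), and applying it to \(\alpha_s\) yields a nonnegative combination (in the simply-laced case \(m_{st}\in\{2,3\}\), so this is a direct check). Hence \(w(\alpha_s)=w'(u(\alpha_s))>0\).

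For the converse, suppose \(\ell(ws)<\ell(w)\). Apply the forward implication to \(ws\), using \(\ell(ws\cdot s)>\ell(ws)\), to get \(ws(\alpha_s)>0\). Then \(w(\alpha_s)=-ws(\alpha_s)<0\). Since \(\ell(ws)\neq\ell(w)\) always holds (parity), the equivalence follows.

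The main obstacle is the dihedral step: choosing the minimal factorization correctly and verifying nonnegativity in the rank-two subgroup. In the simply-laced setting used here this is a finite check, so I expect it to be routine but fiddly. Since the paper cites \cite[Proposition~4.2.5]{BjornerBrenti}, in the paper itself I would simply invoke that reference after the reduction described in the first paragraph.
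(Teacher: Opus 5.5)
Your proposal is correct and takes essentially the paper's approach: the paper only cites \cite[Proposition~4.2.5]{BjornerBrenti}, and your identity \(\ell(s_iv)=\ell(v^{-1}s_i)\) with \(w=v^{-1}\) is exactly the reduction that puts the statement in that standard form \(\ell(ws)>\ell(w)\Leftrightarrow w(\alpha_s)>0\). The induction you sketch, through a length-minimal factorization \(w=w'u\) with \(u\in W_{\{s,t\}}\) plus a parity argument for the converse, is the classical proof of the cited criterion; in the simply-laced case the rank-two check is \(u\in\{1,t,st\}\) with \(t(\alpha_s)=\alpha_s+\alpha_t\) and \(st(\alpha_s)=\alpha_t\) when \(m_{st}=3\), and \(u\in\{1,t\}\) with \(t(\alpha_s)=\alpha_s\) when \(m_{st}=2\).
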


\begin{theorem}\label{thm:directed-sorting}
For \(D\subseteq[N]\), the following are equivalent.
\begin{enumerate}
\item \(D\) is \(\underline{w}_A\)-sorted.
\item \(D\) is nonnegative.
\item \(\CC_D\) is quotient-closed.
\end{enumerate}
\end{theorem}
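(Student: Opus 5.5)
The equivalence (2)$\Leftrightarrow$(3) is Proposition~\ref{prop:directed-closure-effectivity}, so I will prove (1)$\Rightarrow$(2) and (2)$\Rightarrow$(1).

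\textbf{(1)$\Rightarrow$(2).} Assume \(D\) is \(\underline{w}_A\)-sorted, and fix \(a\in[N]\). By Lemma~\ref{lem:directed-truncated-coordinates}, \(\mu_D(a,x)=\mu_{D[a]}(a,x)\) for every \(x\). By Corollary~\ref{cor:directed-sorting-suffixes}, the \(D[a]\)-subword is reduced.

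\begin{itemize}
\item If \(x\in D[a]\), then \(a\in D[a]\) as well, so Lemma~\ref{lem:directed-principal-positive} applied to \(D[a]\) gives \(\mu_{D[a]}(a,x)\geq0\).
\item If \(x\notin D[a]\), then \eqref{eq:directed-projective-column} gives \(\mu_{D[a]}(a,x)=\delta_{a,x}\geq0\).
\end{itemize}

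Since \(a\) was arbitrary, every \(\mu_D(a,x)\) is nonnegative, so \(D\) is nonnegative.

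\textbf{(2)$\Rightarrow$(1).} I argue by contraposition. Suppose \(D\) is not sorted. By Corollary~\ref{cor:directed-sorting-suffixes}, some \(D[a]\)-subword is not reduced. Among all such \(a\), choose the largest. Then for every \(e>a\) the \(D[e]\)-subword is reduced, in particular for \(e\notin D\).

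Form the roots \(\beta_y\) of Lemma~\ref{lem:directed-first-cancellation}, starting from \(\beta_a=\alpha_{i_a}\). The \(D[a]\)-subword is \(s_{i_a}\) followed by the word for \(v_a(D)\). This suffix is reduced: if \(a\in D\) it is the \(D[a']\)-subword for the next selected position \(a'>a\), and if \(a\notin D\) it is the same as the \(D\)-part of some \(D[e]\) with \(e>a\), or it is empty. Since the whole subword is not reduced, Lemma~\ref{lem:coxeter-root-sign} says \(v_a(D)^{-1}(\alpha_{i_a})\) is negative. This root is \(\beta_N\), read with the appropriate convention for the sequence of reflections. Hence the sequence \(\beta_y\) becomes negative at some first position \(x\), and \(x\in D\).

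Lemma~\ref{lem:directed-first-cancellation}(2) then gives
\[
 \mu_D(a,x)=-1-\sum_{\substack{a<e<x\\e\notin D}}c_e\,\mu_D(e,x),
\]
and part~(1) of that lemma gives \(c_e\geq0\).

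To conclude, I show every \(\mu_D(e,x)\) in the sum is nonnegative. Each such \(e>a\) has a reduced \(D[e]\)-subword, so by Lemma~\ref{lem:directed-truncated-coordinates} and Lemma~\ref{lem:directed-principal-positive}, \(\mu_D(e,x)=\mu_{D[e]}(e,x)\geq0\). This uses \(e,x\in D[e]\), which holds because \(x\in D\) and \(x>e\). Therefore \(\mu_D(a,x)\leq-1\), so \(D\) is not nonnegative.

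\textbf{Main obstacle.} The delicate point is the claim that \(v_a(D)^{-1}(\alpha_{i_a})\) coincides with the root \(\beta\) obtained at the end of the recursion. The recursion applies the reflections \(s_{i_y}\) left to right, which gives \(v^{-1}(\alpha_{i_a})\) for the appropriate convention. I must check this convention carefully against Lemma~\ref{lem:coxeter-root-sign}.
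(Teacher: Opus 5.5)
Your proof is correct and follows the paper's argument: (2)\(\Leftrightarrow\)(3) is Proposition~\ref{prop:directed-closure-effectivity}; (1)\(\Rightarrow\)(2) uses Lemmas~\ref{lem:directed-truncated-coordinates} and~\ref{lem:directed-principal-positive} applied to \(D[a]\); and (2)\(\Rightarrow\)(1) takes the largest \(a\) with non-reduced \(D[a]\)-subword, uses the identity \(\beta_N=s_{i_{d_t}}\cdots s_{i_{d_1}}(\alpha_{i_a})=v^{-1}(\alpha_{i_a})\) (so the convention you flagged matches Lemma~\ref{lem:coxeter-root-sign} directly), and applies Lemma~\ref{lem:directed-first-cancellation}. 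The one difference is cosmetic: the paper gets \(\mu_D(e,x)\geq0\) from the assumed nonnegativity of \(D\) in a proof by contradiction, whereas you get it from the maximality of \(a\) by applying Lemma~\ref{lem:directed-principal-positive} to \(D[e]\), which produces an explicit coordinate \(\mu_D(a,x)\leq-1\) without using that hypothesis.
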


\begin{proof}
\((2)\Longleftrightarrow(3)\):
The equivalence is
Proposition~\ref{prop:directed-closure-effectivity}.

\((1)\Longrightarrow(2)\):
Suppose (1) holds.  By Corollary~\ref{cor:directed-sorting-suffixes}, the
\(D[a]\)-subword of \(\underline{w}_A\) is reduced for every \(a\).
Lemmas~\ref{lem:directed-truncated-coordinates}
and~\ref{lem:directed-principal-positive} give
\[
 \mu_D(a,x)=\mu_{D[a]}(a,x)\geq0
\]
when \(x\in D[a]\).  If \(x\notin D[a]\), then
Lemma~\ref{lem:directed-truncated-coordinates} and the first case of
Lemma~\ref{lem:directed-coordinate-recurrence} give
\[
 \mu_D(a,x)=\mu_{D[a]}(a,x)=\delta_{a,x}=0,
\]
where the last equality follows from \(a\in D[a]\).  Thus \(D\) is
nonnegative.

\((2)\Longrightarrow(1)\):
Assume that \(D\) is nonnegative but not sorted.  Choose the
largest \(a\) for which the \(D[a]\)-subword of \(\underline{w}_A\) is not
reduced.  Put \(S=\{\,d\in D\mid d>a\,\}\).
The set \(S\) is nonempty, since otherwise the \(D[a]\)-subword would be
the reduced word \(s_{i_a}\).  Write \(S=\{d_1<\cdots<d_t\}\), and put
\(v=s_{i_{d_1}}\cdots s_{i_{d_t}}\).
For \(b=d_1\), one has \(S=D[b]\).  Since \(b>a\), the maximality of
\(a\) shows that the \(D[b]\)-subword \(v\) is reduced.  On the other
hand, \(D[a]=\{a\}\cup S\), so the \(D[a]\)-subword is \(s_{i_a}v\),
which is not reduced by the choice of \(a\).
Lemma~\ref{lem:coxeter-root-sign} gives that
\(v^{-1}(\alpha_{i_a})\) is negative.  The root sequence defined in
Lemma~\ref{lem:directed-first-cancellation} satisfies
\[
 \beta_N=s_{i_{d_t}}\cdots s_{i_{d_1}}(\alpha_{i_a})
 =v^{-1}(\alpha_{i_a}),
\]
so \(\beta_N\) is negative.  Since \(\beta_a=\alpha_{i_a}\) is positive,
there is a first position \(x>a\) at which \(\beta_x\) is negative.
Lemma~\ref{lem:directed-first-cancellation}(1),(2) and the assumed
nonnegativity give
\[
 \mu_D(a,x)
 =-1-\sum_{\substack{a<e<x\\e\notin D}}c_e\mu_D(e,x)
 \leq-1,
\]
which contradicts the assumed nonnegativity.  Therefore \(D\) is sorted.
\end{proof}

Theorem~\ref{thm:directed-sorting} completes the translation from quotient
closure to canonical subwords.  It remains to index these subwords uniformly
and then obtain the submodule-closed classification by duality.

\subsection{Classification by a Bruhat interval and the duality bijection}

The Bruhat order provides the required indexing set because it records which
elements are represented by reduced subwords of a fixed reduced word.

\begin{definition}[{\cite[Theorem~2.2.2]{BjornerBrenti}}]
\label{def:bruhat-order}
For elements \(u,w\) of a Coxeter group, write \(u\leq w\) if every
reduced expression for \(w\) contains a reduced expression for \(u\) as a
subword.  We call this partial order the \emph{Bruhat order}.
\end{definition}

We first classify the quotient-closed subcategories.
Applying Theorem~\ref{thm:directed-sorting} to \(D=[N]\) shows that the word
\(\underline{w}_A\) is reduced.  The subword property for Bruhat order then indexes
all of its sorted supports.

\begin{proposition}\label{prop:directed-quotient-classification}
The following statements hold.
\begin{enumerate}
\item The word \(\underline{w}_A\) is reduced.
\item One has
\[
 \LQ(A)=\{\,\CC_{I_{\underline{w}_A}(u)}\mid u\leq w_A\,\}.
\]
\end{enumerate}
\end{proposition}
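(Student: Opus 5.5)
For part~(1), apply Theorem~\ref{thm:directed-sorting} to \(D=[N]\). The subcategory \(\CC_{[N]}\) is the zero subcategory, which is quotient-closed. Hence \([N]\) is \(\underline{w}_A\)-sorted, and Proposition~\ref{prop:armstrong-sorting-criterion}(1) (equivalently Corollary~\ref{cor:directed-sorting-suffixes} with \(a=1\), since \([N][1]=[N]\)) shows that the full word \(\underline{w}_A\) is reduced. In particular \(\ell(w_A)=N\).

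For part~(2), we prove the two inclusions separately.

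First let \(\CC\in\LQ(A)\), and let \(D\subseteq[N]\) be its set of omitted positions, so that \(\CC=\CC_D\). Theorem~\ref{thm:directed-sorting} gives \(D=I_{\underline{w}_A}(u_D)\). By Proposition~\ref{prop:armstrong-sorting-criterion}(1), the \(D\)-subword is reduced. It is a subword of the reduced word \(\underline{w}_A\), so the subword property (Definition~\ref{def:bruhat-order}, i.e.\ \cite[Theorem~2.2.2]{BjornerBrenti}) gives \(u_D\leq w_A\). Thus \(\CC=\CC_{I_{\underline{w}_A}(u)}\) with \(u=u_D\leq w_A\).

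Conversely, let \(u\leq w_A\). Since \(\underline{w}_A\) is a reduced expression for \(w_A\) by part~(1), the subword property shows that it contains a reduced subword representing \(u\). Hence \(\operatorname{RedPos}_{\underline{w}_A}(u)\neq\varnothing\) and \(I:=I_{\underline{w}_A}(u)\) is defined. Then \(u_I=u\), and \(I\) is the lexicographically first member of \(\operatorname{RedPos}_{\underline{w}_A}(u_I)\), so \(I\) is \(\underline{w}_A\)-sorted. Theorem~\ref{thm:directed-sorting} then shows that \(\CC_I\) is quotient-closed.

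The only point needing care is this last step, namely checking that \(I_{\underline{w}_A}(u)\) is sorted in the sense of Definition~\ref{def:directed-canonical-position-sets}. This is tautological once one notes that \(u_I=u\) and \(I\in\operatorname{RedPos}_{\underline{w}_A}(u)\). All substantive work has already been done in Theorem~\ref{thm:directed-sorting}. Injectivity of \(u\mapsto\CC_{I_{\underline{w}_A}(u)}\) is not required here, although it also follows because \(u=u_I\) is recovered from \(I\).
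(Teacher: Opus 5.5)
Your proof is correct and follows the paper's argument. Part~(1) applies Theorem~\ref{thm:directed-sorting} to \(D=[N]\), where \(\CC_{[N]}=0\) is quotient-closed, and part~(2) uses the subword property to identify the \(\underline{w}_A\)-sorted sets with \(\{I_{\underline{w}_A}(u)\mid u\leq w_A\}\) before invoking Theorem~\ref{thm:directed-sorting} again; the only cosmetic difference is that you get reducedness of a sorted set from Proposition~\ref{prop:armstrong-sorting-criterion}(1), whereas the paper reads it off directly from \(D\in\operatorname{RedPos}_{\underline{w}_A}(u_D)\).
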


\begin{proof}
\((1)\)
For \(D=[N]\), the subcategory \(\CC_D=0\) is quotient-closed.
Theorem~\ref{thm:directed-sorting} therefore makes \([N]\) an
\(\underline{w}_A\)-sorted set.  A sorted position set is reduced, so the full
word \(\underline{w}_A\) is reduced.

\((2)\)
By (1), the word \(\underline{w}_A\) is a reduced expression for \(w_A\).
Definition~\ref{def:bruhat-order} therefore gives
\[
 u\leq w_A
 \quad\Longleftrightarrow\quad
 \text{a reduced expression for \(u\) occurs in }\underline{w}_A.
\]
Thus \(\operatorname{RedPos}_{\underline{w}_A}(u)\) is nonempty exactly when
\(u\leq w_A\).  For every such \(u\),
the position set \(I_{\underline{w}_A}(u)\) represents \(u\), so
Definition~\ref{def:directed-canonical-position-sets} makes it an
\(\underline{w}_A\)-sorted position set.  Conversely, if \(D\) is
\(\underline{w}_A\)-sorted, then
\(D=I_{\underline{w}_A}(u_D)\); in particular, \(D\)
indexes a reduced subword of \(\underline{w}_A\), so
Definition~\ref{def:bruhat-order} gives \(u_D\leq w_A\).  Therefore
\[
 \{\,D\subseteq[N]\mid D\text{ is }\underline{w}_A\text{-sorted}\,\}
 =
 \{\,I_{\underline{w}_A}(u)\mid u\leq w_A\,\}.
\]
By Theorem~\ref{thm:directed-sorting}, the quotient-closed subcategories are
exactly the \(\CC_D\) indexed by the position sets on the left.  Substituting
the right-hand description gives
\[
 \LQ(A)=
 \{\,\CC_{I_{\underline{w}_A}(u)}\mid u\leq w_A\,\}.
\]
\end{proof}

Thus the quotient-closed subcategories have been classified.  To obtain the
submodule-closed subcategories, we apply
Proposition~\ref{prop:directed-quotient-classification} to \(A^{\op}\).
Duality reverses the Hom-compatible order and exchanges the lexicographically
first supports with the last supports in reverse lexicographic order.

\begin{lemma}\label{lem:directed-opposite-word}
Put \(\iota(d)=N+1-d\).  Under the duality
\(\DDual\), the following statements hold.
\begin{enumerate}
\item The reversed order
\[
 \DDual X_N<\cdots<\DDual X_1
\]
is again Hom-compatible for \(A^{\op}\).
The graph \(\Delta_{A^{\op}}\) is naturally identified with \(\Delta_A\).
Under this identification, the Auslander--Reiten word for the reversed order
is \(\underline{w}_A^{\rm rev}\), which represents \(w_A^{-1}\).
\item For every \(u\in W(\Delta_A)\), one has
\begin{equation}\label{eq:directed-right-support}
 I_{\underline{w}_A}^{\rm R}(u)
 =\iota\bigl(I_{\underline{w}_A^{\rm rev}}(u^{-1})\bigr).
\end{equation}
\end{enumerate}
\end{lemma}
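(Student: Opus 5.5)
For part (1), the plan is to transport everything along the duality \(\DDual\colon\mod A\to\mod A^{\op}\).  Since \(\DDual\) is a contravariant equivalence, \(\Hom_{A^{\op}}(\DDual X_i,\DDual X_j)\cong\Hom_A(X_j,X_i)\).  So a nonzero map \(\DDual X_i\to\DDual X_j\) with \(i\ne j\) forces \(j<i\), and the reversed list \(\DDual X_N<\cdots<\DDual X_1\) is Hom-compatible.

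For the graph, I will use the standard facts that \(\DDual\) reverses arrows of the Auslander--Reiten quiver and satisfies \(\tau_{A^{\op}}\DDual\cong\DDual\tau_A^{-1}\).  Hence \(\DDual\) maps \(\tau_A\)-orbits bijectively onto \(\tau_{A^{\op}}\)-orbits.  Two orbits are joined by an arrow for \(A\) exactly when their images are joined by the reversed arrow for \(A^{\op}\).  Since \(\Delta\) is an unoriented simple graph, this gives the identification \(\Delta_{A^{\op}}=\Delta_A\), preserving labels.

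The position \(d'\) of the reversed order is occupied by \(\DDual X_{N+1-d'}\), whose label is \(i_{N+1-d'}\).  So the Auslander--Reiten word for \(A^{\op}\) is \(s_{i_N}\cdots s_{i_1}=\underline{w}_A^{\rm rev}\).  Since every \(s_i\) is an involution, this word represents \((s_{i_1}\cdots s_{i_N})^{-1}=w_A^{-1}\).

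For part (2), I will prove the purely combinatorial statement for any word \(\underline{w}=s_{j_1}\cdots s_{j_m}\) in a Coxeter group, with \(\iota(d)=m+1-d\).  The first step is to check that \(\iota\) restricts to a bijection
\[
 \operatorname{RedPos}_{\underline{w}}(u)\longrightarrow
 \operatorname{RedPos}_{\underline{w}^{\rm rev}}(u^{-1}).
\]
If \(D=\{d_1<\cdots<d_k\}\) represents \(u\) reducedly in \(\underline{w}\), then \(\iota(D)=\{\iota(d_k)<\cdots<\iota(d_1)\}\).  In \(\underline{w}^{\rm rev}\) these positions carry the letters \(s_{j_{d_k}},\ldots,s_{j_{d_1}}\), whose product is \(u^{-1}\).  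The length is still \(k=\ell(u)=\ell(u^{-1})\).  The map is an involution, so it is a bijection.

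The second step is to show that \(\iota\) turns \(<_{\rm rlex}\) into the reverse of \(<_{\rm lex}\).  Write \(E=\iota(D)\) and \(F=\iota(D')\) with entries \(e_t=\iota(d_{k+1-t})\) and \(f_t=\iota(d'_{k+1-t})\).  The largest index where \(D\) and \(D'\) differ corresponds to the smallest index where \(E\) and \(F\) differ, and \(\iota\) reverses the inequality there.  Hence \(D<_{\rm rlex}D'\) if and only if \(F<_{\rm lex}E\).

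Consequently, the rlex-last member of \(\operatorname{RedPos}_{\underline{w}}(u)\) maps to the lex-first member of \(\operatorname{RedPos}_{\underline{w}^{\rm rev}}(u^{-1})\).  The two sets are nonempty simultaneously, so when they are nonempty this is exactly \eqref{eq:directed-right-support}.

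The only step needing real care is the compatibility \(\tau_{A^{\op}}\DDual=\DDual\tau_A^{-1}\) together with arrow reversal, which underlies the identification of \(\Delta_{A^{\op}}\) with \(\Delta_A\).  I will cite this standard fact from Assem--Simson--Skowro\'nski rather than reprove it.  The index bookkeeping in part (2) is routine.
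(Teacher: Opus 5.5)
Your proposal is correct and follows the same route as the paper. For part (1) you transport the order, the $\tau$-orbits and the Auslander--Reiten arrows along $\DDual$; for part (2) you use that $\iota$ reverses increasing position tuples, so the last member in reverse lexicographic order becomes the lexicographically first one. You simply write out the index bookkeeping (the bijection $\operatorname{RedPos}_{\underline{w}}(u)\to\operatorname{RedPos}_{\underline{w}^{\rm rev}}(u^{-1})$ and the order reversal) that the paper leaves implicit.
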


\begin{proof}
\((1)\)
Duality reverses nonzero maps and almost split sequences, and therefore
reverses each translation orbit and the chosen order.  The reversed word
represents \(w_A^{-1}\) because every simple reflection is an involution.

\((2)\)
Reversal of increasing position tuples turns
lexicographic minimum into the maximum in reverse lexicographic order, proving
\eqref{eq:directed-right-support}.
\end{proof}

The inversion in Lemma~\ref{lem:directed-opposite-word} preserves Bruhat
order:

\begin{lemma}[{\cite[Corollary~2.2.5]{BjornerBrenti}}]
\label{lem:bruhat-inversion}
For elements \(u,w\) of a Coxeter group,
\[
 u\leq w\quad\Longleftrightarrow\quad u^{-1}\leq w^{-1}.
\]
\end{lemma}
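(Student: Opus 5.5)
The plan is to prove this directly from the subword description of Bruhat order in Definition~\ref{def:bruhat-order}, using word reversal. Since inversion is an involution, it suffices to prove the implication \(u\leq w\Rightarrow u^{-1}\leq w^{-1}\). The converse then follows by applying that implication to \(u^{-1}\) and \(w^{-1}\).

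The first step records a basic fact about reversal. If \(s_{j_1}\cdots s_{j_m}\) is any expression for an element \(v\), then the reversed word \(s_{j_m}\cdots s_{j_1}\) represents \(v^{-1}\), because each \(s_j\) is an involution. Applying this to both \(v\) and \(v^{-1}\) gives \(\ell(v^{-1})=\ell(v)\). Consequently, reversal is a bijection from the reduced expressions of \(v\) to the reduced expressions of \(v^{-1}\): a word of length \(\ell(v)\) goes to a word of length \(\ell(v^{-1})\), and reversal is its own inverse. This is the same observation already used in Lemma~\ref{lem:directed-opposite-word}(1).

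The second step is the main argument. Assume \(u\leq w\), and let \(\underline{x}\) be an arbitrary reduced expression for \(w^{-1}\). By the first step, \(\underline{x}^{\rm rev}\) is a reduced expression for \(w\). The hypothesis \(u\leq w\) then gives a subword of \(\underline{x}^{\rm rev}\), on some position set \(E\), that is a reduced expression for \(u\). The reversal of this subword is exactly the subword of \(\underline{x}\) on the reflected position set \(\{m+1-e\mid e\in E\}\), where \(m\) is the length of \(\underline{x}\). By the first step, this subword is a reduced expression for \(u^{-1}\). Since \(\underline{x}\) was arbitrary, every reduced expression for \(w^{-1}\) contains a reduced expression for \(u^{-1}\) as a subword, which is \(u^{-1}\leq w^{-1}\).

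There is no real obstacle. The only care needed is the quantifier in Definition~\ref{def:bruhat-order}: one must start from an \emph{arbitrary} reduced expression of \(w^{-1}\) and pull it back to one of \(w\) by reversal. This is possible precisely because reversal is a bijection between the two sets of reduced expressions, which is the point of the first step.
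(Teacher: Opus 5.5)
Your proof is correct: reversal gives a length-preserving bijection between reduced expressions of \(v\) and \(v^{-1}\), and it sends the subword on positions \(E\) to the subword on the reflected positions. You also handle the ``every reduced expression'' quantifier in Definition~\ref{def:bruhat-order} correctly by starting from an arbitrary reduced expression of \(w^{-1}\). The paper gives no argument of its own and only cites \cite[Corollary~2.2.5]{BjornerBrenti}; your reversal argument is the standard derivation of that corollary from the subword characterization, which the paper adopts as its definition.
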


Combining Proposition~\ref{prop:directed-quotient-classification} with
Lemmas~\ref{lem:directed-opposite-word} and~\ref{lem:bruhat-inversion} gives
the representation-directed classification and the desired
equidistribution.

\begin{theorem}\label{thm:directed-main}
Let \(A\) be a representation-directed algebra over an algebraically
closed field, and let \(\underline{w}_A\) and \(w_A\) be given by
Construction~\ref{constr:directed-ar-word}.  The following statements hold.
\begin{enumerate}
\item The word \(\underline{w}_A\) is reduced, and
\begin{equation}
 \LQ(A)
 =\{\,\CC_{I_{\underline{w}_A}(u)}\mid u\leq w_A\,\}.
 \label{eq:directed-LQ-classification}
\end{equation}
\item One has
\begin{equation}
 \LS(A)
 =\{\,\CC_{I_{\underline{w}_A}^{\rm R}(u)}\mid u\leq w_A\,\}.
 \label{eq:directed-LS-classification}
\end{equation}
\item The map
\[
 \CC_{I_{\underline{w}_A}(u)}
 \longmapsto
 \CC_{I_{\underline{w}_A}^{\rm R}(u)}
 \qquad(u\leq w_A)
\]
is a size-preserving bijection from \(\LQ(A)\) to \(\LS(A)\).  Consequently,
\begin{equation}\label{eq:directed-profile}
 \RQ(A;q)=\RS(A;q)=
 \sum_{u\leq w_A}q^{N-\ell(u)}.
\end{equation}
\end{enumerate}
\end{theorem}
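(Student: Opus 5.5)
Part (1) is exactly Proposition~\ref{prop:directed-quotient-classification}, so I will simply cite it. I will also record the extra fact that the sets \(I_{\underline{w}_A}(u)\) for distinct \(u\leq w_A\) are distinct, because each such set represents its \(u\) by a reduced subword. Hence \(u\mapsto\CC_{I_{\underline{w}_A}(u)}\) is a bijection from the lower interval \([1,w_A]\) onto \(\LQ(A)\). The subcategory \(\CC_{I_{\underline{w}_A}(u)}\) omits exactly \(|I_{\underline{w}_A}(u)|=\ell(u)\) indecomposables, so its size is \(N-\ell(u)\).

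For (2) I will apply (1) to \(A^{\op}\) with the reversed Hom-compatible order of Lemma~\ref{lem:directed-opposite-word}(1). The algebra \(A^{\op}\) is again representation-directed, since duality reverses cycles of nonzero nonisomorphisms. By Lemma~\ref{lem:directed-opposite-word}(1), its Auslander--Reiten word is \(\underline{w}_A^{\rm rev}\), representing \(w_A^{-1}\) in the same Coxeter group. Thus \(\LQ(A^{\op})\) consists of the complements of the positions \(I_{\underline{w}_A^{\rm rev}}(v)\) for \(v\leq w_A^{-1}\). Position \(d\) of the reversed word corresponds to \(\DDual X_{\iota(d)}\).

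Transporting this through \(\DDual\) as in Remark~\ref{rem:opposite-duality} shows that the submodule-closed subcategories of \(\mod A\) are the \(\CC_{\iota(I_{\underline{w}_A^{\rm rev}}(v))}\) for \(v\leq w_A^{-1}\). Substituting \(v=u^{-1}\), Lemma~\ref{lem:bruhat-inversion} turns the condition into \(u\leq w_A\). Lemma~\ref{lem:directed-opposite-word}(2) identifies \(\iota(I_{\underline{w}_A^{\rm rev}}(u^{-1}))\) with \(I^{\rm R}_{\underline{w}_A}(u)\), which gives \eqref{eq:directed-LS-classification}.

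For (3) the same argument shows that \(u\mapsto\CC_{I^{\rm R}_{\underline{w}_A}(u)}\) is a bijection \([1,w_A]\to\LS(A)\). Injectivity holds because \(I^{\rm R}_{\underline{w}_A}(u)\) is a reduced position set for \(u\), so it determines \(u\). Each such subcategory again omits \(\ell(u)\) modules. Composing the inverse of the bijection from (1) with this one gives the stated map, which is bijective and size-preserving. Summing \(q^{N-\ell(u)}\) over \(u\leq w_A\) then gives \eqref{eq:directed-profile}.

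The only step needing care is the bookkeeping in the duality. One must check that the labels of \(\Delta_{A^{\op}}\) are identified with those of \(\Delta_A\) compatibly with reversal of positions, and that \(\DDual\) sends the omitted set indexed by positions \(E\) of the reversed word to the omitted set \(\iota(E)\) for \(A\). Both are supplied by Lemma~\ref{lem:directed-opposite-word}, so I expect no real obstacle.
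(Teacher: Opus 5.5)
Your proposal is correct and follows the paper's proof essentially step for step. Part (1) cites Proposition~\ref{prop:directed-quotient-classification}, and part (2) applies that proposition to \(A^{\op}\) with the reversed order, then converts the result using Lemmas~\ref{lem:bruhat-inversion} and~\ref{lem:directed-opposite-word}(2). Part (3) compares the common cardinality \(\ell(u)\) of the two position sets, and your explicit injectivity argument (a reduced position set determines the element it represents) and your check that \(A^{\op}\) is again representation-directed fill in details the paper leaves implicit.
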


\begin{proof}
\((1)\)
This is Proposition~\ref{prop:directed-quotient-classification}(1),(2).

\((2)\)
Lemma~\ref{lem:bruhat-inversion} gives
\(u\leq w_A\) if and only if \(u^{-1}\leq w_A^{-1}\).
Duality also exchanges quotient closure for \(A^{\op}\) with submodule
closure for \(A\).  Applying
Proposition~\ref{prop:directed-quotient-classification}(2) to \(A^{\op}\),
and then using Lemma~\ref{lem:directed-opposite-word}(2), gives
\eqref{eq:directed-LS-classification}.

\((3)\)
The sets \(I_{\underline{w}_A}(u)\) and
\(I_{\underline{w}_A}^{\rm R}(u)\) both have
\(\ell(u)\) members.  Thus
\[
 \CC_{I_{\underline{w}_A}(u)}
 \longmapsto
 \CC_{I_{\underline{w}_A}^{\rm R}(u)}
\]
is a bijection preserving size.  Summing
\(q^{N-\ell(u)}\) proves \eqref{eq:directed-profile}.
\end{proof}

The remainder of the section compares the classification with the Dynkin
formula and illustrates it in examples.

\begin{remark}\label{rem:directed-recovers-ort}
For Dynkin path algebras, Theorem~\ref{thm:directed-main} gives precisely the
correspondence in Proposition~\ref{prop:ort-dynkin}.  Let
\(A=\kk Q\), where \(Q\) is Dynkin, put \(r=|Q_0|\), and order the
indecomposable modules as
\[
 P_1,\ldots,P_r,\tau^{-1}P_1,\ldots,\tau^{-1}P_r,\ldots,
\]
with zero modules omitted, using the numbering in
Proposition~\ref{prop:ort-dynkin}.  This order is Hom-compatible.  Its
\(\tau\)-orbits are indexed by \(Q_0\), the graph \(\Delta_A\) is the
underlying Dynkin graph of \(Q\), and \(\underline{w}_A\) is obtained from
\(c^\infty=(s_1\cdots s_r)^\infty\) by omitting the positions matched with
zero modules.  Gabriel's theorem
\cite[Theorem~VII.5.10(b),(c)]{AssemSimsonSkowronski} and the standard root
formula for length \cite[Proposition~4.4.4]{BjornerBrenti} give
\[
 N=|\Phi_Q^+|=\ell(w_0),
\]
where \(w_0\) is the longest element of \(W_Q\).  Since
Theorem~\ref{thm:directed-main}(1) proves that \(\underline{w}_A\) is
reduced of length
\(N\), it represents \(w_0\).

We compare the two canonical subwords.  If a position of \(c^\infty\)
matched with \(\tau^{-a}P_i=0\) for some \(a\) is omitted from
\(\underline{w}_A\), then no later
occurrence of \(s_i\) belongs to \(\underline{w}_A\).  Hence the subword of
\(\underline{w}_A\) in the positions strictly after this omitted position represents
an element of the standard parabolic subgroup generated by the \(s_j\) with
\(j\ne i\), and \(s_i\) is not a left descent of that element.  The
recognition criterion in
\cite[Corollary~3.4 and Section~7]{Armstrong} therefore shows that
\(\underline{w}_A\) is the lexicographically first reduced subword of \(c^\infty\)
representing \(w_0\).  Since \(w_0\) is the longest element, Armstrong's
comparison result \cite[Theorem~4.2 and Section~7]{Armstrong} shows that, for
every \(u\in W_Q\), the lexicographically first reduced subword of
\(c^\infty\) representing \(u\) is contained in \(\underline{w}_A\).  After the
zero positions are deleted, this
subword is \(I_{\underline{w}_A}(u)\).
Consequently, the quotient-closed subcategory in
\eqref{eq:directed-LQ-classification} is
\[
 \add\bigl(\ind\kk Q\setminus\XX(u)\bigr),
\]
with \(\XX(u)\) as in Proposition~\ref{prop:ort-dynkin}.  Hence the
representation-directed classification gives another proof of the Dynkin
specialization of the Oppermann--Reiten--Thomas theorem.
\end{remark}

Example~\ref{ex:directed-affine-d4} shows that the finite graph \(\Delta_A\)
can have an infinite Coxeter group.

\begin{example}\label{ex:directed-affine-d4}
Let \(A=\kk Q/\rad^2 \kk Q\), where
\[
 Q:\quad 1\longrightarrow2\longrightarrow3\longrightarrow4\longrightarrow5.
\]
This Nakayama algebra has nine indecomposable modules: the simples
\(S_1,\ldots,S_5\) and the modules \(P_1,\ldots,P_4\), where \(P_i\) is
uniserial of length two with top \(S_i\) and socle \(S_{i+1}\), and is
both projective and injective.
The Auslander--Reiten quiver is the chain
\begin{center}
\begin{tikzpicture}[xscale=1.5,yscale=1.0]
\node (S5) at (0,0) {\(S_5\)};
\node (P4) at (1,1) {\(P_4\)};
\node (S4) at (2,0) {\(S_4\)};
\node (P3) at (3,1) {\(P_3\)};
\node (S3) at (4,0) {\(S_3\)};
\node (P2) at (5,1) {\(P_2\)};
\node (S2) at (6,0) {\(S_2\)};
\node (P1) at (7,1) {\(P_1\)};
\node (S1) at (8,0) {\(S_1\)};
\draw[->] (S5) -- (P4);
\draw[->] (P4) -- (S4);
\draw[->] (S4) -- (P3);
\draw[->] (P3) -- (S3);
\draw[->] (S3) -- (P2);
\draw[->] (P2) -- (S2);
\draw[->] (S2) -- (P1);
\draw[->] (P1) -- (S1);
\draw[dashed] (S4) -- (S5);
\draw[dashed] (S3) -- (S4);
\draw[dashed] (S2) -- (S3);
\draw[dashed] (S1) -- (S2);
\end{tikzpicture}
\end{center}
where a dashed line joins each nonprojective module to its translate,
coming from the almost split sequences
\(0\to S_{i+1}\to P_i\to S_i\to0\).  The order from left to right in
the displayed chain is Hom-compatible, and \(A\) is
representation-directed.

There are five \(\tau\)-orbits: the orbit
\(\{S_1,\ldots,S_5\}\), labeled \(0\), and the four singletons
\(\{P_i\}\), labeled \(i\).  Construction~\ref{constr:directed-ar-word}
gives the star \(\Delta_A\) with center \(0\) and leaves \(1,2,3,4\), that
is, the affine Dynkin diagram of type \(\widetilde{D}_4\).  Thus
\(W(\Delta_A)\) is an infinite affine Weyl group.  Reading the orbit labels
along the chain gives
\[
 \underline{w}_A=s_0s_4s_0s_3s_0s_2s_0s_1s_0.
\]
Theorem~\ref{thm:directed-main}(1) shows that this word is reduced.  Its lower
Bruhat interval has \(162\) elements, and
\[
 \sum_{u\leq w_A}q^{9-\ell(u)}
 =[2]_q[3]_q^4,
\]
so
\[
 \RQ(A;q)=\RS(A;q)=[2]_q[3]_q^4.
\]
\end{example}

Example~\ref{ex:directed-a3-supports} displays the two canonical supports in
the preceding \(A_3\) example.

\begin{example}\label{ex:directed-a3-supports}
Continue with the quiver \(1\leftarrow2\to3\) in
Example~\ref{ex:directed-a3-negative}.  For \(u=s_1s_3\), the two
canonical position sets in \(\underline{w}_A\) are
\[
 I_{\underline{w}_A}(u)=\{1,2\},\qquad
 I_{\underline{w}_A}^{\rm R}(u)=\{4,5\}.
\]
The quotient-closed subcategory indexed by \(I_{\underline{w}_A}(u)\) and the
submodule-closed subcategory indexed by
\(I_{\underline{w}_A}^{\rm R}(u)\) are
\[
\begin{aligned}
 \CC_{\{1,2\}}&=\add\{M_{123},M_{23},M_{12},S_2\},\\
 \CC_{\{4,5\}}&=\add\{S_1,S_3,M_{123},S_2\}.
\end{aligned}
\]
Both contain four indecomposable modules, and
Theorem~\ref{thm:directed-main}(3) maps \(\CC_{\{1,2\}}\) to
\(\CC_{\{4,5\}}\).
\end{example}

\appendix

\section{Proofs for representation-directed algebras}
\label{sec:directed-technical}

This appendix contains the technical proofs deferred from
Section~\ref{sec:representation-directed}.  They are arranged in the order in
which their results are used.

\subsection{Bases of Grothendieck groups from exact
\texorpdfstring{\(\tau\)}{tau}-sequences}
\label{sec:directed-mesh-bases}

The two arguments with quotient categories use the following calculation.

\begin{lemma}\label{lem:directed-mesh-basis}
Let \(\Lambda\) be a finite-dimensional \(\kk\)-algebra, and let
\(\mathcal B\) be a \(\kk\)-linear category which is \(\kk\)-linearly
equivalent to
\(\proj\Lambda\).  Suppose that
\(\End_{\mathcal B}(X)=\kk\) for every indecomposable \(X\).  Suppose also
that, for every indecomposable \(X\), there is an exact sequence
\[
 0\longrightarrow\mathcal B(-,\tau_{\mathcal B}X)
 \longrightarrow\mathcal B(-,\theta_{\mathcal B}X)
 \longrightarrow\mathcal B(-,X)
 \longrightarrow S_X\longrightarrow0,
\]
where the first term may be zero and
\(S_X=\mathcal B(-,X)/J_{\mathcal B}(-,X)\).  Put
\[
 \kappa_X=[X]_{\rm sp}-[\theta_{\mathcal B}X]_{\rm sp}
                   +[\tau_{\mathcal B}X]_{\rm sp}.
\]
Then the following statements hold.
\begin{enumerate}
\item The classes \(\kappa_X\) form a basis of
\(K_0^{\rm sp}(\mathcal B)\).
\item For every \(M\in\mathcal B\), one has
\begin{equation}\label{eq:directed-mesh-basis-expansion}
 [M]_{\rm sp}
 =\sum_{X\in\ind\mathcal B}
   \dim_\kk\mathcal B(X,M)\,\kappa_X.
\end{equation}
Consequently, every additive function \(f\) on \(\mathcal B\) satisfies
\begin{equation}\label{eq:directed-mesh-function-expansion}
 f(M)=\sum_{X\in\ind\mathcal B}
 \dim_\kk\mathcal B(X,M)f(\kappa_X).
\end{equation}
\end{enumerate}
\end{lemma}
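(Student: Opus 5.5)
The plan is to work in \(\mod\mathcal B\), the category of finitely presented contravariant functors on \(\mathcal B\). Since \(\mathcal B\) is \(\kk\)-linearly equivalent to \(\proj\Lambda\), this category is equivalent to \(\mod\Lambda\). It therefore has finite length, its indecomposable projectives are the representables \(\mathcal B(-,X)\), and its simple objects are the \(S_X\). I would then use the Cartan homomorphism \(\mathfrak c_{\mathcal B}\colon K_0^{\rm sp}(\mathcal B)\to K_0(\mod\mathcal B)\) of Construction~\ref{con:cartan-homomorphism}. The key point is that \(\kappa_X\) is exactly the Euler characteristic of the given resolution of \(S_X\). Exactness of the displayed sequence gives, in \(K_0(\mod\mathcal B)\),
\[
 [S_X]=[\mathcal B(-,X)]-[\mathcal B(-,\theta_{\mathcal B}X)]+[\mathcal B(-,\tau_{\mathcal B}X)]
 =\mathfrak c_{\mathcal B}(\kappa_X).
\]

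Next, I would show that \(\mathfrak c_{\mathcal B}\) is an isomorphism. Since \(\Lambda\) need not be directed, I would not argue by triangularity. Instead, I would use that every simple \(S_X\) has a finite projective resolution, namely the displayed one of length at most two. Because \(\mod\mathcal B\) has finite length, induction on length shows that every object has finite projective dimension. The Euler characteristic \(\chi\) is then well defined on \(K_0(\mod\mathcal B)\) by the usual horseshoe/Schanuel argument, and gives a homomorphism \(\chi\colon K_0(\mod\mathcal B)\to K_0(\proj\mod\mathcal B)\cong K_0^{\rm sp}(\mathcal B)\). Clearly \(\chi\circ\mathfrak c_{\mathcal B}=\mathrm{id}\), since a projective is its own resolution. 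Also \(\mathfrak c_{\mathcal B}\circ\chi=\mathrm{id}\), since the class of an object equals the alternating sum of the classes of the terms of any finite resolution. Hence \(\mathfrak c_{\mathcal B}\) is an isomorphism with inverse \(\chi\). The simple classes \([S_X]\) form a basis of \(K_0(\mod\mathcal B)\), so their preimages \(\kappa_X=\chi([S_X])\) form a basis of \(K_0^{\rm sp}(\mathcal B)\). This proves (1).

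For (2), I would expand \([\mathcal B(-,M)]\) in the simple basis. The multiplicity of \(S_X\) in \(\mathcal B(-,M)\) equals \(\dim_\kk\mathcal B(X,M)\). To see this, evaluate at \(X\): we have \(S_Y(X)=0\) for \(Y\not\simeq X\), and \(S_X(X)=\End(X)/J(X,X)=\kk\), using \(\End_{\mathcal B}(X)=\kk\). Dimension at \(X\) is additive on short exact sequences, so \([F:S_X]=\dim_\kk F(X)\) for every finite-length \(F\). Thus
\[
 [\mathcal B(-,M)]=\sum_X\dim_\kk\mathcal B(X,M)[S_X],
\]
and applying \(\chi=\mathfrak c_{\mathcal B}^{-1}\) gives \eqref{eq:directed-mesh-basis-expansion}. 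Formula \eqref{eq:directed-mesh-function-expansion} follows by applying the additive function \(f\), viewed as a homomorphism on \(K_0^{\rm sp}(\mathcal B)\), to this identity.

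The main obstacle is the well-definedness of \(\chi\) without triangularity. The clean route is to note that \(\mod\mathcal B\simeq\mod\Lambda\) and that \(\Lambda\) has finite global dimension, because every simple has projective dimension at most two. The standard fact that the Cartan map \(K_0(\proj\Lambda)\to K_0(\mod\Lambda)\) is an isomorphism for algebras of finite global dimension then applies directly.
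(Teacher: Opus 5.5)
Your proof is correct. It agrees with the paper in reading \(\mathfrak c_{\mathcal B}(\kappa_X)=[S_X]\) off the given exact sequences. It also agrees in part (2): evaluate at \(X\) to get \([\mathcal B(-,M)]=\sum_X\dim_\kk\mathcal B(X,M)[S_X]\), then apply \(\mathfrak c_{\mathcal B}^{-1}\).

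The routes differ only in how you show that \(\mathfrak c_{\mathcal B}\) is an isomorphism. The paper never constructs an inverse. Since \(\mathfrak c_{\mathcal B}\) sends the family \((\kappa_X)\) onto the simple basis \(([S_X])\), it is surjective. Both \(K_0^{\rm sp}(\mathcal B)\) and \(K_0(\mod(\mathcal B))\) are free of the same finite rank \(|\ind\mathcal B|\), so a surjection between them is an isomorphism. The \(\kappa_X\) are then the preimages of a basis, hence a basis.

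With that counting argument, what you call the main obstacle disappears. You do not need well-definedness of the Euler characteristic on \(K_0(\mod\mathcal B)\), a bound on global dimension, or the horseshoe/Schanuel argument. Your route pays for that standard homological input, but it gains an explicit description of \(\mathfrak c_{\mathcal B}^{-1}\) as the Euler characteristic of projective resolutions. The paper itself uses that viewpoint for \(\mod A\) around \eqref{eq:directed-cartan-inverse}.
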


\begin{proof}
\((1)\)
For each indecomposable \(X\), the assumed exact sequence gives
\[
 \mathfrak c_{\mathcal B}(\kappa_X)=[S_X]
 \quad\text{in }K_0(\mod(\mathcal B)).
\]
The simple classes \([S_X]\) form a basis of this Grothendieck group.
Both \(K_0^{\rm sp}(\mathcal B)\) and
\(K_0(\mod(\mathcal B))\) are free of rank
\(\lvert\ind\mathcal B\rvert\).  Since the Cartan homomorphism
\(\mathfrak c_{\mathcal B}\) from
Construction~\ref{con:cartan-homomorphism} maps the \(\kappa_X\) to the
simple basis, it is an isomorphism and the \(\kappa_X\) form a basis.

\((2)\)
Evaluation at an indecomposable \(X\) is exact, and
\[
 S_Y(X)=
 \begin{cases}
  \kk,&X\simeq Y,\\
  0,&X\not\simeq Y.
 \end{cases}
\]
Therefore, in \(K_0(\mod(\mathcal B))\), one has
\[
 [\mathcal B(-,M)]
 =\sum_{X\in\ind\mathcal B}
 \dim_\kk\mathcal B(X,M)[S_X].
\]
Applying \(\mathfrak c_{\mathcal B}^{-1}\) proves
\eqref{eq:directed-mesh-basis-expansion}.  Applying \(f\) to this identity
gives \eqref{eq:directed-mesh-function-expansion}.
\end{proof}

\begin{definition}[{\cite[Section~1.3.1]{IyamaRealization}}]
Let \(\mathcal B\) be a \(\tau\)-category.  A \emph{right additive
function} is a map \(f:\ind\mathcal B\to\NN_{>0}\) such that, after
extending \(f\) additively and setting \(f(0)=0\),
\[
 f(X)-f(\theta_{\mathcal B}X)
 +f(\tau_{\mathcal B}X)\geq0
\]
for every indecomposable \(X\), with equality whenever
\(\tau_{\mathcal B}X\ne0\).
\end{definition}

\begin{corollary}\label{cor:directed-right-additive-strictness}
Fix \(D\subseteq[N]\), and put
\(\overline{\AA}_D=(\mod A)/[\CC_D]\).
Suppose that \(\overline{\AA}_D\) admits a right additive function.  Then
\(\overline{\AA}_D\) is a strict \(\tau\)-category.  The classes
\[
 [X]_{\rm sp}-[\theta_DX]_{\rm sp}+[\tau_DX]_{\rm sp}
 \qquad(X\in\ind\overline{\AA}_D)
\]
form a basis of \(K_0^{\rm sp}(\overline{\AA}_D)\), and
\begin{equation}\label{eq:directed-quotient-class-expansion}
 [M]_{\rm sp}
 =\sum_{X\in\ind\overline{\AA}_D}
 \dim_\kk\overline{\AA}_D(X,M)
 \bigl([X]_{\rm sp}-[\theta_DX]_{\rm sp}+[\tau_DX]_{\rm sp}\bigr).
\end{equation}
Consequently, every additive function \(f\) on \(\overline{\AA}_D\)
satisfies
\begin{equation}\label{eq:directed-quotient-function-expansion}
 f(M)=\sum_{X\in\ind\overline{\AA}_D}
 \dim_\kk\overline{\AA}_D(X,M)
 \bigl(f(X)-f(\theta_DX)+f(\tau_DX)\bigr).
\end{equation}
Here \(f(0)=0\).
\end{corollary}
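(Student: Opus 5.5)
The plan is to reduce Corollary~\ref{cor:directed-right-additive-strictness} to Lemma~\ref{lem:directed-mesh-basis}, applied with \(\mathcal B=\overline{\AA}_D\). This requires four hypotheses:
\begin{enumerate}
\item \(\overline{\AA}_D\) is \(\kk\)-linearly equivalent to \(\proj\Lambda\) for a finite-dimensional algebra \(\Lambda\);
\item every indecomposable has endomorphism ring \(\kk\);
\item every right \(\tau\)-sequence gives an exact sequence of representable functors with the radical quotient \(S_X\) as cokernel;
\item in that sequence the first map is a monomorphism, which is exactly strictness on the right.
\end{enumerate}
Once these are checked, the basis statement and the expansions \eqref{eq:directed-quotient-class-expansion} and \eqref{eq:directed-quotient-function-expansion} are literally \eqref{eq:directed-mesh-basis-expansion} and \eqref{eq:directed-mesh-function-expansion}, with \(\theta_{\mathcal B}=\theta_D\) and \(\tau_{\mathcal B}=\tau_D\) by Proposition~\ref{prop:iyama-factor-category}.

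Items (1) and (2) are routine. By Proposition~\ref{prop:iyama-factor-category}, \(\overline{\AA}_D\) is a \(\tau\)-category with finitely many indecomposables, namely the \(X_d\) with \(d\in D\). It is Krull--Schmidt and Hom-finite, so it is equivalent to \(\proj\Lambda\) with \(\Lambda=\End(\bigoplus_{d\in D}X_d)^{\op}\), computed in the factor category. Endomorphism rings in \(\overline{\AA}_D\) are quotients of \(\End_A(X_d)=\kk\); they are nonzero because \(X_d\) is nonzero in \(\overline{\AA}_D\), as shown in the proof of Proposition~\ref{prop:iyama-factor-category}(2). Hence they equal \(\kk\).

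The real content is strictness. For this I would invoke Iyama's theory of \(\tau\)-categories admitting additive functions. In \cite{IyamaRealization}, and as I recall in \cite{IyamaTauI}, it is shown that a \(\tau\)-category with finitely many indecomposables admitting a right additive function has strict right \(\tau\)-sequences; dually one gets the left ones. Concretely, I would argue as follows.
\begin{enumerate}
\item Nilpotency of the radical (Proposition~\ref{prop:iyama-factor-category}(2)) and the projective covers of radical layers (Proposition~\ref{prop:iyama-factor-category}(1)) show that the right \(\tau\)-sequence \(\mathcal B(-,\tau_DX)\to\mathcal B(-,\theta_DX)\to J(-,X)\to0\) is exact.
\item Let \(K\) be the kernel of the first map. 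Counting dimensions at every object, weighted by the positive values of the right additive function \(f\), gives \(\sum_Y f(Y)\dim K(Y)\le0\). The key inequality here is \(f(X)-f(\theta_DX)+f(\tau_DX)\ge0\), with equality when \(\tau_DX\ne0\), combined with a comparison of \(\sum_Y f(Y)\dim\mathcal B(Y,-)\) across the sequence.
\item Positivity of \(f\) then forces \(K=0\).
\end{enumerate}
I expect this step to be the main obstacle: making that weighted count rigorous, or locating the exact citation. My first attempt would be to cite Iyama's result that a \(\tau\)-category with a right additive function and nilpotent radical is strict.

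Once strictness gives item (4), the exact sequence in Lemma~\ref{lem:directed-mesh-basis} holds for every indecomposable, with the first term zero when \(\tau_DX=0\). The lemma then yields the basis of classes \([X]_{\rm sp}-[\theta_DX]_{\rm sp}+[\tau_DX]_{\rm sp}\) and both displayed expansions, with \(f(0)=0\) by the usual additive convention.
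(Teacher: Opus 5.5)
Your proposal is correct and is essentially the paper's proof. The paper checks that $\overline{\AA}_D$ is a $\tau$-category equivalent to $\proj\Lambda$ whose indecomposables all have endomorphism ring $\kk$, gets strictness by citing Iyama's theorem that a right additive function forces strictness (\cite[Theorem~2.1, (4)$\Rightarrow$(3)]{IyamaRealization}), and then reads off the basis and both expansions from Lemma~\ref{lem:directed-mesh-basis}, exactly as you plan.

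Rely on that citation rather than your weighted-count sketch. As written the sketch is not an argument: before strictness is known, nothing relates $\sum_Y f(Y)\dim_\kk\overline{\AA}_D(Y,M)$ to $f(M)$.
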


\begin{proof}
Proposition~\ref{prop:iyama-factor-category} shows that
\(\overline{\AA}_D\) is a \(\tau\)-category.  It is equivalent to
the category of projective modules over a finite-dimensional algebra.  For
every \(x\in D\), the identity of \(X_x\) does not factor through
\(\CC_D\), and \(\End_A(X_x)=\kk\).  Hence
\(\End_{\overline{\AA}_D}(X_x)=\kk\).
Iyama's strictness theorem
\cite[Theorem~2.1, (4)\(\Rightarrow\)(3)]{IyamaRealization} shows that
\(\overline{\AA}_D\) is strict.  Consequently, the right
\(\tau\)-sequence ending at each indecomposable \(X\) induces an exact
sequence
\[
 0\longrightarrow\overline{\AA}_D(-,\tau_DX)
 \longrightarrow\overline{\AA}_D(-,\theta_DX)
 \longrightarrow\overline{\AA}_D(-,X)
 \longrightarrow S_X\longrightarrow0.
\]
These sequences satisfy the hypotheses of
Lemma~\ref{lem:directed-mesh-basis}, which gives the basis,
\eqref{eq:directed-quotient-class-expansion}, and
\eqref{eq:directed-quotient-function-expansion}.
\end{proof}

\subsection{Nonnegativity and quotient closure via a quotient
\texorpdfstring{\(\tau\)}{tau}-category}
\label{sec:directed-tau-closure}

We prove the implication \((\Leftarrow)\) in
Proposition~\ref{prop:directed-closure-effectivity}.  The proof has three
steps.  First, we define from the nonnegative mixed coordinates an additive
function \(\lambda\) which is positive on every omitted
indecomposable module.  Second, the Auslander--Reiten functor resolutions
determine the values
\(\lambda(X)-\lambda(\theta_DX)+\lambda(\tau_DX)\) on the quotient
\(\tau\)-category \((\mod A)/[\CC_D]\).  Finally,
Corollary~\ref{cor:directed-right-additive-strictness} identifies \(\lambda\)
with the dimension of a Hom space from the
indecomposable projective \(A\)-modules belonging to \(D\).  Positivity of
these Hom dimensions excludes omitted modules as quotients of objects of
\(\CC_D\), and hence proves that \(\CC_D\) is quotient-closed.

Throughout this subsection, \([F]\) denotes the class of a functor in
\(K_0(\mod(\mod A))\), whereas \([M]_{\rm sp}\) denotes the class of a
module in the split Grothendieck group \(K_0^{\rm sp}(\mod A)\).  For
modules \(X,Y\), abbreviate
\([X,Y]=\dim_\kk\Hom_A(X,Y)\).

We use the following consequence of directedness.

\begin{lemma}\label{lem:directed-hom-rigidity}
Let \(Y\) be indecomposable and let \(T\) have no direct summand isomorphic
to \(Y\).  Suppose that
\[
 [X,T]\leq[X,Y]
 \qquad\text{for every indecomposable \(X\)}.
\]
Then there is an indecomposable projective \(P\) such that
\[
 [P,T]<[P,Y].
\]
\end{lemma}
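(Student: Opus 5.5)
The plan is to argue by contradiction. The dimension of \(\Hom_A(P,M)\) for an indecomposable projective \(P=e_iA\) is the composition multiplicity \([M:S_i]\), so the conclusion says exactly that the dimension vectors of \(T\) and \(Y\) differ. Suppose instead that \([P,T]=[P,Y]\) for every indecomposable projective \(P\). The hypothesis gives \([P,T]\le[P,Y]\), so this forces \(\underline{\dim}\,T=\underline{\dim}\,Y=:\mathbf d\). Together with \([X,T]\le[X,Y]\) for all indecomposable \(X\), this says that \(T\le Y\) in the Hom order on modules of dimension vector \(\mathbf d\).

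The first step is to record what directedness gives for \(Y\). We have \(\End_A(Y)=\kk\), as already observed before Definition~\ref{def:directed-mixed-coordinates}. We also have \(\Ext^1_A(Y,Y)\cong\DDual\overline{\Hom}_A(Y,\tau Y)=0\), since a nonzero map \(Y\to\tau Y\) composed with the mesh maps \(\tau Y\to\theta Y\to Y\) gives a cycle of nonzero nonisomorphisms; if \(Y\) is projective the Ext group vanishes trivially. Consequently, in the affine module variety \(\mathrm{mod}_A(\mathbf d)\), Voigt's lemma shows that the \(GL(\mathbf d)\)-orbit \(\mathcal O_Y\) is open. The set of points of \(\mathrm{mod}_A(\mathbf d)\) isomorphic to \(T\) is the orbit \(\mathcal O_T\).

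The second step, which I expect to be the main obstacle, converts the Hom inequality into a degeneration. By Bongartz's theorem that the Hom order and the degeneration order coincide for representation-directed algebras, \(T\le Y\) in the Hom order yields \(\mathcal O_Y\subseteq\overline{\mathcal O_T}\). If I want to avoid quoting Bongartz in full, I would try the direct route via Riedtmann's construction. Order the summands of \(T\) Hom-compatibly and take a summand \(Z\) that is minimal in this order. Then \([Z,T]\le[Z,Y]\) gives a nonzero map \(Z\to Y\), and I would aim to build a short exact sequence \(0\to Z\to Y\oplus Z'\to T'\to0\) exhibiting a degeneration. An induction on the number of summands of \(T\) would finish this route. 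Comparing the resulting Hom-dimension bookkeeping against the hypothesis is where I expect the real work.

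The final step is dimension counting. Since \(\mathcal O_Y\) is open and contained in \(\overline{\mathcal O_T}\), which is irreducible, the set \(\mathcal O_T\cap\mathcal O_Y\) is nonempty, because both are dense open subsets of the closure, or at least \(\mathcal O_Y\) is open in it. Hence \(T\cong Y\), contradicting the assumption that \(T\) has no summand isomorphic to \(Y\).

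As a fallback if the geometric route is unavailable, I would use the Tits form. A representation-directed algebra has global dimension at most two. So \(\langle\mathbf d,\mathbf d\rangle=[Y,Y]-\dim\Ext^1(Y,Y)+\dim\Ext^2(Y,Y)=1\), using that \(\Ext^2(Y,Y)=0\) by directedness. I would then compute \(\langle\underline{\dim}\,T,\underline{\dim}\,T\rangle\) summand by summand, using that the summands of \(T\) are directed and pairwise have Ext in only one direction, and try to contradict the value \(1\).
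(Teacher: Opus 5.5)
Your main route is correct, but it differs from the paper's. After reducing to $\underline{\dim}\,T=\underline{\dim}\,Y$, you pass to the module variety. Bongartz's theorem (Hom order $=$ degeneration order for representation-directed algebras) gives $\mathcal O_Y\subseteq\overline{\mathcal O_T}$. Openness of $\mathcal O_Y$, from $\operatorname{Ext}^1_A(Y,Y)=0$ and Voigt's lemma, then forces $\mathcal O_Y\cap\mathcal O_T\neq\varnothing$, so $T\cong Y$. This is sound, but almost all of the content is delegated to Bongartz's theorem, which is far deeper than the lemma.

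The paper stays inside Auslander--Reiten theory. Under the same contrary assumption it writes $[\mathsf P_Y]-[\mathsf P_T]=\sum_X d_X[\mathsf S_X]$ with $d_X=[X,Y]-[X,T]\geq0$ and $d_P=0$ for projective $P$. The inverse Cartan map turns this into $[Y]_{\rm sp}-[T]_{\rm sp}=\sum_{X\text{ nonprojective}}d_X([X]_{\rm sp}-[\theta X]_{\rm sp}+[\tau X]_{\rm sp})$. Evaluating $[-,Z]$ and using $[X,Z]-[\theta X,Z]+[\tau X,Z]=\delta_{Z,\tau X}$ yields the dual inequality $[T,Z]\leq[Y,Z]$ for every indecomposable $Z$; this is the Auslander--Reiten formula in disguise. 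Any indecomposable summand $Z$ of $T$ then satisfies $[Z,Y]\geq[Z,T]>0$ and $[Y,Z]\geq[T,Z]>0$. This gives a cycle $Y\to Z\to Y$ of nonzero nonisomorphisms, contrary to directedness.

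So the paper needs no geometry, no $\operatorname{Ext}$-vanishing and no degeneration theory, and uses directedness essentially only in that last step. Your argument gives a conceptual picture ($T$ would degenerate to a module with open orbit) at the cost of a heavy citation.

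Your fallbacks are not proofs as they stand. The Riedtmann-type construction amounts to reproving Bongartz's theorem. The Tits-form idea is vacuous: the Euler form depends only on the dimension vector, so $\langle\underline{\dim}\,T,\underline{\dim}\,T\rangle=\langle\mathbf d,\mathbf d\rangle=1$ automatically. Any contradiction must come from the Hom hypothesis, and the dual-inequality argument above is the natural way to use it.
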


\begin{proof}
Suppose, to the contrary, that \([P,T]=[P,Y]\) for every indecomposable
projective \(P\).
Put \(d_X=[X,Y]-[X,T]\).  The multiplicity formula in
Proposition~\ref{prop:directed-functor-bases} gives the following identity
in \(K_0(\mod(\mod A))\):
\[
 [\mathsf P_Y]-[\mathsf P_T]
 =\sum_{X\in\ind A}d_X[\mathsf S_X].
\]
The hypothesis gives \(d_X\geq0\) for every \(X\), and the contrary
assumption gives \(d_P=0\) for every indecomposable projective \(P\).  Applying
\(\mathfrak c_{\mod A}^{-1}\) and using
\eqref{eq:directed-cartan-inverse} gives, in \(K_0^{\rm sp}(\mod A)\),
\begin{equation}\label{eq:directed-hom-rigidity-split}
 [Y]_{\rm sp}-[T]_{\rm sp}
 =\sum_{X\text{ nonprojective}}d_X
   \bigl([X]_{\rm sp}-[\theta X]_{\rm sp}
                         +[\tau X]_{\rm sp}\bigr).
\end{equation}
For every indecomposable \(Z\), applying \(\Hom_A(-,Z)\) to the almost
split sequence ending at \(X\) gives
\[
 [X,Z]-[\theta X,Z]+[\tau X,Z]
 =\dim_\kk\operatorname{Coker}
   \bigl(\Hom_A(\theta X,Z)\longrightarrow\Hom_A(\tau X,Z)\bigr).
\]
The left almost split property says that every non-section
\(\tau X\to Z\) factors through \(\tau X\to\theta X\).  Thus this cokernel
is zero if \(Z\not\simeq\tau X\), while for \(Z\simeq\tau X\) it is
\(\End_A(\tau X)/\rad\End_A(\tau X)=\kk\).  Hence
\[
 [X,Z]-[\theta X,Z]+[\tau X,Z]
 =
 \begin{cases}
  1,&Z\simeq\tau X,\\
  0,&Z\not\simeq\tau X.
 \end{cases}
\]
Applying \([-,Z]\) to \eqref{eq:directed-hom-rigidity-split} gives
\[
 [Y,Z]-[T,Z]
 =\sum_{\substack{X\text{ nonprojective}\\\tau X\simeq Z}}d_X
 \geq0.
\]

The module \(T\) is nonzero: otherwise equality at all indecomposable
projectives would give \([A,Y]=0\).  Choose an indecomposable summand \(Z\) of
\(T\).  Then
\[
 [Z,Y]\geq[Z,T]>0,
 \qquad
 [Y,Z]\geq[T,Z]>0.
\]
Thus
\[
 \Hom_A(Z,Y)\ne0
 \qquad\text{and}\qquad
 \Hom_A(Y,Z)\ne0.
\]
Since \(Z\not\simeq Y\), every morphism occurring here is a
nonisomorphism, contrary to directedness.
\end{proof}

\begin{proof}[Proof of \((\Leftarrow)\) in
Proposition~\ref{prop:directed-closure-effectivity}]
Suppose that \(D\) is nonnegative.  By additivity, all coefficients
\(\mu_D(i,M)\) are nonnegative.  Put
\[
 K=[N]\setminus D,\qquad
 \mathcal D=\{X_d\mid d\in D\},\qquad
 \mathcal P_D=\{\,p\in D\mid X_p\text{ is projective in }\mod A\,\}.
\]
Put
\[
 \overline{\AA}_D=(\mod A)/[\CC_D],\qquad
 \Pi_D=\bigoplus_{p\in\mathcal P_D}X_p,
\]
and define \(h(M)=\dim_\kk\overline{\AA}_D(\Pi_D,M)\).
Since \(\Pi_D\) is a summand of the regular module \(A\), the inequality
\(h(Y)>0\) implies \(\overline{\AA}_D(A,Y)\ne0\).
Lemma~\ref{lem:trace-factor-hom} then gives
\(Y/\tr_{\CC_D}(Y)\ne0\), and
Lemma~\ref{lem:indecomposable-quotient-test} excludes \(Y\) as a quotient
of an object of \(\CC_D\).  It is therefore enough to prove
\(h(Y)>0\) for every \(Y\in\mathcal D\).  We carry out the argument in
three steps.

We first obtain positivity for a coordinate function before the
\(\tau\)-sequences in \(\overline{\AA}_D\) are known to be exact.
For a module \(M\), define
\begin{equation}\label{eq:directed-boundary-function}
 \lambda(M)=\sum_{p\in\mathcal P_D}\mu_D(p,M).
\end{equation}
This is an additive function of \(M\), and hence extends to a homomorphism
on \(K_0^{\rm sp}(\mod A)\).  Moreover,
\eqref{eq:directed-projective-column} gives
\begin{equation}\label{eq:directed-boundary-vanishing}
 \lambda(X_a)=0\qquad(a\in K).
\end{equation}

\medskip
\noindent\textbf{Step 1: Positivity of the coordinate function.}
For every \(Y\in\mathcal D\), one has \(\lambda(Y)>0\).
For \(Y=X_b\) with
\(b\in D\), set
\[
 T_Y=\bigoplus_{a\in K}X_a^{\mu_D(a,Y)}.
\]
The mixed expansion \eqref{eq:directed-mixed-expansion} is the identity
\begin{equation}\label{eq:directed-mixed-residual}
 [\mathsf P_Y]
 = [\mathsf P_{T_Y}]
   +\sum_{d\in D}\mu_D(d,Y)[\mathsf S_{X_d}]
 \quad\text{in }K_0(\mod(\mod A)).
\end{equation}
Evaluating at each indecomposable module gives
\[
 \begin{aligned}
 {}[X_a,T_Y]&=[X_a,Y]&&(a\in K),\\
 [X_d,Y]-[X_d,T_Y]&=\mu_D(d,Y)\geq0&&(d\in D).
 \end{aligned}
\]
Indeed, \(\mathsf S_{X_d}(X_i)\) has dimension \(1\) when \(i=d\) and is
zero otherwise.  The module \(T_Y\) belongs to \(\CC_D\), whereas
\(Y\notin\CC_D\), so \(T_Y\) has no direct summand isomorphic to \(Y\).
Lemma~\ref{lem:directed-hom-rigidity} gives an indecomposable projective
\(P=X_p\) such that \([P,T_Y]<[P,Y]\).
Since \([X_a,T_Y]=[X_a,Y]\) for every \(a\in K\), one has \(p\notin K\)
and hence \(p\in D\).  Thus \(p\in\mathcal P_D\), and
\(\mu_D(p,Y)=[P,Y]-[P,T_Y]>0\).
Consequently, \(\lambda(Y)\geq\mu_D(p,Y)>0\).

For \(X\in\mathcal D\), put
\[
 \kappa_X=[X]_{\rm sp}-[\theta_DX]_{\rm sp}
                         +[\tau_DX]_{\rm sp}
 \quad\text{in }K_0^{\rm sp}(\overline{\AA}_D),
\]
where \([0]_{\rm sp}=0\).

\medskip
\noindent\textbf{Step 2: Right additivity of the coordinate function.}
The function \(\lambda\) is right additive on \(\overline{\AA}_D\):
\begin{equation}\label{eq:directed-lambda-values}
 \lambda(\kappa_X)=
 \begin{cases}
  1,&X\text{ is projective in }\mod A,\\
  0,&X\text{ is nonprojective in }\mod A.
 \end{cases}
\end{equation}
For \(p\in\mathcal P_D\), let
\(\varepsilon_p:K_0(\mod(\mod A))\to\ZZ\) take the
\([\mathsf S_{X_p}]\)-coordinate in the mixed basis.  Thus
\(\varepsilon_p([\mathsf P_M])=\mu_D(p,M)\).  Put
\(\varepsilon=\sum_{p\in\mathcal P_D}\varepsilon_p\).
By the definition of \(\lambda\), one has
\[
 \varepsilon([\mathsf P_M])
 =\sum_{p\in\mathcal P_D}\mu_D(p,M)=\lambda(M)
 \qquad(M\in\mod A).
\]
If \(X=X_x\in\mathcal D\), then
\([\mathsf S_X]=\mathsf B_D(x)\), and hence
\[
 \varepsilon([\mathsf S_X])=
 \begin{cases}
  1,&X\text{ is projective in }\mod A,\\
  0,&X\text{ is nonprojective in }\mod A.
 \end{cases}
\]
Indeed, \(x\in\mathcal P_D\) precisely when \(X\) is projective.
Applying \(\varepsilon\) to the two identities in
Lemma~\ref{lem:directed-functor-ar-resolutions} therefore gives
\begin{equation}\label{eq:directed-lambda-ar}
 \lambda(X)-\lambda(\theta X)+\lambda(\tau X)=0
 \qquad(X\in\mathcal D\text{ nonprojective})
\end{equation}
and
\begin{equation}\label{eq:directed-lambda-projective}
 \lambda(P)-\lambda(\theta P)=1
 \qquad(P\in\mathcal D\text{ projective}).
\end{equation}
Since \(\lambda(X_a)=0\) for every \(a\in K\), the modules \(\theta X\)
and \(\theta_DX\) have the same \(\lambda\)-value.

Let \(X\in\mathcal D\).  If \(X\) is projective in \(\mod A\), then the
identity \eqref{eq:directed-lambda-projective} gives
\(\lambda(\kappa_X)=1\).  Suppose that \(X\) is nonprojective.  If
\(\tau_DX=\tau X\), then \eqref{eq:directed-lambda-ar} gives
\(\lambda(\kappa_X)=0\).  By
Proposition~\ref{prop:iyama-factor-category}(1), if
\(\tau_DX\ne\tau X\), then \(\tau_DX=0\).  If
\(\tau X\notin\mathcal D\), then
\(\lambda(\tau X)=0\), so \eqref{eq:directed-lambda-ar} again gives
\(\lambda(\kappa_X)=0\).  The only case left would have
\(\tau X\in\mathcal D\) and \(\theta_DX=0\).  In that case,
\eqref{eq:directed-lambda-ar} gives
\(\lambda(X)+\lambda(\tau X)=0\),
contrary to Step~1.  This proves
\eqref{eq:directed-lambda-values}.  The value \(\lambda(\kappa_X)\) is zero
when \(\tau_DX\ne0\) and is nonnegative when \(\tau_DX=0\).  Step~1 gives
\(\lambda(X)>0\) for every indecomposable object \(X\) of
\(\overline{\AA}_D\).  Thus \(\lambda\) is a right additive function on
\(\overline{\AA}_D\).

\medskip
\noindent\textbf{Step 3: Identification of the two functions by strictness.}
For every \(Y\in\mathcal D\), one has
\[
 \lambda(Y)=\dim_\kk\overline{\AA}_D(\Pi_D,Y).
\]
Corollary~\ref{cor:directed-right-additive-strictness} applies by Step~2.
Its formula \eqref{eq:directed-quotient-function-expansion} and
\eqref{eq:directed-lambda-values} give
\[
 \lambda(Y)
 =\sum_{X\in\mathcal D}
   \dim_\kk\overline{\AA}_D(X,Y)\lambda(\kappa_X)
 =\sum_{p\in\mathcal P_D}
   \dim_\kk\overline{\AA}_D(X_p,Y)
 =\dim_\kk\overline{\AA}_D(\Pi_D,Y).
\]

Finally, if \(Y=X_b\) is omitted, then
\(\dim_\kk\overline{\AA}_D(\Pi_D,Y)=\lambda(Y)>0\) by Steps~1 and~3.
Since every summand of \(\Pi_D\) is a summand of the
regular module \(A\), it follows that \(\overline{\AA}_D(A,Y)\ne0\).
Lemma~\ref{lem:trace-factor-hom} identifies
\(\overline{\AA}_D(A,Y)\) with
\(Y/\tr_{\CC_D}(Y)\).  Thus no
indecomposable \(Y\notin\CC_D\) is a quotient of an object of \(\CC_D\).
Lemma~\ref{lem:indecomposable-quotient-test} now shows that \(\CC_D\) is
quotient-closed.
\end{proof}

\subsection{Orbit geometry behind the Auslander--Reiten word}
\label{sec:directed-orbit-geometry}

We prove Lemma~\ref{lem:directed-two-orbits}.  The resulting alternation of
two labels will be used in both remaining appendix proofs.

Lemma~\ref{lem:ar-mesh-correspondence} gives the Auslander--Reiten arrow
correspondence
\begin{equation}\label{eq:directed-arrow-correspondence}
 Z\longrightarrow Y,\quad Y\text{ nonprojective}
 \quad\longleftrightarrow\quad
 \tau Y\longrightarrow Z.
\end{equation}

\begin{proof}[Proof of Lemma~\ref{lem:directed-two-orbits}]
The projective and injective modules exist in each \(\tau\)-orbit:
otherwise iteration of \(\tau\), or of \(\tau^{-1}\), in the finite
Auslander--Reiten quiver would repeat and produce a directed cycle.  Index
an orbit from its projective module to its injective module, so that
\(\tau U_{i+1}=U_i\).  For \(i<m\), choose an indecomposable summand \(Z\)
of the middle term of the almost split sequence ending at \(U_{i+1}\).
Its component maps give
\[
 U_i=\tau U_{i+1}\longrightarrow Z\longrightarrow U_{i+1}.
\]
Consequently, there is a directed path
\(U_i\rightsquigarrow U_{i'}\) whenever \(i<i'\).

For an arrow \(X\to Y\) with \(Y\) nonprojective, call the arrow
\(\tau Y\to X\) given by \eqref{eq:directed-arrow-correspondence} its \emph{left
rotation}.  If \(X\) is noninjective, the inverse operation is the right
rotation
\[
 X\longrightarrow Y
 \quad\longmapsto\quad
 Y\longrightarrow\tau^{-1}X.
\]
Equation~\eqref{eq:directed-arrow-correspondence} and
Lemma~\ref{lem:representation-finite-no-multiple-arrows} make these mutually
inverse operations on arrows whenever both sides are defined.  They generate
the \(\sigma\)-orbits of arrows in the terminology of
Bongartz--Gabriel \cite[Section~4.2]{BongartzGabrielCovering}.

If the source and target have indices \(i\) and \(j\) in their respective
\(\tau\)-orbits, a left rotation decreases \(i+j\) by one.  Thus repeated
left rotation ends at an arrow whose target is projective.  An arrow inside
one orbit would therefore end at an arrow \(U_i\to U_0\).  For \(i>0\),
this arrow and the path \(U_0\rightsquigarrow U_i\) form a directed cycle;
for \(i=0\), it is a loop.  Hence there is no arrow inside one
\(\tau\)-orbit.

Now fix two different orbits connected by an arrow.  Every arrow between
them has a terminal left rotation of one of the forms
\[
 U_i\longrightarrow V_0,
 \qquad
 V_j\longrightarrow U_0.
\]
The two forms cannot both occur, since the paths within the two orbits
would give a directed cycle
\[
 U_0\rightsquigarrow U_i\longrightarrow V_0
 \rightsquigarrow V_j\longrightarrow U_0.
\]
After exchanging the orbits if necessary, every arrow therefore rotates
to an arrow \(U_i\to V_0\).

There is only one possible index \(i\).  Indeed, suppose that
\(U_i\to V_0\) and \(U_{i'}\to V_0\) are arrows with \(i<i'\).  The right
rotation of the first arrow exists and gives \(V_0\to U_{i+1}\).  Hence
\[
 V_0\longrightarrow U_{i+1}\rightsquigarrow U_{i'}
 \longrightarrow V_0
\]
is a directed cycle, where the middle path is allowed to be trivial.
Lemma~\ref{lem:representation-finite-no-multiple-arrows} now shows that there
is a unique terminal arrow \(U_t\to V_0\).

Every arrow between the two orbits belongs to the rotation orbit of this
arrow.  Its successive right rotations are
\[
 U_{t+n}\longrightarrow V_n\longrightarrow U_{t+n+1}
\]
for every \(n\) for which \(U_{t+n}\), \(V_n\), and \(U_{t+n+1}\) exist.
Taking \(c=-t\) and putting
\(i=t+n\) gives \eqref{eq:directed-orbit-arrows}.
\end{proof}

\begin{lemma}\label{lem:directed-two-label-alternation}
For each edge \(a\sim b\) of \(\Delta_A\), let
\(\underline{w}_A|_{a,b}\) be the subword obtained by deleting all letters other
than \(s_a\) and \(s_b\).  Write
\[
 \underline{w}_A|_{a,b}=B_1B_2\cdots B_t
\]
as its decomposition into maximal blocks of equal letters.  Then
\(\lvert B_h\rvert=1\) for every \(2\leq h\leq t-1\).
\end{lemma}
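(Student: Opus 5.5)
The plan is to read off the two-label subword directly from the arrow pattern in Lemma~\ref{lem:directed-two-orbits}. After possibly exchanging \(a\) and \(b\), write the \(a\)-orbit as \((U_0,\ldots,U_m)\) and the \(b\)-orbit as \((V_0,\ldots,V_n)\), so that all arrows between the two orbits occur in the chains \(U_i\to V_{i+c}\to U_{i+1}\) of \eqref{eq:directed-orbit-arrows}. By Lemma~\ref{lem:directed-word-dictionary}(1), the letters \(s_a\) in \(\underline{w}_A\) are the positions of \(U_0,\ldots,U_m\) in increasing order, and likewise for \(s_b\) and the \(V_j\). Hence \(\underline{w}_A|_{a,b}\) records how the two chains \(U_0<\cdots<U_m\) and \(V_0<\cdots<V_n\) interleave in the Hom-compatible order.

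The key step is to show that the interleaving is forced wherever both orbits are "active". Each arrow is a nonzero nonisomorphism, so it increases the position in the Hom-compatible order. Therefore, whenever \(U_i\), \(V_{i+c}\) and \(U_{i+1}\) all exist, we get \(U_i<V_{i+c}<U_{i+1}\). Dually, whenever \(V_j\), \(U_{j-c+1}\) and \(V_{j+1}\) exist, the arrows \(V_j\to U_{j-c+1}\to V_{j+1}\) give \(V_j<U_{j-c+1}<V_{j+1}\). The second chain comes from the same family: reindex \eqref{eq:directed-orbit-arrows} as \(V_{i+c}\to U_{i+1}\to V_{i+c+1}\).

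Next we identify the overlap range of indices where both families of inequalities apply. Write \(I\) for the set of \(i\) with \(0\le i\) and \(0\le i+c\le n\) and \(i\le m\). On this range the letters alternate strictly \(U_i,V_{i+c},U_{i+1},V_{i+c+1},\ldots\). Outside this range, only an initial run of one label precedes and a final run of one label follows. The initial run consists of the \(U_i\) with \(i+c<0\) or the \(V_j\) with \(j-c<0\), depending on the sign of \(c\); the final run is similar at the injective ends.

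The main obstacle is making sure these boundary runs really are contiguous blocks. One has to check, for example, that no \(V_j\) is placed before some \(U_i\) that lies in the initial \(U\)-run. I would argue this from the existence of an arrow at the first overlap index. If \(U_{i_0}\to V_0\) with \(i_0=-c\), then every earlier \(U_i\) precedes \(U_{i_0}\), which precedes \(V_0\), using the directed path \(U_i\rightsquigarrow U_{i_0}\) of Lemma~\ref{lem:directed-two-orbits}. Symmetrically, at the end, the last element of the alternating part maps by an arrow to the first element of the final run.

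One still needs the alternating part to be nonempty. This holds because \(a\sim b\) means some arrow exists, and then Lemma~\ref{lem:directed-two-orbits} (the terminal-arrow argument) shows that the arrow \(U_t\to V_0\) exists. Gluing the three pieces gives the decomposition \(B_1B_2\cdots B_t\). The only blocks that can have length greater than one are the initial and the final runs, so every interior block \(B_h\) with \(2\le h\le t-1\) is a single letter.
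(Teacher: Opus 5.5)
Your argument is correct and is essentially the paper's: the arrows $U_i\to V_{i+c}\to U_{i+1}$ of Lemma~\ref{lem:directed-two-orbits} force strict interleaving in the Hom-compatible order, so repetitions can occur only before the first or after the last occurrence of the other label; you just spell out the boundary runs that the paper leaves implicit. One small slip: at the injective end, when the final run has the same label as the last alternating letter (for instance $U_m\to V_{m+c}$ followed by $V_{m+c+1},\ldots,V_n$), there is no arrow between them, because no arrow lies inside a $\tau$-orbit; argue instead as you did at the projective end, using the arrow $U_m\to V_{m+c}$ into the last alternating letter and then the directed path within the $V$-orbit.
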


\begin{proof}
For two adjacent orbits, the arrows
\(U_i\to V_{i+c}\to U_{i+1}\) in
\eqref{eq:directed-orbit-arrows} place \(V_{i+c}\) strictly between
\(U_i\) and \(U_{i+1}\) in the Hom-compatible order.  The two labels
therefore alternate whenever both orbits occur; repetition is possible only
before the first or after the last occurrence of the other label.
\end{proof}

\subsection{Positivity for reduced subwords}
\label{sec:directed-positivity}

Put \(\overline{\AA}_D=(\mod A)/[\CC_D]\) as in
Appendix~\ref{sec:directed-tau-closure}.  The goal of this subsection
is the factor-category identification in
Lemma~\ref{lem:directed-principal-positive}: if the \(D\)-subword of
\(\underline{w}_A\) is reduced, then
\[
 \mu_D(a,x)
 =\dim_\kk\overline{\AA}_D(X_a,X_x)\geq0
 \qquad(a,x\in D).
\]
The Auslander--Reiten species of \(\overline{\AA}_D\) is obtained by
deleting the positions outside \(D\).  We first describe the
\(\tau_D\)-orbits in \(\overline{\AA}_D\)
and their Coxeter word.  As in Appendix~\ref{sec:directed-tau-closure}, we
then apply Corollary~\ref{cor:directed-right-additive-strictness}.  In that
argument the function came from the assumed nonnegative coordinates; here
it comes from the heights of positive inversion roots.  Equation
\eqref{eq:directed-quotient-class-expansion} then identifies the mixed
coordinates with dimensions of Hom spaces in \(\overline{\AA}_D\).

We use the following standard facts about the geometric representation.

\begin{lemma}[{\cite[Proposition~4.2.5 and
equations~(4.24)--(4.25)]{BjornerBrenti}}]
\label{lem:coxeter-geometric-representation}
Let \(\Delta\) be a finite simple graph.  The geometric representation of
\(W(\Delta)\) on the real span of the simple roots \(\alpha_h\) has the
following properties.  If \(z=\sum_hz_h\alpha_h\), then
\begin{equation}\label{eq:directed-geometric-reflection}
 (s_i z)_i=-z_i+\sum_{h\sim i}z_h,\qquad
 (s_i z)_h=z_h\quad(h\ne i).
\end{equation}
The roots are the vectors \(w\alpha_h\) with \(w\in W(\Delta)\).
Every root is either positive, with all coordinates nonnegative, or
negative, with all coordinates nonpositive.
\end{lemma}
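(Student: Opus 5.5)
This lemma collects standard facts about the geometric representation of a Coxeter group. The plan is to specialize the general theory in \cite{BjornerBrenti} to the simply-laced group \(W(\Delta)\) and read off each assertion.

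First, recall the bilinear form on \(V=\bigoplus_h\mathbb R\alpha_h\) defined by \(B(\alpha_i,\alpha_j)=-\cos(\pi/m_{ij})\). For \(W(\Delta)\) this gives \(B(\alpha_i,\alpha_i)=1\), \(B(\alpha_i,\alpha_h)=-\tfrac12\) when \(h\sim i\), and \(0\) otherwise. The reflection is \(s_iz=z-2B(\alpha_i,z)\alpha_i\), which changes only the \(\alpha_i\)-coordinate. Expanding \(2B(\alpha_i,z)=2z_i-\sum_{h\sim i}z_h\) gives
\[
 (s_iz)_i=z_i-2z_i+\sum_{h\sim i}z_h=-z_i+\sum_{h\sim i}z_h,
\]
which is \eqref{eq:directed-geometric-reflection}. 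This step is routine, and it is the content of equations (4.24)--(4.25) of \cite{BjornerBrenti}.

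Second, define the root system \(\Phi\) as the orbit \(\{w\alpha_h\}\). The dichotomy \(\Phi=\Phi^+\sqcup\Phi^-\) is the substantive input. I would quote \cite[Proposition~4.2.5]{BjornerBrenti} for it: if \(\ell(ws_h)>\ell(w)\), then \(w\alpha_h\) has nonnegative coordinates, and otherwise \(w\alpha_h=-ws_h\alpha_h\) has nonpositive coordinates. Any root \(w\alpha_h\) falls into one of these two cases.

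If a self-contained argument is wanted instead, I would follow the standard dihedral-reduction proof. Argue by induction on \(\ell(w)\) that \(\ell(ws_h)>\ell(w)\) implies \(w\alpha_h\geq0\). Choose \(s_i\) with \(\ell(ws_i)<\ell(w)\), and factor \(w=w'u\) with \(u\) in the rank-two parabolic subgroup \(\langle s_i,s_h\rangle\) and \(w'\) minimal in its coset. The rank-two case is then checked directly; for simply-laced pairs it reduces to the \(A_2\) or \(A_1\times A_1\) computation. This dihedral reduction is the only nontrivial step, so I would cite it rather than reprove it.
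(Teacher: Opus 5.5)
Your proposal is correct and takes essentially the same approach as the paper, which gives no argument of its own and simply cites Bj\"orner--Brenti (Proposition~4.2.5 for the sign dichotomy, and the explicit action for \eqref{eq:directed-geometric-reflection}). Your derivation of the coordinate formula from \(B(\alpha_i,\alpha_h)=-\cos(\pi/m_{ih})\), and your reduction \(w\alpha_h=-(ws_h)\alpha_h\) in the descent case, are exactly the standard facts being invoked.
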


Let \(I\) be a finite totally ordered set and let
\(\underline w=(s_{j_x})_{x\in I}\) be an \(I\)-indexed word.  For
\(J\subseteq I\), write \(\prod_{y\in J}^{\nearrow}s_{j_y}\) for the
product in increasing order.  The inversion root at \(x\in I\) is
\[
 \beta_x=
 \left(\prod_{\substack{y\in I\\y<x}}^{\nearrow}s_{j_y}\right)
 (\alpha_{j_x}).
\]
\begin{lemma}[{\cite[Proposition~4.2.5 and
Equation~(4.25)]{BjornerBrenti}}]
\label{lem:coxeter-inversion-root-criterion}
Let \(I\) be a finite totally ordered set and let
\(\underline w=(s_{j_x})_{x\in I}\) be an \(I\)-indexed word.  Then
\(\underline w\) is reduced if and only if all its inversion roots are
positive.
\end{lemma}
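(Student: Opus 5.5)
This is the standard fact (Björner--Brenti, Proposition~4.2.5 and (4.25)): the inversion roots of a word are positive exactly when the word is reduced. The plan is to induct on \(|I|\), reducing to the one-letter extension criterion, which is Lemma~\ref{lem:coxeter-root-sign} (equivalently its right-hand form \(\ell(ws)>\ell(w)\iff w(\alpha_s)>0\)). Write \(I=\{x_1<\cdots<x_n\}\), put \(w_k=s_{j_{x_1}}\cdots s_{j_{x_k}}\), so that by definition \(\beta_{x_k}=w_{k-1}(\alpha_{j_{x_k}})\).

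First I will show that the word is reduced if and only if \(\ell(w_k)=\ell(w_{k-1})+1\) for every \(k\). This holds because \(\ell(w_k)\leq\ell(w_{k-1})+1\) always, and \(\ell(w_n)=n\) forces equality at every step.

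Next, by the right-handed version of Lemma~\ref{lem:coxeter-root-sign}, I will prove that \(\ell(w_{k-1}s)>\ell(w_{k-1})\) if and only if \(w_{k-1}(\alpha_s)\) is positive. I will obtain this from the stated left-handed form by inversion. Since \(\ell(v)=\ell(v^{-1})\), we have \(\ell(w_{k-1}s)=\ell(sw_{k-1}^{-1})\). Applying Lemma~\ref{lem:coxeter-root-sign} with \(v=w_{k-1}^{-1}\), and using \(v^{-1}=w_{k-1}\), gives exactly the positivity of \(w_{k-1}(\alpha_s)=\beta_{x_k}\). Lemma~\ref{lem:coxeter-root-sign} is stated for \(v\) represented by a reduced word. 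This is harmless, because every element has a reduced expression and the criterion depends only on the element.

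Finally I combine these: the length increases at each step exactly when each \(\beta_{x_k}\) is positive. Lemma~\ref{lem:coxeter-geometric-representation} guarantees that every root is either positive or negative, so "not positive" means "negative" and the dichotomy is clean.

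The only real obstacle is bookkeeping. I need the left/right conventions of Lemma~\ref{lem:coxeter-root-sign} to match the prefix-product definition of \(\beta_x\); the inversion trick above handles this. The rest is immediate.
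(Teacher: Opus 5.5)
Your proof is correct and follows essentially the paper's route. The paper gives no argument of its own and simply cites \cite[Proposition~4.2.5]{BjornerBrenti}; your argument is the standard one behind that citation: reducedness is equivalent to the length increasing at every prefix, and Lemma~\ref{lem:coxeter-root-sign} applied to \(v=w_{k-1}^{-1}\) gives \(\ell(w_{k-1}s_{j_{x_k}})>\ell(w_{k-1})\iff\beta_{x_k}>0\). Your final appeal to the positive/negative dichotomy is harmless but not needed for the stated equivalence.
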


For a positive root \(\beta=\sum_i b_i\alpha_i\), put
\(\operatorname{ht}(\beta)=\sum_i b_i\).

We extend the position construction in
Construction~\ref{constr:directed-ar-mesh-positions} to an arbitrary
finite ordered word.

\begin{construction}\label{constr:word-mesh-positions}
Let \(\Delta\) be a finite simple graph and
\(\underline w=(s_{j_x})_{x\in I}\) an \(I\)-indexed word in the simple
generators of \(W(\Delta)\), where \(I\) is a finite totally ordered set
and \(j_x\in V(\Delta)\).  For \(x\in I\), define
\[
 p_{\underline w}(x):=\max\{\,y\in I\mid y<x,\ j_y=j_x\,\}
\]
when the set on the right is nonempty.  Put
\[
 \mathcal W_{\underline w}(x):=
 \begin{cases}
  \{\,y\in I\mid p_{\underline w}(x)<y<x\,\},
     &p_{\underline w}(x)\text{ exists},\\
  \{\,y\in I\mid y<x\,\},&p_{\underline w}(x)\text{ does not exist}.
 \end{cases}
\]
Define
\[
 \operatorname{mid}_{\underline w}(x):=
 \left\{
  \max\{\,y\in\mathcal W_{\underline w}(x)\mid j_y=h\,\}
  \ \middle|\
  \begin{array}{c}
   h\sim j_x\text{ in }\Delta,\\[-2pt]
   \{\,y\in\mathcal W_{\underline w}(x)\mid j_y=h\,\}\ne\varnothing
  \end{array}
 \right\}.
\]
\end{construction}

\begin{construction}\label{constr:directed-factor-word}
Fix \(D\subseteq[N]\).  By
Proposition~\ref{prop:iyama-factor-category}, the ideal quotient
\(\overline{\AA}_D=(\mod A)/[\CC_D]\) is a \(\tau\)-category whose
indecomposable objects are the modules
\(X_x\) with \(x\in D\), and whose Auslander--Reiten species is obtained by
restriction from \(\mod A\).  Let \(\Delta_D\) be the graph whose vertices
are the \(\tau\)-orbits of \(\overline{\AA}_D\), that is, the orbits of its
translation \(\tau_D\).  Two vertices are joined when the
Auslander--Reiten species of \(\overline{\AA}_D\) has an arrow between
objects in the corresponding \(\tau_D\)-orbits.
Lemma~\ref{lem:directed-two-orbits} ensures that this graph has no loops.

For \(x\in D\), let \(j_x\in V(\Delta_D)\) be the \(\tau_D\)-orbit
containing \(X_x\).  The order inherited from \(X_1,\ldots,X_N\) is
Hom-compatible for \(\overline{\AA}_D\), and defines its
Auslander--Reiten word
\[
 \underline{w}_D=(s_{j_x})_{x\in D},
\]
with the positions read in increasing order.  Since \(\tau_DX\) is either
\(\tau X\) or zero, every \(\tau_D\)-orbit in \(\overline{\AA}_D\) is
contained in a unique \(\tau\)-orbit in \(\mod A\).  Define
\(\pi_D:V(\Delta_D)\to V(\Delta_A)\) by letting \(\pi_D(b)\) be the unique
\(\tau\)-orbit in \(\mod A\) that
contains \(b\).  Replacing every label \(j_x\) by \(\pi_D(j_x)\) recovers
the \(D\)-subword of
\(\underline{w}_A\).

Apply Construction~\ref{constr:word-mesh-positions} to
\(\underline{w}_D\), and, for \(x\in D\), put
\[
 p_D(x):=p_{\underline{w}_D}(x),\qquad
 \operatorname{mid}_D(x):=\operatorname{mid}_{\underline{w}_D}(x).
\]
\end{construction}

\begin{example}\label{ex:directed-factor-word}
Let \(A=\kk Q\) for the linearly oriented quiver
\[
 Q:\quad 1\longrightarrow2\longrightarrow3.
\]
For the interval modules on this quiver, one Hom-compatible order and its
Auslander--Reiten quiver are
\[
\begin{array}{c|cccccc}
x&1&2&3&4&5&6\\ \hline
X_x&S_3&M_{23}&M_{123}&S_2&M_{12}&S_1.
\end{array}
\]
\begin{center}
\begin{tikzpicture}[xscale=1.6,yscale=0.9]
\node (x1) at (0,-1) {\(X_1\)};
\node (x2) at (1,0) {\(X_2\)};
\node (x3) at (2,1) {\(X_3\)};
\node (x4) at (2,-1) {\(X_4\)};
\node (x5) at (3,0) {\(X_5\)};
\node (x6) at (4,-1) {\(X_6\)};
\draw[->] (x1) -- (x2);
\draw[->] (x2) -- (x3);
\draw[->] (x2) -- (x4);
\draw[->] (x3) -- (x5);
\draw[->] (x4) -- (x5);
\draw[->] (x5) -- (x6);
\draw[dashed] (x4) -- (x1);
\draw[dashed] (x6) -- (x4);
\draw[dashed] (x5) -- (x2);
\end{tikzpicture}
\end{center}
where a dashed line joins each nonprojective module to its translate.
The three \(\tau\)-orbits, indexed from their projective ends, are
\[
 \mathcal O_1=(X_1,X_4,X_6),\qquad
 \mathcal O_2=(X_2,X_5),\qquad
 \mathcal O_3=(X_3).
\]
Thus \(\Delta_A\) is the path \(1-2-3\), and
\[
 \underline{w}_A=s_1s_2s_3s_1s_2s_1.
\]

Take \(D=\{1,2,3,5,6\}\).  Thus
\(\overline{\AA}_D=(\mod A)/[\add X_4]\).
Its Auslander--Reiten quiver is
\[
\begin{tikzpicture}[baseline=-2pt,xscale=1.25,yscale=0.65]
\node (x1) at (0,-1) {\(X_1\)};
\node (x2) at (1,0) {\(X_2\)};
\node (x3) at (2,1) {\(X_3\)};
\node (x5) at (3,0) {\(X_5\)};
\node (x6) at (4,-1) {\(X_6\)};
\draw[->] (x1) -- (x2);
\draw[->] (x2) -- (x3);
\draw[->] (x3) -- (x5);
\draw[->] (x5) -- (x6);
\draw[dashed] (x5) -- (x2);
\end{tikzpicture}
\]
where the dashed line records \(\tau_DX_5=X_2\).  This is the only nonzero
value of \(\tau_D\), and the four \(\tau_D\)-orbits in
\(\overline{\AA}_D\) are
\[
 a_0=\{X_1\},\qquad b=\{X_2,X_5\},\qquad
 c=\{X_3\},\qquad a_1=\{X_6\}.
\]
Identifying the vertices in each \(\tau_D\)-orbit and forgetting the arrow
orientations gives
\[
\begin{tikzpicture}[baseline=-2pt,xscale=1.3,yscale=0.8]
\node (a0) at (0,0) {\(a_0\)};
\node (b) at (1,0) {\(b\)};
\node (c) at (2,0) {\(c\)};
\node (a1) at (1,-1) {\(a_1\)};
\draw (a0) -- (b) -- (c);
\draw (b) -- (a1);
\node at (3.0,-0.2) {\(=\Delta_D\)};
\end{tikzpicture}
\]
and the Auslander--Reiten word of \(\overline{\AA}_D\) is
\[
 \underline{w}_D=s_{a_0}s_bs_cs_bs_{a_1}.
\]
The map \(\pi_D\) sends \(a_0,a_1\) to \(1\), sends \(b\) to \(2\),
and sends \(c\) to \(3\).  Replacing every label \(v\) by \(\pi_D(v)\)
therefore gives the reduced \(D\)-subword \(s_1s_2s_3s_2s_1\) of
\(\underline{w}_A\).  Notice that position \(6\) does not take position
\(1\) as its predecessor in \(\underline{w}_D\): the object
\(X_{p(6)}=X_4\) is zero in \(\overline{\AA}_D\), so \(X_1\) and \(X_6\)
belong to different \(\tau_D\)-orbits.
\end{example}

The root calculation below uses the following property of an ordered word.

\begin{definition}\label{def:directed-alternation-property}
Let \(\Delta\) be a simple graph, let \(I\) be a finite totally ordered
set, and let \(\underline w=(s_{j_x})_{x\in I}\) be an \(I\)-indexed word
whose labels belong to \(V(\Delta)\).  For each edge \(a\sim b\), let
\(\underline w|_{a,b}\) be the
subword obtained by deleting all letters other than \(s_a\) and \(s_b\), and
write
\[
 \underline w|_{a,b}=B_1B_2\cdots B_t
\]
as its decomposition into maximal blocks of equal letters.  The word
\(\underline w\) has the \emph{alternation property} if
\(\lvert B_h\rvert=1\) for every \(2\leq h\leq t-1\) and every edge
\(a\sim b\) of \(\Delta\).
\end{definition}

\begin{lemma}\label{lem:directed-factor-word-alternation}
The word \(\underline{w}_D\) has the alternation property.
\end{lemma}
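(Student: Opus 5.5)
Fix an edge \(a\sim b\) of \(\Delta_D\).  By definition there is an arrow of the Auslander--Reiten species of \(\overline{\AA}_D\) between an object of the \(\tau_D\)-orbit \(a\) and an object of the \(\tau_D\)-orbit \(b\).  Such an arrow is an arrow of \(\Gamma(\mod A)\) between two modules in \(D\).  Moreover, \(\pi_D(a)\ne\pi_D(b)\): Lemma~\ref{lem:directed-two-orbits} excludes arrows inside a single \(\tau\)-orbit of \(\mod A\).  Put \(a'=\pi_D(a)\) and \(b'=\pi_D(b)\).  Then \(a'\sim b'\) in \(\Delta_A\), and Lemma~\ref{lem:directed-two-orbits} gives, for suitable indexings \((U_i)\) and \((V_j)\) of the two \(\tau\)-orbits of \(\mod A\), the chains \(U_i\to V_{i+c}\to U_{i+1}\).

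The \(\tau_D\)-orbit \(a\) consists of the modules in a set \(\{U_i\mid i_0\le i\le i_1\}\), all lying in \(D\).  These modules form a consecutive segment of the \(\tau\)-orbit, because \(\tau_D X\) is either \(\tau X\) or zero.  Similarly, the orbit \(b\) is \(\{V_j\mid j_0\le j\le j_1\}\).  In the Hom-compatible order, the \(U_i\) for \(i_0\le i\le i_1\) appear in increasing index, and so do the \(V_j\).  The chains \(U_i\to V_{i+c}\to U_{i+1}\) show that \(V_{i+c}\) lies strictly between \(U_i\) and \(U_{i+1}\) in the order.  Thus if \(U_i,U_{i+1}\in a\) and \(V_{i+c}\in D\), then \(V_{i+c}\) separates them.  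It remains to ensure that \(V_{i+c}\) actually lies in the orbit \(b\) whenever both \(U_i\) and \(U_{i+1}\) lie in \(a\) and the restricted word has letters of \(b\) on both sides.  The same must hold with the roles of \(a\) and \(b\) exchanged.

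This is the main step.  Suppose two consecutive letters \(s_a\) at \(U_i,U_{i+1}\) appear in the middle of \(\underline{w}_D|_{a,b}\), so that some element \(V_j\) of the orbit \(b\) occurs before \(U_i\) and some \(V_{j'}\) occurs after \(U_{i+1}\).  Then \(j<i+c<j'\), using the ordering of the chain.  Since the orbit \(b\) is a consecutive segment \([j_0,j_1]\) containing \(j\) and \(j'\), it also contains \(i+c\).  In particular \(V_{i+c}\in D\), and it belongs to \(b\).  Hence \(V_{i+c}\) lies between \(U_i\) and \(U_{i+1}\), a contradiction.

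The exchanged case works the same way.  Suppose consecutive \(V_j,V_{j+1}\) in orbit \(b\) have \(a\)-letters on both sides.  Using the chain \(V_j\to U_{j-c+1}\to V_{j+1}\), which is the same chain family reindexed, we get that \(U_{j-c+1}\) lies in the segment of \(a\) and separates them.  The only delicate point is verifying the index inequalities \(j<i+c<j'\) from the order positions.  This follows because, within the union of the two orbits, the order is exactly the interleaving \(\cdots U_i,V_{i+c},U_{i+1},V_{i+c+1},\cdots\) forced by the chains, as in the proof of Lemma~\ref{lem:directed-two-label-alternation}.  Hence every interior block of \(\underline{w}_D|_{a,b}\) has length one, and \(\underline{w}_D\) has the alternation property.
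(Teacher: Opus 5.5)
Your proof is correct and follows the paper's argument. In both, each \(\tau_D\)-orbit is a consecutive segment of a \(\tau\)-orbit, and adjacent vertices of \(\Delta_D\) lie over distinct adjacent vertices of \(\Delta_A\); the alternation of Lemma~\ref{lem:directed-two-label-alternation} then survives restriction to consecutive segments. Your ``delicate point'' can be settled by applying that lemma directly: if \(U_i,U_{i+1}\) have \(V\)-letters on both sides in \(\underline{w}_A|_{a',b'}\), then some \(V_k\) with \(j<k<j'\) lies between them, which also avoids the boundary cases where \(V_{i+c}\) does not exist.
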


\begin{proof}
Every \(\tau_D\)-orbit is a consecutive segment of the \(\tau\)-orbit in
\(\mod A\) that contains it.  If \(b,c\in V(\Delta_D)\) are adjacent, then
\(\pi_D(b)\) and \(\pi_D(c)\) are adjacent in \(\Delta_A\).
Lemma~\ref{lem:directed-two-label-alternation} shows that the occurrences of
these two labels in \(\underline{w}_A\) alternate, except possibly before
the first or after the last occurrence of the other label.  Restricting to
the segments \(b\) and \(c\) proves the assertion for
\(\underline{w}_D\).
\end{proof}

\begin{lemma}\label{lem:directed-delete-separated-edge}
Let \(\Delta\) be a finite simple graph, and let \(\underline w\) be a reduced
word in \(W(\Delta)\).  Suppose that \(a\sim b\) in \(\Delta\) and that every
occurrence of \(s_a\) in \(\underline w\) precedes every occurrence of \(s_b\).
Let \(\Delta'\) be the graph obtained from \(\Delta\) by deleting the edge
\(a\sim b\).  Then \(\underline w\) is reduced in \(W(\Delta')\).
\end{lemma}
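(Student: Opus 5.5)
The plan is to use the inversion-root criterion (Lemma~\ref{lem:coxeter-inversion-root-criterion}) for both groups at once. Write \(\rho\) and \(\rho'\) for the geometric representations of \(W(\Delta)\) and \(W(\Delta')\) on the common space with basis \(\{\alpha_h\}\). By \eqref{eq:directed-geometric-reflection}, the reflections \(\rho(s_i)\) and \(\rho'(s_i)\) coincide unless \(i\in\{a,b\}\). Moreover, \(\rho(s_a)z=\rho'(s_a)z\) whenever \(z_b=0\), and \(\rho(s_b)z=\rho'(s_b)z\) whenever \(z_a=0\). Since every \(s_a\) precedes every \(s_b\), I will split the word as \(\underline w=\underline u\,\underline v\). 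Here \(\underline u\) is the prefix ending at the last \(s_a\), and it contains no \(s_b\). The suffix \(\underline v\) contains no \(s_a\). Let \(u,v\) be the elements of \(W(\Delta)\) they represent, and \(u',v'\) those of \(W(\Delta')\).

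\emph{Step 1: inversion roots inside \(\underline u\).} For a position \(x\) of \(\underline u\), the inversion root is a product of reflections \(s_i\) with \(i\neq b\) applied to \(\alpha_{j_x}\) with \(j_x\neq b\). Only \(s_b\) can change a \(b\)-coordinate, so every intermediate vector has \(b\)-coordinate \(0\). Hence \(\rho\) and \(\rho'\) give the same vector at each step, and these roots are positive in \(\Delta'\) because they are positive in \(\Delta\). The same argument, with the roles of \(a\) and \(b\) exchanged, shows that the roots \(\rho'(\text{prefix of }\underline v)(\alpha_{j_y})\) coincide with the \(\Delta\)-roots \(\gamma_y:=\rho(\text{prefix of }\underline v)(\alpha_{j_y})\). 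These vectors have \(a\)-coordinate \(0\).

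\emph{Step 2: the remaining inversion roots.} Let \(y\) be a position of \(\underline v\). Its inversion root in \(\Delta'\) is \(\rho'(u')\gamma_y\), and in \(\Delta\) it is \(\rho(u)\gamma_y\), which is positive. Both \(\rho(u)\) and \(\rho'(u')\) are products of reflections \(s_i\) with \(i\ne b\), so they agree on the hyperplane \(z_b=0\), and both leave the \(b\)-coordinate fixed. Writing \(\gamma_y=c\,\alpha_b+\gamma^0_y\) with \(c=(\gamma_y)_b\ge0\) and \((\gamma^0_y)_b=0\), I get
\[
 \rho'(u')\gamma_y=\rho(u)\gamma_y+c\bigl(\rho'(u')\alpha_b-\rho(u)\alpha_b\bigr).
\]
It therefore suffices to prove that \(\delta:=\rho'(u')\alpha_b-\rho(u)\alpha_b\) has all coordinates nonnegative. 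The \(b\)-coordinate of \(\delta\) is \(0\).

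\emph{Step 3, the main obstacle: comparing \(\rho(u)\alpha_b\) and \(\rho'(u')\alpha_b\).} I will induct along the letters of \(\underline u\), read from the right. The goal is to show that, at each stage, the difference between the \(\Delta'\)-vector and the \(\Delta\)-vector is \(-\rho(\text{remaining prefix})\) applied to a nonnegative multiple of \(\alpha_a\)-type contributions. The first discrepancy arises at the last \(s_a\) in \(\underline u\). There \(\rho(s_a)\) adds \(z_b\alpha_a\) and \(\rho'(s_a)\) does not, so the difference is \(-\rho(u_{<})\bigl(z_b\alpha_a\bigr)\) with \(z_b>0\). The desired sign is \(-\rho(u_<)\alpha_a\le0\) in the \(\Delta\)-vector, which would follow if \(\rho(u_<)\alpha_a\) were a negative root. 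I expect the hypothesis that \(\underline u\,s_b\) is reduced in \(\Delta\) to enter at exactly this point, through Lemma~\ref{lem:coxeter-root-sign}. If this direct bookkeeping becomes unwieldy, I would fall back on a length count. Using \(\ell'(u')=\ell(u)\), \(\ell'(v')=\ell(v)\) and the fact that \(u'\in W'_{V\setminus\{b\}}\) and \(v'\in W'_{V\setminus\{a\}}\) generate a product in which \(s_a\) and \(s_b\) commute, I would show directly that no deletion in \(\underline u\,\underline v\) is possible in \(W(\Delta')\). A deletion pair in \(\Delta'\) would then have to straddle the split. I would translate such a pair into a deletion in \(\Delta\) by the deletion condition applied to the two parabolic pieces, contradicting reducedness in \(W(\Delta)\).
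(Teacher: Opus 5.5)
\textbf{There is a genuine gap.} Your Step~1 and the decomposition in Step~2 are correct. The problem is the reduction ``it suffices to prove \(\delta\geq0\)'': it reduces the lemma to a false statement. Take \(\Delta\) to be the single edge \(a\sim b\) and \(\underline w=s_as_b\), so \(\underline u=s_a\) and \(\underline v=s_b\). Then
\begin{itemize}
\item \(\rho(u)\alpha_b=\alpha_a+\alpha_b\) and \(\rho'(u')\alpha_b=\alpha_b\), so \(\delta=-\alpha_a\);
\item the \(\Delta'\)-inversion root \(\rho'(u')\gamma_y=\alpha_b\) is still positive, but the correction term \(c\,\delta\) is not nonnegative.
\end{itemize}

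Your Step~3 heuristic points the wrong way. The vector \(\rho(u_<)\alpha_a\) is the inversion root of \(\underline w\) at the last \(s_a\) of \(\underline u\). By Lemma~\ref{lem:coxeter-inversion-root-criterion} and reducedness in \(W(\Delta)\), it is positive, never negative. So the first discrepancy always contributes with the sign you do not want, and no reducedness hypothesis can reverse this.

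The fallback via deletion pairs is not yet an argument. You give no mechanism for turning a \(\Delta'\)-deletion that straddles the split into a \(\Delta\)-deletion. The relations of the two groups differ precisely at the pair \(s_a,s_b\), so such a translation is not automatic.

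\textbf{The missing idea.} You do not need to compare \(\rho(u)\alpha_b\) with \(\rho'(u')\alpha_b\). Instead, use sign coherence of roots, Lemma~\ref{lem:coxeter-geometric-representation} applied to \(\Delta'\).
\begin{itemize}
\item The vector \(\rho'(u')\gamma_y\) is a root of \(W(\Delta')\), being an element of \(W(\Delta')\) applied to a simple root. Hence it is either positive or negative.
\item Its \(b\)-coordinate equals \(c=(\gamma_y)_b\), because \(\underline u\) contains no \(s_b\). So if \(c>0\), the root is positive.
\item If \(c=0\), then \(\gamma_y\) lies in the hyperplane \(z_b=0\). You already observed that \(\rho(u)\) and \(\rho'(u')\) agree there, so \(\rho'(u')\gamma_y=\rho(u)\gamma_y\), which is positive by reducedness in \(W(\Delta)\).
\end{itemize}
This two-case argument is the paper's proof. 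It completes your Steps~1 and~2 directly and makes Step~3 unnecessary.
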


\begin{proof}
Write \(\underline w=\underline u\,\underline v\), where no \(b\) occurs
in \(\underline u\) and no \(a\) occurs in \(\underline v\).  For a
position in \(\underline u\), the \(b\)-coordinate of its inversion root
is zero, so deleting the \(z_b\)-summand from the action of \(s_a\) in
\eqref{eq:directed-geometric-reflection} does not change that root.

For a position in \(\underline v\), first form its inversion root \(\gamma\)
using only the prefix inside \(\underline v\).  It is positive because
\(\underline v\) is a suffix of a reduced word.  No \(s_a\) occurs there,
so \(\gamma_a=0\).  Put \(c=\gamma_b\geq0\).  While the prefix
\(\underline u\) acts, no \(s_b\) occurs, so
the \(b\)-coordinate stays equal to \(c\).  If \(c=0\), deleting the edge
changes nothing.  If \(c>0\), the resulting real root cannot be negative,
because its \(b\)-coordinate is positive.  Thus all inversion roots remain
positive, and the word remains reduced.
\end{proof}

\begin{proposition}\label{prop:directed-factor-word-reduced}
If the \(D\)-subword of \(\underline{w}_A\) is reduced, then
\(\underline{w}_D\) is reduced in
\(W(\Delta_D)\).
\end{proposition}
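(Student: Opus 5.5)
The plan is to pass from the \(D\)-subword of \(\underline{w}_A\), reduced in \(W(\Delta_A)\), to \(\underline{w}_D\) in \(W(\Delta_D)\) through an intermediate graph \(\Delta'\). The vertex set of \(\Delta'\) is \(V(\Delta_D)\), and \(b\sim c\) in \(\Delta'\) exactly when \(\pi_D(b)\sim\pi_D(c)\) in \(\Delta_A\). By Construction~\ref{constr:directed-factor-word}, adjacency in \(\Delta_D\) comes from an arrow of the restricted species, hence from an arrow of \(\Gamma(\mod A)\). So \(\Delta_D\) is a spanning subgraph of \(\Delta'\). The proof then has two steps: (i) \(\underline{w}_D\) is reduced in \(W(\Delta')\); (ii) the extra edges of \(\Delta'\) can be removed one at a time using Lemma~\ref{lem:directed-delete-separated-edge}.

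For step (i) I would use Lemma~\ref{lem:coxeter-inversion-root-criterion} and compare inversion roots. Let \(\varphi\) be the linear map sending \(\alpha_b\) to \(\alpha_{\pi_D(b)}\). By \eqref{eq:directed-geometric-reflection}, \(\varphi(s_bz)=s_{\pi_D(b)}\varphi(z)\) holds whenever \(z\) has zero coordinate at every fiber-mate \(b'\ne b\) with \(\pi_D(b')=\pi_D(b)\). The key claim is that this vanishing holds at every step of the computation of each inversion root. The reason should be that the fibers of \(\pi_D\) are consecutive segments of one \(\tau\)-orbit, separated by deleted positions, so two fiber-mates never occur interleaved in \(\underline{w}_D\).

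\begin{itemize}
\item If the claim holds, each \(\Delta'\)-inversion root maps under \(\varphi\) to the corresponding \(\Delta_A\)-inversion root of the \(D\)-subword.
\item That image is positive, and a root of \(W(\Delta')\) is either positive or negative (Lemma~\ref{lem:coxeter-geometric-representation}). Since \(\varphi\) preserves the sign of sign-coherent vectors, the \(\Delta'\)-root is positive.
\end{itemize}

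Proving the vanishing claim is where I expect the main obstacle. I would try induction along the prefix. A fiber-mate that occurs earlier is reflected by letters whose neighbours are controlled by the alternation property (Lemma~\ref{lem:directed-factor-word-alternation} together with Lemma~\ref{lem:directed-two-label-alternation}). If the direct invariant fails, the fallback is to compute inversion roots from the right, using suffixes instead of prefixes. Reducedness can be tested either way, and the segment structure may make the vanishing easier to verify in that direction.

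For step (ii), take an edge \(b\sim c\) of \(\Delta'\) that is not an edge of \(\Delta_D\). Then \(\pi_D(b)\sim\pi_D(c)\), so by Lemma~\ref{lem:directed-two-orbits} the arrows of \(\Gamma(\mod A)\) between the two \(\tau\)-orbits zigzag as in \eqref{eq:directed-orbit-arrows}. The segments \(b\) and \(c\) contain no pair of objects joined by such an arrow. Because each segment is consecutive and the labels alternate in \(\underline{w}_A\), this forces every occurrence of \(s_b\) in \(\underline{w}_D\) to precede every occurrence of \(s_c\), or the reverse. Lemma~\ref{lem:directed-delete-separated-edge} then deletes the edge while keeping the word reduced. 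Repeating this for each such edge reaches \(\Delta_D\), which proves the proposition.
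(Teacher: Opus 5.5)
Your intermediate graph \(\Delta'\) is the paper's \(\widehat\Delta_D\), and your step~(ii) is exactly the paper's argument. The gap is in step~(i). The vanishing claim is false, so the \(W(\Delta')\)-inversion roots of \(\underline{w}_D\) do \emph{not} map to the inversion roots of the \(D\)-subword.

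It already fails in Example~\ref{ex:directed-factor-word}. There \(\Delta'=\Delta_D\) and \(\underline{w}_D=s_{a_0}s_bs_cs_bs_{a_1}\), with \(a_0,a_1\) fiber-mates over \(1\).
\begin{itemize}
\item The inversion root at position \(6\) is computed from \(\alpha_{a_1}\). Applying \(s_b,s_c,s_b\) gives \(\alpha_{a_1}+\alpha_b+\alpha_c\). Then \(s_{a_0}\) is applied while the \(a_1\)-coordinate equals \(1\). The result is \(\alpha_{a_0}+\alpha_{a_1}+\alpha_b+\alpha_c\), whose image under your \(\varphi\) is \(2\alpha_1+\alpha_2+\alpha_3\).
\item The corresponding inversion root of \(s_1s_2s_3s_2s_1\) is \(\alpha_2+\alpha_3\).
\item Your fallback fails symmetrically: compute from the right at position \(1\).
\end{itemize}

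This is structural, not a matter of direction. For fiber-mates \(b\ne b'\), the reflection \(s_b\) fixes \(\alpha_{b'}\), because \(b\not\sim b'\) in \(\Delta'\). But \(s_{\pi_D(b)}\) negates \(\alpha_{\pi_D(b')}=\alpha_{\pi_D(b)}\). So no map \(\alpha_b\mapsto c_b\alpha_{\pi_D(b)}\) with all \(c_b\ne0\) intertwines the two actions once a fiber has two elements. Concretely, suppose a fiber splits into segments \(b_1\) before \(b_2\). At the first occurrence \(x\) of \(b_2\), the starting vector \(\alpha_{b_2}\) keeps \(b_2\)-coordinate \(1\) until an \(s_{b_1}\) is applied. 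Hence the invariant breaks whenever a fiber is split, and the alternation lemmas cannot rescue it.

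You do not need roots for step~(i). Fiber-mates are nonadjacent in \(\Delta'\), since \(\Delta_A\) has no loops, and they map to the same generator. Hence \(s_b\mapsto s_{\pi_D(b)}\) respects every Coxeter relation of \(W(\Delta')\):
\begin{itemize}
\item \((s_bs_c)^3\) for \(b\sim c\);
\item \((s_bs_c)^2\) for nonadjacent \(b,c\) with distinct images;
\item \((s_bs_{b'})^2\mapsto s_{\pi_D(b)}^4=1\) for fiber-mates.
\end{itemize}
So it defines a homomorphism \(W(\Delta')\to W(\Delta_A)\). It sends simple reflections to simple reflections, so it does not increase length. It sends the element of \(\underline{w}_D\) to the element of the reduced \(D\)-subword, which has length \(|D|\). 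Hence \(|D|\le\ell_{W(\Delta')}(\underline{w}_D)\le|D|\), which is step~(i). With this replacement, your proof coincides with the paper's.
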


\begin{proof}
Let \(\widehat\Delta_D\) be the graph with vertex set \(V(\Delta_D)\) in
which \(b\sim c\) if and only if \(\pi_D(b)\sim\pi_D(c)\) in \(\Delta_A\).
The assignment \(s_b\mapsto s_{\pi_D(b)}\) respects the Coxeter relations
and gives a homomorphism
\(\varphi:W(\widehat\Delta_D)\to W(\Delta_A)\).
The element \(\varphi(\underline{w}_D)\) is represented by the reduced
\(D\)-subword of \(\underline{w}_A\), which has length \(|D|\).  Since
\(\varphi\) sends simple reflections to simple reflections, it does not
increase Coxeter length; this gives the first inequality below.  The second
follows because, by definition, \(\underline{w}_D\) has \(|D|\) letters:
\[
 |D|\leq\ell_{W(\widehat\Delta_D)}(\underline{w}_D)\leq|D|.
\]
Thus \(\ell_{W(\widehat\Delta_D)}(\underline{w}_D)=|D|\), so
\(\underline{w}_D\) is reduced in
\(W(\widehat\Delta_D)\).

Now let \(b-c\) be an edge of \(\widehat\Delta_D\) which is not an edge of
\(\Delta_D\).  Thus the Auslander--Reiten species of
\(\overline{\AA}_D\) has no arrow between objects belonging to the orbits
\(b\) and \(c\).  The orbits \(b\) and \(c\) are consecutive segments of
the \(\tau\)-orbits \(\pi_D(b)\) and \(\pi_D(c)\) in \(\mod A\).  If neither
every occurrence of \(b\) precedes every occurrence of \(c\) nor every
occurrence of \(c\) precedes every occurrence of \(b\), then some object
belonging to \(b\) and some object belonging to \(c\) are consecutive in the
alternating pattern \eqref{eq:directed-orbit-arrows}.  That pattern gives an
Auslander--Reiten arrow between them, a contradiction.  Hence all
occurrences of one of \(b,c\) precede all occurrences of the other.  By
Lemma~\ref{lem:directed-delete-separated-edge}, the word
remains reduced after this edge is deleted.  Deleting all such edges changes
\(\widehat\Delta_D\) into \(\Delta_D\).
\end{proof}

We next compare \(p_D(x)\) and \(\operatorname{mid}_D(x)\) with
\(p(x)\) and \(\operatorname{mid}(x)\), respectively.

\begin{lemma}\label{lem:directed-factor-mesh-positions}
Suppose that the \(D\)-subword of \(\underline{w}_A\) is reduced.  For
\(x\in D\), the following statements hold.
\begin{enumerate}
\item One has
\begin{equation}
 \theta_DX_x
 =\bigoplus_{y\in\operatorname{mid}(x)\cap D}X_y.
 \label{eq:directed-factor-middle}
\end{equation}
If \(p(x)\in D\), then \(\theta_DX_x\ne0\).
\item The predecessor \(p_D(x)\) exists exactly when \(p(x)\in D\), and
then \(p_D(x)=p(x)\).  Consequently,
\begin{equation}
 \tau_DX_x
   =
   \begin{cases}
    X_{p(x)},&p(x)\in D,\\
    0,&p(x)\text{ does not exist or }p(x)\notin D.
   \end{cases}
   \label{eq:directed-factor-translation}
\end{equation}
\item One has
\[
 \operatorname{mid}_D(x)=\operatorname{mid}(x)\cap D.
\]
\end{enumerate}
\end{lemma}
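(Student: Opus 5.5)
The plan is to read everything off the restricted species description in Proposition~\ref{prop:iyama-factor-category}, so that \(\theta_D\) and \(\tau_D\) are exactly the operators of Construction~\ref{con:factor-ladder}, and then to translate each statement into word positions using Lemma~\ref{lem:directed-word-dictionary}.

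\textbf{Part (1).} By definition, \(\theta_DX_x\) is the sum of the summands of \(\theta X_x\) that lie in \(D\). Lemma~\ref{lem:directed-word-dictionary}(2) identifies these summands with the positions in \(\operatorname{mid}(x)\), each with multiplicity one. Intersecting with \(D\) then gives \eqref{eq:directed-factor-middle}.

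For the nonvanishing claim, assume \(p(x)\in D\) but \(\operatorname{mid}(x)\cap D=\varnothing\). Between \(p(x)\) and \(x\), every label adjacent to \(i_x\) that occurs in \(\mathcal W(x)\) has its last occurrence outside \(D\). I want to show that no selected position strictly between \(p(x)\) and \(x\) carries a label adjacent to \(i_x\).
\begin{itemize}
\item Consider a neighbour \(h\) occurring in \(\mathcal W(x)\).
\item By Lemma~\ref{lem:directed-two-label-alternation}, together with \eqref{eq:directed-orbit-arrows}, the labels \(h\) and \(i_x\) alternate between the occurrences \(p(x)\) and \(x\).
\item Hence \(h\) occurs exactly once in \(\mathcal W(x)\), and that occurrence is the last one, which is unselected.
\end{itemize}
It follows that, in the \(D\)-subword, the letters between the two selected copies of \(s_{i_x}\) commute with \(s_{i_x}\). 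The subword therefore contains \(s_{i_x}\,u\,s_{i_x}\) with \(u\) commuting with \(s_{i_x}\), so it is not reduced. This contradiction proves \(\theta_DX_x\neq0\).

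\textbf{Part (2).} By Construction~\ref{con:factor-ladder}, \(\tau_DX_x=\tau X_x=X_{p(x)}\) exactly when \(p(x)\) exists, \(p(x)\in D\), and \(\theta_DX_x\ne0\). Part (1) shows that the last condition is automatic once \(p(x)\in D\), which yields \eqref{eq:directed-factor-translation}.

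It remains to identify \(p_D(x)\). Every \(\tau_D\)-orbit is a consecutive segment of a \(\tau\)-orbit, traversed in the Hom-compatible order, and the label \(j_x\) records this segment.
\begin{itemize}
\item If \(p(x)\in D\), then \(X_{p(x)}=\tau_DX_x\) lies in the same segment. Since no occurrence of the label \(i_x\) lies strictly between \(p(x)\) and \(x\), we get \(p_D(x)=p(x)\).
\item If \(p(x)\notin D\) or \(p(x)\) does not exist, then \(\tau_DX_x=0\). So \(X_x\) begins its \(\tau_D\)-orbit, and no earlier position of \(D\) carries the label \(j_x\). Hence \(p_D(x)\) does not exist.
\end{itemize}

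\textbf{Part (3).} This is the step I expect to be the main obstacle. I would compare both sides with \(\theta_DX_x\), using Lemma~\ref{lem:directed-word-dictionary}(2) applied to the \(\tau\)-category \(\overline{\AA}_D\) and the word \(\underline{w}_D\), in place of \(\mod A\) and \(\underline{w}_A\).

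The proof of that lemma used only three facts: the alternation pattern \eqref{eq:directed-orbit-arrows}, the absence of arrows inside an orbit, and the fact that orbits are listed in order. For \(\overline{\AA}_D\) these hold for the following reasons:
\begin{itemize}
\item Arrows of \(\overline{\AA}_D\) are restrictions of arrows of \(\mod A\), and \(\tau_D\)-orbits are segments of \(\tau\)-orbits, so \eqref{eq:directed-orbit-arrows} restricts to the segments.
\item Lemma~\ref{lem:directed-factor-word-alternation} supplies the alternation property for \(\underline{w}_D\).
\item Part (2) identifies the window \(\mathcal W_{\underline{w}_D}(x)\) with \(\mathcal W(x)\cap D\).
\end{itemize}
Then \(\operatorname{mid}_D(x)\) indexes the summands of \(\theta_DX_x\), and by part (1) these are indexed by \(\operatorname{mid}(x)\cap D\).

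The delicate point is that one \(\Delta_A\)-label \(h\) may split into several \(\Delta_D\)-labels. I must check two things:
\begin{itemize}
\item Taking the last selected occurrence of each \(\Delta_D\)-neighbour inside the window picks out exactly the selected last occurrences of \(\Delta_A\)-neighbours.
\item A selected position of label \(h\) that is not the last occurrence of \(h\) in \(\mathcal W(x)\) cannot appear.
\end{itemize}
For the second point, alternation with \(i_x\) inside \((p(x),x)\) again forces a single occurrence of \(h\) in the window. I would also need this when \(p(x)\) is absent, where the window is an initial segment; there a repeated \(h\)-block can occur only before the first \(i_x\), and the last element of that block is still the one adjacent via \eqref{eq:directed-orbit-arrows}.
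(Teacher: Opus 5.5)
Your parts (1) and (2) are correct and match the paper's argument. The gap is in part (3). It starts with your claim that part (2) identifies \(\mathcal W_{\underline{w}_D}(x)\) with \(\mathcal W(x)\cap D\). That claim is false when \(p(x)\) exists but \(p(x)\notin D\). In that case \(p_D(x)\) is undefined, so \(\mathcal W_{\underline{w}_D}(x)\) is the set of \emph{all} selected positions before \(x\), including those before \(p(x)\). In Example~\ref{ex:directed-factor-word} with \(x=6\), one has \(\mathcal W_{\underline{w}_D}(6)=\{1,2,3,5\}\) but \(\mathcal W(6)\cap D=\{5\}\). The larger window contains position \(1\), whose \(\Delta_A\)-label is \(i_6\) itself, and two occurrences of the neighbouring \(\Delta_A\)-label. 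Your case split (\(p(x)\in D\), or \(p(x)\) absent) omits this case, and alternation inside \((p(x),x)\) says nothing about it.

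More importantly, you never prove the first of your two checks: that for each \(\Delta_D\)-neighbour \(b\) of \(j_x\), the last occurrence of \(b\) in \(\mathcal W_{\underline{w}_D}(x)\) is the source of an arrow into \(X_x\). This is exactly what gives \(\operatorname{mid}_D(x)\subseteq\operatorname{mid}(x)\cap D\). It does not follow from the three facts you list, because \(b\sim j_x\) in \(\Delta_D\) only says that \emph{some} member of \(b\) is adjacent to \emph{some} member of \(j_x\), possibly far from \(x\). The same issue arises for projective \(X_x\). There the last member of the initial \(h\)-block may be unselected, and you must exclude that an earlier selected piece of that block is a \(\Delta_D\)-neighbour of \(j_x\).

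The missing idea is a consecutiveness argument.
Suppose \(p_D(x)\) is undefined. Then \(\tau_DX_x=0\), so \(j_x\) consists of \(X_x\) and later members of its \(\tau\)-orbit \(U\). Let \(V\) be the \(\tau\)-orbit containing the segment \(b\). Write the arrows between \(U\) and \(V\) as \(U_i\to V_{i+c}\to U_{i+1}\) with \(X_x=U_{i+1}\); \eqref{eq:directed-orbit-arrows} allows this in either orientation.

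The members of \(V\) before \(x\) have index at most \(i+c\). The members of \(V\) adjacent to a member of \(j_x\) have index at least \(i+c\). Since \(b\) is a consecutive segment of \(V\), a \(\Delta_D\)-neighbour \(b\) of \(j_x\) with a member before \(x\) must contain \(V_{i+c}\). This member has three properties:
it is the last member of \(V\) before \(x\);
it lies in \((p(x),x)\) when \(p(x)\) exists;
it satisfies \(V_{i+c}\to X_x\).
So the \(b\)-entry of \(\operatorname{mid}_D(x)\) is the selected \(\pi_D(b)\)-entry of \(\operatorname{mid}(x)\).

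The converse is easy. For \(y\in\operatorname{mid}(x)\cap D\), the arrow \(X_y\to X_x\) gives \(j_y\sim j_x\), and \(y\) is the last occurrence of \(i_y\), hence of \(j_y\), before \(x\). Together these prove (3). Once this argument is supplied, your detour through Lemma~\ref{lem:directed-word-dictionary}(2) for \(\overline{\AA}_D\) amounts to the paper's direct proof of the two inclusions.
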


\begin{proof}
\((1)\)
Proposition~\ref{prop:iyama-factor-category} says that the
Auslander--Reiten species of \(\overline{\AA}_D\) is obtained by restricting
the species of \(\mod A\).  Lemma~\ref{lem:directed-word-dictionary}(2)
therefore gives \eqref{eq:directed-factor-middle}.  Suppose that
\(p(x)\in D\) and
\(\operatorname{mid}(x)\cap D=\varnothing\).  For every
\(h\sim i_x\) in \(\Delta_A\),
Lemma~\ref{lem:directed-two-label-alternation} shows that at most one
position labeled \(h\) lies between \(p(x)\) and \(x\).
Lemma~\ref{lem:directed-word-dictionary}(2) identifies that position, when
it exists, with a member of \(\operatorname{mid}(x)\).
Hence every letter of the \(D\)-subword between \(p(x)\) and \(x\)
commutes with \(s_{i_x}\).  Moving the two occurrences of \(s_{i_x}\)
together would produce \(s_{i_x}s_{i_x}\), contrary to the assumed
reducedness of the \(D\)-subword.  Thus
\(\operatorname{mid}(x)\cap D\ne\varnothing\),
and \eqref{eq:directed-factor-middle} gives \(\theta_DX_x\ne0\).

\((2)\)
If \(p(x)\in D\), part~(1) and the definition of \(\tau_D\) give
\(\tau_DX_x=X_{p(x)}\).  Thus \(p(x)\) and \(x\) are consecutive
positions labeled \(j_x\) in \(\underline{w}_D\), so
\(p_D(x)=p(x)\).  If \(p(x)\)
does not exist or \(p(x)\notin D\), then \(\tau_DX_x=0\), and \(x\) is the
first position labeled \(j_x\) in \(\underline{w}_D\).  Hence \(p_D(x)\) is
undefined.  This proves (2) and
\eqref{eq:directed-factor-translation}.

\((3)\)
Let \(y\in\operatorname{mid}(x)\cap D\).
Lemma~\ref{lem:directed-word-dictionary}(2) gives an Auslander--Reiten arrow
\(X_y\to X_x\), so \(j_y\sim j_x\) in \(\Delta_D\).  If \(p(x)\in D\), part~(2) and
Lemma~\ref{lem:directed-factor-word-alternation} make \(y\) the last
position labeled \(j_y\) in
\((p_D(x),x)=(p(x),x)\).  If \(p(x)\) does not exist or \(p(x)\notin D\),
then \(p_D(x)\) is undefined, and \(y\) is the last occurrence of the
label \(j_y\) before \(x\).  Thus
\(y\in\operatorname{mid}_D(x)\).

Conversely, let \(y\in\operatorname{mid}_D(x)\).  Then
\(j_y\sim j_x\) in \(\Delta_D\).  For the \(\tau\)-orbits
\(\pi_D(j_y)\) and \(\pi_D(j_x)\) in \(\mod A\),
Lemma~\ref{lem:directed-two-orbits} gives the alternating arrow pattern
\(U_i\to V_{i+c}\to U_{i+1}\).  The position \(y\) is, by
Construction~\ref{constr:word-mesh-positions}, the last occurrence of the
label \(j_y\) in the interval preceding \(x\).  The arrow pattern in
\eqref{eq:directed-orbit-arrows} therefore makes \(X_y\) the source of an
arrow into \(X_x\).
Thus \(X_y\to X_x\), and
Lemma~\ref{lem:directed-word-dictionary}(2) gives
\(y\in\operatorname{mid}(x)\).  Since \(y\in D\) by construction, this
proves (3).
\end{proof}

For \(x\in D\), put
\[
 \kappa_x^D=[X_x]_{\rm sp}-[\theta_DX_x]_{\rm sp}
                         +[\tau_DX_x]_{\rm sp}
 \quad\text{in }K_0^{\rm sp}(\overline{\AA}_D).
\]

\begin{lemma}\label{lem:directed-factor-mesh-basis}
Suppose that the \(D\)-subword of \(\underline{w}_A\) is reduced.  The
classes \(\kappa_x^D\), for \(x\in D\), form a basis of
\(K_0^{\rm sp}(\overline{\AA}_D)\), and
\begin{equation}\label{eq:directed-factor-mesh-expansion}
 [X_x]_{\rm sp}
 =\sum_{a\in D}\dim_\kk\overline{\AA}_D(X_a,X_x)\,\kappa_a^D
 \qquad(x\in D).
\end{equation}
\end{lemma}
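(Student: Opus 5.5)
The plan is to apply Corollary~\ref{cor:directed-right-additive-strictness} to \(\overline{\AA}_D\). Once a right additive function is exhibited on \(\overline{\AA}_D\), that corollary gives at once that \(\overline{\AA}_D\) is a strict \(\tau\)-category. It also gives that the classes \([X]_{\rm sp}-[\theta_DX]_{\rm sp}+[\tau_DX]_{\rm sp}\) form a basis, together with the expansion \eqref{eq:directed-quotient-class-expansion}. Taking \(M=X_x\) and writing \(X=X_a\) with \(a\in D\), this expansion is exactly \eqref{eq:directed-factor-mesh-expansion}, since these classes are the \(\kappa_a^D\). So the whole task is to produce a right additive function, and, as announced before Lemma~\ref{lem:coxeter-geometric-representation}, it will come from heights of inversion roots.

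By Proposition~\ref{prop:directed-factor-word-reduced}, the word \(\underline{w}_D=(s_{j_x})_{x\in D}\) is reduced in \(W(\Delta_D)\). By Lemma~\ref{lem:coxeter-inversion-root-criterion}, every inversion root \(\beta_x\) (\(x\in D\)) of \(\underline{w}_D\) is then positive. I would define \(f(X_x)=\operatorname{ht}(\beta_x)\geq1\), which is a positive integer as the definition requires. One could instead use the positive coefficient \((\beta_x)_{\text{some fixed vertex}}\), but height is the natural choice. The key computation is the standard ``mesh'' identity for inversion roots of a word with the alternation property. Using Lemma~\ref{lem:directed-factor-mesh-positions}, namely \(p_D(x)\), \(\operatorname{mid}_D(x)\), \(\theta_DX_x=\bigoplus_{y\in\operatorname{mid}_D(x)}X_y\), and \(\tau_DX_x=X_{p_D(x)}\) or \(0\), I want to prove
\[
 \beta_x+\beta_{p_D(x)}=\sum_{y\in\operatorname{mid}_D(x)}\beta_y\quad(p_D(x)\text{ exists}),\qquad
 \beta_x=\alpha'_x+\sum_{y\in\operatorname{mid}_D(x)}\beta_y\quad\text{(otherwise)},
\]
where \(\alpha'_x\) is a nonnegative vector. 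To prove this, write \(u_{<x}\) for the prefix product, so that \(\beta_x=u_{<x}\alpha_{j_x}\). In the case where \(p=p_D(x)\) exists, first use \(u_{<x}=u_{\le p}\,v\), where \(v\) is the product of the letters strictly between \(p\) and \(x\). By the alternation property (Lemma~\ref{lem:directed-factor-word-alternation}), each neighbour label \(h\sim j_x\) occurs at most once in \(v\). Then I would compute \(v\alpha_{j_x}\) via \eqref{eq:directed-geometric-reflection}, and compare it with \(-u_{\le p}^{-1}\beta_p=\alpha_{j_x}\) after applying \(s_{j_x}\). This should express \(\beta_x+\beta_p\) as \(u_{\le p}\) applied to a sum of vectors \(\alpha_h\) moved by the letters of \(v\) after position \(p\); these are exactly the \(\beta_y\) for \(y\in\operatorname{mid}_D(x)\). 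The first-occurrence case is the same computation with \(p\) absent, and there the leftover term is \(\alpha'_x=u_{<x}\) applied to \(\alpha_{j_x}\) minus the contributions already accounted for.

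Taking heights then gives \(f(X_x)-f(\theta_DX_x)+f(\tau_DX_x)=0\) when \(\tau_DX_x\neq0\). When \(\tau_DX_x=0\), the same quantity equals \(\operatorname{ht}(\alpha'_x)\), and this must be \(\geq0\). The main obstacle is precisely this sign in the first-occurrence case. If the direct computation produces a vector \(\alpha'_x\) whose nonnegativity is unclear, I would instead use that \(\beta_x\) is positive and that \(\alpha'_x\) is \(u_{<x}\) applied to a root supported away from later corrections. Failing that, I would argue by induction along the word, showing that each newly introduced label starts with coefficient \(1\) in its own coordinate. Once right additivity is established, Corollary~\ref{cor:directed-right-additive-strictness} finishes the proof as described in the first paragraph.
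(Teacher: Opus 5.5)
Your proposal is correct and follows the paper's proof: the heights of the positive inversion roots of the reduced word \(\underline{w}_D\) give a right additive function via the mesh identity \(\beta_{p_D(x)}+\beta_x=\sum_{y\in\operatorname{mid}_D(x)}\beta_y\), and Corollary~\ref{cor:directed-right-additive-strictness} then yields both the basis and the expansion. The sign you flag as the main obstacle follows from the same telescoping computation, so the fallback arguments are not needed: when \(j_x\) does not occur before \(x\), each position \(y<x\) with \(j_y\sim j_x\) adds \(\beta_y\) to \(\alpha_{j_x}\), so \(\alpha'_x\) is \(\alpha_{j_x}\) plus the \(\beta_y\) at the non-last neighbour occurrences, and it has positive height because every \(\beta_y\) is a positive root.
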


\begin{proof}
By Corollary~\ref{cor:directed-right-additive-strictness}, it suffices to
construct a right additive function on \(\overline{\AA}_D\).  We construct
one from the heights of the inversion roots of \(\underline{w}_D\).

Let \(\beta_x\) be the inversion root of \(\underline{w}_D\) at \(x\):
\[
 \beta_x=
 \left(\prod_{\substack{y\in D\\y<x}}^{\nearrow}s_{j_y}\right)
 (\alpha_{j_x}).
\]
The word \(\underline{w}_D\) is reduced by
Proposition~\ref{prop:directed-factor-word-reduced}, so every \(\beta_x\) is
positive by Lemma~\ref{lem:coxeter-inversion-root-criterion}.

Define \(\operatorname{ht}_D(X_x)=\operatorname{ht}(\beta_x)\) and extend
\(\operatorname{ht}_D\) additively.  The positivity of \(\beta_x\) gives
\(\operatorname{ht}_D(X_x)>0\) for every indecomposable object \(X_x\) of
\(\overline{\AA}_D\).  We prove that \(\operatorname{ht}_D\) is right
additive.  By Lemma~\ref{lem:directed-factor-mesh-positions}(2), it is enough
to prove
\[
 \operatorname{ht}_D(\kappa_x^D)=
 \begin{cases}
  0,&p_D(x)\text{ exists},\\
  >0,&p_D(x)\text{ is undefined}.
 \end{cases}
\]

Lemma~\ref{lem:directed-factor-mesh-positions} gives
\[
 \kappa_x^D=[X_x]_{\rm sp}
 -\sum_{y\in\operatorname{mid}_D(x)}[X_y]_{\rm sp}
 +[X_{p_D(x)}]_{\rm sp},
\]
where the last term is omitted when \(p_D(x)\) is undefined.

Suppose that \(p=p_D(x)\) exists.  Starting immediately after the reflection
at \(p\), put
\[
 \gamma_y=
 \left(\prod_{\substack{z\in D\\z\leq y}}^{\nearrow}s_{j_z}\right)
 (\alpha_{j_x})
 \qquad(p\leq y<x).
\]
Then \(\gamma_p=-\beta_p\), while the value after the last position before
\(x\) is \(\beta_x\).  As \(y\) passes through the positions with
\(p<y<x\), equation~\eqref{eq:directed-geometric-reflection} adds
\(\beta_y\) when \(j_y\sim j_x\) and makes no change otherwise.  Hence
\begin{equation}\label{eq:directed-root-mesh}
 \beta_p+\beta_x
 =\sum_{\substack{p<y<x\\j_y\sim j_x}}\beta_y
 =\sum_{y\in\operatorname{mid}_D(x)}\beta_y.
\end{equation}
The second equality follows from
Lemma~\ref{lem:directed-factor-word-alternation}: between consecutive
occurrences of \(j_x\), each neighboring label occurs at most once.
If \(p_D(x)\) is undefined, applying
\eqref{eq:directed-geometric-reflection} successively at the positions of
\(\underline{w}_D\) before \(x\), starting from \(\alpha_{j_x}\), gives
\begin{equation}\label{eq:directed-root-projective}
 \beta_x-\sum_{y\in\operatorname{mid}_D(x)}\beta_y
 =\alpha_{j_x}
  +\sum_{\substack{y<x,\ j_y\sim j_x\\
  y\text{ is not the last occurrence of }j_y\text{ before }x}}
  \beta_y.
\end{equation}

If \(p_D(x)\) exists, \eqref{eq:directed-root-mesh} gives
\(\operatorname{ht}_D(\kappa_x^D)=0\).  If \(p_D(x)\) is undefined,
\eqref{eq:directed-root-projective} gives
\(\operatorname{ht}_D(\kappa_x^D)>0\).  Thus \(\operatorname{ht}_D\) is a
right additive function on \(\overline{\AA}_D\).
Corollary~\ref{cor:directed-right-additive-strictness} now shows that the
classes \(\kappa_x^D\), for \(x\in D\), form a basis.  Its formula
\eqref{eq:directed-quotient-class-expansion} is
\eqref{eq:directed-factor-mesh-expansion}.
\end{proof}

\begin{proof}[Proof of Lemma~\ref{lem:directed-principal-positive}]
For \(a\in D\) and \(y\notin D\), the first case of
Lemma~\ref{lem:directed-coordinate-recurrence} gives
\(\mu_D(a,y)=\delta_{a,y}=0\).  Define
\[
 \widetilde\mu_a:K_0^{\rm sp}(\overline{\AA}_D)\longrightarrow\ZZ,
 \qquad [X_x]_{\rm sp}\longmapsto\mu_D(a,x)
 \quad(a,x\in D).
\]
For \(x\in D\), Lemma~\ref{lem:directed-factor-mesh-positions} gives
\[
 \kappa_x^D
 = [X_x]_{\rm sp}
   -\sum_{y\in\operatorname{mid}(x)\cap D}[X_y]_{\rm sp}
   +
   \begin{cases}
    [X_{p(x)}]_{\rm sp},&p(x)\in D,\\
    0,&p(x)\text{ does not exist or belongs to }[N]\setminus D.
   \end{cases}
\]
Since \(\mu_D(a,y)=0\) for \(a\in D\) and \(y\notin D\), applying
\(\widetilde\mu_a\) and then
\eqref{eq:directed-coordinate-recurrence} gives
\begin{align*}
 \widetilde\mu_a(\kappa_x^D)
 &=\mu_D(a,x)
   -\sum_{y\in\operatorname{mid}(x)}\mu_D(a,y)
   +
   \begin{cases}
    \mu_D(a,p(x)),&p(x)\text{ exists},\\
    0,&p(x)\text{ does not exist}
   \end{cases}\\
 &=\delta_{a,x}
 \qquad(a,x\in D).
\end{align*}
Thus the \(\widetilde\mu_a\) are the coordinate functionals dual to the
basis \(\{\kappa_x^D\mid x\in D\}\).  Applying \(\widetilde\mu_a\) to
\eqref{eq:directed-factor-mesh-expansion} gives
\[
 \mu_D(a,x)=\dim_\kk\overline{\AA}_D(X_a,X_x)\geq0
 \qquad(a,x\in D).
\]
\end{proof}

\subsection{A negative mixed coordinate from the first negative root}
\label{sec:directed-cancellation}

\begin{proof}[Proof of Lemma~\ref{lem:directed-first-cancellation}]
Lemma~\ref{lem:directed-first-cancellation} computes \(\mu_D(a,x)\) at the
first position \(x\) at which
the root sequence \(\beta_y\) becomes negative.  These roots are produced
by a process which applies the simple reflections at the positions
\(p\in D\) to
\(\alpha_{i_a}\).  Immediately before \(x\), the root is
\(\beta_{x-1}=\alpha_{i_x}\), and applying \(s_{i_x}\) changes it to
\(-\alpha_{i_x}\): a simple reflection sends a positive root to a negative
root only when that root is the corresponding simple root.  All earlier
roots are positive, so every nonzero
\(c_e=(\beta_e)_{i_e}\) is a positive integer; in particular,
\(c_e\geq0\) for every \(a<e<x\).

The proof compares two processes.  The \emph{root process} produces the
roots \(\beta_y\), while a \emph{coordinate process} produces the
coefficients \(\mu_D(e,x)\).  At a position \(p\in D\), the two processes
take the same step on the current root.  At a position \(e\notin D\), the
root process keeps the current vector while the coordinate process removes
its \(i_e\)-coordinate, namely \(c_e\alpha_{i_e}\).  The later coordinate
steps carry this removed simple root to the final \(i_x\)-coordinate with
coefficient \(\mu_D(e,x)\).  Thus
\eqref{eq:directed-first-cancellation} sums the contributions of these local
differences.

We now define the coordinate process precisely.  Its state is a vector
\(z=(z_h)_h\), where, after each position, \(z_h\) records the coefficient
at the most recent occurrence of the label \(h\).  It starts immediately
after position \(a\) with \(z=\alpha_{i_a}\).
At a position \(e\notin D\), the root process does nothing, while the
coordinate process sets the \(i_e\)-coordinate to zero and leaves the other
coordinates unchanged.  At a position \(p\in D\), the
root process applies \(s_{i_p}\), while the coordinate process replaces
\(z_{i_p}\) by
\[
 -z_{i_p}+
 \sum_{y\in\operatorname{mid}(p)}z_{i_y}.
\]
It leaves every \(z_h\) with \(h\ne i_p\) unchanged.  The coordinate
step at \(p\in D\) is the \(p\in D\) case of
Lemma~\ref{lem:directed-coordinate-recurrence}, while the step at
\(e\notin D\) is its \(e\notin D\) case.  Induction on the position
therefore shows that, if the coordinate process
starts immediately after a position \(e\) with the vector
\(\alpha_{i_e}\), then its
\(i_x\)-coordinate after
position \(x\) is \(\mu_D(e,x)\).  In particular, when it begins at \(a\),
this coordinate is \(\mu_D(a,x)\).

\medskip
\noindent\textbf{Claim.}\ \emph{For every \(p\in D\cap(a,x]\), applying
the coordinate step to \(\beta_{p-1}\)
has the same effect as applying \(s_{i_p}\).}

\noindent\emph{Proof.}
The two steps
can differ only by the contribution of a label \(h\sim i_p\) for which no
\(y\in\operatorname{mid}(p)\) satisfies \(i_y=h\).
We prove
\[
 (\beta_{p-1})_h=0
\]
for every such \(h\).

If no occurrence of \(h\) precedes \(p\), its coordinate has remained
zero.  If \(p\) is the first occurrence of the label \(i_p\), then the
most recent occurrence of every neighboring label before \(p\) belongs to
\(\operatorname{mid}(p)\).  Thus suppose that \(h\) occurs before \(p\)
and that \(p\) is not the first occurrence of \(i_p\).  Since no position
in \(\operatorname{mid}(p)\) is labeled \(h\), the most recent occurrence
of \(h\) precedes the preceding occurrence of \(i_p\).  After all other
labels are deleted, the two \(i_p\)-occurrences form a consecutive string
preceded by an occurrence of \(h\).
Lemma~\ref{lem:directed-two-label-alternation} makes
this the last string of \(i_p\)'s, so \(h\) has no occurrence in \((p,x]\).
The \(h\)-coordinate of the root process is therefore unchanged from
\(\beta_{p-1}\) to \(\beta_{x-1}=\alpha_{i_x}\), and hence it is zero.
For \(p=x\), the equality
\(\beta_{x-1}=\alpha_{i_x}\) makes every neighboring coordinate zero
directly.  Therefore
the two steps agree at every \(p\in D\cap(a,x]\).
\hfill\(\square\)

Let \(z_y\) be the state of the coordinate process after position \(y\).
For every \(e\in(a,x)\setminus D\), let \(z_y^{(e)}\), for \(y\geq e\),
be the state obtained by starting immediately after \(e\) with
\(z_e^{(e)}=\alpha_{i_e}\) and applying the same later coordinate steps.
We claim that
\begin{equation}\label{eq:directed-discrepancy-induction}
 z_y=\beta_y-
 \sum_{\substack{a<e\leq y\\e\notin D}}c_ez_y^{(e)}
 \qquad(a\leq y\leq x).
\end{equation}
This follows by induction on \(y\).  At a position \(y\in D\), the
coordinate and root steps agree on \(\beta_{y-1}\), so the coordinate
step sends this root term to \(\beta_y\); linearity carries every earlier
error term through the same step.  At a position \(e\notin D\), the
root is unchanged and the coordinate step removes exactly
\(c_e\alpha_{i_e}=c_ez_e^{(e)}\).  This proves
\eqref{eq:directed-discrepancy-induction}.

The \(i_x\)-coordinate of \(z_x\) is \(\mu_D(a,x)\), and that of
\(z_x^{(e)}\) is \(\mu_D(e,x)\).  Since
\(\beta_x=-\alpha_{i_x}\), taking the \(i_x\)-coordinate in
\eqref{eq:directed-discrepancy-induction} gives
\[
 \mu_D(a,x)
 =-1-\sum_{\substack{a<e<x\\e\notin D}}c_e\mu_D(e,x),
\]
which is \eqref{eq:directed-first-cancellation}.
\end{proof}

\section{Classification and calculations for size four}
\label{app:small-core}

This appendix proves the two lemmas of Section~\ref{sec:size-four} used for
the equality in size four.  Recall from Lemma~\ref{lem:faithful-core} that
\(\mathcal K_{\quot}(B)=\ind\Gen\bigl(\DDual({}_B B)\bigr)\) and
\(\mathcal K_{\sub}(B)=\ind\Cogen(B_B)\) are the minimal faithful
quotient-closed and submodule-closed sets, that \(\mathcal K_{\sub}(B)\)
consists of the indecomposable torsionless modules, and that \(d(B)\) is the
cardinality of each of \(\mathcal K_{\quot}(B)\) and
\(\mathcal K_{\sub}(B)\)
(Proposition~\ref{prop:ringel-torsionless}).  We first classify the algebras
\(B\) with \(d(B)\leq3\) (Lemma~\ref{lem:small-faithful-core}).  We then
count the faithful closed subcategories of size four over the algebras which
that classification leaves outside the Nakayama case
(Lemma~\ref{lem:exceptional-faithful-four}).

\subsection{Classification of algebras with at most three indecomposable
torsionless modules}

\begin{proof}[Proof of Lemma~\ref{lem:small-faithful-core}]
We must show that \(B\) is Nakayama, the path algebra of one of the two
\(A_3\) orientations with a source or a sink of valency two, or, up to
passing to \(B^{\op}\), one of the algebras \(L_0=\kk Q/(x^2,xa)\) and
\(L_1=\kk Q/(x^2)\) on the quiver \(Q\) with a loop \(x\) at vertex \(1\)
and an arrow \(a:1\to2\).

First suppose that \(B\) is connected, and let \(r\) be its number of simple
modules.  The \(r\) indecomposable projectives belong to
\(\mathcal K_{\sub}(B)\), so \(r\leq d(B)\leq3\).
If \(r=d(B)\), every indecomposable torsionless module is projective.  Every
right ideal is a direct sum of indecomposable torsionless modules and is
therefore projective.  Hence \(B\) is hereditary.  Since \(B\) is basic,
representation-finite, and split over the algebraically closed field
\(\kk\), it is the path algebra of a connected Dynkin quiver with at most
three vertices
\cite[Theorems~VII.1.7 and VII.5.10(a)]{AssemSimsonSkowronski}.
Types \(A_1,A_2\), and the linear orientations of \(A_3\)
are Nakayama; the other two orientations of \(A_3\) are the source and sink
orientations in Lemma~\ref{lem:small-faithful-core}(2).

If \(r=1\), representation-finiteness permits at most one loop: two
independent loop classes would give a projective line of pairwise
nonisomorphic nonsplit self-extensions of the simple module.  Thus the
Gabriel quiver is a single vertex with at most one loop, so
\(B\cong\kk[x]/(x^m)\) for some \(m\geq1\) by the bound-quiver
presentation theorem \cite[Theorem~II.3.7]{AssemSimsonSkowronski}, and
\(B\) is Nakayama.

It remains to consider \(r=2\) and \(d(B)=3\).  There is a unique
nonprojective indecomposable torsionless module; call it \(T\).  Put
\(J=\rad B\).  Representation-finiteness also shows that there is at most
one Gabriel arrow with any fixed ordered pair of endpoints: two independent
arrow classes would give a projective line of pairwise nonisomorphic
extensions of length two.

Represent a Gabriel arrow \(i\to j\) by
\(\alpha\in e_iJe_j\setminus e_iJ^2e_j\).  The cyclic ideal \(\alpha B\) is
a submodule of \(P_i\), hence torsionless, and \(\alpha B/\alpha J\) is the
simple module \(S_j\), so \(\alpha B\) is indecomposable with top \(S_j\).
Hence \(\alpha B\) is isomorphic to either \(P_j\) or \(T\).  A loop ideal
is a proper submodule of \(P_i\) with
simple top \(S_i=\top P_i\), so it cannot be projective and must be \(T\).  In
particular, loops cannot occur at both vertices: the two loop ideals would
both be isomorphic to \(T\), although their simple tops would be the
nonisomorphic modules \(S_1\) and \(S_2\).  If no loop occurs, then, since
the quiver has two vertices and at most one arrow for each ordered pair,
each vertex is the target of at most one arrow and the source of at most
one arrow.  Hence \(B\) is Nakayama
\cite[Theorem~V.3.2]{AssemSimsonSkowronski}.

Suppose that there is a loop \(x:1\to1\).  Since \(B\) is connected, there
is at least one arrow between the two vertices.

\medskip
\noindent\textbf{Claim.}\ \emph{There cannot be arrows in both directions
between the two vertices.}

\noindent\emph{Proof.}
Suppose that there are arrows in both directions:
\[
 \begin{tikzcd}[ampersand replacement=\&,column sep=large]
  1 \arrow[loop left,"x"]\arrow[r,shift left,"a"]\&
  2 \arrow[l,shift left,"b"]
 \end{tikzcd}
\]
Since \(x\) is a loop, \(xB\) is a proper submodule of \(P_1\) with
simple top \(S_1\).  Hence \(xB\simeq T\).  The ideal \(aB\) has top
\(S_2\), while \(T\simeq xB\) has top \(S_1\); hence \(aB=P_2\).  The isomorphism
\(P_2\xrightarrow{\sim}aB\subset P_1\) is a proper embedding, so
\(\ell(P_2)<\ell(P_1)\).  If \(bB=P_1\), then
\(P_1\xrightarrow{\sim}bB\subset P_2\) would also be a proper embedding,
giving the opposite strict inequality.  Hence \(bB=T\).  Restricting the
isomorphism \(P_2\xrightarrow{\sim}aB\) to \(bB\) gives
\begin{equation}\label{eq:lollipop-cyclic-ideals}
 abB\simeq bB\simeq T\simeq xB.
\end{equation}

We now construct a representation-infinite factor of \(B\).  Let \(H\) be
the two-sided ideal generated by
\[
 J^3,\quad x^2,\quad xa,\quad bx,\quad ba.
\]
We first prove that \(ab\notin H\).  This strengthens
\eqref{eq:lollipop-cyclic-ideals}, since the ideal \(H\) might contain
\(ab\).  Suppose that \(ab\in H\).  The generators \(xa\), \(bx\), and
\(ba\) do not connect vertex \(1\) to itself, and the product of any
generator with a radical element lies in \(J^3\).  Taking the
\(e_1(-)e_1\)-component modulo \(J^3\) therefore leaves only the generator
\(x^2\), so \(ab-cx^2\in J^3\) for some \(c\in\kk\).  The Gabriel arrows
starting at vertex \(1\) are
\(x\) and \(a\), and a path which starts with \(a\) and returns to vertex
\(1\) must next use \(b\).  Hence every path of length at least three from
vertex \(1\) to itself begins with \(x\) or with \(ab\), and
\[
 e_1J^3e_1\subseteq xJ^2+abJ.
\]
It follows that
\[
 ab=cx^2+xu+abv
 \qquad(u\in J^2,\ v\in J).
\]
Since \(1-v\) is a unit, this implies \(abB\subseteq xB\).  Equation
\eqref{eq:lollipop-cyclic-ideals} gives
\(\ell(abB)=\ell(xB)\), so \(abB=xB\).  This equality would put
\(x\in abB\subseteq J^2\), although the
arrow \(x\) does not lie in \(J^2\).

Thus \(B/H\) is the string algebra with basis
\(e_1,e_2,x,a,b,ab\), in which \(ab\) is the only nonzero product of two
radical basis elements.  The cyclic string \(abx^{-1}\) is not a proper
power, and each of its
positive powers is a string; hence it is a band.  The string--band
classification gives infinitely many pairwise nonisomorphic indecomposable
\(B/H\)-modules \cite{ButlerRingelString}.  Thus \(B/H\), and hence \(B\),
is representation-infinite.  This contradiction shows that arrows cannot
occur in both directions.
\hfill\(\square\)

Since the vertices are joined and arrows cannot occur in both directions,
there is exactly one arrow between them.  Ringel's bijection in
Proposition~\ref{prop:ringel-torsionless} identifies the indecomposable
torsionless modules over \(B\) and over \(B^{\op}\), so
\(d(B^{\op})=d(B)=3\).  Lemma~\ref{lem:small-faithful-core}(3) allows replacing \(B\)
by \(B^{\op}\); we may therefore assume that the only arrow between
distinct vertices is
\(a:1\to2\).  If \(x^2\ne0\), then \(x^2B\) is an indecomposable
torsionless module with simple top \(S_1\).  It is a proper submodule of
\(xB=T\), so it is neither projective nor isomorphic to \(T\), contrary to
the uniqueness of \(T\).  Hence \(x^2=0\).  The only remaining path of
length two is \(xa\); if it is zero then \(B=L_0=\kk Q/(x^2,xa)\), and if
it is nonzero then \(B=L_1=\kk Q/(x^2)\).

Finally, let \(B=\prod_iB_i\) be the product of its nonzero connected
blocks.  Under the decomposition of \(\mod B\) by blocks, one has
\(\mathcal K_*(B)=\coprod_i\mathcal K_*(B_i)\).  Hence
\(d(B)=\sum_i d(B_i)\).
If there is more than one block and \(d(B)\leq3\), then
\(d(B_i)\leq2\) for every \(i\).  The connected classification just proved
shows that every \(B_i\) is Nakayama: the algebras in
Lemma~\ref{lem:small-faithful-core}(2),(3) all have \(d=3\).  Their product is Nakayama as well,
because each indecomposable projective or injective belongs to one block.
\end{proof}

\subsection{Faithful size-four counts for the exceptional algebras}

\begin{proof}[Proof of Lemma~\ref{lem:exceptional-faithful-four}]
\((1)\)
First take the sink orientation
\[
 Q:\quad 1\longrightarrow2\longleftarrow3.
\]
The six positive roots of type \(A_3\) are the dimension vectors of the six
indecomposable \(\kk Q\)-modules
\cite[Theorem~VII.5.10(b),(c)]{AssemSimsonSkowronski}.  These modules are the
three simples, the projective covers \(P_1\) and \(P_3\) of \(S_1\) and
\(S_3\), and the injective envelope \(E\) of \(S_2\).  Their dimension
vectors are
\[
 \underline{\dim}P_1=(1,1,0),\qquad
 \underline{\dim}P_3=(0,1,1),\qquad
 \underline{\dim}E=(1,1,1).
\]
Since \(\kk Q\) is hereditary, the torsionless indecomposables are the
projective modules.  Thus
\[
 \mathcal K_{\sub}(\kk Q)=\{S_2,P_1,P_3\},
 \qquad\text{and dually}\qquad
 \mathcal K_{\quot}(\kk Q)=\{S_1,S_3,E\}.
\]
Adjoining any one of the three remaining indecomposable modules to
\(\mathcal K_{\quot}(\kk Q)\) gives a quotient-closed subcategory, and
adjoining any one to \(\mathcal K_{\sub}(\kk Q)\) gives a submodule-closed
subcategory.  Hence each family has exactly three faithful subcategories
of size four.  Duality gives the source orientation.

\((2)\)
We now compute the two exceptional algebras \(L_0,L_1\).  Both are string
algebras.  Up to inversion, their strings are
\[
 \begin{array}{c|l}
 L_0&e_1,e_2,x,a,x^{-1}a,\\
 L_1&e_1,e_2,x,a,xa,x^{-1}a,a^{-1}xa.
 \end{array}
\]
There are no bands.  Since every indecomposable module over a string algebra
is a string or band module \cite{ButlerRingelString}, these lists show that
\(L_0\) has five indecomposable modules and \(L_1\) has seven.

A direct calculation of quotient and submodule closure in these two finite
lists gives
\[
 f_{\quot}^4(L_e)=f_{\sub}^4(L_e)=2
 \qquad(e=0,1).
\]
Linear duality gives
\[
 f_{\quot}^4(L_e^{\op})=f_{\sub}^4(L_e^{\op})=2
 \qquad(e=0,1).
\]
\end{proof}

\section{Proofs for cosizes three and four}
\label{sec:four-ladders}

This appendix proves Lemma~\ref{lem:rooted-balance} and
Lemma~\ref{lem:four-vertex-ladder}.  Put \(N=|\ind A|\) and
\(\Gamma:=\Gamma(\mod A)\), and let
\(\PP,\mathcal I\subseteq\ind A\) be the sets of indecomposable projective
and injective \(A\)-modules, respectively.
The vertex set of \(\Gamma\) is \(\Gamma_0=\ind A\).  For
\(S\subseteq\Gamma_0\), let \(\Gamma[S]\) denote the full subquiver with
vertex set \(S\).  A nonempty vertex set \(S\) is \emph{strongly connected}
if, for every \(X,Y\in S\), there are directed paths from \(X\) to \(Y\) and
from \(Y\) to \(X\) in \(\Gamma[S]\), where paths of length zero are
allowed.  A \emph{strongly connected
component} is a maximal strongly connected vertex set.
For a vertex set \(W\), write \(W^-\) and \(W^+\) for the sets of immediate
predecessors and successors of vertices of \(W\) in the whole quiver
\(\Gamma\); these sets may meet \(W\).

\begin{proof}[Proof of Lemma~\ref{lem:rooted-balance}]
For \(S\subseteq D\), say that \(S\) is \emph{predecessor-closed} in \(D\)
if no arrow of \(\Gamma[D]\) enters \(S\) from \(D\setminus S\).  The set
\(D\) is not projectively rooted exactly when it has a nonempty
predecessor-closed subset \(S\subseteq D\setminus\PP\).  Indeed, if \(D\)
is not projectively rooted, the members of
\(D\) which are not reachable in \(\Gamma[D]\) from a projective
member form such a subset; conversely, a path from a projective member
of \(D\) would have to enter \(S\) through an arrow from
\(D\setminus S\).

Every minimal nonempty predecessor-closed subset
\(S\subseteq D\setminus\PP\) is strongly connected.  Indeed, choose a
strongly connected component \(C\) of \(\Gamma[S]\) which receives no arrow
from another component.  Such a component exists because contracting the
strongly connected components gives a finite acyclic quiver.  No arrow enters
\(C\) from \(S\setminus C\), and no arrow enters \(S\) from
\(D\setminus S\).  Hence \(C\) is predecessor-closed in \(D\), so the
minimality of \(S\) gives \(C=S\).  Conversely, suppose that
\(S\subseteq D\setminus\PP\) is strongly connected and predecessor-closed
in \(D\).  For every
nonempty proper subset \(T\subsetneq S\), a directed path from
\(S\setminus T\) to \(T\) contains an arrow entering \(T\) from
\(S\setminus T\).  Thus \(T\) is not predecessor-closed in \(D\), and
\(S\) is minimal among the nonempty predecessor-closed subsets of
\(D\setminus\PP\).  If two such minimal subsets meet, their intersection is
again predecessor-closed in \(D\), so the two subsets are equal.  Hence
distinct minimal nonempty predecessor-closed subsets of \(D\setminus\PP\)
are disjoint, and no arrow joins one to another.

Consider the sets \(W\subseteq\ind A\setminus\PP\) for which \(\Gamma[W]\)
is a disjoint union of strongly connected components with no arrows between
different components.  Include \(W=\varnothing\), and let \(c(W)\) be the
number of components, with \(c(\varnothing)=0\).  Fix a set
\(D\subseteq\ind A\) with \(j\) elements.  The subsets \(W\subseteq D\)
for which \(\Gamma[W]\) is a disjoint union of strongly connected
components with no arrows between distinct components and no arrow enters
\(W\) from \(D\setminus W\) are exactly
the unions of some of the minimal nonempty subsets \(S\subseteq D\setminus
\PP\) which are predecessor-closed in \(D\).  If \(b(D)\) is the
number of these minimal subsets, their contribution is
\[
 \sum_{W}(-1)^{c(W)}
 =\sum_{t=0}^{b(D)}\binom{b(D)}t(-1)^t
 =(1-1)^{b(D)}.
\]
Thus the contribution is \(1\) when \(D\) is projectively rooted, because
then \(b(D)=0\), and is \(0\) otherwise.  This is the
inclusion--exclusion cancellation used below.
Summing over all \(D\) and exchanging the two summations gives the
number of projectively rooted sets with \(j\) elements as
\begin{equation}\label{eq:rooted-count-projective}
 \sum_W(-1)^{c(W)}
 \binom{N-|W\cup W^-|}{j-|W|},
\end{equation}
where \(W\) runs over all vertex sets for which \(\Gamma[W]\) is a disjoint
union of strongly connected components with no arrows between distinct
components.  The binomial
coefficient counts the sets \(D\supseteq W\) with \(j\) elements and no
arrow from \(D\setminus W\) to \(W\): the elements of \(D\setminus W\)
must avoid \(W\cup W^-\).  As usual, the binomial coefficient is zero
when its lower argument is outside the interval from zero to its upper
argument.

For every arrow \(X\to Y\) with \(Y\) nonprojective, Auslander--Reiten
translation gives the arrow \(\tau Y\to X\), and this correspondence is
bijective.  Applying it twice shows that, for nonprojective \(X\) and
\(Y\), there is an arrow \(X\to Y\) exactly when there is an arrow
\(\tau X\to\tau Y\).  Put \(\tau W=\{\tau X\mid X\in W\}\).  Thus
\(\tau\) induces an isomorphism
\(\Gamma[W]\simeq\Gamma[\tau W]\), and \(W\mapsto\tau W\)
is a bijection from the sets \(W\) under consideration to the vertex sets
avoiding injectives whose full subquivers are disjoint unions of strongly
connected components with no arrows between distinct components.  This
bijection preserves \(|W|\) and \(c(W)\).  Inclusion--exclusion
therefore computes the number of injectively corooted
sets with \(j\) elements as
\begin{equation}\label{eq:rooted-count-injective}
 \sum_W(-1)^{c(W)}
 \binom{N-|\tau W\cup(\tau W)^+|}{j-|W|},
\end{equation}
with successors in place of predecessors.

It remains to prove
\[
 |W\cup W^-|=|\tau W\cup(\tau W)^+|.
\]
This equality makes the \(W\)-indexed summands in
\eqref{eq:rooted-count-projective} and
\eqref{eq:rooted-count-injective} equal.  The arrows
leaving \(\tau W\) are the arrows \(\tau X\to Z\) with \(X\in W\), and
the correspondence \((Z\to X)\mapsto(\tau X\to Z)\) identifies them
with the arrows entering \(W\); hence \((\tau W)^+=W^-\).  Since
\(|\tau W|=|W|\), it suffices to show
\(|W^-\cap W|=|W^-\cap\tau W|\).  For \(X\in W\),
\[
\begin{aligned}
 X\in W^-
 &\Longleftrightarrow
 \text{there is \(Y\in W\) with \(X\to Y\)},\\
 \tau X\in W^-
 &\Longleftrightarrow
 \text{there is \(Z\in W\) with \(\tau X\to Z\)}\\
 &\Longleftrightarrow
 \text{there is \(Z\in W\) with \(Z\to X\)}.
\end{aligned}
\]
Within a strongly connected component of \(\Gamma[W]\) which contains an
arrow, every vertex occurs both as the initial endpoint and as the terminal
endpoint of an internal arrow.  In a component containing no arrow, no
vertex occurs as either kind of endpoint.  Writing \(W'\) for the union of
the components containing an arrow, we obtain \(W^-\cap W=W'\).  The
equivalence
\[
 \tau X\in W^-\quad\Longleftrightarrow\quad
 \text{there is \(Z\in W\) with \(Z\to X\)}
\]
shows that \(\tau X\in W^-\) if and only if \(X\in W'\).  Therefore
\(W^-\cap\tau W=\tau W'\), and both intersections have cardinality
\(|W'|\).
\end{proof}

The rest of this appendix proves
Lemma~\ref{lem:four-vertex-ladder}.  Fix a projectively rooted set
\(D\subseteq\ind A\) with \(|D|\leq4\).  We use the recursion from
Construction~\ref{con:factor-ladder}.  For \(X\in D\), this gives the
sequence \(\theta^D_0X,\theta^D_1X,\theta^D_2X,\ldots\) defined by
\eqref{eq:factor-ladder}.  For \(X\in D\), the element
\(\theta_DX\) is the sum, with multiplicities, of the indecomposable
summands belonging to \(D\) of \(\theta X\).  One puts \(\tau_DX=\tau X\) when
\(X\) is nonprojective, \(\tau X\in D\), and \(\theta_DX\ne0\), and
\(\tau_DX=0\) otherwise.  Both operators extend additively to the free
abelian group on \(D\), and \(v_+\) replaces every negative coefficient
of \(v\) by zero.  Then \(\theta^D_0X=X\), \(\theta^D_1X=\theta_DX\), and
\eqref{eq:factor-ladder} is the recurrence
\[
 \theta^D_nX=
 \bigl(\theta_D\theta^D_{n-1}X-\tau_D\theta^D_{n-2}X\bigr)_+
 \qquad(n\geq2).
\]
We also use the following features of the recursion: its
terms are supported on \(D\), it is determined by two consecutive terms,
and a nonzero repeated ordered pair
\((\theta^D_{n-1}X,\theta^D_nX)\) makes the sequence periodic.  Such
periodicity is impossible by
Proposition~\ref{prop:iyama-factor-category}.  We say that the sequence has
a projective summand if some \(\theta^D_nX\) has an indecomposable summand
belonging to \(\PP\cap D\).  We will study the case in which there is an
index \(n_0\) with \(\theta_{n_0}^D X=0\), while none of
\(\theta_0^D X,\ldots,\theta_{n_0-1}^D X\) has such a summand.

For the dual count, reverse the translation quiver by replacing every arrow
\(X\to Y\) with \(Y\to X\), replacing \(\tau\) with \(\tau^{-1}\) whenever
\(\tau^{-1}\) is defined, and exchanging \(\PP\) with \(\mathcal I\).  Under
this reversal, the factor ladder is the reverse factor ladder of
Construction~\ref{con:reverse-factor-ladder}.

\begin{definition}\label{def:killed-set}
A projectively rooted set \(D\) is \emph{killed} if, for some
\(X\in D\), the factor ladder starting at \(X\) becomes zero before any of
its nonzero terms has a summand in \(\PP\cap D\).  An injectively corooted
set \(D\) is \emph{killed} if, for some \(X\in D\), the reverse factor
ladder starting at \(X\) becomes zero before any of its nonzero terms has a
summand in \(\mathcal I\cap D\).
\end{definition}

\subsection{Auslander--Reiten constraints for factor ladders on at most four
vertices}

For a vertex set \(S\subseteq\Gamma_0\), write
\(S_D^\pm=S^\pm\cap D\).  We abbreviate \(\{x\}^\pm\) and
\(\{x\}_D^\pm\) to \(x^\pm\) and \(x_D^\pm\), respectively, and we write
\(\ell X\) for the composition length of \(X\).

By Corollary~\ref{cor:ar-middle-terms-squarefree}, the middle term of an
almost split sequence has no repeated indecomposable summand.
Lemma~\ref{lem:ar-mesh-correspondence} pairs each arrow \(X\to Y\), with \(Y\)
nonprojective, with the arrow \(\tau Y\to X\); we apply it in both
directions.  Here a path
\(X_0\to\cdots\to X_t\) is \emph{sectional} when
\(\tau X_{i+1}\ne X_{i-1}\) for every interior index \(i\); a
\emph{sectional bypass} of an arrow is a second sectional path with the same
endpoints.

\begin{lemma}[{\cite[Proposition~2]{CBHR}}]
\label{lem:four-no-sectional-bypass}
No arrow in \(\Gamma\) admits a sectional bypass.
\end{lemma}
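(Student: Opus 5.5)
The statement is Lemma~\ref{lem:four-no-sectional-bypass}, cited from \cite[Proposition~2]{CBHR}: in the Auslander--Reiten quiver of a representation-finite algebra, no arrow \(X\to Y\) admits a second sectional path \(X=X_0\to X_1\to\cdots\to X_t=Y\) with \(t\geq2\). The plan is to compose irreducible maps along the bypass and compare the composite with the irreducible map \(f\colon X\to Y\). The tool is the classical theorem of Igusa--Todorov: a composite of irreducible maps along a sectional path is nonzero. In the representation-finite case we want the sharper statement that such a composite does not lie in \(\rad^{t+1}\). We use the bypass to produce an oriented cycle or a contradiction with radical degrees.

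\textbf{Step 1.} Choose irreducible maps \(f_i\colon X_{i-1}\to X_i\) and set \(g=f_t\cdots f_1\in\rad^t(X,Y)\). By Igusa--Todorov, \(g\notin\rad^{t+1}(X,Y)\). The proof goes by induction along the path using the mesh relations. Sectionality means that at each interior vertex the incoming arrow is not the translate partner of the outgoing one, so no mesh relation kills the composite.

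\textbf{Step 2.} Since \(f\) is irreducible, \(f\in\rad(X,Y)\setminus\rad^2(X,Y)\). For a representation-finite algebra over an algebraically closed field, the Auslander--Reiten quiver has no multiple arrows (Lemma~\ref{lem:representation-finite-no-multiple-arrows}). Moreover \(\rad(X,Y)/\rad^2(X,Y)\) is one-dimensional and spanned by the class of \(f\). The aim is a length function \(\ell\) on the component that is compatible with arrows. In the directed case this is immediate: a Hom-compatible order gives depths with \(\mathrm{depth}(Y)=\mathrm{depth}(X)+1\) along an arrow. A bypass would then require \(t=1\), and this is the contradiction we want.

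\textbf{Step 3 (main obstacle).} In the general representation-finite case the component may contain oriented cycles, so there is no global additive depth. To handle this, pass to the universal cover (Bongartz--Gabriel \cite{BongartzGabrielCovering}, already cited). By Riedtmann's structure theory it is a simply connected translation quiver of the form \(\ZZ\Delta\) modulo an admissible group. Sectional paths and arrows lift to the cover. Over \(\ZZ\Delta\) with \(\Delta\) a tree (Dynkin, by representation-finiteness), a function \(\ZZ\Delta_0\to\ZZ\) that increases by one along every arrow exists, because the underlying graph of \(\Delta\) is a tree. Lifting both the arrow \(X\to Y\) and the bypass to paths between the same lifted endpoints then forces equal lengths, \(t=1\). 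The delicate point is checking that the lifts of both paths end at the same vertex. If they did not, composing with the deck transformation would give a cycle in the fundamental group, contradicting simple connectedness of the cover. If this covering argument proves unwieldy, I would instead simply invoke \cite[Proposition~2]{CBHR} as the paper does.
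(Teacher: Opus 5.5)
There is a genuine gap in Step~3, and it is exactly where sectionality should have been used. You claim that the lifts to the universal cover \(\widetilde\Gamma\) of the arrow \(X\to Y\) and of the bypass, both starting at one lift of \(X\), end at the same vertex. Your reason is that \(\widetilde\Gamma\) is simply connected, but this does not follow. The closed walk formed by the arrow followed by the reversed bypass defines an element of \(\pi_1(\Gamma)\), and the two lifts share an endpoint precisely when this element is trivial. Simple connectedness of the cover says nothing about \(\pi_1(\Gamma)\) itself. When the element is nontrivial, the endpoints differ by a nontrivial deck transformation, and nothing is contradicted.

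Since Step~3 never uses sectionality, it would prove that no arrow has any bypass at all, and that is false for representation-finite algebras. Take \(A=\kk[x]/(x^2)\) with simple module \(S\). The Auslander--Reiten quiver has vertices \(S\) and \(A\), one arrow \(S\to A\), one arrow \(A\to S\), and \(\tau S=S\). The arrow \(S\to A\) has the bypass \(S\to A\to S\to A\), which is not sectional because \(\tau S=S\). The only mesh has a single middle term, so it imposes no homotopy relation. Hence \(\pi_1(\Gamma)\cong\ZZ\), \(\widetilde\Gamma\) is an infinite line, and the two lifts really do end at different vertices.

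The other steps do not close this gap.
\begin{itemize}
\item The Igusa--Todorov input of Step~1 is never used, so nothing in your proposal distinguishes sectional from non-sectional paths.
\item In Step~2, a Hom-compatible order is only a linear extension of the Hom-order. It does not give a function increasing by exactly one along every arrow; that would be gradability of \(\Gamma\), a different property you have not established.
\item In Step~3, Riedtmann's theorem describes \emph{stable} translation quivers as \(\ZZ\Delta/G\), whose universal cover is \(\ZZ\Delta\). It does not describe \(\widetilde\Gamma\) when \(\Gamma\) has projective vertices. A length function on \(\widetilde\Gamma\) does exist, because mesh homotopies preserve net length, but that was never the difficulty.
\end{itemize}

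A correct proof must use both sectionality and representation-finiteness. Representation-finiteness cannot be dropped: for the hereditary algebra of the quiver \(1\to2\to3\), \(1\to3\), the indecomposable projectives form a transitive triangle, which is a sectional bypass of length two. The paper does not reprove the statement; it simply cites Proposition~2 of \cite{CBHR}, which is your fallback.
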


\begin{lemma}\label{lem:four-no-translation-arrow}
There is no arrow \(\tau X\to X\).
\end{lemma}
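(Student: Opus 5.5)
The plan is to reduce this to Lemma~\ref{lem:four-no-sectional-bypass} by building a short sectional path that bypasses an arrow. Suppose there is an arrow \(\tau X\to X\). In particular \(X\) is nonprojective, so the almost split sequence \(0\to\tau X\to\theta X\to X\to0\) exists. Lemma~\ref{lem:ar-mesh-correspondence}, applied to this arrow with \(Y=X\), gives an arrow from \(\tau X\) to \(\tau X\). Equivalently, \(\tau X\) is an indecomposable summand of \(\theta X\).

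So the arrow \(\tau X\to X\) produces a loop at \(\tau X\), i.e.\ an arrow \(\tau X\to\tau X\). By the same correspondence, this loop yields an arrow \(\tau X\to X\) again, which is consistent so far. I will therefore look for a contradiction elsewhere.

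First I try a direct argument: an irreducible map \(f\colon\tau X\to\tau X\) would be a nonzero radical endomorphism. However, irreducible maps cannot be nilpotent endomorphisms only under some extra hypotheses, so this route may not close by itself. Instead I use the structure of the mesh. Take the two-arrow path \(\tau X\to\tau X\to X\), both arrows coming from the loop and the given arrow. This path is not sectional, because its middle vertex is \(\tau X\) and \(\tau(X)=\tau X\) equals the start. So sectionality has to be obtained more carefully.

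The cleaner approach is this. The arrow \(\tau X\to X\) itself is a path of length one. Compose the loop \(\tau X\to\tau X\) with the arrow \(\tau X\to X\) to get the path \(\tau X\to\tau X\to X\). Sectionality requires \(\tau X_2\neq X_0\), i.e.\ \(\tau X\neq\tau X\), which fails. I therefore switch to the loop itself, viewed as the arrow \(\tau X\to\tau X\). If \(\tau X\) is nonprojective, its mesh contains \(\tau^2X\to\tau X\) and \(\tau X\to\tau X\) with \(\tau(\tau X)=\tau^2X\). The path \(\tau X\to\tau X\to\tau X\) (the loop twice) is sectional exactly when \(\tau(\tau X)\neq\tau X\). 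That inequality holds, since otherwise \(\tau X\) would be \(\tau\)-periodic of period one.

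This reduces everything to excluding loops in \(\Gamma\). For representation-finite algebras this is a known fact: the Auslander--Reiten quiver has no loops (Bautista--Smal\o). I expect this to be the main obstacle. If the paper avoids quoting it, I will use Lemma~\ref{lem:four-no-sectional-bypass} in its Bautista--Smal\o\ form, since a loop gives a sectional cycle of arbitrary length and hence a sectional path of length at least two bypassing the loop arrow.

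Putting it together: an arrow \(\tau X\to X\) gives a loop at \(\tau X\), and that loop is impossible by Lemma~\ref{lem:four-no-sectional-bypass} or by the no-loop theorem. This contradiction proves the lemma.
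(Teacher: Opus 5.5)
Your final argument is correct and is the paper's proof: Lemma~\ref{lem:ar-mesh-correspondence} turns an arrow \(\tau X\to X\) into a loop at \(\tau X\), and \(\Gamma(\mod A)\) has no loops.

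Two parts of the surrounding discussion need correcting.

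First, the alternative through Lemma~\ref{lem:four-no-sectional-bypass} does not work as written. The equality \(\tau(\tau X)=\tau X\) is not absurd, because \(\tau\)-fixed indecomposables do occur. Examples are the simple module over \(\kk[x]/(x^2)\) and the vertex \(c\) in type~\({\rm F}\) of Definition~\ref{def:four-support-configurations}. In that case injectivity of \(\tau\) forces \(X=\tau X\), so the given arrow is itself a loop at a \(\tau\)-fixed vertex. Every path of repeated loops there is non-sectional, so the bypass lemma gives nothing, and you must use the no-loop fact directly.

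Second, the direct route you abandoned does close, and it makes that fact elementary. An irreducible map \(f\colon M\to M\) is a monomorphism or an epimorphism. Since \(M\) has finite length, \(f\) is then an isomorphism, which contradicts irreducibility. No appeal to Bautista--Smal\o{} is needed.
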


\begin{proof}
By Lemma~\ref{lem:ar-mesh-correspondence}, an arrow \(\tau X\to X\) would
give a loop at \(\tau X\).  The Auslander--Reiten quiver has no loops.
\end{proof}

The arguments below repeatedly use pairs of distinct vertices joined by
arrows in both directions; we write \(Y\rightleftarrows Z\) for such a
pair.  The next lemma restricts these pairs.

\begin{lemma}\label{lem:two-cycle-local}
Let \(Y\) and \(Z\) be distinct vertices for which both arrows
\(Y\to Z\) and \(Z\to Y\) occur in \(\Gamma\).
\begin{enumerate}
\item One of \(Y,Z\) is fixed by \(\tau\).
\item If one member is projective, then it is also injective.
\end{enumerate}
\end{lemma}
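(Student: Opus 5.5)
Both parts will come from the sectional-bypass lemma (Lemma~\ref{lem:four-no-sectional-bypass}) applied to the $2$-cycle $Y\rightleftarrows Z$, together with the mesh correspondence of Lemma~\ref{lem:ar-mesh-correspondence}. Multiple arrows are excluded by Lemma~\ref{lem:representation-finite-no-multiple-arrows}, so there is exactly one arrow in each direction.

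For part (1), first suppose $Z$ is nonprojective. Applying Lemma~\ref{lem:ar-mesh-correspondence} to the arrow $Y\to Z$ gives an arrow $\tau Z\to Y$. This produces the path $\tau Z\to Y\to Z\to Y$. Consider the length-two path $Z\to Y\to Z$: it is sectional unless $\tau Z=Z$. If it were sectional, it would not by itself give a bypass, so I will instead test the path $Y\to Z\to Y$ and the arrow $Y\to Z$ against longer paths.

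The cleaner route goes as follows. Suppose neither $Y$ nor $Z$ is $\tau$-fixed. Then the path $Y\to Z\to Y\to Z$ is sectional, because its interior conditions are $\tau Y\ne Y$ at the middle vertex $Z$ and $\tau Z\ne Z$ at the middle vertex $Y$. This sectional path has the same endpoints as the arrow $Y\to Z$, so it is a sectional bypass of that arrow, contradicting Lemma~\ref{lem:four-no-sectional-bypass}. Hence $\tau Y=Y$ or $\tau Z=Z$. This is the whole argument for (1), provided the cited bypass lemma is read as allowing bypass paths that repeat vertices. I expect this to be the main point to check: if the cited result only forbids bypasses with distinct vertices, I would instead argue via a cycle in $\Gamma$ through the mesh at $Z$. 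In that approach, $\tau Z\to Y\to Z$ together with $Z\to Y$ gives infinitely many sectional paths, and I would then invoke the underlying result of \cite{CBHR} that sectional paths in a representation-finite Auslander--Reiten quiver have nonzero composites and bounded length.

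For part (2), suppose $Y$ is projective, and by (1) either $Y$ or $Z$ is $\tau$-fixed. A projective is never $\tau$-fixed, since $\tau Y=0$. Hence $\tau Z=Z$, so $Z$ is nonprojective and noninjective. Apply Lemma~\ref{lem:ar-mesh-correspondence} to the arrow $Y\to Z$. This is consistent, since it gives $\tau Z=Z\to Y$. Now suppose $Y$ is not injective. Then $\tau^{-1}Y$ exists, and the dual correspondence applied to $Z\to Y$ gives an arrow $\tau^{-1}Y\to Z$. Applying Lemma~\ref{lem:ar-mesh-correspondence} to this arrow, with $Z$ nonprojective, gives $\tau Z=Z\to\tau^{-1}Y$, so $Z$ would have arrows to both $Y$ and $\tau^{-1}Y$.

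To get a contradiction, I will look at the path $Y\to Z\to\tau^{-1}Y$. It is sectional because $\tau\tau^{-1}Y=Y$, so this fails as stated. Instead, consider $Y\to Z\to Y\to Z\to\tau^{-1}Y$; I intend to use the mesh starting at $Y$, which has $\theta(\tau^{-1}Y)\ni Z$ and $\tau(\tau^{-1}Y)=Y$. Combining this with the arrow $Y\to Z$ should produce, via the bypass lemma, a sectional bypass: the path $Z\to Y\to Z$ is sectional only if $\tau Z\ne Z$, so here I expect the contradiction to come from the mesh counting instead. Concretely, the mesh ending at $Z$ has $\tau Z=Z$, so the almost split sequence is $0\to Z\to Y\oplus\cdots\to Z\to0$. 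The additive length function then gives $2\ell Z\ge\ell Y+\cdots$. Comparing this with the mesh ending at $\tau^{-1}Y$ yields $\ell\tau^{-1}Y\ge\ell Z-\ell Y+\ell Z$, and I expect iterating this to produce an unbounded sequence of lengths, contradicting finiteness. This length bookkeeping is the step I regard as least certain.
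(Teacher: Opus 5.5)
\textbf{Part (1): correct, by a different route from the paper.} If neither vertex is \(\tau\)-fixed, then \(Y\to Z\to Y\to Z\to\cdots\) is sectional of every length, and that is indeed impossible.

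Quoting Lemma~\ref{lem:four-no-sectional-bypass} for the length-three path is only as safe as that lemma's validity for paths with repeated vertices. The paper applies it only to two-step bypasses through distinct vertices, so you should lean on your fallback instead. That fallback is really two facts: composites of irreducible maps along sectional paths are nonzero (Igusa--Todorov), and Harada--Sai kills long composites of nonisomorphisms over a representation-finite algebra. You do not need to attribute this to \cite{CBHR}.

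The paper avoids sectional-path theory entirely and uses elementary length counting. For a pair \(P\rightleftarrows Z\) with \(P\) projective, \(\tau Z\ne Z\) would make \(Z\oplus\tau Z\) a summand of \(\rad P\) while \(P\) is a summand of \(\theta Z\), so \(\ell(\rad P)\geq\ell(\theta Z)\geq\ell P\), which is impossible. The general case is reduced to this one (or its dual) by translating the pair. The remaining periodic case is excluded because the mesh inequalities force equalities, so the family would be a whole component without projectives.

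\textbf{Part (2): genuine gap.} You correctly obtain \(\tau Z=Z\) and \(\tau^{-1}Y\rightleftarrows Z\), but you never reach a contradiction.
\begin{itemize}
\item The sectional-path attempts fail, as you note yourself.
\item The inequality \(\ell\tau^{-1}Y\geq 2\ell Z-\ell Y\) is not what the mesh ending at \(\tau^{-1}Y\) gives. That mesh gives only \(\ell Y+\ell\tau^{-1}Y\geq\ell Z\).
\item Iterating along the \(\tau^{-1}\)-orbit of \(Y\) cannot produce unbounded lengths, because that orbit is finite. It cannot cycle, since \(\tau Y=0\).
\end{itemize}

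The missing idea is to use \emph{both ends} of that finite orbit. Write \(Y=P\). Since \(P\) is not injective, its orbit is a chain \(P=P_0,\ldots,P_s=I\) with \(s\geq1\), ending at an injective \(I\neq P\). Repeating your mesh step gives \(P_j\rightleftarrows Z\) for every \(j\).
\begin{itemize}
\item The arrow \(Z\to P\) makes \(Z\) a summand of \(\rad P\), so \(\ell P>\ell Z\).
\item The arrow \(I\to Z\) makes \(Z\) a summand of \(I/\soc I\), so \(\ell I>\ell Z\).
\item \(P\) and \(I\) are distinct summands of \(\theta Z\) in \(0\to Z\to\theta Z\to Z\to0\).
\end{itemize}
Hence \(2\ell Z=\ell(\theta Z)\geq\ell P+\ell I>2\ell Z\), a contradiction.

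The strict inequality \(\ell P>\ell Z\), which your bookkeeping never uses, is essential, but it is not enough by itself. The mesh at \(Z\) with only \(P\) and \(\tau^{-1}P\) gives \(\ell P+\ell\tau^{-1}P\leq 2\ell Z\). That is perfectly consistent with \(\ell P>\ell Z\). The contradiction appears only once the injective end \(I\) is brought in.
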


\begin{proof}
\((1)\)
First suppose that \(P\rightleftarrows Z\), where \(P\) is projective.
The other vertex cannot also be projective.  Indeed, an irreducible arrow
between indecomposable projectives \(Q\to P\) makes \(Q\) a summand of
\(\rad P\), and hence \(\ell Q<\ell P\).  Arrows in both directions would
give both strict inequalities.  Thus \(Z\) is nonprojective.
Lemma~\ref{lem:ar-mesh-correspondence} applied to \(P\to Z\) gives
\(\tau Z\to P\).  If
\(Z\ne\tau Z\), both are summands of \(\rad P\), while \(P\) is a summand
of the middle term \(\theta Z\).  Hence
\[
 \ell(\rad P)\geq\ell Z+\ell(\tau Z)
 =\ell(\theta Z)\geq\ell P,
\]
a contradiction.  Thus \(\tau Z=Z\).

For a general pair \(Y\rightleftarrows Z\), applying the arrow
correspondence to each of the two arrows twice gives the pair
\(\tau Y\rightleftarrows\tau Z\), as long as all four vertices involved
are nonprojective; the inverse correspondence likewise produces
\(\tau^{-1}Y\rightleftarrows\tau^{-1}Z\) from noninjective modules.
Iterate in both directions until one of the two modules belongs to
\(\PP\) or \(\mathcal I\).  If a pair meets \(\PP\), the projective case
proved in the first paragraph gives a module fixed by \(\tau\); the case in which a pair meets
\(\mathcal I\) is dual.  Suppose instead that no pair meets
\(\PP\cup\mathcal I\).  The pairs
\[
 Y_i\rightleftarrows Z_i,\qquad Y_i=\tau^iY,\quad Z_i=\tau^iZ,
\]
then form a finite periodic family, since \(\Gamma\) is finite.
The meshes at \(Y_i\) and \(Z_i\) give
\[
\begin{aligned}
 \ell Y_i+\ell Y_{i+1}&\geq\ell Z_i+\ell Z_{i+1},\\
 \ell Z_i+\ell Z_{i+1}&\geq\ell Y_i+\ell Y_{i+1}.
\end{aligned}
\]
Equality holds in both inequalities.  Hence
\[
 E_{Y_i}\simeq Z_i\oplus Z_{i+1},
 \qquad
 E_{Z_i}\simeq Y_i\oplus Y_{i+1}.
\]
The finite periodic family is therefore
closed under predecessors and successors, so it is a whole
Auslander--Reiten component without a projective.  This is impossible: a
module \(M\) in the component admits an indecomposable projective \(P\) with
\(\Hom_A(P,M)\ne0\), and a path of irreducible maps joins \(P\) to \(M\)
\cite[Corollary~IV.5.6]{AssemSimsonSkowronski}.  Thus
one vertex of every pair \(Y\rightleftarrows Z\) is fixed by \(\tau\).

\((2)\)
Finally, suppose that \(P\rightleftarrows Z\) and that \(P\) is not
injective.  We have \(\tau Z=Z\).  Choose
\[
 P=P_0,P_1,\ldots,P_s=I,\qquad
 \tau P_j=P_{j-1}\quad(1\leq j\leq s),
\]
with \(I\) injective.  Repeated use of
Lemma~\ref{lem:ar-mesh-correspondence} gives
\[
 P_j\rightleftarrows Z\qquad(0\leq j\leq s).
\]
The arrows incident with \(P=P_0\) and \(I=P_s\) give
\(\ell P>\ell Z\) and \(\ell I>\ell Z\).  Both \(P\) and \(I\) are
summands of the middle term \(\theta Z\) of
\(0\to Z\to \theta Z\to Z\to0\).  Therefore
\[
 2\ell Z=\ell(\theta Z)\geq\ell P+\ell I>2\ell Z,
\]
a contradiction.
\end{proof}

\subsection{Classification of factor ladders that vanish without a projective
summand}

Fix a projectively rooted set \(D\) with at most four vertices.  We classify
the sequences \((\theta_n^D X)_{n\geq0}\) for which there is an index
\(n_0\) such that \(\theta_{n_0}^D X=0\) and none of
\(\theta_0^D X,\ldots,\theta_{n_0-1}^D X\) has a summand in \(\PP\cap D\).

\begin{definition}\label{def:four-hooks}
For a vertex set \(D\subseteq\Gamma_0\), put
\begin{equation}\label{eq:hook}
 \mathcal H(D)=\left\{(a,u,b)\in D^3\ \middle|\
 \begin{gathered}
 a\longrightarrow u\longrightarrow b,\qquad \tau b=a,\qquad
 u\notin\PP,\\
 u_D^-=\{a\},\qquad b_D^-=\{u\}
 \end{gathered}\right\}.
\end{equation}
We call the elements of \(\mathcal H(D)\) the \emph{hooks} of \(D\).
A vertex set \(D\) is \emph{hookless} if \(\mathcal H(D)=\varnothing\).
\end{definition}

The displayed conditions restrict \(u\) and \(b\) only; no condition is
imposed on the first vertex \(a\).
For \((a,u,b)\in\mathcal H(D)\), the sequence starting at
\(b\) is \(b,u,0\): the singleton predecessor sets give
\(\theta_1^Db=u\) and \(\theta_Du=a\), while \(\tau_Db=\tau b=a\), so
\(\theta_2^Db=(a-a)_+=0\).  Here \(u\notin\PP\) ensures that neither
nonzero term has a summand in \(\PP\cap D\).

\begin{lemma}\label{lem:small-support}
Let \(D\) be projectively rooted.  The following statements hold.
\begin{enumerate}
\item If \(|D|\leq2\) and \(\theta_{n_0}^D X=0\) for some \(n_0\), then
\(\theta_n^D X\) has a projective summand for some \(n<n_0\).
\item If \(|D|=3\), then \(D\) is killed if and only if
\(\mathcal H(D)=\{(P,u,b)\}\) with \(P\in\PP\).  In this case the
factor ladder starting at \(b\) is \(b,u,0\).
\end{enumerate}
\end{lemma}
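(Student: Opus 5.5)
The plan is a direct case analysis of the factor ladder on very small supports. It rests on two facts. First, by Lemma~\ref{lem:factor-ladder-support-path}, every summand of a term \(\theta_n^DX\) lies in \(D\) and has a path to \(X\) in \(\Gamma[D]\). Second, \(\tau_DZ\ne0\) requires \(\tau Z\in D\) and \(\theta_DZ\ne0\). Fix \(X\in D\) and suppose the ladder reaches zero at \(n_0\) with no projective summand in \(\theta_0^DX,\ldots,\theta_{n_0-1}^DX\). Since \(D\) is projectively rooted, \(X\) is not projective, or else \(\theta_0^DX\) already has a projective summand.

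For \(|D|=1\), the set \(D=\{X\}\) must contain a projective. Hence \(X\) is projective and we are done.

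For \(|D|=2\), write \(D=\{P,X\}\) with \(P\) projective; \(P\) must lie in \(D\), and we may assume \(X\) is nonprojective. Projective rootedness gives an arrow \(P\to X\), so \(\theta_1^DX\) contains \(P\). The exception would be \(\theta_1^DX=0\), but the arrow \(P\to X\) inside \(\Gamma[D]\) forces \(P\) to be a summand of \(\theta_DX\). Thus the ladder reaches a projective at step one, which proves part (1).

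For \(|D|=3\), the ``if'' direction is the computation already recorded after Definition~\ref{def:four-hooks}. The ladder at \(b\) is \(b,u,0\), and \(u\notin\PP\). We also need \(b\) to be nonprojective: if \(b\) were projective, the ladder at \(b\) would contain a projective at step zero. But \(\tau b=a\) is defined, so \(b\) is indeed nonprojective. Hence \(D\) is killed. The hypothesis \(a=P\in\PP\) only ensures that this is the sole hook, which matters for the uniqueness claim in the converse.

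For the converse, take a killing \(X\). If \(\theta_1^DX\) contained a projective, we would be done, so \(\theta_1^DX\) consists of nonprojective elements of \(D\setminus\{X\}\). Projective rootedness gives a path \(P\to\cdots\to X\) in \(\Gamma[D]\). With three vertices this path is \(P\to u\to X\) or \(P\to X\). The second is excluded, because then \(P\) would appear in \(\theta_1^DX\).

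So \(D=\{P,u,X\}\) with arrows \(P\to u\to X\), and \(\theta_DX\) contains \(u\) but not \(P\). Since \(\theta_D\) is squarefree by Corollary~\ref{cor:ar-middle-terms-squarefree}, we get \(\theta_1^DX=u\). Next, \(\theta_Du\) contains \(P\). The ladder then has \(\theta_2^DX=(\theta_Du-\tau_DX)_+\), and this must vanish, since \(\theta_2^DX\) contains no projective and any surviving term containing \(P\) would give one. It also cannot continue past zero. This forces \(\tau_DX=P\), hence \(\tau X=P\), and \(\theta_Du=P\), that is, \(u_D^-=\{P\}\). We also need \(X_D^-=\{u\}\); the only other possible predecessor in \(D\) is \(P\), which is already excluded.

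We must also rule out the degenerate possibility that \(u\) is itself projective. That is immediate, because \(u\) appears in \(\theta_1^DX\), which contains no projective.

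This shows \((P,u,X)\in\mathcal H(D)\). It remains to show that \(\mathcal H(D)\) has no other element, and this is the step I expect to be the main obstacle. Any hook \((a,u',b)\) in a three-element set uses all three vertices. So it would be a second labeling of the same triple with \(\tau b=a\), \(a\to u'\to b\), and the stated predecessor conditions. I would compare this labeling with \((P,u,X)\) using several tools: Lemma~\ref{lem:four-no-translation-arrow} to exclude arrows \(\tau Z\to Z\); Lemma~\ref{lem:two-cycle-local} to constrain any 2-cycles a relabeling would create, noting that a 2-cycle containing the projective \(P\) forces \(P\) to be injective and its partner \(\tau\)-fixed, which contradicts \(\tau X=P\); and Lemma~\ref{lem:four-no-sectional-bypass}. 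Checking the few possible permutations of the triple this way should show that \((P,u,X)\) is the only hook.
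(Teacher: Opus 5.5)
Your treatment of part (1) and of the ``if'' direction of part (2) is fine. The converse of part (2) has a genuine gap at the step where you conclude \(\theta_Du=P\), i.e.\ \(u_D^-=\{P\}\). You correctly obtain \(\tau_DX=P\): the coefficient of \(P\) in \(\theta_Du\) is one, and only \(\tau_DX\) is subtracted. But your claim that \(\theta_2^DX=(\theta_Du-\tau_DX)_+\) must vanish does not follow. Besides \(P\), the only possible summand of \(\theta_Du\) is \(X\) itself, which is nonprojective.

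If there is an arrow \(X\to u\), then \(\theta_2^DX=X\), and the ladder continues with \(\theta_3^DX=(u-\tau_Du)_+\). At the level of the recursion this can be killed without any hook. If \(\tau u=u\), the ladder is \(X,u,X,0\), while \(u_D^-=\{P,X\}\), so \(\mathcal H(D)=\varnothing\). Hence the recursion alone cannot give the statement; you need an Auslander--Reiten input that forbids \(X\to u\).

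This is what the paper does. If \(X\to u\), then \(u\rightleftarrows X\). Since \(\tau X=P\ne X\), Lemma~\ref{lem:two-cycle-local}(1) forces \(\tau u=u\). The arrow correspondence applied to \(P\to u\) then gives \(u\to P\), so \(P\rightleftarrows u\). Lemma~\ref{lem:two-cycle-local}(2) would then make \(P\) injective, which is impossible because \(P=\tau X\) lies in the image of \(\tau\). You cite this lemma, but only for the uniqueness step, which does not need it.

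Once \(u_D^-=\{P\}\) is established, the step you flagged as the main obstacle is immediate. A hook \((a',u',b')\) needs \(\tau b'\in D\), so \(b'\in\{u,X\}\).
\begin{itemize}
\item If \(b'=X\), then \(a'=\tau X=P\), and \(X_D^-=\{u\}\) forces \(u'=u\).
\item If \(b'=u\), then \(u'\in u_D^-=\{P\}\), contradicting \(u'\notin\PP\).
\end{itemize}
Note also that your remark that a hook ``uses all three vertices'' is not true a priori: \(a=b\) is allowed when \(b\) is \(\tau\)-fixed. The check above covers that case as well.
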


\begin{proof}
All distances in this proof are directed distances in \(\Gamma[D]\).

(1)
If \(X\in\PP\), then \(X\) already occurs in \(\theta_0^D X\).  Since
\(D\) is projectively rooted, a nonprojective \(X\) lies on a path in
\(\Gamma[D]\) from a member of \(\PP\cap D\); with at most two vertices
this path is a single arrow \(P\to X\), so \(P\) is a summand of
\(\theta_1^D X\).

(2)
Suppose that \(D\) is killed.  Since every vertex of \(D\) lies on a path
in \(\Gamma[D]\) starting at a member of \(\PP\cap D\), this full
subquiver contains a path
\[
 P\longrightarrow u\longrightarrow b
\]
with \(\PP\cap D=\{P\}\).  Indeed, if \(D\) contained two projective
modules, it would contain at most one nonprojective module \(X\), and
\(\theta_1^D X\) would have a projective summand.  A starting vertex whose
factor ladder is killed is not \(P\), because \(\theta_0^DP=P\), and is
not \(u\), because \(P\) is a summand of \(\theta_1^Du\).  Hence the
starting vertex is \(b\).

Since no term in the sequence starting at \(b\) has \(P\) as a summand,
one has \(b_D^-=\{u\}\).  Since \(P\to u\), the projective \(P\) is a
summand of \(\theta_Du\); in
\(\theta_2^Db=(\theta_Du-\tau_Db)_+\) the only subtracted term is
\(\tau_Db\), so absence of \(P\) forces \(\tau b=P\).  If \(b\to u\),
then \(u\rightleftarrows b\).  Since \(b\) is not \(\tau\)-fixed,
Lemma~\ref{lem:two-cycle-local} gives \(\tau u=u\).  The arrow
correspondence then gives \(u\to P\), so the projective, noninjective
module \(P=\tau b\) occurs in the pair \(P\rightleftarrows u\), contrary
to Lemma~\ref{lem:two-cycle-local}.  Hence \(u_D^-=\{P\}\), and
\(\mathcal H(D)=\{(P,u,b)\}\).  Conversely, this hook gives the factor
ladder \(b,u,0\).
\end{proof}

We next name the three four-vertex configurations that occur in the
classification.

\begin{definition}\label{def:four-support-configurations}
Let \(D\subseteq\Gamma_0\) have four vertices.  We call \(D\) a
\emph{double hook} if it admits a labeling
\(D=\{P,u_1,u_2,u_3\}\), with \(P\in\PP\), such that
\[
 \mathcal H(D)=\{(P,u_1,u_2),(u_1,u_2,u_3)\}.
\]
In this case the two hooks are consecutive:
\begin{equation}\label{eq:double-hook}
 P\longrightarrow u_1\longrightarrow u_2\longrightarrow u_3,
 \qquad \tau u_2=P,\quad\tau u_3=u_1,
\end{equation}
and \((u_1)_D^-=\{P\}\),
\((u_2)_D^-=\{u_1\}\), and \((u_3)_D^-=\{u_2\}\).

Suppose that \(D\) is hookless and \(\PP\cap D=\{P\}\).
\begin{enumerate}
\item[\textup{(F)}] We call \(D\) of \emph{type \({\rm F}\)} if it
admits a labeling \(D=\{P,a,c,z\}\) such that
\[
\begin{tikzcd}[ampersand replacement=\&,column sep=large,baseline=-.5ex]
 P\arrow[r]\&a\arrow[r,shift left]\&
 c\arrow[l,shift left]\arrow[r,shift left]\&
 z\arrow[l,shift left]
\end{tikzcd}
\quad
 \tau z=a,\qquad\tau c=c,
\]
and
\[
 \tau a=z\ \text{ or }\ \tau a\notin D,\qquad
 P\nrightarrow z,\qquad
 z_D^-=\{c\},\qquad c_D^-=\{a,z\}.
\]
No condition is imposed on additional arrows ending at \(a\) or \(P\);
in particular, \(z\to P\) may be present or absent.

\item[\textup{(T)}] We call \(D\) of \emph{type \({\rm T}\)} if it
admits a labeling \(D=\{P,A_1,A_2,x\}\) such that
\[
\begin{tikzcd}[ampersand replacement=\&,column sep=large,row sep=large,
 baseline=-.5ex]
 x\arrow[r]\&A_2\arrow[d]\&P\arrow[l]\\
 \&A_1\arrow[ul]\arrow[ur]\&
\end{tikzcd}
\]
and
\[
 \tau A_1=P,\qquad\tau A_2=A_1,\qquad\tau x\notin D,
\]
\[
 (A_1)_D^-=\{A_2\},\qquad
 (A_2)_D^-=\{P,x\},\qquad x_D^-=\{A_1\}.
\]
No condition is imposed on additional arrows ending at \(P\).
\end{enumerate}
The label \({\rm F}\) refers to the \(\tau\)-fixed vertex \(c\), and
\({\rm T}\) to the triangle \(A_1\to P\to A_2\to A_1\).
\end{definition}

To separate the finite computation from its application to
Auslander--Reiten quivers, we encode only the arrow relation, the projective
vertices, and the partial translation.

\begin{definition}\label{def:four-vertex-test-pattern}
An \emph{admissible four-vertex pattern} consists of a set \(V\) with four
elements,
a loop-free arrow relation \(E\subseteq V\times V\), a nonempty subset
\(\Pi\subseteq V\), and a map
\[
 t:V\setminus\Pi\longrightarrow V\sqcup\{*\}.
\]
For \(w\in V\setminus\Pi\), the value \(t(w)=*\) records that the translate
is outside \(V\).  The following conditions are imposed.
\begin{enumerate}
\item Every vertex is reachable from \(\Pi\) by a directed path in \(E\).
\item The values of \(t\) which belong to \(V\) are distinct.
\item If \(t(w)=v\in V\), then
\[
 w_E^-=v_E^+,
 \qquad (v,w)\notin E.
\]
\item The relation \(E\) contains no transitive triangle.
\item If two distinct vertices form a two-cycle, one of them is fixed by
\(t\).  If a member of the two-cycle belongs to \(\Pi\), it is not in the
image of \(t\).
\item For each \(w\in V\), the factor-ladder recurrence determined by
\((E,\Pi,t)\) either has a summand in \(\Pi\) or reaches zero without
repeating a nonzero ordered pair of consecutive terms.
\end{enumerate}
For \(x\in V\), the symbols \(x_E^-\) and \(x_E^+\) denote the predecessor
and successor sets computed in \(E\).  The pattern is \emph{killed} if the recurrence for
some starting vertex reaches zero before any nonzero term has a summand in
\(\Pi\).
\end{definition}

For an admissible four-vertex pattern, hooks, double hooks, and types
\({\rm F}\) and \({\rm T}\) are defined by Definitions~\ref{def:four-hooks} and
\ref{def:four-support-configurations} after replacing \(D\) by \(V\),
membership in \(\PP\) by membership in \(\Pi\), \(\tau\) by \(t\), and
the arrow relation of \(\Gamma[D]\) by \(E\).  In this translation,
\(\tau a\notin D\) means \(t(a)=*\).

\begin{proposition}\label{prop:four-test-pattern-classification}
Every admissible four-vertex pattern satisfies the following statements.
\begin{enumerate}
\item It has at most two hooks, and it has two hooks if and only if it is a
double hook.
\item If it is hookless, then it is killed if and only if it has type
\({\rm F}\) or type \({\rm T}\).
\end{enumerate}
\end{proposition}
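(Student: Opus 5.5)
This statement is purely combinatorial: an admissible four-vertex pattern is a finite object. On a four-element set \(V\) there are finitely many loop-free relations \(E\), nonempty subsets \(\Pi\), and partial maps \(t\). So the plan is to prove Proposition~\ref{prop:four-test-pattern-classification} by exhaustive enumeration, organized to keep the case analysis transparent and checkable, and backed by the verification scripts referred to in the formalization note.

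I would first prune the search space using conditions (1)--(5) of Definition~\ref{def:four-vertex-test-pattern}, quotienting by relabelings of \(V\). Condition (3) is the strongest pruning tool: \(t(w)=v\) forces \(w_E^-=v_E^+\) and \((v,w)\notin E\), so \(t\) is almost determined by \(E\). Condition (2) makes \(t\) injective on its values in \(V\), and (4) forbids transitive triangles. I would then evaluate condition (6) by running the factor-ladder recurrence of Construction~\ref{con:factor-ladder} with \(\theta_D\) and \(\tau_D\) read off from \((E,\Pi,t)\). Periodicity is detected by a repeated nonzero pair of consecutive terms. This yields the list of admissible patterns.

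For part (1), I would argue structurally before resorting to the enumeration. A hook \((a,u,b)\) needs \(u\notin\Pi\), \(u_E^-=\{a\}\), \(b_E^-=\{u\}\) and \(t(b)=a\). Condition (3) gives \(a_E^+=b_E^-=\{u\}\). Two distinct hooks share vertices because \(|V|=4\), while each hook uses three vertices. The uniqueness of predecessors then forces the two hooks to chain as \(a\to u\to b\) followed by \(u\to b\to c\). Reachability from \(\Pi\) (condition (1)) forces the first vertex to lie in \(\Pi\), giving \eqref{eq:double-hook}. A third hook would need a fifth vertex. The converse is immediate from the definition of a double hook. Any configuration this argument does not cleanly exclude (for instance overlaps via two-cycles) would be settled by the enumeration.

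For part (2), the direction ``type F or T implies killed'' is a direct computation. For type F, starting at \(z\) gives \(z,c,a+z-z=a\) and then \(\ldots\), and one tracks the recurrence until it vanishes without meeting \(P\), using \(\tau c=c\) and \(\tau z=a\). Type T is handled analogously, starting at \(x\). The hard direction, and the main obstacle, is showing that every other hookless killed admissible pattern is excluded. I would first try to argue by the length of the ladder before it vanishes. Vanishing at step \(2\) from a term with a single summand is exactly a hook, so in a hookless pattern the killed ladder has length at least three. With four vertices and one projective that must stay absent, the subtraction by \(\tau_D\) must then cancel at least two steps, which forces either a \(t\)-fixed vertex inside a two-cycle (type F) or a \(t\)-chain \(P\leftarrow A_1\leftarrow A_2\) (type T). If this case analysis becomes unwieldy, I would fall back on machine enumeration of all admissible patterns, as the paper does elsewhere, and check the two equivalences on each.
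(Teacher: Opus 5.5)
Your plan is the paper's proof. The paper establishes this proposition solely by exhaustive machine enumeration: it runs over all loop-free relations, nonempty \(\Pi\), and maps \(t\), and tests conditions (1)--(6). The recurrence \eqref{eq:factor-ladder} is evaluated exactly, and any repeated nonzero pair of consecutive terms is excluded by (6). Both statements are then checked on the \(6933\) labeled admissible patterns (\(329\) isomorphism classes) that survive.

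The hand arguments you add are optional, but the one explicit computation you give is wrong, and as written it points the wrong way. In type \({\rm F}\), one has \(t(z)=a\) and \(\theta_Dz=c\neq0\), so \(\tau_Dz=a\), not \(z\). Hence
\(\theta^D_2z=(a+z-a)_+=z\) and \(\theta^D_3z=(\theta_Dz-\tau_Dc)_+=(c-c)_+=0\), and the ladder is \(z,c,z,0\). If \(\theta^D_2z\) were \(a\), as you wrote, then \(\theta^D_3z=(\theta_Da-c)_+\) would contain \(P\), because \(P\to a\), so the ladder from \(z\) would not be killed. Your type \({\rm T}\) claim is fine: the ladder from \(x\) is \(x,A_1,A_2,x,0\).

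In part (1), uniqueness of predecessors alone does not force the two hooks to chain.
\begin{itemize}
\item To rule out coincidences of vertices in the same role, you also need \(a_E^+=\{u\}\) from condition (3) and injectivity of \(t\) from condition (2).
\item The cross coincidence \(b=u'\) then forces \(a'=u\).
\item The remaining overlaps, like a degenerate hook with \(a=b\), give a set of non-\(\Pi\) vertices receiving no arrow from outside. Condition (1) together with \(\Pi\neq\varnothing\) excludes this.
\item A third hook is not excluded because it needs a fifth vertex. If it preceded the first hook, \(P\in\Pi\) would be a middle vertex, which is forbidden. If it followed the first hook, injectivity of \(t\) would make it equal to the second hook.
\end{itemize}

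Your sketch of the hard direction of (2) is only heuristic; for instance, it tacitly assumes \(|\Pi|=1\). That direction rests on the enumeration, exactly as in the paper.
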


\begin{proof}
A Python script performs the following exhaustive enumeration.
There are \(2^{12}\) loop-free arrow relations on a labeled four-element
set, \(15\) nonempty choices of \(\Pi\), and finitely many maps \(t\).
For every choice, the script tests conditions~(1)--(6).  In doing so, it
evaluates the recurrence \eqref{eq:factor-ladder} with integer coefficients
for every starting vertex.  It stops when a term meets \(\Pi\) or becomes
zero; repetition of a nonzero ordered pair is detected exactly and is
excluded by condition~(6).

The script finds \(6933\) labeled admissible patterns, in
\(329\) isomorphism classes.  Their classification is as follows.
\[
\begin{array}{c|r|r}
 &\text{labeled patterns}&\text{isomorphism classes}\\ \hline
\text{killed}&1272&53\\
\text{hookless and killed}&96&4\\
\text{type \({\rm F}\)}&72&3\\
\text{type \({\rm T}\)}&24&1\\
\text{double hook}&96&4
\end{array}
\]
Every admissible pattern is killed if and only if it has a hook or has type
\({\rm F}\) or type \({\rm T}\).  Types
\({\rm F}\) and \({\rm T}\) are disjoint and have unique labelings.
Every admissible pattern has at most two hooks, and two hooks occur exactly in
a double hook.

The Python script used for this verification is available at
\begin{center}
\footnotesize
\url{https://github.com/haruhisa-enomoto/quotient-submodule-equidistribution/blob/main/verification/four_vertex_patterns.py}.
\end{center}
\end{proof}

\begin{proposition}\label{prop:four-support-classification}
Let \(D\) be a projectively rooted set with four vertices.  The following
statements hold.
\begin{enumerate}
\item The set \(D\) has at most two hooks, and it has two hooks if and only
if it is a double hook.
\item If \(D\) is hookless, then \(D\) is killed if and only if it has type
\({\rm F}\) or type \({\rm T}\).
\end{enumerate}
\end{proposition}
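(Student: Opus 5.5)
The plan is to reduce Proposition~\ref{prop:four-support-classification} to the finite verification of Proposition~\ref{prop:four-test-pattern-classification}. To a projectively rooted four-element set \(D\subseteq\ind A\) I attach the pattern \((V,E,\Pi,t)\) with \(V=D\), \(E\) the arrow relation of \(\Gamma[D]\), \(\Pi=\PP\cap D\), and, for nonprojective \(w\in D\), \(t(w)=\tau w\) if \(\tau w\in D\) and \(t(w)=*\) otherwise. The dictionary is tautological: hooks, double hooks and types F and T are defined through exactly these data, so they correspond. The only point to check is the killing condition. The factor-ladder recurrence in Construction~\ref{con:factor-ladder} uses \(\theta_D\) and \(\tau_D\). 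Here \(\theta_D\) is read off from \(E\) because Corollary~\ref{cor:ar-middle-terms-squarefree} makes all multiplicities one. Also \(\tau_D w=t(w)\) exactly when \(t(w)\in V\) and \(\theta_Dw\neq0\), which is the rule the pattern recurrence must use as well. Hence \(D\) is killed if and only if its pattern is killed. Once admissibility is shown, both statements follow verbatim from Proposition~\ref{prop:four-test-pattern-classification}.

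Most of the work is checking conditions (1)--(6) of Definition~\ref{def:four-vertex-test-pattern}, one at a time.
\begin{itemize}
\item Condition (1) is projective rootedness. This also gives \(\Pi\neq\varnothing\).
\item Condition (2) holds because \(\tau\) is injective on nonprojectives.
\item Loop-freeness holds because the Auslander--Reiten quiver has no loops; see the proof of Lemma~\ref{lem:four-no-translation-arrow}.
\item For condition (3), take \(t(w)=v\in D\). Lemma~\ref{lem:ar-mesh-correspondence} gives that the predecessors of \(w\) in \(\Gamma\) are exactly the successors of \(v=\tau w\). Intersecting with \(D\) gives \(w_E^-=v_E^+\). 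The condition \((v,w)\notin E\) is Lemma~\ref{lem:four-no-translation-arrow}.
\item For condition (4), suppose \(E\) contains a transitive triangle \(x\to y\to z\) together with \(x\to z\). If the path \(x\to y\to z\) is sectional, it is a sectional bypass of \(x\to z\), which Lemma~\ref{lem:four-no-sectional-bypass} forbids. Otherwise \(\tau z=x\), and the arrow \(x\to z\) is an arrow \(\tau z\to z\), excluded by Lemma~\ref{lem:four-no-translation-arrow}.
\item Condition (5) is Lemma~\ref{lem:two-cycle-local}. Its part~(1) gives a \(\tau\)-fixed member, and \(\tau\)-fixed means \(t\)-fixed inside \(D\). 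Its part~(2) says a projective member is injective, hence not of the form \(\tau w\), so it is not in the image of \(t\).
\item Condition (6) follows from Proposition~\ref{prop:iyama-factor-category}(2). The ladder reaches zero, so it cannot repeat a nonzero ordered pair; repetition would force periodicity.
\end{itemize}

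The main obstacle is making the dictionary between \(\tau_D\) and \(t\) exact. The subtle points are the side condition \(\theta_Dw\ne0\) in the definition of \(\tau_D\), and the fact that \(t(w)=*\) must behave exactly like \(\tau w\notin D\). I will state explicitly that the script's recurrence applies \(t\) only when \(\theta\) is nonzero. This matches Construction~\ref{con:factor-ladder} and the formulation of Definition~\ref{def:four-hooks}, where the translation outside \(D\) is recorded as \(*\).
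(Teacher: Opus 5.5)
Your proposal is correct and follows the paper's proof: you build the same pattern \((D,E,\PP\cap D,t)\), check conditions~(1)--(6) of Definition~\ref{def:four-vertex-test-pattern} with the same lemmas, and use Corollary~\ref{cor:ar-middle-terms-squarefree} to identify the two recurrences before applying Proposition~\ref{prop:four-test-pattern-classification}. The side condition \(\theta_Dw\ne0\), which you treat as the main obstacle, is in fact immaterial: if \(t(w)=v\) and \(w_E^-=\varnothing\), condition~(3) gives \(v_E^+=\varnothing\), so \(v\) is never a summand of any \(\theta_DY\) with \(Y\in D\), and subtracting \(v\) is erased by \((\cdot)_+\); hence your argument need not depend on how the script implements this convention.
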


\begin{proof}
Let \(E\) be the arrow relation of \(\Gamma[D]\), put
\(\Pi=\PP\cap D\), and define
\[
 t:D\setminus\Pi\longrightarrow D\sqcup\{*\},\qquad
 t(X)=
 \begin{cases}
  \tau X&\text{if \(\tau X\in D\)},\\
  *&\text{if \(\tau X\notin D\)}.
 \end{cases}
\]
We verify that \((D,E,\Pi,t)\) is an admissible four-vertex pattern.
Since \(\Gamma\) has no loops, \(E\) is loop-free.  Since \(D\) is
projectively rooted, condition~(1) holds, and the injectivity of \(\tau\) gives
condition~(2).
Lemma~\ref{lem:ar-mesh-correspondence} gives the equality of predecessor and
successor sets in condition~(3), while
Lemma~\ref{lem:four-no-translation-arrow} gives
\((t(w),w)\notin E\) when \(t(w)\in D\).  Any transitive triangle would
give either a sectional
bypass or an arrow \(\tau X\to X\), so
Lemmas~\ref{lem:four-no-sectional-bypass} and
\ref{lem:four-no-translation-arrow} give condition~(4).
Lemma~\ref{lem:two-cycle-local} gives condition~(5): a projective member of
a two-cycle is injective and therefore does not belong to the image of
\(\tau\).  Proposition~\ref{prop:iyama-factor-category}(2) gives
\(\theta_n^D X=0\) for all sufficiently large \(n\), which is
condition~(6).

Corollary~\ref{cor:ar-middle-terms-squarefree} shows that the arrow relation
records the middle terms of the almost split sequences with multiplicity one.
Hence the factor-ladder recurrence for \((D,E,\Pi,t)\) is the recurrence for
\(D\).
Proposition~\ref{prop:four-test-pattern-classification} now gives both
statements.
\end{proof}

Let \(K_{4,\quot}\) be the number of projectively rooted sets of four vertices
for which some sequence becomes zero without a projective summand; this
is the quantity the two sides of Lemma~\ref{lem:four-vertex-ladder}
compare at \(j=4\).  Put
\[
 W_{4,\quot}=
 \sum_{\substack{D\text{ projectively rooted}\\|D|=4}}
 |\mathcal H(D)|.
\]
Let \(B_{4,\quot}\) be the number of double hooks.  Let \(F_{4,\quot}\) and
\(T_{4,\quot}\) be the numbers of sets of types \({\rm F}\) and \({\rm T}\),
respectively.
Proposition~\ref{prop:four-support-classification} gives
\begin{equation}\label{eq:killed-packets}
 K_{4,\quot}=W_{4,\quot}-B_{4,\quot}+F_{4,\quot}+T_{4,\quot}.
\end{equation}
Indeed, a set with \(|\mathcal H(D)|=1\) contributes once to \(W_{4,\quot}\), while
a double hook contributes \(2-1=1\) to
\(W_{4,\quot}-B_{4,\quot}\), and Proposition~\ref{prop:four-support-classification}(1) gives
\(|\mathcal H(D)|\leq2\).  A set with \(\mathcal H(D)=\varnothing\) for
which some sequence becomes zero without a projective summand satisfies
exactly one of types \({\rm F}\) and \({\rm T}\), and each such set is
counted once, because the labeling of its four vertices is determined
by the configuration: in type \({\rm F}\), the unique projective \(P\)
and the unique
\(\tau\)-fixed nonprojective \(c\) distinguish two vertices,
while \(P\to a\) and \(P\nrightarrow z\) distinguish the remaining two.  In
type \({\rm T}\), the unique projective determines
\(A_1=\tau^{-1}P\) and \(A_2=\tau^{-1}A_1\), leaving \(x\).  The two types
are disjoint because type \({\rm F}\) has a \(\tau\)-fixed nonprojective
vertex and type \({\rm T}\) does not.  The next subsection matches
\(W_{4,\quot}\), \(B_{4,\quot}\), \(F_{4,\quot}\), and \(T_{4,\quot}\) with their counterparts in the
reversed translation quiver by counting orbits of arrows.

We now compare the configurations in Lemma~\ref{lem:small-support} and
Proposition~\ref{prop:four-support-classification} under reversal.

The subscript \(\quot\) refers to \(\Gamma\), and the subscript \(\sub\)
refers to the translation quiver obtained by reversing \(\Gamma\).  We call
these the \emph{quotient side} and the \emph{submodule side}, respectively.
Thus the modules in \(\mathcal I\) play the role of projective modules on the
submodule side.  We first organize the arrows into translation orbits.

\begin{construction}\label{con:alternative-arrow-orbits}
For an arrow \(\alpha:c\to d\) with \(d\) nonprojective, put
\(\sigma\alpha=(\tau d\to c)\).
Its inverse sends an arrow \(c\to d\) with \(c\) noninjective to
\(d\to\tau^{-1}c\).  Starting from an arrow, iterate \(\sigma\) and its
inverse for as long as the required translations are defined.  We call the resulting maximal
indexed sequence a \emph{maximal \(\sigma\)-orbit}; it is either a cycle or a
finite chain.  Each indexed arrow in such a sequence is an \emph{arrow
occurrence}.  Thus different positions remain different occurrences even
if the orbit revisits a vertex.  We write a finite chain as
\begin{equation}\label{eq:alternative-chain}
 H=(y_0,\ldots,y_L),\qquad
 \beta_e=(y_{e+1}\to y_e),\qquad
 \tau y_e=y_{e+2}.
\end{equation}
Thus \(y_1\) is injective and \(y_{L-1}\) is projective.  In the reversed
translation quiver, this chain is read as \((y_L,\ldots,y_0)\).

If a second orbit is written as
\[
 C=(x_0,\ldots,x_M),\qquad \tau x_f=x_{f+2},
\]
we call a pair \((f,e)\) with \(x_f=y_e\) an \emph{indexed contact} of
\(C\) with \(H\).  Equalities at different positions give different indexed
contacts.
\end{construction}

\subsection{Balance for three omitted vertices}

\begin{lemma}\label{lem:alternative-three-killed}
The numbers of projectively rooted killed sets with three vertices and
injectively corooted killed sets with three vertices are equal.
\end{lemma}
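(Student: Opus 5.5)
By Lemma~\ref{lem:small-support}(2), a projectively rooted three-vertex set \(D\) is killed exactly when \(\mathcal H(D)=\{(P,u,b)\}\) with \(P\in\PP\). In that case \(D=\{P,u,b\}\) and the hook is a configuration
\[
 P\longrightarrow u\longrightarrow b,\qquad \tau b=P,\qquad u\notin\PP,\qquad u_D^-=\{P\},\qquad b_D^-=\{u\}.
\]
The killed set is therefore determined by the triple. The plan is to count these triples and the dual triples on the submodule side, then match the two counts using the arrow orbits of Construction~\ref{con:alternative-arrow-orbits}. The dual statement, obtained by reversing \(\Gamma\), says that an injectively corooted three-vertex set is killed exactly when it is \(\{I,u',b'\}\) with
\[
 b'\longrightarrow u'\longrightarrow I,\qquad \tau^{-1}b'=I,\qquad u'\notin\mathcal I,
\]
with \(u'\) having \(b'\) as its only successor in \(D\) and \(b'\) having \(u'\) as its only successor in \(D\).

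\textbf{Step 1: rewrite the quotient-side configuration.} The arrow \(u\to b\) with \(\tau b=P\) corresponds, by Lemma~\ref{lem:ar-mesh-correspondence}, to the arrow \(P\to u\). So the quotient configuration amounts to choosing an arrow \(\alpha:u\to b\) with \(b\) nonprojective, \(\tau b\) projective and \(u\) nonprojective; then \(P=\tau b\), and \(\sigma\alpha=(P\to u)\) is the last arrow of its \(\sigma\)-orbit, since \(P\) is projective. The remaining conditions are local and only concern arrows inside \(\{P,u,b\}\). They are \(b\nrightarrow u\) and \(P\nrightarrow b\) (so \(b_D^-=\{u\}\)), and \(b\nrightarrow u\) again (so \(u_D^-=\{P\}\)). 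The arrow \(P\to b\) is excluded anyway by Lemma~\ref{lem:four-no-translation-arrow}, because \(\tau b=P\).

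\textbf{Step 2: rewrite the submodule-side configuration.} In the chain notation of \eqref{eq:alternative-chain}, with \(y_{L-1}\) projective, a quotient-side killed set is \(\{y_{L-1},y_L,y_{L-3}\}=\{P,u,b\}\), where \(\tau y_{L-3}=y_{L-1}\), \(P=y_{L-1}\) and \(u=y_L\). This requires \(y_L\) to be nonprojective. For this I use that a chain ending at \(y_L\) means \(\sigma\) is undefined at the last arrow \(y_L\to y_{L-1}\), and check the vertex-type conventions in \eqref{eq:alternative-chain}. Symmetrically, a submodule-side killed set is \(\{y_1,y_0,y_3\}\) at the injective end of a chain, with \(y_0\) noninjective.

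\textbf{Step 3: biject the two sides.} Every maximal chain has exactly one projective end and one injective end. The goal is a bijection between chains satisfying the end conditions at the projective end and chains satisfying the mirrored conditions at the injective end. A matching by chains themselves will not work directly, because the two end conditions are independent. Instead I count globally, as in the proof of Corollary~\ref{cor:cosize-two}. First I count the arrows \(u\to b\) with \(\tau b\) projective and \(u\) nonprojective; via \(\sigma\) and \(\tau\) these become arrows \(P\to u\) with \(P\in\PP\) and \(u\notin\PP\). Then I subtract the configurations violating the local condition, namely those with \(b\to u\) as well, i.e.\ \(u\rightleftarrows b\). Lemma~\ref{lem:two-cycle-local} forces such a pair to have a \(\tau\)-fixed member. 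If \(b\) were \(\tau\)-fixed we would get \(b=P\), which is projective, contrary to \(\tau b=P\) with \(b\) nonprojective; so \(u\) is \(\tau\)-fixed. Then \(u\to P\) and \(P\rightleftarrows u\), so \(P\) is injective. The bad configurations are therefore pairs \(P\rightleftarrows u\) with \(P\) projective-injective and \(\tau u=u\), and this description is self-dual.

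\textbf{Step 4: equate the main terms.} The main terms \(\beta_{\quot}\)-type counts of arrows leaving projectives towards nonprojectives are equated with their duals by the edge-counting identity \(e(\ind A,\PP)=e(\mathcal I,\ind A)\) from Corollary~\ref{cor:cosize-two}. Adjusting for the excluded case \(u\in\PP\) should leave terms that are symmetric under reversal.

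The main obstacle is bookkeeping: making sure the side conditions (\(u\notin\PP\), \(b\) nonprojective, the absence of extra arrows inside \(D\)) are exactly mirrored, so that every correction term is self-dual.
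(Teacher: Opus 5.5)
\textbf{There is a genuine gap.} Step~4 is only asserted (``should leave terms that are symmetric under reversal''), and the counts feeding into it are the wrong ones, so the argument as written does not prove the lemma. Two specific problems:

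(i) In Step~3 you pass from arrows \(u\to b\) (with \(\tau b\in\PP\), \(u\notin\PP\)) to arrows \(P\to u\) with \(P\in\PP\), \(u\notin\PP\). This passage is injective but not surjective, because \(b=\tau^{-1}P\) exists only when \(P\) is noninjective. For the path algebra of \(1\to2\) in Example~\ref{ex:factor-ladder-a2}, the arrow \(P_1\to S_1\) out of the projective-injective \(P_1\) is counted, yet there is no killed three-element set.

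(ii) Your ``bad configurations'' are empty, not a self-dual family. You correctly derive that \(b\to u\) forces \(\tau u=u\) and \(P\rightleftarrows u\), hence that \(P\) is injective by Lemma~\ref{lem:two-cycle-local}(2). But \(P=\tau b\) lies in the image of \(\tau\), so it is noninjective. Combined with \(P\nrightarrow b\) (Lemma~\ref{lem:four-no-translation-arrow}), this is the missing idea: the local conditions hold automatically. Consequently the quotient-side killed sets correspond bijectively to arrows \(P\to u\) with \(P\in\PP\setminus\mathcal I\) and \(u\notin\PP\). Dually, the submodule-side killed sets correspond to arrows \(u'\to I\) with \(I\in\mathcal I\setminus\PP\) and \(u'\notin\mathcal I\).

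With these corrections your global count can be completed. Subtract \(e(\PP\cap\mathcal I,\ind A\setminus\PP)\) from \(\beta_{\quot}(A)\) and \(e(\ind A\setminus\mathcal I,\PP\cap\mathcal I)\) from \(\beta_{\sub}(A)\). The two corrections agree by the mesh correspondence \((P\to Z)\leftrightarrow(\tau Z\to P)\), and Corollary~\ref{cor:cosize-two}(1) finishes.

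The same automaticity is what makes the chain matching you dismissed work, and that matching is the paper's proof. The two end conditions are not independent: they hold at both ends of every maximal \(\sigma\)-chain \((y_0,\ldots,y_L)\) with \(L\geq3\).
\begin{itemize}
\item The projective-end triple is \((P,u,b)=(y_{L-1},y_{L-2},y_{L-3})\). So \(u=y_{L-2}\), while \(y_L=\tau u\); your Step~2 indexing \(u=y_L\) is off.
\item The injective-end triple is \((y_1,y_2,y_3)\), not \(\{y_1,y_0,y_3\}\).
\item Conversely, a killed triple \((P,u,b)\) recovers its chain: \(\sigma(P\to u)=(\tau u\to P)\) is the last arrow, and \(\sigma^{-1}(P\to u)=(u\to b)\). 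Your Step~1 claim that \(P\to u\) itself is the last arrow is wrong, since \(u\) is nonprojective.
\end{itemize}
Both sides are therefore counted by the chains with \(L\geq3\). This is shorter than the global count and needs no correction terms.
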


\begin{proof}
By Lemma~\ref{lem:small-support}(2), a projectively rooted set with three
vertices is killed precisely when it has the form
\begin{equation}\label{eq:alternative-wall-hook}
 P\longrightarrow u\longrightarrow b,\qquad
 \tau b=P,\qquad
 u_D^-=\{P\},\quad b_D^-=\{u\}.
\end{equation}
For a chain \(H\) in \eqref{eq:alternative-chain} with \(L\geq3\), we call
\((y_{L-1},y_{L-2},y_{L-3})\) its \emph{projective-end triple}.  After
reversal, the projective-end triple is \((y_1,y_2,y_3)\).
Both triples satisfy \eqref{eq:alternative-wall-hook} on their respective
sides.  We verify this for
\((P,u,b)=(y_{L-1},y_{L-2},y_{L-3})\); the other end is dual.  The vertex
\(u\) is nonprojective because \(\tau u=y_L\) exists.  The arrow
\(P\to b\) is impossible because \(P=\tau b\).  If \(b\to u\), then
\(u\rightleftarrows b\).  Since \(\tau b=P\ne b\),
Lemma~\ref{lem:two-cycle-local} gives \(\tau u=u\).  The arrow
correspondence applied to \(P\to u\) then gives \(u\to P\), so
\(P\rightleftarrows u\).  This is impossible because the projective module
\(P=\tau b\) is not injective, again by
Lemma~\ref{lem:two-cycle-local}.  Hence the predecessor sets induced on the
triple are \(\{P\}\) at \(u\) and \(\{u\}\) at \(b\).

Conversely, if \((P,u,b)\) satisfies
\eqref{eq:alternative-wall-hook}, apply \(\sigma\) to the arrow \(P\to u\).
The resulting arrow \(\tau u\to P\) is the last arrow of its chain because
\(P\) is projective.  Hence \(P\to u\) immediately precedes it in the
\(\sigma\)-orbit.  If these
positions are \(y_{L-1}=P\) and \(y_{L-2}=u\), then
\(\tau y_{L-3}=P=\tau b\), so the injectivity of \(\tau\) gives
\(y_{L-3}=b\).  Thus reading a finite chain from its two ends gives a
bijection between the killed triples on the two sides.
\end{proof}

\subsection{The signed count for four omitted vertices}

Recall that \(K_{4,\quot}\) denotes the number of projectively rooted killed
sets with four vertices and that
\begin{equation}\label{eq:alternative-killed-packets}
 K_{4,\quot}=W_{4,\quot}-B_{4,\quot}+F_{4,\quot}+T_{4,\quot}.
\end{equation}
Here \(W_{4,\quot}\) counts pairs \((D,h)\), where \(h\in\mathcal H(D)\), the
term \(B_{4,\quot}\) corrects the double hooks, and
\(F_{4,\quot},T_{4,\quot}\) count the hookless configurations of types
\({\rm F}\) and \({\rm T}\) in
Definition~\ref{def:four-support-configurations}.

\begin{definition}\label{def:admissible-hook-triple}
A triple \(h=(a,u,b)\) is \emph{admissible} if
\[
 a\longrightarrow u\longrightarrow b,\qquad \tau b=a,
\]
the three vertices are distinct, \(u\notin\PP\), and the predecessor sets
induced on \(\{a,u,b\}\) are \(\{a\}\) at \(u\) and \(\{u\}\) at \(b\).
\end{definition}

Fix such a triple and put
\(A_h=u^-\) and \(B_h=b^-=a^+\).
The last equality follows from Lemma~\ref{lem:ar-mesh-correspondence}
applied at \(b\), since
\(\tau b=a\).  For \(D=\{a,u,b,z\}\), the condition
\(h\in\mathcal H(D)\) says exactly that \(z\notin A_h\cup B_h\).
We use the disjointness relations
\[
 A_h\cap B_h=\varnothing,
 \qquad u^+\cap b^+=\varnothing.
\]
A vertex in either intersection would give a sectional bypass, contrary to
Lemma~\ref{lem:four-no-sectional-bypass}.
If \(a\) is projective, the possible fourth vertices are
\[
 \bigl(\PP\setminus(A_h\cup B_h)\bigr)\ \sqcup\
 \bigl((u^+\cup b^+)
 \setminus(\PP\cup A_h\cup B_h\cup\{b\})\bigr).
\]
If \(a\) is not projective, they are the vertices in
\[
 (\PP\cap a^-)\setminus(A_h\cup B_h).
\]
The resulting fourth-vertex count has the following signed form.  If
\(a\in\PP\), put
\[
\begin{aligned}
 m_1(h)&=|\PP\cap A_h|,&
 m_2(h)&=|\PP\cap B_h|,\\
 m_3(h)&=|(u^+\cup b^+)
 \setminus(\PP\cup A_h\cup B_h\cup\{b\})|,&
 m_4(h)&=0.
\end{aligned}
\]
If \(a\notin\PP\), put \(m_1(h)=m_2(h)=m_3(h)=0\) and
\[
 m_4(h)=|(\PP\cap a^-)\setminus(A_h\cup B_h)|.
\]
Then the number of fourth vertices is \(|\PP|+e(h)\) for
\(a\in\PP\), and \(e(h)\) for \(a\notin\PP\), where
\begin{equation}\label{eq:alternative-signed-sum}
 e(h)=-m_1(h)-m_2(h)+m_3(h)+m_4(h).
\end{equation}

Associate an arrow occurrence with every term in
\eqref{eq:alternative-signed-sum} as follows:
\[
\begin{array}{c|c|c}
\text{term}&\text{condition on the fourth vertex }z&\text{arrow occurrence}\\ \hline
m_1&z\in\PP\cap A_h&z\to u\\
m_2&z\in\PP\cap B_h&z\to b\\
m_3&z\in u^+\cup b^+&u\to z\text{ or }b\to z\\
m_4&z\in\PP\cap a^-&z\to a.
\end{array}
\]
Let \(H\) be the maximal \(\sigma\)-orbit containing \(a\to u\); we call
\(H\) the \emph{hook orbit} of \(h\).  Assign each signed term to
\((H,C)\), where \(C\) is the maximal \(\sigma\)-orbit containing its
associated arrow.  For \(*\in\{\quot,\sub\}\), write
\(e_*(H,C)\) for the sum of the signs assigned to this pair on the \(*\)-side,
and let \(m_{j,*}(H,C)\) be the number of its \(m_j\)-terms.  Thus
\[
 e_*(H,C)=-m_{1,*}(H,C)-m_{2,*}(H,C)
 +m_{3,*}(H,C)+m_{4,*}(H,C).
\]
Let \(B_*(H)\) be the number of double-hook configurations whose two
hooks occur consecutively in the chain \(H\); then
\(B_{4,*}=\sum_HB_*(H)\).  Put
\(\PP_{\quot}=\PP\) and \(\PP_{\sub}=\mathcal I\), and let
\(a_0(*)\) be the number of admissible triples whose first vertex
belongs to \(\PP_*\).  Then
\begin{equation}\label{eq:alternative-wall-decomposition}
 W_{4,*}=|\PP_*|a_0(*)
 +\sum_{H,C}e_*(H,C).
\end{equation}
Lemma~\ref{lem:alternative-three-killed} gives
\(a_0(\quot)=a_0(\sub)\), and
\(|\PP|=|\mathcal I|\).  It remains to compare the signed sums.

\subsection{Balance of indexed contacts for chain hook orbits}

Let \(H=(y_0,\ldots,y_L)\) be a chain hook orbit; then \(L\geq3\).
A contribution of
type \(m_3\) counted before imposing the conditions
\(z\nrightarrow u\) and \(z\nrightarrow b\) will be called a
\emph{preliminary \(m_3\)-term}.  We first compare the signed boundary sums
before these two conditions are imposed.

Suppose that \(C=(x_0,\ldots,x_M)\) is a chain orbit, and put
\(\zeta_{f,e}=[\,x_f=y_e\,]\).
For \(L\geq5\), define
\begin{align}
 U_{\quot}(H,C)
 &=\sum_{e=2}^{L-4}\zeta_{M-2,e}
   +\sum_{f=2}^{M-2}(\zeta_{f,L-3}+\zeta_{f,L-2}),
 \label{eq:alternative-raw-q}\\
 U_{\sub}(H,C)
 &=\sum_{e=4}^{L-2}\zeta_{2,e}
   +\sum_{f=2}^{M-2}(\zeta_{f,2}+\zeta_{f,3}).
 \label{eq:alternative-raw-s}
\end{align}
An empty sum is zero.  The first sums encode the
\(m_4-m_1-m_2\) contribution, and the second sums encode the preliminary
\(m_3\)-terms.  For \(L=3\), define the two boundary sums by
\[
\begin{split}
 U_{\quot}(H,C)
 &=-\zeta_{M-2,0}-\zeta_{M-2,1}
   +\sum_{f=1}^{M-1}(\zeta_{f,0}+\zeta_{f,1}),\\
 U_{\sub}(H,C)
 &=-\zeta_{2,2}-\zeta_{2,3}
   +\sum_{f=1}^{M-1}(\zeta_{f,2}+\zeta_{f,3}),
\end{split}
\]
and for \(L=4\), define them by
\[
\begin{split}
 U_{\quot}(H,C)
 &=-\zeta_{M-2,1}
   +\sum_{f=1}^{M-1}(\zeta_{f,1}+\zeta_{f,2}),\\
 U_{\sub}(H,C)
 &=-\zeta_{2,3}
   +\sum_{f=1}^{M-1}(\zeta_{f,2}+\zeta_{f,3}).
\end{split}
\]
Missing indices are interpreted as zero.  If \(C\) is cyclic, the boundary
sum consists only of the preliminary \(m_3\)-terms, with the \(f\)-coordinate
read modulo the occurrence period of \(C\).

\begin{lemma}\label{lem:alternative-raw-contact-balance}
For every chain hook orbit \(H\) and every orbit \(C\),
\begin{equation}\label{eq:alternative-raw-balance}
 U_{\quot}(H,C)=U_{\sub}(H,C).
\end{equation}
\end{lemma}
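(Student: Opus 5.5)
The plan is to establish \eqref{eq:alternative-raw-balance} by converting every indexed contact \(x_f=y_e\) into contacts at other positions. The conversion uses the two-step translation rule \(\tau x_f=x_{f+2}\) and \(\tau y_e=y_{e+2}\), together with injectivity of \(\tau\) and \(\tau^{-1}\). If \(x_f=y_e\) and both \(x_{f+2}\), \(y_{e+2}\) exist, then \(x_{f+2}=y_{e+2}\). Conversely, equality \(x_{f+2}=y_{e+2}\) gives back \(x_f=y_e\). So contacts occur in diagonal runs
\[
\{(f+2k,e+2k)\},
\]
in two parity classes. A run can end only where one of the two orbits reaches an end: a projective position \(M-1\) or \(L-1\), or an injective position \(1\). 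Thus
\[
\zeta_{f,e}=\zeta_{f+2,e+2}
\]
whenever all four indices lie in range, apart from the boundary effects at the ends of the chains.

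First I would rewrite both sides of \eqref{eq:alternative-raw-q}--\eqref{eq:alternative-raw-s} as sums over diagonal runs. Fix a parity class of \(f-e\) and a single maximal diagonal run of contacts. Count how many of its points are seen by each term.

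1. The column sums \(\sum_f(\zeta_{f,L-3}+\zeta_{f,L-2})\) on the quotient side see the points of the run in columns \(L-3,L-2\). These are the two columns adjacent to the projective end of \(H\).
2. The sums \(\sum_f(\zeta_{f,2}+\zeta_{f,3})\) on the submodule side see the points of the run near the injective end.
3. The row term \(\sum_e\zeta_{M-2,e}\) sees the run at the projective end of \(C\).
4. The row term \(\sum_e\zeta_{2,e}\) sees the run at the injective end of \(C\).

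A diagonal run crosses each column at most once, since it increases \(e\) and \(f\) together by \(2\). Within one parity class, the run therefore meets exactly one of the two columns \(L-3,L-2\) (unless it stops early), and likewise one of the columns \(2,3\). The range \(2\le f\le M-2\) is exactly what is needed to exclude positions where the run would be truncated by an end of \(C\).

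Next I would do the case analysis on where the run begins and ends. It begins at the injective end of \(C\) (\(f\in\{0,1\}\)), at the injective end of \(H\) (\(e\in\{0,1\}\)), or both. It ends at the projective end of \(C\) (\(f\in\{M-2,M-1\}\)), at the projective end of \(H\), or both.

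1. A run spanning \(H\) from column \(2/3\) to column \(L-3/L-2\) contributes \(1\) to each side.
2. A run that starts inside \(H\) because \(C\) begins there contributes \(1\) to the row sum \(\zeta_{2,e}\) on the submodule side. On the quotient side it contributes \(1\) to the column sum at the far end, and symmetrically at the other end.

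The index ranges \(2\le e\le L-4\) and \(4\le e\le L-2\) are designed so that a run touching both an end of \(C\) and an end of \(H\) is not double counted. Checking that each of the four begin/end combinations gives equal totals is a finite bookkeeping verification.

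For cyclic \(C\), there are no row terms. Every run is infinite in \(f\) (taken modulo the period), so it crosses both column pairs the same number of times, which gives equality directly.

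The small cases \(L=3\) and \(L=4\) I would handle the same way, with the explicit negative corrections playing the role of the row terms. Here \(H\) is so short that columns overlap, which is why the sums run over \(1\le f\le M-1\).

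The main obstacle is the endpoint bookkeeping in the four-case analysis: matching exactly which truncated runs are caught by the row terms versus missed by the column ranges. I expect this to need careful attention to when \(x_{f+2}\) fails to exist, that is, when \(x_f\) is projective (\(f=M-1\)). It will also need the fact that a contact can place a projective of \(C\) inside \(H\) only at position \(y_e\) with \(y_{e+2}\) nonexistent, or else violate injectivity of \(\tau\). I would first verify the generic case \(L\geq5\), \(M\geq5\), and then check the short cases separately.
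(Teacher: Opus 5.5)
Your mechanism is the one the paper uses: propagate contacts along diagonals by \(\zeta_{f,e}=\zeta_{f+2,e+2}\), then show each maximal run contributes once to each side. But the step you defer as ``finite bookkeeping'' is the entire content of the lemma, and your sketch goes wrong exactly where the real input is needed.

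First, the run ends are misdescribed. Maximal runs in \([0,M]\times[0,L]\) begin at \(f\in\{0,1\}\) or \(e\in\{0,1\}\), and end at \(f\in\{M-1,M\}\) or \(e\in\{L-1,L\}\), not at \(f\in\{M-2,M-1\}\). Nothing forces \(x_0\), \(x_M\), \(y_0\), \(y_L\) to be projective or injective.

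Second, you claim that a run which ``starts inside \(H\) because \(C\) begins there'' is caught by a row term \(\zeta_{2,e}\). This holds only for a start at \(f=0\). A run starting at \((1,e_0)\) with \(e_0\geq2\) would first reach the window \([2,M-2]\times[2,L-2]\) at \((3,e_0+2)\). That point is not a summand of \(U_{\sub}(H,C)\), yet the run's upper end would still be counted in \(U_{\quot}(H,C)\), so the two sides would not balance. What rules this out is \(\zeta_{1,e}=0\) for \(e\geq2\): \(x_1\) is injective, whereas \(y_e=\tau y_{e-2}\) is not. You never state this fact. You state only its dual, \(\zeta_{M-1,e}=0\) for \(e\leq L-2\), and you attribute it to injectivity of \(\tau\). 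The actual reason is that the projective \(x_{M-1}\) has no translate, while \(y_e\) has the translate \(y_{e+2}\).

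With both vanishings, the argument closes as in the paper, and no case split on \(M\) is needed.
\begin{itemize}
\item The contacts inside the window form disjoint diagonal paths.
\item A lower end at \(f=3\), \(e\geq4\) would extend to a contact \((1,e-2)\), which is excluded. An upper end at \(f=M-3\), \(e\leq L-4\) would extend to a contact \((M-1,e+2)\), also excluded.
\item Hence each path's lower end has \(f=2\) or \(e\in\{2,3\}\), and its upper end has \(f=M-2\) or \(e\in\{L-3,L-2\}\). Conversely, every contact of either kind is such an end.
\item Assigning corner ends to the column terms makes these two endpoint sets exactly the summands of \(U_{\sub}\) and \(U_{\quot}\).
\end{itemize}

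For \(L=3,4\) there is no window, and the negative terms do not act as row terms. Instead, shifting by two cancels everything except
\(U_{\quot}-U_{\sub}=\zeta_{M-1,0}+\zeta_{M-1,1}-\zeta_{1,2}-\zeta_{1,3}\) for \(L=3\), and \(\zeta_{M-1,1}-\zeta_{1,3}\) for \(L=4\). Both vanish by the same two facts. Your cyclic case is fine.
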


\begin{proof}
Translation and its injectivity give
\begin{equation}\label{eq:alternative-contact-shift}
 \zeta_{f+2,e+2}=\zeta_{f,e}
\end{equation}
whenever both terms are defined.  For \(L\geq5\), consider the nonzero
indexed contacts in
\[
 2\leq f\leq M-2,\qquad 2\leq e\leq L-2.
\]
They form disjoint paths under \((f,e)\mapsto(f+2,e+2)\).  Since \(x_1\)
is injective and \(x_{M-1}\) is projective, whereas
\(y_2,\ldots,y_{L-2}\) are neither, one has
\[
 \zeta_{1,e}=\zeta_{M-1,e}=0
 \qquad(2\leq e\leq L-2).
\]
Hence every contact path has one endpoint satisfying
\[
 f=2\quad\text{or}\quad e\in\{2,3\}
\]
and one endpoint satisfying
\[
 f=M-2\quad\text{or}\quad e\in\{L-3,L-2\}.
\]
For example, an endpoint with \(f=3\) and \(e\geq4\) would extend to
\((1,e-2)\), contradicting the first displayed vanishing.  Similarly, an
endpoint with \(f=M-3\) and \(e\leq L-4\) would extend to
\((M-1,e+2)\).  Assign a corner endpoint to the boundary on which \(e\) is
fixed.  The endpoints satisfying
\(f=2\) or \(e\in\{2,3\}\) are exactly the summands of
\(U_{\sub}(H,C)\), and those satisfying
\(f=M-2\) or \(e\in\{L-3,L-2\}\) are exactly the summands of
\(U_{\quot}(H,C)\).  Each path therefore contributes one to both sums.

For \(L=3,4\), equation~\eqref{eq:alternative-contact-shift} pairs the
displayed summands; the unmatched end terms vanish because a projective has
no translate and an injective is not in the image of \(\tau\).  If \(C\) is
cyclic, shifting the cyclic \(f\)-coordinate by two pairs the two preliminary
\(m_3\)-sums.  This proves \eqref{eq:alternative-raw-balance}.  The argument
uses occurrence indices, so it also applies when \(C=H\) or a vertex occurs
at more than one position.
\end{proof}

The next finite verification determines which preliminary terms are removed
by the conditions \(z\nrightarrow u\) and \(z\nrightarrow b\).

\begin{lemma}\label{lem:alternative-computer-blockers}
Let \((P,u,b)\) be a projective-end triple of \(H\), and let \(z\) be a
fourth nonprojective vertex for which \(u\to z\) or \(b\to z\).  Exactly one
of the following holds.
\begin{enumerate}
\item Neither \(z\to u\) nor \(z\to b\) occurs.
\item One has \(u\rightleftarrows z\), and one of \(u,z\) is fixed by
\(\tau\).
\item One has \(b\to z\to u\) and \(\tau z=u\); the arrow \(b\to z\)
belongs to the hook orbit \(H\).
\item The four vertices form type \({\rm T}\) in
Definition~\ref{def:four-support-configurations}.
\end{enumerate}
\end{lemma}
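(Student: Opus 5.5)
The plan is to reduce the statement to a finite local configuration and analyse it with the Auslander--Reiten constraints already proved: Lemma~\ref{lem:ar-mesh-correspondence}, Lemma~\ref{lem:four-no-sectional-bypass}, Lemma~\ref{lem:four-no-translation-arrow}, and Lemma~\ref{lem:two-cycle-local}. The title of the lemma suggests the paper settles it by computer; I would mirror that with an exhaustive check on admissible patterns, in the spirit of Proposition~\ref{prop:four-test-pattern-classification}, and also say by hand why each case arises.

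\textbf{Step 1: setting up the configuration.} Fix the projective-end triple $(P,u,b)=(y_{L-1},y_{L-2},y_{L-3})$. By the argument in the proof of Lemma~\ref{lem:alternative-three-killed}, the local data are known:
\begin{enumerate}
\item $P\to u\to b$ and $\tau b=P$;
\item $\tau u=y_L$, and $u$ is not $\tau$-fixed;
\item the predecessor sets induced on $\{P,u,b\}$ are $\{P\}$ at $u$ and $\{u\}$ at $b$.
\end{enumerate}
Adjoin a nonprojective $z$ with $u\to z$ or $b\to z$. Record the arrow relation on $\{P,u,b,z\}$, the set $\Pi=\{P\}$, and the partial translation $t$ restricted to these four vertices. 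The constraints of Definition~\ref{def:four-vertex-test-pattern}, conditions (2)--(5), are inherited exactly as in the proof of Proposition~\ref{prop:four-support-classification}. Enumerating all such $4$-vertex data should leave a short list of cases, and a script can confirm that each survivor falls into exactly one of (1)--(4).

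\textbf{Step 2: conceptual case split.} If neither $z\to u$ nor $z\to b$ occurs, we are in case (1). Otherwise there are four subcases.
\begin{enumerate}
\item \emph{$u\to z$ and $z\to u$.} This is a two-cycle, so Lemma~\ref{lem:two-cycle-local}(1) gives case (2).
\item \emph{$b\to z$ and $z\to b$.} Then $b$ is not $\tau$-fixed, since $\tau b=P\neq b$, so $\tau z=z$. The mesh correspondence turns $b\to z$ into $\tau z=z\to P=\tau b$, and turns $z\to b$ into $P\to z$. This gives the two-cycle $P\rightleftarrows z$ with $P$ projective, noninjective and in the image of $\tau$, contradicting Lemma~\ref{lem:two-cycle-local}(2). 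So this subcase cannot occur.
\item \emph{$b\to z\to u$ without $u\to z$.} A translation argument should force $\tau z=u$. If instead $u^+$ and $b^+$ gave a sectional bypass, Lemma~\ref{lem:four-no-sectional-bypass} would exclude it. From $\tau z=u$ and $\tau u=y_L$, the arrow $b\to z$ is the inverse-$\sigma$ image of $u\to b$. Hence it lies in $H$, which is case (3).
\item \emph{$u\to z\to b$.} Together with $u\to b$ this is a transitive triangle, forbidden by Step~1, unless it collapses. The remaining mixed situation, $z\to u$ via a triangle with $P$, should produce exactly the configuration of type ${\rm T}$, with $A_1=u$, $A_2=b$ and $x=z$, up to relabelling. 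I would verify this against Definition~\ref{def:four-support-configurations}: read off $\tau A_1=P$ and $\tau A_2=A_1$ from the chain, and $\tau x\notin D$ from injectivity of $\tau$ on the triple.
\end{enumerate}

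\textbf{Exclusivity.} The four cases are mutually exclusive. Case (1) has no arrow back from $z$. Case (2) has a two-cycle, which cases (3) and (4) do not contain. Cases (3) and (4) are distinguished by $\tau z$: in (3) $\tau z=u$, while in (4) $\tau x\notin D$.

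\textbf{Main obstacle.} The hard part will be subcases 3 and 4. I must pin down $\tau z$, and show no other arrow patterns survive, using only mesh rotations and the no-bypass lemma. Here I expect to rely on the exhaustive pattern enumeration rather than a fully hand-written argument.
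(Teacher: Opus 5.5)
\textbf{There is a genuine gap: the branch where $z$ has an arrow back is analysed incorrectly, and the enumeration you describe would not confirm the lemma.}

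\emph{Type ${\rm T}$ is misplaced.} Your subcase~4 is empty, since $u\to z\to b$ together with $u\to b$ is a transitive triangle. Type~${\rm T}$ also cannot sit there with $A_1=u$, $A_2=b$. Then $\tau A_1=P$ would give $\tau u=P=\tau b$, and $\tau A_2=A_1$ would give $\tau b=u$. Matching $P\to A_2\to A_1$, $\tau A_1=P$ in Definition~\ref{def:four-support-configurations} with the triple forces $A_1=b$, $A_2=u$, $x=z$. So type~${\rm T}$ is the configuration $b\to z\to u$, $b\to P$, $\tau u=b$, $\tau z\notin D$.

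It therefore lies inside your subcase~3, and it refutes your claim there that $b\to z\to u$ forces $\tau z=u$. In that branch, condition~(3) of Definition~\ref{def:four-vertex-test-pattern} leaves only $t(z)\in\{u,*\}$ and $t(u)\in\{b,*\}$. The value $t(z)=u$ gives alternative~(3). The values $t(z)=*$, $t(u)=b$ give type~${\rm T}$, with $b\to P$ forced by condition~(3) at $u$.

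\emph{The missing ingredient is condition (6).} The remaining pattern is $E=\{P\to u,\,u\to b,\,b\to z,\,z\to u\}$ with $t(b)=P$ and $t(u)=t(z)=*$. It satisfies your marking and conditions~(1)--(5), yet lies in none of the four alternatives. None of the local constraints you invoke excludes it: not the mesh correspondence inside $D$, not the absence of sectional bypasses, not the two-cycle lemma. Condition~(6) does. Since $\tau_Dz=\tau_Du=0$ and $\theta_Du-\tau_Db=(P+z)-P=z$, the factor ladder from $z$ is $z,b,u,z,b,\ldots$. This contradicts the nilpotency in Proposition~\ref{prop:iyama-factor-category}(2).

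Your Step~1 explicitly uses only conditions~(2)--(5), so the script you describe would report this pattern as a survivor outside (1)--(4). With~(6) included, your plan becomes the paper's proof. That proof is an exhaustive enumeration of fully admissible patterns, giving the split $384+48+72+24$. It adds the observation $\sigma(b\to z)=(\tau z\to b)=(u\to b)$ for alternative~(3), which you already have.

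A minor slip: in subcase~2 it is $z\to b$ that yields $P\to z$, and then $\tau z=z$ yields $z\to P$. Your bookkeeping of which arrow gives which is swapped, but $P\rightleftarrows z$ and the resulting contradiction stand.
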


\begin{proof}
In the enumeration of admissible patterns from
Definition~\ref{def:four-vertex-test-pattern}, mark an induced triple
\[
 P\longrightarrow u\longrightarrow b,\qquad \tau b=P,
\]
whose predecessor sets inside the triple are \(\{P\}\) at \(u\) and
\(\{u\}\) at \(b\).  Mark also a nonprojective fourth vertex \(z\) for which
at least one of the arrows \(u\to z\) and \(b\to z\) occurs.  The enumeration
contains \(528\) such marked configurations.  Their exhaustive classification
is
\[
\begin{array}{c|r}
\text{condition}&\text{marked configurations}\\ \hline
z\nrightarrow u,\ z\nrightarrow b&384\\
u\rightleftarrows z&48\\
b\to z\to u,\ \tau z=u&72\\
\text{type \({\rm T}\)}&24.
\end{array}
\]
No marked configuration has both arrows \(u\to z\) and \(b\to z\), or both
arrows \(z\to u\) and \(z\to b\), and no other case occurs.  The script cited
in the proof of Proposition~\ref{prop:four-test-pattern-classification}
verifies these four counts.  In the second row it also verifies that one
member of the two-cycle is fixed.  In the third row,
\(\sigma(b\to z)=(\tau z\to b)=(u\to b)\), so \(b\to z\) belongs to
\(H\).  This proves the four alternatives.
\end{proof}

We now impose the conditions \(z\nrightarrow u\) and \(z\nrightarrow b\) and
compare different orbits.

\begin{lemma}\label{lem:alternative-contact-rectangle}
Suppose that \(H\) is a chain and \(C\ne H\).  For
\(*\in\{\quot,\sub\}\), let \(T_*(H,C)\) be the number of configurations
of type \({\rm T}\) on the \(*\)-side, written \((P,u,b,z)\), for which
\((P,u,b)\) is the projective-end triple of \(H\) and the arrow \(b\to z\)
belongs to \(C\).
Then
\[
 e_{\quot}(H,C)+T_{\quot}(H,C)
 =e_{\sub}(H,C)+T_{\sub}(H,C).
\]
\end{lemma}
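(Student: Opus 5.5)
Starting from Lemma~\ref{lem:alternative-raw-contact-balance}, I correct the preliminary $m_3$-terms using Lemma~\ref{lem:alternative-computer-blockers}. Since $C\ne H$, the raw boundary sums $U_*(H,C)$ are exactly the signed counts $e_*(H,C)$ before imposing $z\nrightarrow u$ and $z\nrightarrow b$ on the preliminary $m_3$-terms. The first step is to confirm this identification term by term. The $m_1,m_2,m_4$ contributions come from arrows $z\to u$, $z\to b$, $z\to a$ with $z$ projective; under $\sigma$ these rotate to contacts of $C$ with $H$ at the projective end, namely the first sums in \eqref{eq:alternative-raw-q}. The preliminary $m_3$-terms $u\to z$ or $b\to z$ with $u=y_{L-2}$, $b=y_{L-3}$ correspond to contacts $\zeta_{f,L-3}$, $\zeta_{f,L-2}$. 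This bookkeeping, including the special lengths $L=3,4$, is forced by how the $U_*$ were defined, so I would only check the indices.

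I then write $e_*(H,C)=U_*(H,C)-R_*(H,C)$. Here $R_*$ counts the preliminary $m_3$-terms whose arrow lies in $C$ but which fail the condition $z\nrightarrow u, z\nrightarrow b$. By Lemma~\ref{lem:alternative-computer-blockers}, each removed term is of kind (2), (3) or (4). Kind (3) puts $b\to z$ in $H$, so it is excluded because $C\ne H$. Kind (4) is exactly a type~T configuration whose arrow $b\to z$ lies in $C$, so it is counted by $T_*(H,C)$. It remains to handle kind (2), the two-cycles $u\rightleftarrows z$ with a $\tau$-fixed member. Thus
\[
 e_*(H,C)+T_*(H,C)=U_*(H,C)-R^{(2)}_*(H,C),
\]
and by Lemma~\ref{lem:alternative-raw-contact-balance} the claim reduces to $R^{(2)}_{\quot}(H,C)=R^{(2)}_{\sub}(H,C)$.

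The main obstacle is this last equality. I would argue with $\tau$-fixed vertices. By Lemma~\ref{lem:two-cycle-local}, either $u$ or $z$ is $\tau$-fixed. The vertex $u=y_{L-2}$ has $\tau u=y_L\ne u$, because the orbit is a chain with $H$ indexed by occurrences, and I would check that $y_L=y_{L-2}$ is impossible. So $\tau z=z$. Then the arrow $u\to z$ rotates under $\sigma$ to $z\to u$ and back to $\tau u\to z$. Consequently the orbit $C$ of $u\to z$ alternates $z,y_{L-2},z,y_{L},\dots$, contacting $H$ in a comb that visits every other vertex of $H$ with $z$ interleaved. The reversed side sees the symmetric comb at the injective end $y_2$. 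I expect the contact-path argument of Lemma~\ref{lem:alternative-raw-contact-balance}, applied to the subsequence of contacts with $x_f=z$ removed, to show that such a $C$ produces exactly one removed term at each end. If this fails, I would instead verify the balance with the finite pattern enumeration, as in Lemma~\ref{lem:alternative-computer-blockers}, counting the 48 two-cycle configurations separately on each side.
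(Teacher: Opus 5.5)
Your reduction is the same as the paper's. Lemma~\ref{lem:alternative-computer-blockers} excludes alternative (3) because $C\ne H$, and alternative (4) is absorbed into $T_*(H,C)$, so $e_*(H,C)+T_*(H,C)=U_*(H,C)-R^{(2)}_*(H,C)$. Lemma~\ref{lem:alternative-raw-contact-balance} then reduces the claim to $R^{(2)}_{\quot}(H,C)=R^{(2)}_{\sub}(H,C)$. That equality is the whole content of the lemma, and you do not prove it: ``I expect \ldots'' is not an argument. Your fallback also cannot work. The four-vertex enumeration is local: it sees one triple and one extra vertex. The equality instead compares the projective-end triple $(y_{L-1},y_{L-2},y_{L-3})$ with the injective-end triple $(y_1,y_2,y_3)$ of $H$, which can be separated by arbitrarily many occurrences. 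The number $48$ counts labeled abstract patterns, not configurations in $\Gamma$. Similarly, the contact-path balance only pairs endpoints of contact paths. It does not tell you that the partner endpoint is again blocked by a two-cycle.

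What is missing is the transport of a blocking two-cycle along $H$, together with a parity split.

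First, $u=y_{L-2}$ is not $\tau$-fixed, but the reason is not ``because $H$ is a chain''. If $\tau u=u$, Lemma~\ref{lem:ar-mesh-correspondence} turns $P\to u$ into $u\to P$. Then $P\rightleftarrows u$ with $P=\tau b$ projective and non-injective, contradicting Lemma~\ref{lem:two-cycle-local}(2). Hence $\tau z=z$.

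Next, if $y_e\rightleftarrows z$ and $y_e$ is noninjective (true for $e\geq2$, since $y_e=\tau y_{e-2}$), the inverse arrow correspondence gives $y_{e-2}\rightleftarrows z$. Each step is a $\sigma^{-1}$-translate, so the arrow $z\to y_2$ labelling the blocked term on the submodule side lies in the same orbit $C$ as $u\to z$. The forward correspondence reverses the transport.

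Your claim that ``the reversed side sees the symmetric comb at $y_2$'' holds only for even $L$. In that case, transport from $y_{L-2}$ down to $y_2$ is a bijection between blocked terms at the two ends within $C$; for $L=4$ both ends use the same occurrence $y_2$. For odd $L$, the comb from $y_{L-2}$ would reach $y_1$, which is injective but nonprojective because $\tau y_1=y_3$. This contradicts the dual of Lemma~\ref{lem:two-cycle-local}(2). Symmetrically, a blocked term at $y_2$ would reach the projective, noninjective $y_{L-1}$. So for odd $L$, including $L=3$, both counts are zero rather than ``one removed term at each end''.

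With this propagation and parity dichotomy the argument closes, and it is exactly the paper's proof. Without it, your proposal leaves the key step open.
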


\begin{proof}
Let \(R^{\rm fix}_*(H,C)\) count the preliminary \(m_3\)-terms
excluded by an arrow \(u\rightleftarrows z\).  Since \(C\ne H\),
Lemma~\ref{lem:alternative-computer-blockers} excludes its third
alternative.  The other alternatives give
\begin{equation}\label{eq:alternative-return}
 e_*(H,C)+T_*(H,C)
 =U_*(H,C)-R^{\rm fix}_*(H,C).
\end{equation}

Successive \(\sigma\)-translates take a fixed return at one end of \(H\) to
a fixed return at the other end when \(L\) is even, and translation in the
opposite direction gives the inverse map.  If \(L\) is odd, the translate
would give a two-cycle through an injective nonprojective module, contrary
to Lemma~\ref{lem:two-cycle-local}; hence neither end contributes.  For
\(L=3\), the same obstruction makes both counts zero, and for \(L=4\) both
sides use the middle occurrence.  Thus
\[
 R^{\rm fix}_{\quot}(H,C)=R^{\rm fix}_{\sub}(H,C).
\]
Equation~\eqref{eq:alternative-return} and
Lemma~\ref{lem:alternative-raw-contact-balance} give the required equality.
\end{proof}

Every configuration of type \({\rm T}\) determines a unique pair \((H,C)\)
in the construction used in the proof of
Lemma~\ref{lem:alternative-contact-rectangle}.  Consequently
\[
 T_{4,*}=\sum_{H,C}T_*(H,C).
\]

It remains to translate the preliminary contact count when the two orbits
agree.

\begin{lemma}\label{lem:alternative-self-contact}
For every chain orbit \(H\),
\[
 e_{\quot}(H,H)-B_{\quot}(H)
 =e_{\sub}(H,H)-B_{\sub}(H).
\]
\end{lemma}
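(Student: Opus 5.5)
The plan is to follow the proof of Lemma~\ref{lem:alternative-contact-rectangle}, now with \(C=H\), and track the two effects that appear only for self-contacts. The starting point is the raw identity \(U_{\quot}(H,H)=U_{\sub}(H,H)\) from Lemma~\ref{lem:alternative-raw-contact-balance}. Its proof uses occurrence indices, so it applies verbatim when \(C=H\). There \(\zeta_{f,e}\) records coincidences \(y_f=y_e\) between two positions of the same chain. The diagonal contacts \(f=e\) are always present, and these carry the new information.

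On the \(*\)-side, Lemma~\ref{lem:alternative-computer-blockers} gives
\[
 e_*(H,H)=U_*(H,H)-R^{\rm fix}_*(H,H)-R^{\rm ret}_*(H)-T_*(H,H).
\]
Here \(R^{\rm fix}_*\) counts the preliminary \(m_3\)-terms removed by a two-cycle \(u\rightleftarrows z\). The term \(R^{\rm ret}_*(H)\) counts those removed by the third alternative \(b\to z\to u\) with \(\tau z=u\). That alternative has its arrow \(b\to z\) in \(H\), so it contributes only to the pair \((H,H)\).

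The first step is to show that \(T_*(H,H)=0\). In a type \({\rm T}\) configuration the arrow \(b\to z\) is \(A_1\to x\), and this arrow lies in a different orbit from \(P\to A_1\). I would check this by noting that \(\sigma(A_1\to x)=(\tau x\to A_1)\) and that \(\tau x\notin D\). If this fails, the \(T\)-terms would have to be carried along on both sides.

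The second step matches \(R^{\rm fix}_{\quot}(H,H)\) with \(R^{\rm fix}_{\sub}(H,H)\) using the same \(\sigma\)-translation argument along \(H\) as in Lemma~\ref{lem:alternative-contact-rectangle}. This uses the parity of \(L\) and Lemma~\ref{lem:two-cycle-local} to exclude two-cycles through injective nonprojective vertices.

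The main step is to identify \(R^{\rm ret}_*(H)\), adjusted by the diagonal self-contacts in \(U_*(H,H)\), with \(B_*(H)\). In the third alternative we have \(z=\tau^{-1}u\), so \(z=y_{L-4}\), and the arrow \(b\to z\) is \(\beta_{L-4}\). The quadruple \((P,u,b,z)=(y_{L-1},y_{L-2},y_{L-3},y_{L-4})\) therefore consists of four consecutive vertices of the chain. These are exactly the vertices of a candidate double hook \(P\to u_1\to u_2\to u_3\) as in \eqref{eq:double-hook}, but with the opposite return arrow. I would therefore classify four consecutive vertices \(y_{L-1},\ldots,y_{L-4}\) at the projective end, and \(y_1,\ldots,y_4\) at the injective end, into three cases:
\begin{enumerate}
\item they form a double hook, counted by \(B_*(H)\);
\item they are blocked by a return arrow, counted by \(R^{\rm ret}_*\);
\item they contribute a genuine \(m_3\)-term.
\end{enumerate}
I would express each case through the predecessor sets induced on the four vertices, using the mesh correspondence, Lemma~\ref{lem:ar-mesh-correspondence}, to transport arrows along the chain.

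The expected outcome is an identity
\[
 R^{\rm ret}_*(H)+B_*(H)=\Phi_*(H),
\]
where \(\Phi_*(H)\) is a diagonal contact count, of the form \(\zeta_{L-2,L-2}+\zeta_{L-3,L-3}\) restricted to the relevant range. If \(\Phi_*(H)\) is determined by \(L\) alone, then \(\Phi_{\quot}=\Phi_{\sub}\). Combining this with the raw balance and the two steps above gives the claim.

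The obstacle I expect is showing that the four-vertex end configurations at the projective end correspond bijectively to those at the injective end, including for short chains with \(L=4,5,6\), where the two ends overlap and a vertex may occur twice. For these cases I would first try to extend the exhaustive pattern enumeration of Proposition~\ref{prop:four-test-pattern-classification} to mark consecutive chain quadruples. I would then handle the overlapping short chains by a direct case check using Lemmas~\ref{lem:four-no-sectional-bypass} and~\ref{lem:two-cycle-local}.
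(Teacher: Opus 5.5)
Your architecture matches the paper's: apply the raw balance of Lemma~\ref{lem:alternative-raw-contact-balance} with $C=H$, and show that the only extra self-contact effect is the return arrow on the four end vertices, which is complementary to the double hook. However, the step that actually proves the lemma is left as an expected outcome, and the bookkeeping around it is misstated.

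For $C=H$, the identity $e_*=U_*-R^{\rm fix}_*-R^{\rm ret}_*-T_*$ does not follow from Lemma~\ref{lem:alternative-computer-blockers}; the diagonal contacts have to be read off. Take $L\geq5$.
\begin{itemize}
\item The unique $m_1$-term assigned to $(H,H)$ is $z=a=y_{L-1}$ itself (arrow $\beta_{L-2}$), and $m_{2,*}(H,H)=0$.
\item The first sum of $U_*(H,H)$ has already cancelled this $m_1$-term against the $m_4$-term $z=y_{L-1}$ of the triple $(y_{L-2},y_{L-3},y_{L-4})$.
\item The second sum contains the diagonal contact $(L-2,L-2)$. It corresponds to $z=b$, which is not a fourth vertex.
\end{itemize}
Let $c_*,p_*$ be the numbers of $m_4$-terms and preliminary $m_3$-terms assigned to $(H,H)$. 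Then $U_*(H,H)=c_*+p_*$ and $e_*(H,H)=-1+c_*+p_*-R^{\rm ret}_*(H)$.

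Your claims $T_*(H,H)=0$ and $R^{\rm fix}_{\quot}(H,H)=R^{\rm fix}_{\sub}(H,H)$ are right. The latter is simply $0=0$: $u$ and $z$ lie on $H$, and a chain with $L\geq3$ has no $\tau$-fixed vertex by Lemma~\ref{lem:two-cycle-local}(2).

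The real gap is your case split: cases (1) and (3) are the same case. Read literally, the split gives $R^{\rm ret}_*+B_*=1-(\text{case }3)$, which is not a function of $L$. The missing observation is that $D=\{y_{L-1},y_{L-2},y_{L-3},y_{L-4}\}$ satisfies every condition in \eqref{eq:double-hook} except possibly $u_3\nrightarrow u_1$:
\begin{itemize}
\item the arrows $u_2\to u_1$ and $u_3\to u_2$ would be two-cycles with no $\tau$-fixed member;
\item $P\to u_2$ and $u_1\to u_3$ have the form $\tau X\to X$;
\item $P\to u_3$ would give $u_1\to P$ by Lemma~\ref{lem:ar-mesh-correspondence}, a two-cycle through the noninjective projective $P$;
\item Proposition~\ref{prop:four-support-classification}(1) rules out a third hook.
\end{itemize}
The exceptional preliminary term $z=y_{L-4}$ is removed exactly by $u_3\to u_1$: this is the third alternative of Lemma~\ref{lem:alternative-computer-blockers}, and $z\to b$ would be a two-cycle. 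Hence $D$ is a double hook if and only if that term survives, so $B_*(H)=1-R^{\rm ret}_*(H)$.

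So your $\Phi$ is $1$, coming from the single diagonal $(L-3,L-3)$. The diagonal $(L-2,L-2)$ enters instead through the relation between $e_*$ and $U_*$ above. Together these give $e_*(H,H)-B_*(H)=U_*(H,H)-2$, and the raw balance finishes the proof.

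This identity holds on each side separately, so you do not need the bijection between the configurations at the two ends that you expected to be the main obstacle. For $L\geq5$ the argument is uniform, with repeated vertices absorbed by occurrence indices. Only $L\leq4$ needs direct substitution.
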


\begin{proof}
Assume first that \(L\geq5\).  For
\(*\in\{\quot,\sub\}\), let \(c_*(H)\) be the number of
\(m_4\)-terms assigned to \((H,H)\), and let \(p_*(H)\) be the number
of preliminary \(m_3\)-terms assigned to this pair.  Reading the endpoint
sums in \eqref{eq:alternative-raw-q} and
\eqref{eq:alternative-raw-s} gives
\begin{equation}\label{eq:alternative-self-raw}
 U_*(H,H)=c_*(H)+p_*(H).
\end{equation}
Indeed, every off-diagonal contact is the occurrence label of an
\(m_4\)-term or preliminary \(m_3\)-term.  On the diagonal, the contact
\((L-3,L-3)\) is the preliminary term at the projective end, while
\((L-2,L-2)\) cancels the unique \(m_1\)-term against the adjacent
\(m_4\)-contact.  These are the only diagonal contacts involving the
projective end.  Repeated vertices cause no further identification because
the two occurrence indices remain distinct.

Let \(\delta_*\in\{0,1\}\) record whether this exceptional preliminary
\(m_3\)-term is excluded.  At the projective end, one has
\[
\begin{aligned}
 m_{1,*}(H,H)&=1,&m_{2,*}(H,H)&=0,\\
 m_{3,*}(H,H)&=p_*(H)-\delta_*,&
 m_{4,*}(H,H)&=c_*(H).
\end{aligned}
\]
At the projective end, the exceptional preliminary term is excluded if and
only if there is no second consecutive hook.  Hence
\[
 B_*(H)=1-\delta_*.
\]
Substitution in \eqref{eq:alternative-signed-sum} and
\eqref{eq:alternative-self-raw} gives
\[
 e_*(H,H)-B_*(H)
 =U_*(H,H)-2.
\]
Lemma~\ref{lem:alternative-raw-contact-balance} now gives the equality
between the two sides.

For \(L\leq4\), substitution into the definitions gives
\[
\begin{array}{c|ccc}
L&0,1,2&3&4\\ \hline
e_*(H,H)-B_*(H)&0&-1&0,
\end{array}
\]
independently of \(*\).  This completes the proof.
\end{proof}
\subsection{The signed count for cyclic hook orbits and fixed configurations}

\begin{lemma}\label{lem:alternative-cyclic-hook}
For \(*\in\{\quot,\sub\}\), the contribution of cyclic hook orbits to
\(W_{4,*}-B_{4,*}+T_{4,*}\) is independent of \(*\).
\end{lemma}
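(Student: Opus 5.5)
Our plan is to exploit the fact that a cyclic hook orbit has no ends: every arrow occurrence \(a\to u\) in a cyclic \(\sigma\)-orbit has \(u\) nonprojective and \(a\) noninjective, and \(\sigma\) acts on the orbit as a cyclic shift. The first step is to show that for a cyclic hook orbit \(H\) no first vertex \(a\) of an admissible triple in \(H\) is projective on either side. On the quotient side, a projective \(a=\tau b\) would make \(\sigma^{-1}\) of the arrow \(a\to u\) undefined, so the orbit would have an end and be a chain. Dually, on the submodule side the first vertex is \(b\) read in the reversed quiver, and an injective first vertex would likewise terminate the orbit. Hence for a cyclic \(H\) only the terms \(m_4\) occur, and in the reversed quiver only their duals. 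Moreover, no double hook has both hooks in a cyclic orbit, because the configuration \eqref{eq:double-hook} contains a projective \(P=\tau u_2\), which forces the orbit of \(P\to u_1\) to be a chain. Likewise, in type \({\rm T}\) the triple \((P,A_1,A_2)\) meets a projective, so it lies on a chain. Therefore the cyclic contribution to \(W_{4,*}-B_{4,*}+T_{4,*}\) reduces to \(\sum_{H\text{ cyclic}}\sum_C m_{4,*}(H,C)\).

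The second step is to express \(m_{4,*}\) as a contact count. On the quotient side, \(m_4(h)\) counts projectives \(z\to a\) with \(z\notin A_h\cup B_h\). Each such arrow \(z\to a\) is the last arrow of a chain \(C=(x_0,\ldots,x_M)\), with \(z=x_{M-1}\) and \(a=x_M\), so it is an indexed contact \(\zeta_{M,e}\) of \(C\) with \(H\). On the submodule side the analogous count uses injectives \(z\) with \(b\to z\), and gives contacts \(\zeta_{0,e}\) of the first arrow of a chain with \(H\). We must also check that the exclusion \(z\notin A_h\cup B_h\) never removes anything here. The case \(z\in B_h=a^+\) would give a two-cycle \(z\rightleftarrows a\) involving a projective, and Lemma~\ref{lem:two-cycle-local} then forces \(z\) to be injective and \(a\) \(\tau\)-fixed. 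The case \(z\in A_h\) would give the transitive triangle \(z\to a\to u\), \(z\to u\), which Lemma~\ref{lem:four-no-sectional-bypass} excludes. The \(\tau\)-fixed case needs a separate check, and we would handle it with the length argument from Lemma~\ref{lem:two-cycle-local}(2).

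The third step is the balance. Since \(H\) is cyclic, every position \(e\) of \(H\) indexes an admissible triple whenever the induced predecessor conditions hold, and these conditions are \(\sigma\)-invariant along the cycle by \eqref{eq:alternative-contact-shift}. We then run the path argument of Lemma~\ref{lem:alternative-raw-contact-balance}: contacts between a chain \(C\) and the cyclic \(H\) propagate along \((f,e)\mapsto(f+2,e+2)\), with \(e\) taken modulo the period of \(H\). Each maximal contact path starts at the injective end of \(C\) (\(f\in\{0,1\}\)) and ends at the projective end (\(f\in\{M-1,M\}\)). Each path therefore contributes exactly one \(m_4\)-term on each side, and summing over chains \(C\) gives equality.

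The main obstacle is the bookkeeping in the second and third steps. The endpoint of a contact path must correspond exactly to an admissible triple at the matching hook position, including the parity issue of whether the path ends at \(f=M-1\) or \(f=M\). If that matching fails, we would fall back on a finite enumeration in the style of Lemma~\ref{lem:alternative-computer-blockers}, restricted to patterns with no projective vertex among \(u,b\).
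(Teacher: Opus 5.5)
Your architecture matches the paper's proof. For a cyclic hook orbit \(H\), the first vertex \(a\) is never projective, so only \(m_4\)-terms occur. Cyclic second orbits contribute nothing, and \(H\) carries no double hook or type \({\rm T}\) configuration. The surviving terms are contacts of \(H\) with chains \(C\), matched by \((f,e)\mapsto(f\pm2,e\pm2)\) with \(e\) read modulo the period of \(H\). Your check that the exclusion \(z\notin A_h\cup B_h\) removes nothing is a point the paper leaves implicit.

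However, the step you call the main obstacle uses the wrong indices and is then left open, so the argument is not complete as written. With \(C=(x_0,\ldots,x_M)\) and \(\beta_f=(x_{f+1}\to x_f)\), an arrow \(z\to a\) with \(z\) projective and \(a\) nonprojective is not the last arrow of its chain. Indeed \(\sigma(z\to a)=(\tau a\to z)\) is defined; only \(\sigma\) of that arrow is not. Thus \(z\to a=\beta_{M-2}\) and \(a=x_{M-2}\), so the quotient-side term is the contact \(\zeta_{M-2,e}\), not \(\zeta_{M,e}\). Dually, the submodule-side term uses \(\beta_1=(x_2\to x_1)\) and is the contact \(\zeta_{2,e}\), not \(\zeta_{0,e}\). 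In Step~1 the direction is also reversed: a projective \(a\) makes \(\sigma(\tau u\to a)\) undefined, not \(\sigma^{-1}(a\to u)\). The conclusion survives because no vertex of a cyclic orbit is projective.

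No enumeration is needed to finish. Every vertex of a cyclic orbit is the target of an occurrence on which \(\sigma\) is defined and the source of one on which \(\sigma^{-1}\) is defined. So it is neither projective nor injective, and it lies in a periodic \(\tau\)-orbit. This has three consequences.
First, there are no contacts at \(f=1\) or \(f=M-1\).
Second, a contact at \(f=M-2\) (or at \(f=2\)) forces \(M\) to be even. Otherwise that vertex would share a \(\tau\)-orbit with the injective \(x_1\) (or with the projective \(x_{M-1}\)).
Third, by injectivity of \(\tau\), contacts propagate in both directions through every even \(f\).
Hence the shifts \((f,e)\mapsto(f\pm2,e\pm2)\) carry the contacts at \(f=M-2\) bijectively onto those at \(f=2\), as in the paper.

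You also need every position of \(H\) to give an admissible triple on both sides, or none to do so. This does not follow from \eqref{eq:alternative-contact-shift}. The paper obtains it from a \(\tau\)-fixed dichotomy.
Suppose some vertex of \(H\) is \(\tau\)-fixed. Then its whole parity class is \(\tau\)-fixed, so at each position either \(a=b\) or \(u\) is \(\tau\)-fixed. In the latter case the arrow correspondences give \(b\to u\) and \(u\to a\). So \(H\) has no admissible triple on either side.
Otherwise, Lemma~\ref{lem:two-cycle-local}(1) excludes the two-cycles \(u\rightleftarrows b\) and \(a\rightleftarrows u\). The remaining conditions hold automatically, with Lemma~\ref{lem:four-no-translation-arrow} giving \(a\nrightarrow b\). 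So every position is admissible on both sides.

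The same remark closes your open \(\tau\)-fixed case in the exclusion check. Since \(\tau b=a\ne b\), the vertex \(a\) cannot be \(\tau\)-fixed, so \(z\in B_h\) cannot occur.
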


\begin{proof}
If the target of an occurrence in a cyclic orbit \(H\) is fixed by \(\tau\),
then all positions congruent to it modulo two contain that fixed vertex.  If
the fixed vertex occupies the parity containing the first and third entries of a triple
\((a,u,b)\), then \(a=b\).  If the fixed vertex is \(u\), the arrow
correspondence gives the additional arrow \(b\to u\), so the predecessor set
at \(u\) is not \(\{a\}\).  Hence \(H\) has no admissible triple.  If
\(H\) has no \(\tau\)-fixed target, the first vertex of every
admissible triple in \(H\) is nonprojective, so only an
\(m_4\)-term can occur.  A cyclic second orbit has no
projective occurrence and contributes nothing.  If the second orbit
\(C=(x_0,\ldots,x_M)\) is a chain, a term on the quotient side is an equality
\(x_{M-2}=y_e\), with \(e\) read modulo the occurrence period of \(H\).
The length \(M\) is even: otherwise an injective endpoint of \(C\) would
belong to a cyclic \(\tau\)-orbit.  Iterating
\((f,e)\mapsto(f-2,e-2)\) sends the equality at \(f=M-2\) bijectively to an
equality at \(f=2\), with the second coordinate read modulo the period of
\(H\).  These are exactly the terms on the submodule side.  Cyclic hook
orbits carry no double hook and no configuration of type \({\rm T}\).
\end{proof}

Combining Lemmas~\ref{lem:alternative-contact-rectangle},
\ref{lem:alternative-self-contact}, and
\ref{lem:alternative-cyclic-hook} with
\eqref{eq:alternative-wall-decomposition} gives
\begin{equation}\label{eq:alternative-wbt-balance}
 W_{4,\quot}-B_{4,\quot}+T_{4,\quot}
 =W_{4,\sub}-B_{4,\sub}+T_{4,\sub}.
\end{equation}

It remains to compare the configurations of type \({\rm F}\).

\begin{lemma}\label{lem:alternative-fixed-balance}
One has \(F_{4,\quot}=F_{4,\sub}\).
\end{lemma}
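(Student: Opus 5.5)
We plan to construct an explicit bijection between the type \({\rm F}\) configurations on the quotient side and those on the submodule side (the reversed translation quiver). The bijection will move the configuration along the \(\tau\)-orbit structure attached to the \(\tau\)-fixed vertex \(c\). Recall that a type \({\rm F}\) set \(D=\{P,a,c,z\}\) has a projective \(P\to a\), two-cycles \(a\rightleftarrows c\rightleftarrows z\) with \(\tau c=c\) and \(\tau z=a\), and the listed predecessor conditions. By Lemma~\ref{lem:ar-mesh-correspondence} applied at the fixed vertex, the arrows incident with \(c\) are permuted by \(\tau\). Hence the neighbours of \(c\) form \(\tau\)-orbits, and \(z,a\) are consecutive members of one such orbit \(\mathcal O=(U_0,\ldots,U_m)\), with \(U_0\) projective and \(U_m\) injective. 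The orbit ends at both sides, because by Lemma~\ref{lem:two-cycle-local}(2) a projective member of a two-cycle with \(c\) is injective, and cyclic orbits are excluded by the argument in that lemma.

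\textbf{Step 1: locate \(P\).} The condition \(\tau a=z\) or \(\tau a\notin D\), together with \(P\to a\) and \(P\nrightarrow z\), suggests that \(P\) is determined by the mesh at \(a\). Suppose \(a=U_i\) and \(z=U_{i+1}\). Using \(\theta U_{i+1}\supseteq c\) and the mesh correspondence, the arrow \(P\to a\) either translates to an arrow \(\tau a\to P\) or has \(a\) projective. We will check, presumably by the same finite enumeration as in Proposition~\ref{prop:four-test-pattern-classification}, that the admissible patterns of type \({\rm F}\) come in three isomorphism classes (the \(72\) labelled patterns in the table). These correspond to \(\tau a=z\), to \(a\) projective, and to \(\tau a\notin D\), with \(z\to P\) present or absent.

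\textbf{Step 2: reflect.} For each class we define the image configuration in the reversed quiver. The injective \(I\) plays the role of \(P\), the fixed vertex \(c\) is kept, and the pair \((a,z)\) is replaced by a reflected consecutive pair \((U_{m-j},U_{m-j-1})\) of the same orbit. The plan is to follow the \(\sigma\)-orbit of the arrow \(P\to a\) from Construction~\ref{con:alternative-arrow-orbits}: it is a finite chain starting near \(P\), and we read it from the injective end, as in the proof of Lemma~\ref{lem:alternative-three-killed}. Concretely, we map \((P\to a)\) to the occurrence at the opposite end of its chain. The relation \(\tau y_e=y_{e+2}\) then shows that the reflected quadruple satisfies the reversed type \({\rm F}\) conditions. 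The predecessor conditions \(z_D^-=\{c\}\) and \(c_D^-=\{a,z\}\) turn into successor conditions, since by the mesh correspondence predecessors of \(U_{i+1}\) correspond to successors of \(U_i\). The inverse map is given by the same construction in the reversed quiver, so the map is a bijection and \(F_{4,\quot}=F_{4,\sub}\).

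\textbf{Main obstacle.} The hard part is checking that the reflected quadruple really satisfies every condition of type \({\rm F}\), and in particular that hooklessness and the absence of extra arrows into \(z\) and \(c\) are preserved. Where the direct argument stalls, we first try to prove these facts with Lemmas~\ref{lem:four-no-sectional-bypass} and~\ref{lem:two-cycle-local}. Failing that, we reduce the check to the exhaustive pattern enumeration by marking the relevant chain ends, as was done for Lemma~\ref{lem:alternative-computer-blockers}.
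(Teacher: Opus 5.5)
Your picture of the orbit containing \(a\) and \(z\) is backwards, and Step~2 as written depends on it. Apply Lemma~\ref{lem:ar-mesh-correspondence} at the fixed vertex \(c\): every vertex of the \(\tau\)-orbit of \(a\) is then joined to \(c\) in both directions. A projective member \(U_0\) would be injective by Lemma~\ref{lem:two-cycle-local}(2), so its \(\tau\)-orbit would be \(\{U_0\}\). That is impossible, because the orbit contains \(a\neq z\). So you quote the right fact but draw the opposite conclusion: the orbit contains no projective and, \(\Gamma\) being finite, it is periodic of period at least two. The argument in Lemma~\ref{lem:two-cycle-local} only excludes periodic families of two-cycles in which neither vertex is fixed, whereas here \(c\) is fixed. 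There is therefore no projective end \(U_0\) or injective end \(U_m\), and reflecting \((a,z)\) to \((U_{m-j},U_{m-j-1})\) inside this orbit has no meaning. (Also, \(a\) is never projective in type~\({\rm F}\), since \(\PP\cap D=\{P\}\).) The chain with a projective and an injective end is the \(\tau\)-orbit \(P=P_0,\ldots,P_s=I\) of \(P\). Your second description, via the \(\sigma\)-orbit of \(P\to a\), uses this chain and is the right idea.

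The gap is the step you call the main obstacle, and your fallback cannot close it: the image quadruple lies elsewhere in \(\Gamma\), so no four-vertex enumeration relates it to \(D\). Write the periodic orbit as \(\tau x_i=x_{i+1}\) with \(a=x_i\) and \(z=x_{i-1}\), and put \(f_i=[\,P_0\to x_i\,]\) and \(g_i=[\,x_i\to I\,]\). The mesh correspondence gives \(g_{i-s+1}=f_i\), and your \(\sigma\)-chain map sends \(a\) to \(a'=x_{i-s+1}\). The reversed type~\({\rm F}\) conditions at \(a'\) require \(a'\to I\) and \(\tau a'\nrightarrow I\), that is, \(g_{i-s+1}=1\) and \(g_{i-s+2}=f_{i+1}=0\). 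The second condition says \(P\nrightarrow\tau a\). This concerns a vertex outside \(D\) and is not among the type~\({\rm F}\) conditions on \(D\), so you must prove it. It does hold: \(P\to a\) gives \(\tau a\to P\), so an arrow \(P\to\tau a\) would create a two-cycle with neither vertex \(\tau\)-fixed, contrary to Lemma~\ref{lem:two-cycle-local}(1). Hence \(f\) has no two consecutive ones, and with this your map becomes a bijection. The paper does not need this fact. It identifies quotient-side configurations with rises \((f_{i-1},f_i)=(0,1)\) and submodule-side configurations with falls \((g_k,g_{k+1})=(1,0)\). Since \(g\) is a cyclic shift of \(f\), and a cyclic binary word has as many rises as falls, summing over \(c\), the periodic orbit, and the \(\tau\)-chain gives \(F_{4,\quot}=F_{4,\sub}\).
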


\begin{proof}
Fix the \(\tau\)-fixed vertex \(c\) in a configuration of type \({\rm F}\)
from Definition~\ref{def:four-support-configurations}.  The vertices joined
to \(c\) in both directions form complete \(\tau\)-orbits: since
\(\tau c=c\), the arrow correspondence sends \(c\rightleftarrows x\) to
\(c\rightleftarrows\tau x\), and the inverse correspondence gives the
translates under \(\tau^{-1}\).  Such an orbit meets neither \(\PP\) nor
\(\mathcal I\), by Lemma~\ref{lem:two-cycle-local}, and is
therefore periodic.  If its period is \(p\), write it as
\(\tau x_i=x_{i+1}\), with \(i\) read modulo \(p\).  Fix also a \(\tau\)-chain
\(P_0,\ldots,P_s=I\), where \(\tau P_j=P_{j-1}\), and put
\(f_i=[\,P_0\to x_i\,]\) and \(g_i=[\,x_i\to I\,]\).  Successive
applications of Lemma~\ref{lem:ar-mesh-correspondence} give
\(g_{i-s+1}=f_i\).  The predecessor conditions in type \({\rm F}\) say
that a configuration on the quotient side is exactly a rise
\((f_{i-1},f_i)=(0,1)\), whereas a configuration on the submodule side is
exactly a fall \((g_i,g_{i+1})=(1,0)\).
Every cyclic binary word has equally many rises and falls.  The value
\(f_{i-2}\) records the optional arrow from the fourth vertex to \(P_0\)
and imposes no additional condition.  Summing over \(c\), the periodic
orbit, and the \(\tau\)-chain from a projective to an injective proves
\(F_{4,\quot}=F_{4,\sub}\).
\end{proof}

\begin{proposition}\label{prop:alternative-four-killed}
The numbers of projectively rooted killed sets with four vertices and
injectively corooted killed sets with four vertices are equal.
\end{proposition}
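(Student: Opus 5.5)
The argument assembles the balances already established in this appendix. By \eqref{eq:alternative-killed-packets}, the number of projectively rooted killed four-element sets is
\[
 K_{4,\quot}=W_{4,\quot}-B_{4,\quot}+F_{4,\quot}+T_{4,\quot}.
\]
Applying the same reasoning to the reversed translation quiver gives
\[
 K_{4,\sub}=W_{4,\sub}-B_{4,\sub}+F_{4,\sub}+T_{4,\sub}.
\]
Here the reversed quiver has arrows reversed, \(\tau\) replaced by \(\tau^{-1}\), and \(\PP\) exchanged with \(\mathcal I\). Under this reversal the factor ladder becomes the reverse factor ladder, projectively rooted sets become injectively corooted sets, and killed sets correspond to killed sets as in Definition~\ref{def:killed-set}. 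The first step is to check that the reversed quiver again satisfies every hypothesis used in Proposition~\ref{prop:four-support-classification}. These hypotheses are the mesh correspondence, the absence of sectional bypasses and of arrows \(\tau X\to X\), the two-cycle restrictions of Lemma~\ref{lem:two-cycle-local}, and eventual vanishing of the ladders. All of them are self-dual: for \(\Gamma(\mod A^{\op})\) they follow by applying \(\DDual\), or directly from the symmetric statements. So \(K_{4,\sub}\) decomposes in the same way, with no new computation required.

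Next, substitute the two balances. By \eqref{eq:alternative-wbt-balance},
\[
 W_{4,\quot}-B_{4,\quot}+T_{4,\quot}=W_{4,\sub}-B_{4,\sub}+T_{4,\sub}.
\]
This identity rests on three ingredients. The first is the wall decomposition \eqref{eq:alternative-wall-decomposition}, together with \(a_0(\quot)=a_0(\sub)\) from Lemma~\ref{lem:alternative-three-killed} and \(|\PP|=|\mathcal I|\). The second is Lemma~\ref{lem:alternative-contact-rectangle} for pairs of distinct orbits, using \(T_{4,*}=\sum_{H,C}T_*(H,C)\), and Lemma~\ref{lem:alternative-self-contact} for chain self-contacts. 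The third is Lemma~\ref{lem:alternative-cyclic-hook} for cyclic hook orbits. Lemma~\ref{lem:alternative-fixed-balance} gives \(F_{4,\quot}=F_{4,\sub}\). Adding the two identities gives \(K_{4,\quot}=K_{4,\sub}\).

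The main obstacle is bookkeeping rather than any new idea. One must confirm that the sum over pairs \((H,C)\) in the wall decomposition really exhausts \(W_{4,*}\) on both sides. Every \(m_j\)-term must be attached to exactly one ordered pair of maximal \(\sigma\)-orbits, with chain and cyclic hook orbits split consistently. The \(B_*\) and \(T_*\) corrections must be distributed so that each double hook and each type-T configuration is charged once. I would verify this by rereading the assignment table for \(m_1,\dots,m_4\) and checking two points. First, a double hook is counted in \(B_*(H)\) only for its chain orbit \(H\), and never for cyclic orbits, as Lemma~\ref{lem:alternative-cyclic-hook} asserts. Second, the type-T counts in Lemma~\ref{lem:alternative-contact-rectangle} arise only for \(C\ne H\); the third alternative of Lemma~\ref{lem:alternative-computer-blockers} covers the case \(C=H\) inside the self-contact count. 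Once this partition is confirmed, the statement follows by summation.
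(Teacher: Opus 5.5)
Your proposal is correct and is essentially the paper's proof: the paper simply combines \eqref{eq:alternative-killed-packets} on both sides with the balance \eqref{eq:alternative-wbt-balance} and Lemma~\ref{lem:alternative-fixed-balance}. Your remark that the submodule-side decomposition is the quotient-side decomposition for \(A^{\op}\) (because \(\DDual\) identifies the reversed translation quiver with \(\Gamma(\mod A^{\op})\)) makes explicit a step the paper leaves implicit, and the bookkeeping you list is already carried out inside the cited lemmas rather than in this proof.
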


\begin{proof}
Equation~\eqref{eq:alternative-killed-packets},
\eqref{eq:alternative-wbt-balance}, and
Lemma~\ref{lem:alternative-fixed-balance} give
\[
 K_{4,\quot}
 =W_{4,\quot}-B_{4,\quot}+T_{4,\quot}+F_{4,\quot}
 =W_{4,\sub}-B_{4,\sub}+T_{4,\sub}+F_{4,\sub}
 =K_{4,\sub}.
\]
\end{proof}

\begin{proof}[Proof of Lemma~\ref{lem:four-vertex-ladder}]
For at most two vertices, Lemma~\ref{lem:small-support}(1) shows that a factor
ladder has a projective summand before it becomes zero.  For
three vertices, Lemma~\ref{lem:small-support}(2) and
Lemma~\ref{lem:alternative-three-killed} give equality between the counts of
killed sets.
For four vertices, Proposition~\ref{prop:alternative-four-killed} gives
equality between the two counts.  The counts on the submodule side are the
counts for injectively corooted sets and reverse factor ladders.

Proposition~\ref{prop:iyama-factor-category}(2) shows that every factor
ladder under consideration eventually becomes zero.  By duality, every
reverse factor ladder under consideration eventually becomes zero as well.
Thus a factor ladder with no projective summand is killed, and dually for a
reverse factor ladder with no injective summand.  The required counts agree
for every \(0\leq j\leq4\).
\end{proof}

\phantomsection\label{sec:human-ai-collaboration}
\section*{A note on human--AI collaboration}

This paper grew out of a collaboration between the author and several AI
systems, principally Claude Fable~5, GPT-5.4, GPT-5.5, and GPT-5.6-Sol. Almost all of
its prose was first drafted or substantially revised by AI, and the systems
made essential contributions to most of its proofs. This summary, however,
would hide how the research actually proceeded. The author did not begin with a
finished problem and ask the systems to solve it. The conjecture, its proofs,
the failed approaches, and the organization of the paper emerged through a
sustained exchange in which mathematical direction and judgment remained
human responsibilities. For a more personal account of the project, see the
author's essay
\href{https://haruhisa-enomoto.github.io/quotient-submodule-equidistribution-essay/}
{\emph{I Quit Math, Then Wrote a Paper with AI}}.

The author left academia in April 2024, and mathematics subsequently became
mostly weekend work. AI made it possible to pursue a project on a scale that
would otherwise have been difficult. In March 2026, the author observed that
results on quotient-closed subcategories in \cite{CFY} could be understood
through a convex geometry. GPT-5.4 introduced the terminology and pointed out
the related Dynkin phenomenon in \cite{ORT}. The author then developed tools
that allowed AI systems to consult papers, perform module-theoretic
calculations with QPA, and preserve their claims, computations, and failed
attempts.

The first conjecture suggested by the computations was the attractive but
false symmetry
\[
 |\LQ^i(A)|=|\LS^{N-i}(A)|.
\]
Fable~5 proposed it, and GPT-5.5 quickly found a counterexample. On July 6,
2026, at 14:33, Fable~5 proposed instead the equidistribution conjecture of
this paper. By 19:17, the author had decided that the collaboration would
treat it as the main problem. The conjecture was not proved in full, but the
attempt produced the four families of results stated in
Theorem~\ref{thm:main}.

A parallel collaboration produced the software used for the computations. On
July 15, after computations with the original GAP/QPA had become too slow, the
author asked GPT-5.6-Sol to port QPA to Rust and provide a Python interface.
GPT-5.6-Sol then worked continuously, day and night, for seven days, from
July 15 through July 21. The Rust port was completed on July 19 and the Python
interface on July 21, followed by final revisions through July 26. The package
then became the standard software for module-theoretic computations in this
project. It supported, among other work,
Example~\ref{ex:infinite-counterexample} and the enumerations in
Remark~\ref{rem:computer-verification}.

\medskip
\noindent\emph{Attribution of contributions.}
The principal contributions can be summarized as follows.
\begin{itemize}[leftmargin=2em]
\item
The author developed the initial convex-geometric viewpoint, chose the
research directions, constructed the computational and literature tools,
and introduced Iyama's \(\tau\)-categories into the project; this viewpoint
became crucial in the arguments for cosizes three and four and for
representation-directed algebras. He also directed the pivot to
representation-directed algebras,
selected the conjecture as the central problem, evaluated and repeatedly
questioned the arguments, and determined the organization and final
exposition of the paper.

\item
GPT-5.4 identified the language of convex geometries and its relation to
\cite{ORT}. GPT-5.5 refuted the first conjecture about complementary sizes.

\item
Fable~5 discovered the equidistribution conjecture and produced the initial
proofs for cosizes one and two, sizes two and three, algebras with radical
square zero, and cosize three. The last argument followed the author's
suggestion to use \(\tau\)-categorical methods. The initial proof for size three
contained a gap that was found later during formalization.

\item
GPT-5.6-Sol proved the conjecture for Nakayama algebras and for quotients of
path algebras of Dynkin quivers, established the results for size four and
cosize four, and proved the conjecture for representation-directed algebras
using an unexpected construction involving a Bruhat interval. It repaired the
gaps found during formalization and produced most drafts and subsequent textual
revisions. It also built and documented the Rust/Python port of QPA that
was used for the later searches and enumerations.

\item
Claude Fable~5, Claude Opus~5, and GPT-5.6-Sol repeatedly reviewed the
manuscript as first readers. These reviews were combined with the author's
line-by-line reading and mathematical questions.
\end{itemize}

The writing required another collaboration. On July 18, the author asked
GPT-5.6-Sol for a paper and received something closer to an internal archive
of proofs than a manuscript for readers; that version was discarded. A new
attempt began on July 30 with its organization fixed in advance. From then on,
the author and the AI systems repeatedly rewrote the paper. The author pressed
especially against unexplained terminology, ambiguous references, prose used
in place of mathematical formulas, and arguments that could be followed only
by someone already holding the whole proof in mind. At the same time, the
author began reading the AI-generated proofs in detail and asking about every
step he could not yet explain. This process simplified several proofs and
recast others in more categorical or unified forms. The abstract and
introduction were then substantially reorganized by the author and revised in
the same dialogue.

The Lean formalization provided a different check. On July 31,
the author asked GPT-5.6-Sol to formalize the paper in Lean~4. It then
worked continuously, day and night, for eight days, from July 31 through
August 7, on a formalization of almost the entire paper. During this process,
on August 1, formalization revealed gaps in the original arguments for sizes
three and four, although several independent AI reviews had not found them.
GPT-5.6-Sol repaired both the mathematical arguments and their formalizations.
This first development did not yet prove
the statements under the hypotheses used in the paper: on August 9, a
comparison with the manuscript showed that two branches of the main theorem
still depended on
auxiliary hypotheses that had not been derived from the paper's assumptions.
Further work removed these hypotheses, completed the formalization of the
results used in the paper, and rebuilt it over Mathlib alone on August 10.
This experience captures the author's experience of the project: current AI
systems can discover strong mathematics and can reason with remarkable
accuracy, but their informal agreement is not a certificate of correctness;
the same systems can also use formalization to find and repair what their
informal reasoning missed. See the
\hyperref[note:formalization-and-verification]{formalization and verification
statement in the Introduction} for the Lean development.

The author has tried to make the paper readable as mathematics, rather than
merely presenting correct arguments whose structure the reader must
reconstruct, as the AI-generated drafts tended to do. Some parts may
nevertheless remain difficult to follow. He takes sole responsibility for any
shortcomings in the exposition, for every statement in the paper, and for the
decision to present these results.

\end{document}